\newcommand*\circled[1]{\tikz[baseline=(char.base)]{
    \node[shape=circle,draw,inner sep=1pt] (char) {#1};}}
\newtheorem{thm}[equation]{Theorem}
\newtheorem*{thm*}{Theorem}
\newtheorem*{lem*}{Lemma}
\newtheorem{thmA}{Theorem}
\newtheorem{lem}[equation]{Lemma}
\newtheorem{prop}[equation]{Proposition}
\newtheorem*{prop*}{Proposition}
\newtheorem{cor}[equation]{Corollary}
\theoremstyle{definition}
\newtheorem*{defn}{Definition}
\newtheorem{rmk}[equation]{Remark}
\numberwithin{equation}{section}
\DeclareMathOperator{\config}{config}
\DeclareMathOperator{\id}{id}
\DeclareMathOperator{\sgn}{sign}
\DeclareMathOperator{\cel}{cell}
\DeclareMathOperator{\ucel}{ucell}
\DeclareMathOperator{\spin}{spin}
\DeclareMathOperator{\no}{no}
\DeclareMathOperator{\FI}{FI}
\DeclareMathOperator{\FB}{FB}
\DeclareMathOperator{\FBW}{FBW}
\DeclareMathOperator{\length}{length}
\DeclareMathOperator{\codim}{codim}
\newcommand{\epsi}{\varepsilon}
\newcommand{\concat}{\mid}
\newcommand{\del}{\partial}
\newcommand*\strong[1]{\textbf{\textit{#1}}}
\newcommand{\abs}[1]{\left\lvert #1 \right\rvert}
\newcommand{\co}{\colon\thinspace}
\newcommand{\tendigitwidth}{1.7cm}
\begin{document}
\title[Disks in a strip, twisted algebras, persistence]{Configuration spaces of
  disks in a strip, twisted algebras, persistence, and other stories}
\author[H.~Alpert]{Hannah Alpert}
\address[H.~Alpert]{Department of Mathematics, Auburn University, Auburn, Alabama, United States}
\email{hcalpert@auburn.edu}
\author[F.~Manin]{Fedor Manin}
\address[F.~Manin]{Department of Mathematics, UCSB, Santa Barbara, California, United States}
\email{manin@math.ucsb.edu}
\subjclass[2010]{55R80 (18A25)}
\keywords{Configuration space, pure braid group, representation stability, twisted commutative algebra, permutohedron, discrete Morse theory, persistent homology, motion planning}
\begin{abstract}
  We give $\mathbb{Z}$-bases for the homology and cohomology of the configuration
  space of $n$ unit disks in an infinite strip of width $w$, first studied by
  Alpert, Kahle and MacPherson.  We also study the way these spaces evolve both
  as $n$ increases (using the framework of representation stability) and as $w$
  increases (using the framework of persistent homology).  Finally, we include
  some results about the cup product in the cohomology and about the
  configuration space of unordered disks.
\end{abstract}
\maketitle

\setcounter{tocdepth}{1}
\tableofcontents

\section{Introduction}

The configuration space of $n$ labeled unit-diameter disks in an infinite strip of width $w$ is denoted $\config(n, w)$; Figure~\ref{fig-def} depicts an example configuration.  Specifically, parametrizing the configurations in terms of the centers of the disks, $\config(n, w)$ is the set of points $(x_1, y_1, \ldots, x_n, y_n) \in \mathbb{R}^{2n}$, such that $(x_i - x_j)^2 + (y_i - y_j)^2 \geq 1$ for all $i$ and $j$, and such that $\frac{1}{2} \leq y_i \leq w - \frac{1}{2}$ for all $i$.  We would like to describe the topology of $\config(n, w)$.

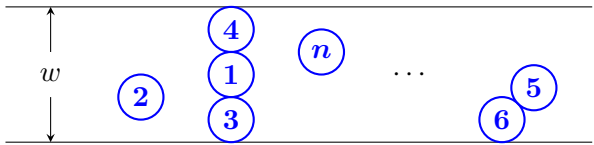
\begin{figure}[h!]
\begin{center}
\begin{tikzpicture}[scale=.6, emp/.style={inner sep = 0pt, outer sep = 0pt, blue}, >=stealth]
\draw (-1, 0)--(12, 0);
\draw (-1, 3)--(12, 3);
\node at (0, 1.5) {$w$};
\draw[->] (0, 2)--(0, 3);
\draw[->] (0, 1)--(0, 0);
\node[emp] (d2) at (2, 1) {\bf 2};
\node[emp] (d3) at (4, .5) {\bf 3};
\node[emp] (d1) at (4, 1.5) {\bf 1};
\node[emp] (d4) at (4, 2.5) {\bf 4};
\node[emp] (dn) at (6, 2) {$\boldsymbol{n}$};
\node at (8, 1.5) {$\cdots$};
\node[emp] (d6) at (10, .5) {\bf 6};
\node[emp] (d5) at (10.707, 1.207) {\bf 5};
\draw[blue, thick] (d2) circle (.5);
\draw[blue, thick] (d3) circle (.5);
\draw[blue, thick] (d1) circle (.5);
\draw[blue, thick] (d4) circle (.5);
\draw[blue, thick] (dn) circle (.5);
\draw[blue, thick] (d6) circle (.5);
\draw[blue, thick] (d5) circle (.5);
\end{tikzpicture}
\end{center}
\caption{The configuration space $\config(n, w)$ is the set of ways to arrange $n$ disjoint labeled disks of width $1$ in $\mathbb{R} \times [0, w]$.}\label{fig-def}
\end{figure}

The topology of the configuration space of $n$ points in the plane has been
well-understood since the work of Arnol'd~\cite{Arnold69} and
F.~Cohen~\cite[Ch.~III]{CLM}; see~\cite{Sinha} for an overview.  Recently, there
has been interest in more ``physical'' models in which the points have thickness
and are constrained to lie in a bounded region, drawing inspiration from both
statistical physics~\cite{Diaconis09} and robotics~\cite{Farber08}.  In the
first, one imagines molecules of a substance as hard balls and extracts
information about states of matter from the way they move past each other.  In
the latter, one can imagine a number of robots coordinating their movements so
that they can travel to different points in a constrained region without bumping
against each other; this amounts to a motion planning problem in a disk
configuration space.

The topology of disk configuration spaces was first studied mathematically by Baryshnikov, Bubenik, and Kahle in~\cite{BBK14} and experimentally by Carlsson et
al.~\cite{CGKM12}.  While these papers represent real progress, trying to fully
understand even the connected components of these spaces seems daunting;
see~\cite{Kahle12}.  Further work has taken two approaches to simplifying the
question.  One is to replace the disks by polygons, such as squares or hexagons,
as in \cite{Alp20} and \cite{ABKMS}.  The topology of the resulting configuration
spaces is closely related to that of disk configuration spaces; in particular, it
captures any of their topology that ``survives for a long time'' as disks grow or
shrink.  This observation can be formalized using persistent homology.


While it is simpler in some respects than that of disk configuration spaces, the
topology of polygon configuration spaces still seems very difficult to
understand.  A more radical simplification, it turns out, is to remove the side
walls of the rectangle, replacing it with the infinite strip, an idea introduced
in~\cite{AKM}.  That paper defines the spaces $\config(n, w)$ and computes the
asymptotic growth of the rank of $H_j(\config(n, w))$ as $n$ increases, up
to a constant factor depending on $j$ and $w$.  This turns out to be exponential
unless the strip is wide compared to $j$.

In this paper, we present a number of results about the topology of
$\config(n, w)$ and related spaces.  The majority of the paper seeks to
understand $H_*(\config(n, w); \mathbb{Z})$, and its dependence on $n$ and $w$, in greater
resolution and from a more algebraic perspective.  For fixed $n$ and $w$, we find
a geometrically motivated basis for this homology.  All the classes in this basis
can be assembled out of a small number of classes involving a small number of
disks, depending only on $w$; we make this idea precise using some algebraic
machinery due to Sam and Snowden.  We also track the appearance and disappearance
of homology classes as the width of the strip changes, using the machinery of
persistent homology.

The paper includes several additional results proven using similar methods.  We
give a basis for the cohomology of $\config(n,w)$ and make progress in
understanding its cup product.  We explore the possibility of $\FI_d$-module
structures on the homology of hard disk configuration spaces and no-$(k+1)$-equal
spaces.  Finally, we discuss the topology of the configuration space of unordered
disks in a strip.

\subsection{Main results}
We now discuss our main results in greater detail, starting with a description of
the homology of $\config(n,w)$ for fixed $w$ and all $n$.
\begin{thmA} \label{thm:FG}
  For fixed $w$, $H_*(\config({-},w); \mathbb{Z})$ forms a finitely generated, noncommutative
  twisted algebra whose generators live in
  $H_{\leq \frac{3}{2}w-2}(\config(\leq 3w/2,w); \mathbb{Z})$.
\end{thmA}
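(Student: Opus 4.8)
\emph{Approach.} The plan is to keep everything geometric: the twisted algebra will come from horizontal concatenation of configurations, the generating set from an explicit homology basis whose elements are \emph{by construction} concatenation products of a finite list of ``cluster'' classes, and the bound on the clusters from a packing estimate. Finite generation is then essentially bookkeeping, and the real work is the packing estimate.

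\emph{The twisted algebra.} I would first upgrade $\config(-,w)$ to a functor from the groupoid $\FB$ of finite sets and bijections to spaces, letting $\config(S,w)$ be the space of unit disks in the strip labeled by a finite set $S$, with bijections acting by relabeling. Placing a configuration indexed by $A$ far to the left of one indexed by $B$ defines maps $\config(A,w)\times\config(B,w)\to\config(A\sqcup B,w)$ that are natural in $(A,B)$, independent of the amount of shift up to homotopy, unital, and associative up to homotopy; applying $H_*(-;\mathbb Z)$ turns these into a unital, associative, graded-by-homological-degree product on the $\FB$-module $H_*(\config(-,w))$, i.e.\ a twisted (graded) algebra over $\mathbb Z$. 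It is not twisted-commutative (already for $w\ge 2$): with $b\in H_1(\config(\{1,2\},w))$ the orbiting-pair class and $p\in H_0(\config(\{3\},w))$ the point class, $b\cdot p$ and $p\cdot b$ are distinct basis elements of $H_1(\config(\{1,2,3\},w))$ --- the orbiting pair lies to the left of the spectator disk in one and to the right in the other --- and relabeling the three disks cannot interchange the two blocks, so no bijection identifies them.

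\emph{The concatenation basis and spanning.} Next I would invoke the explicit $\mathbb Z$-basis for $H_*(\config(n,w))$ obtained from the cube/cell model of $\config(n,w)$ and a discrete Morse matching. The feature I would extract is that this basis is closed, up to sign, under the concatenation product: if $\beta$ is a basis element supported on a set $A$ and $\gamma$ one supported on a disjoint set $B$, then $\beta\cdot\gamma=\pm\delta$ for a basis element $\delta$ on $A\sqcup B$, and every basis element has a canonical ``splitting'' into \emph{indecomposable} basis elements, namely those whose support cannot be cut by a vertical line into two non-interacting parts. Checking this compatibility between the topological concatenation map and the combinatorics of the matching is the one genuinely model-dependent step, but it amounts to noting that a wide enough horizontal gap in a configuration is respected by the matching. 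Granting it, the set $\mathcal G=\mathcal G(w)$ of indecomposable basis elements generates $H_*(\config(-,w))$: an arbitrary basis element on $\{1,\dots,n\}$ is the concatenation product of standard representatives of finitely many members of $\mathcal G$, post-composed with the bijection distributing $\{1,\dots,n\}$ into the corresponding blocks, so spanning is automatic.

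\emph{Bounding the generators (the crux).} It remains to make $\mathcal G$ finite with members supported on at most $3w/2$ disks and in homological degree at most $\tfrac32 w-2$. The degree bound is free once the disk count is known: by the homological-dimension estimate of \cite{AKM} (disks pack into columns of height at most $\lfloor w\rfloor$, whence $H_j(\config(m,w))=0$ for $j>m-\lceil m/\lfloor w\rfloor\rceil$), every class in $H_*(\config(m,w))$ with $m\le 3w/2$ lies in degree at most $\tfrac32 w-2$; and then $\mathcal G\subseteq\bigoplus_{m\le 3w/2}H_{\le\frac32 w-2}(\config(m,w))$, a free abelian group of finite rank, so $\mathcal G$ is finite. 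Thus the theorem reduces to the statement that \emph{an indecomposable basis element involves at most $3w/2$ disks} --- equivalently, that any critical cell on $m>3w/2$ disks admits a splitting position, a left--right cut of its configuration across which it is the juxtaposition of two critical cells. This is the main obstacle. The plan for it is a packing argument: show that an indecomposable critical cell must confine all its disks to a horizontal window of bounded width, and that a window of the width-$w$ strip supporting a nontrivial relative cycle accommodates at most $\lfloor 3w/2\rfloor$ disks --- roughly, one column of height up to $w$ together with the $\sim w/2$ further disks that have room to weave past it. Promoting ``indecomposable'' to the existence of a splitting position at the level of the Morse complex, and tracking which sub-configurations actually carry critical cells, is where the combinatorics of the matching has to be used in full.
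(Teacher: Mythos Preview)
Your high-level architecture is the paper's: build the twisted algebra from horizontal concatenation, appeal to the discrete-Morse basis of Theorem~\ref{thm:basis}, and show that the indecomposable pieces of basis elements involve at most $3w/2$ disks. But there are two real problems and one minor one.

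\textbf{Minor.} Your noncommutativity witness fails for $w\ge 3$: a $2$-wheel and a singleton occupy only three vertical slots, so for $w\ge 3$ the singleton slides past and $b\cdot p = p\cdot b$ in homology. The correct example is a $w$-wheel and a singleton (or any two wheels whose sizes sum to more than $w$). Relatedly, your claim that the basis is closed under concatenation ``up to sign'' is false: concatenating two basis elements in the wrong order typically yields a \emph{linear combination} of basis elements (e.g.\ a $2$-wheel commutator), not a single one. This does not damage the argument---what you need is only that every basis element is a concatenation product of indecomposables---but the statement as written is wrong.

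\textbf{The actual gap.} You correctly identify the $3w/2$ disk bound on indecomposables as the crux, and then do not prove it. Your ``packing argument''---confine an indecomposable critical cell to a bounded window, then bound the number of disks in that window---is not carried out, and in the form you sketch it I do not see how to extract the sharp constant $3/2$. The paper does \emph{not} argue this way. Instead, Theorem~\ref{thm:basis} gives the basis explicitly as concatenations of wheels and filters, and the bound is read off from the \emph{definition} of a filter: a filter is $r\ge 3$ wheels with $k>w$ disks total, subject to the constraint that any $r-1$ of the wheels can be stacked vertically---equivalently, each wheel has at least $k-w$ disks. Summing, $k\ge r(k-w)$, so $k\le rw/(r-1)\le 3w/2$ since $r\ge 3$. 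Filters with $r=2$ are commutators of wheels and hence already in the subalgebra generated by wheels. This is a two-line computation once the wheel/filter structure of the basis is in hand; your proposal bypasses that structure and substitutes a heuristic that does not obviously give the right answer.

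So: either unpack Theorem~\ref{thm:basis} enough to name the indecomposable pieces (wheels and filters) and run the inequality above, or make your packing argument precise---but the former is both what the paper does and considerably easier.
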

This contrasts with the classical family of configuration spaces of points in the
plane, which forms a commutative, but infinitely generated twisted algebra.

Informally, Theorem \ref{thm:FG} means that $H_j(\config(n,w); \mathbb{Z})$ is spanned by
cycles built as follows:
\begin{enumerate}
\item Separate the $n$ disks into groups of at most $3w/2$.
\item Place the groups in some order along the strip.
\item Label the disks in some way using the numbers $1$ through $n$.
\item Let each group do its own thing, without interacting with the others.
\end{enumerate}
The things a group can do---\emph{elementary cycles}---come in two types: $1$ to
$w$ disks can form a \strong{wheel}, and $w+1$ to $3w/2$ disks can form a
\strong{filter}. 
If $z_1$ and $z_2$ are elementary cycles, we refer to the act of placing them
next to each other as the \strong{concatenation product}, denoted
$z_1 \concat z_2$.

A wheel of $k$ disks has $k-1$ circular degrees of freedom generated by the
rotation of concentric disks, making for a cycle represented by a $T^{k-1}$.
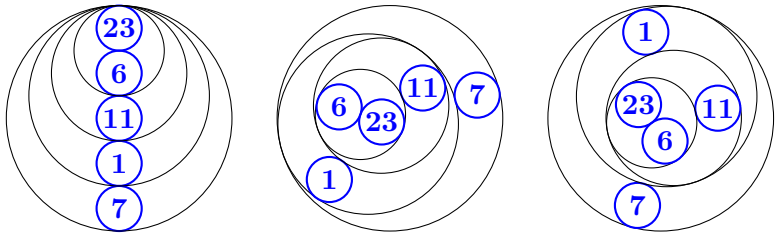
\begin{figure}[h]
  \begin{tikzpicture}[scale=0.6]
    \draw[blue, thick] (6,0) circle (0.5);
    \draw[blue, thick] (6,1) circle (0.5);
    \draw[blue, thick] (6,2) circle (0.5);
    \draw[blue, thick] (6,3) circle (0.5);
    \draw[blue, thick] (6,4) circle (0.5);
    \draw(6,3.5) circle (1);
    \draw(6,3) circle (1.5);
    \draw(6,2.5) circle (2);
    \draw(6,2) circle (2.5);
    \node[blue] at (6,0) {\bf 7};
    \node[blue] at (6,1) {\bf 1};
    \node[blue] at (6,2) {\bf 11};
    \node[blue] at (6,3) {\bf 6};
    \node[blue] at (6,4) {\bf 23};

    \draw(12,2) circle (2.5);
    \coordinate (A) at (11.517,1.871);
    \draw(A) circle (2);
    \draw(A) ++ (235:1.5) coordinate (B);
    \draw[blue,thick](B) circle (0.5);
    \node[blue] at (B) {\bf 1};
    \draw(A) ++ (235:-0.5) coordinate (B2);
    \draw(B2) circle (1.5);
    \draw(B2) ++ (23:1) coordinate (C);
    \draw[blue,thick](C) circle (0.5);
    \node[blue] at (C) {\bf 11};
    \draw(B2) ++ (23:-0.5) coordinate (C2);
    \draw(C2) circle (1);
    \draw(C2) ++ (160.5:0.5) coordinate(D1);
    \draw(C2) ++ (160.5:-0.5) coordinate(D2);
    \draw[blue,thick](D1) circle (0.5);
    \node[blue] at (D1) {\bf 6};
    \node[blue] at (D2) {\bf 23};
    \draw[blue,thick](D2) circle (0.5);
    \draw[blue, thick] (13.933,2.514) circle (0.5);
    \node[blue] at (13.933,2.514) {\bf 7};

    \draw(18,2) circle (2.5) coordinate(A);
    \draw(A)++(255:2) coordinate(A2);
    \draw[blue, thick] (A2) circle (0.5);
    \draw(A) ++(255:-0.5) coordinate(B);
    \draw (B) circle(2);
    \draw (B) ++(108:1.5) coordinate(C1);
    \draw (B) ++(108:-0.5) coordinate(C2);
    \draw[blue,thick] (C1) circle (0.5);
    \draw (C2) circle (1.5);
    \draw (C2) ++(12.5:1) coordinate(D1);
    \draw (C2) ++(12.5:-0.5) coordinate(D2);
    \draw[blue,thick] (D1) circle (0.5);
    \draw (D2) circle (1);
    \draw (D2) ++(306:0.5) coordinate(E1);
    \draw (D2) ++(306:-0.5) coordinate(E2);
    \draw[blue,thick] (E1) circle (0.5);
    \draw[blue,thick] (E2) circle (0.5);
    \node[blue] at (A2) {\bf 7};
    \node[blue] at (C1) {\bf 1};
    \node[blue] at (D1) {\bf 11};
    \node[blue] at (E1) {\bf 6};
    \node[blue] at (E2) {\bf 23};
  \end{tikzpicture}
  \caption{Some configurations of a wheel with $5$ disks.  The first
    configuration gives a canonical (up to switching the first two) ordering of
    the disks.
  } \label{fig:wheel}
\end{figure}

A filter consists of $r \geq 3$ wheels with $k>w$ disks total, such that each
wheel is made of at least $k-w$ disks; the wheels are ordered.  The filter can
move as follows.  Every wheel can perform its rotations independently, for a
total of $k-r$ degrees of freedom; each wheel can also move back and forth along
the strip, crossing over each other in order.  Any $r-1$ wheels can have the same
$x$-coordinate (since they contain at most $w$ disks total) but all $r$ cannot.
This creates an $S^{r-2}$ inside the configuration space, thus the whole
$(k-2)$-cycle is represented by an $S^{r-2} \times T^{k-r}$.

\begin{figure}[h]
  \begin{tikzpicture}[scale=0.6]
    \draw[blue, thick] (15,4) circle (0.5);
    \draw[->] (15.5,4) .. controls (16.25,4) and (16.25,4.25) .. (17,4.25) -- (18,4.25);
    \draw[blue, thick] (17,0) circle (0.5);
    \draw[blue, thick] (17,1) circle (0.5);
    \draw[blue, thick] (17,2) circle (0.5);
    \draw[blue, thick] (15,2.5) circle (0.5);
    \draw[->] (15.5,2.5) .. controls (16.25,2.5) and (16.25,2.75) .. (17,2.75) -- (18,2.75);
    \draw[blue, thick] (17,3.5) circle (0.5);
    \draw[->] (16.5,3.5) .. controls (15.75,3.5) and (15.75,3.25) .. (15,3.25) -- (14,3.25);
    \draw(17,1.5) circle (1);
    \draw(17,1) circle (1.5);
    \draw[->] (15.5,1) -- (14,1);
    \node[blue] at (15,4) {\bf 3};
    \node[blue] at (17,0) {\bf 2};
    \node[blue] at (17,1) {\bf 12};
    \node[blue] at (17,2) {\bf 21};
    \node[blue] at (15,2.5) {\bf 18};
    \node[blue] at (17,3.5) {\bf 15};
    \draw (13,-0.5)--(20,-0.5);
    \draw (13,4.5)--(20,4.5);
  \end{tikzpicture}
  \caption{The wheels in a filter always cross over and under each other in the
    same order.  This figure shows a filter with three wheels of size 1 and one
    wheel of size 3.  The resulting cycle is an $S^2 \times T^2$.}
  \label{fig:filter}
\end{figure}
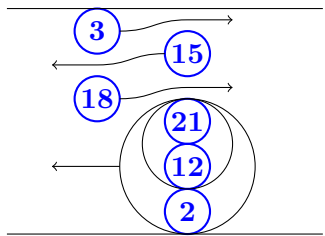

For some purposes, it makes sense to consider filters with $r=2$: such a filter
consists of two wheels $b_1$ and $b_2$ that don't commute, and can be written as
$b_2 \concat b_1-b_1 \concat b_2$.

Our next theorem gives a basis from among the cycles generated in this way.
\begin{thmA} \label{thm:basis}
  $H_*(\config(n,w); \mathbb{Z})$ is free abelian and has a basis consisting of
  concatenations of wheels and filters with $r \geq 2$.  We say one wheel
  \emph{ranks above} another if it has more disks, or has the same number of
  disks and its largest disk label is greater.  A cycle is in the basis if and
  only if:
  \begin{enumerate}[(i)]
  \item Each wheel is ordered so that the largest label comes first.
  \item The wheels inside each filter are in ascending order by largest label
    (regardless of the number of disks).
  \item Adjacent wheels not inside a filter are ordered from higher to lower
    rank.
  \item Every wheel immediately to the left of a filter ranks above the least
    wheel in the filter.
  \end{enumerate}
\end{thmA}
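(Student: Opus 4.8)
The plan is to work with a finite CW complex $X=X(n,w)$ homotopy equivalent to $\config(n,w)$---the combinatorial model of \cite{AKM} or a variant of it---whose cells record the left-to-right order of the disks together with their pattern of vertical tangencies, and to equip $X$ with a discrete Morse function whose critical cells are exactly the concatenations of wheels and filters satisfying conditions (i)--(iv). Theorem~\ref{thm:FG} already guarantees that every class in $H_*(\config(n,w))$ is a $\mathbb{Z}$-combination of concatenations of wheels and filters, so (i)--(iv) should be read as picking out a \emph{normal form}, and the first task is to check that the low-degree relations in the twisted algebra suffice to rewrite any concatenation into normal form. Roughly, those relations are: a wheel may be cyclically relabeled to put its largest label first (up to sign); two commuting adjacent wheels may be transposed (up to sign); for two adjacent wheels that do not commute one has a relation expressing $b_1\concat b_2$ in terms of $b_2\concat b_1$ and the $r=2$ filter on $b_1,b_2$, which lets one sort adjacent wheels by rank at the cost of an error term of strictly smaller ``complexity''; two wheels inside a filter may be transposed (up to sign), so they can be sorted by largest label; and a wheel immediately to the left of a filter whose least member outranks it can be absorbed into that filter modulo lower-complexity terms. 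Showing that this rewriting system is confluent---so that the normal forms span $H_*(\config(n,w))$---is the algebraic half of the argument.

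For the topological half, I would design the Morse matching on $X$ so that a cell is matched with the result of performing the first available ``elementary move'' in a fixed priority order---slide the leftmost movable disk past the column to its left, or cancel the first adjacency that violates one of (i)--(iv)---so that a cell is left unmatched precisely when the combinatorics say it is already in normal form; translating ``unmatched'' into the exact list (i)--(iv) is intricate bookkeeping but should be mechanical. The verification that the matching is acyclic I expect to reduce to exhibiting a strictly decreasing integer weight---lexicographic in the positions of the disks---along every matched pair and along every gradient path, ruling out closed $V$-paths. Granting acyclicity, discrete Morse theory yields a chain equivalence between $C_*(X;\mathbb{Z})$ and the Morse complex freely generated by the critical cells, and I would then identify each critical cell, realized as an honest subspace of $\config(n,w)$, with the geometric cycle described in the introduction: a wheel on $k$ disks is the image of a $T^{k-1}$ coming from the $k-1$ independent rotations of its nested annuli, and a filter of $r\ge 2$ wheels with $k$ disks is the image of $S^{r-2}\times T^{k-r}$, where the sphere is the boundary of the simplex of $x$-orderings in which any $r-1$ of the wheels may share an $x$-coordinate but not all $r$---this is where the permutohedron of the $r$ wheels enters.

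What remains is to prove that the normal-form cycles are linearly independent and that $H_*(\config(n,w))$ has no torsion, equivalently that the Morse differential vanishes over $\mathbb{Z}$; I expect this, together with the acyclicity above, to be the real obstacle. Two routes look promising. The first is internal to the Morse theory: show directly that for critical cells $\sigma,\tau$ with $\dim\sigma=\dim\tau+1$ the signed count of gradient paths from $\sigma$ to $\tau$ is zero, which one typically gets either because no such paths exist or because they occur in sign-canceling pairs determined by an involution on the intermediate cells. The second is to pair the normal-form cycles against the explicit basis of $H^*(\config(n,w))$ produced elsewhere in the paper and verify that the pairing matrix is unitriangular (indeed the identity up to sign), which forces the homology classes to be an integral basis and $H_*$ to be free. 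Either route finishes the proof once the spanning statement and the discrete Morse setup are in place.
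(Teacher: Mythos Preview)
Your outline has the right general shape---discrete Morse theory on a cell model for $\config(n,w)$, with critical cells matching the normal forms---but it is missing the structural idea that makes this tractable, and it leans on two circular inputs.

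First, you invoke Theorem~\ref{thm:FG} to get spanning by wheel/filter concatenations, but in the paper Theorem~\ref{thm:FG} is a \emph{corollary} of Theorem~\ref{thm:basis}, not an independent input. Likewise, your second route to linear independence---pairing against the cohomology basis---is circular here: the paper constructs the cocycles $V(g)$ independently, but proves they form a basis \emph{using} the homology basis (Lemma~\ref{lem:int-pair} shows the pairing is triangular, which cuts both ways only once one side is known to be a basis of a free group of the right rank).

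The substantive gap is the absence of the \emph{layer decomposition}. The paper does not run discrete Morse theory directly on $\cel(n,w)$ with an ad hoc ``first available move'' matching. Instead it stratifies $\cel(n,w)$ by permutations $\sigma\in S_n$: the layer $L(\sigma)$ consists of cells whose wheels, when sorted by axle, spell out $\sigma$. The point is that each layer is, up to a torus factor, a \emph{weighted no-$(w+1)$-equal complex} $P(n-\#\sigma,\mathcal{W}(\sigma),w)$, and there are explicit chain maps $i_\sigma$, $p_\sigma$ with $p_\sigma i_\sigma=\id$ giving a splitting
\[
H_*(\cel(n,w))\;\cong\;\bigoplus_{\sigma\in S_n} H_{*-\#\sigma}\bigl(P(n-\#\sigma,\mathcal{W}(\sigma),w)\bigr).
\]
This reduces everything to the much simpler permutohedral pieces, where the discrete Morse theory is clean (Lemma~\ref{lem:crit}, Theorem~\ref{thm-weighted-basis}): the critical cells are identified by a short leader--follower analysis, and one writes down by hand a cycle $z(e)$ in which $e$ is the maximal cell with coefficient $\pm 1$. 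Lemma~\ref{lem-max-basis} then gives the basis immediately---no need to analyze gradient paths or prove the Morse differential vanishes. Your conditions (i)--(iv) fall out by transporting the no-$(k+1)$-equal basis through $i_\sigma$; for instance, (i) and (ii) encode which layer $\sigma$ you are in, while (iii) and (iv) are exactly the leader--follower conditions in $P(n-\#\sigma,\mathcal{W}(\sigma),w)$ once wheels are ordered first by size and then by axle.

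The paper does note (Remark~\ref{rmk:crit-cells}) that one can alternatively build a single discrete gradient field on $\cel(n,w)$, but the total order it uses is still \emph{layer first, then the no-$(k+1)$-equal order within the layer}, and the basis cycles are still produced via $i_\sigma$. Without the layer idea, your ``first violated rule'' matching is underspecified, and you have no mechanism for producing the explicit cycles $z(e)$ that Lemma~\ref{lem-max-basis} needs; that lemma is what replaces your proposed (and much harder) direct vanishing of the Morse differential.
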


\begin{figure}
  \begin{tikzpicture}[scale=0.6]
    \draw[red, very thick] (-0.75,-0.5)--(3.75,-0.5)--(3.75,3.5)--(-0.75,3.5)--cycle;
    \draw[blue, thick](0.5,1) circle (0.5);
    \draw[blue, thick] (0.5,2) circle (0.5);
    \draw(0.5,1.5) circle (1);
    \draw[blue, thick](2.5,2) circle (0.5);
    \draw[blue, thick] (2.5,3) circle (0.5);
    \draw(2.5,2.5) circle (1);
    \draw[blue, thick](2.5,0) circle (0.5);
    \draw[blue, thick] (2.5,1) circle (0.5);
    \draw(2.5,0.5) circle (1);
    \draw[red, very thick] (3.75,0.5)--(10.5,0.5)--(10.5,4.5)--(3.75,4.5)--cycle;
    \draw[blue, thick] (5,3) circle (0.5);
    \draw[blue, thick] (5,4) circle (0.5);
    \draw(5,3.5) circle (1);
    \draw[blue, thick] (8.25,1) circle (0.5);
    \draw[blue, thick] (8.25,2) circle (0.5);
    \draw[blue, thick] (8.25,3) circle (0.5);
    \draw[blue, thick] (8.25,4) circle (0.5);
    \draw(8.25,3.5) circle (1);
    \draw(8.25,3) circle (1.5);
    \draw(8.25,2.5) circle (2);
    \draw[blue, thick] (11.75,3) circle (0.5);
    \draw[blue, thick] (11.75,4) circle (0.5);
    \draw(11.75,3.5) circle (1);
    \draw[blue, thick] (13.5,4) circle (0.5);
    \draw[blue, thick] (15,3.5) circle (0.5);
    \draw[blue, thick] (17,0) circle (0.5);
    \draw[blue, thick] (17,1) circle (0.5);
    \draw[blue, thick] (17,2) circle (0.5);
    \draw[blue, thick] (17,3) circle (0.5);
    \draw[blue, thick] (17,4) circle (0.5);
    \draw(17,1.5) circle (1);
    \draw(17,1) circle (1.5);
    \draw[red, very thick] (14.25,-0.5)--(18.75,-0.5)--(18.75,4.5)--(14.25,4.5)--cycle;
    \draw[blue, thick] (20,3) circle (0.5);
    \draw[blue, thick] (20,4) circle (0.5);
    \draw(20,3.5) circle (1);
    \draw[blue, thick] (21.75,4) circle (0.5);

    \node[blue] at (0.5,2)  {\bf 7};
    \node[blue] at (0.5,1)  {\bf 5};
    \node[blue] at (2.5,2)   {\bf 4};
    \node[blue] at (2.5,3) {\bf 13};
    \node[blue] at (2.5,1) {\bf 24};
    \node[blue] at (2.5,0) {\bf 19};
    \node[blue] at (5,4) {\bf 17};
    \node[blue] at (5,3) {\bf 14};
    \node[blue] at (8.25,1) {\bf 1};
    \node[blue] at (8.25,2) {\bf 11};
    \node[blue] at (8.25,3) {\bf 6};
    \node[blue] at (8.25,4) {\bf 23};
    \node[blue] at (11.75,3) {\bf 8};
    \node[blue] at (11.75,4) {\bf 9};
    \node[blue] at (13.5,4) {\bf 10};
    \node[blue] at (15,3.5) {\bf 3};
    \node[blue] at (17,0) {\bf 2};
    \node[blue] at (17,1) {\bf 12};
    \node[blue] at (17,2) {\bf 21};
    \node[blue] at (17,3) {\bf 18};
    \node[blue] at (17,4) {\bf 15};
    \node[blue] at (20,3) {\bf 16};
    \node[blue] at (20,4) {\bf 22};
    \node[blue] at (21.75,4) {\bf 20};
    \draw (-1.25,-0.5)--(22.75,-0.5);
    \draw (-1.25,4.5)--(22.75,4.5);
  \end{tikzpicture}
  \captionsetup{singlelinecheck=off}
  \caption[]{%
    A basic 14-cycle in $\config(24,5)$ represented by an
    $S^1 \times S^0 \times S^2 \times T^{12}$: the three red boxes are filters giving an $S^1$,
    an $S^0$ and an $S^2$ respectively, and there are 11 additional circular
    degrees of freedom from spinning the wheels (in black).  We also remark:
    \begin{enumerate}[(a)]
    \item If the disks $\circled{10}$ and $\circled{3}$ switched places, this
      would no longer represent a basic cycle, by property (iv) in Theorem
      \ref{thm:basis}.
    \item The single disk $\circled{10}$ can move freely past all the disks to
      its left.  So in a basic cycle, it has to appear all the way on the right.
    \end{enumerate}
  } \label{14-cycle}
\end{figure}

This combinatorial structure admits a natural interpretation via homotopical
algebra.  Notice that $\config(n,w)$ naturally embeds in $\config(n,w+1)$,
forming a filtration.  The union of this filtration is the classical
configuration space $\config(n)$ of $n$ points in the plane, which turns out to
be homotopy equivalent to $\config(n,n)$.  The algebraic structure of
$H_*(\config(-))$ was considered by Arnol'd~\cite{Arnold69} and
F.~Cohen~\cite[Ch.~III]{CLM}, who showed (in our terms) that it is a
\emph{commutative}, but infinitely generated twisted algebra whose generators are
wheels of all degrees.

For any two wheels in $\config(n)$, we have a choice of commuting them ``over''
or ``under''.  However, if we order all the wheels (for example, in ascending
order by largest label) and make them cross over each other in order, then they
commute in a \emph{homotopy coherent} way: informally, this means that
concatenated cycles can be permuted in any sequence, or all at the same time, and
that all such paths in the space of cycles in $\config(n)$ are
homotopic\footnote{Unfortunately, these choices cannot be made equivariantly with
  respect to relabeling, which underlies the seemingly unavoidable
  non-equivariance of many of our results.}.  Just as filters with two wheels are
commutators of wheels, other filters can then be thought of as nontrivial
``higher commutators'' that obstruct this homotopy coherence.

These higher commutators appear and disappear as we move up the filtration,
increasing $w$, while wheels of size $k$ are born when $w=k$ and stay forever.
This observation can be made precise by considering the filtration's persistent
homology.
\begin{thmA} \label{thm:PH}
  The basic cycles from Theorem \ref{thm:basis} form a $\mathbb{Z}[t]$-basis for
  $PH_*(\config(n,*); \mathbb{Z})$.  Every bar born at time $w$ is either infinite or dies
  by time $2w$.
\end{thmA}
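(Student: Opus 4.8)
The plan is to read the persistence module off directly from the way the basis of Theorem~\ref{thm:basis} depends on $w$. Recall that $PH_*(\config(n,*))=\bigoplus_{w}H_*(\config(n,w))$ is a graded $\mathbb{Z}[t]$-module, with $H_*(\config(n,w))$ placed in degree $w$ and $t$ acting by the map induced by the inclusion $\config(n,w)\hookrightarrow\config(n,w+1)$; I read ``the basic cycles form a $\mathbb{Z}[t]$-basis'' as the assertion that $PH_*$ splits as a direct sum of cyclic submodules indexed by the basic cycles, with a basic cycle $z$ generating a summand isomorphic to $t^{b_z}\mathbb{Z}[t]$ if it lives forever and to $t^{b_z}\mathbb{Z}[t]/t^{d_z}\mathbb{Z}[t]$ if it dies, where $b_z,d_z$ are its birth and death widths. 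So I would reduce the statement to: (A) each basic cycle has a well-defined birth width $b_z$ and death width $d_z\in\{b_z+1,\dots,2b_z\}\cup\{\infty\}$; and (B) the transition map $t$ sends each basic cycle which is a basis element at width $w$ either to itself, if it is still a basis element at width $w+1$, or to $0$ otherwise. Given (A) and (B) the splitting is automatic: at width $w$ the kernel of $t$ is spanned by the basic cycles dying at $w+1$ and the image by the survivors, which remain basis elements at every later width, so iterating yields exactly one interval summand per basic cycle.

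For (A) I would list which elementary cycles exist at width $w$: a wheel of $k$ disks needs $k\le w$, so it is born at width $k$ and never dies; a filter with $r\ge2$ wheels and $k$ total disks whose smallest wheel has $m$ disks needs $k-m\le w\le k-1$ --- the lower bound because each wheel carries at least $k-w$ disks, the upper because the $r$ wheels cannot all occupy one column --- so it is born at width $k-m$ and dies at width $k$. A concatenation $z=z_1\concat\dots\concat z_p$ is a basis element precisely at those widths where all the $z_i$ exist and the $w$-independent conditions (i)--(iv) hold, so $b_z$ is the maximum of the birth widths of the $z_i$ and $d_z$ the minimum of the death widths of the filter factors (or $\infty$ if there are none). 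If $z$ has a filter factor, take one, $F$, of minimal total size $k$, with smallest wheel of size $m$; since $F$ has at least two wheels, $m\le k/2$, so $b_z\ge k-m\ge k/2$ and hence $d_z=k\le 2b_z$, which is the claimed dichotomy.

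The substance is (B). A surviving basic cycle maps to itself: a wheel and a filter are concrete cycles supported in a bounded window of the strip and assembled in a way insensitive to the ambient width once there is room, so under $\config(n,w)\hookrightarrow\config(n,w+1)$ the cycle $z$ goes to the cycle $z$, which Theorem~\ref{thm:basis} again lists in the basis at width $w+1$. If instead $z$ is a basis element at width $w$ but not at width $w+1$, part (A) says this happens exactly when $z$ has a filter factor $F$ of total size $w+1$, and one must show $t[z]=0$ in $H_*(\config(n,w+1))$. Because the concatenation product $H_*(\config(a,w))\otimes H_*(\config(b,w))\to H_*(\config(a+b,w))$ is natural in $w$ and multilinear, and a product of a boundary with any chain is a boundary (and the bounding chain can be confined to a bounded window, hence concatenated with the rest of $z$), it is enough to show that $F$, included into a width-$(w+1)$ strip, is nullhomologous. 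Now $F$ is an $S^{r-2}\times T^{k-r}$ with $k=w+1$, the torus coming from the independent wheel rotations and the $S^{r-2}$ encoding the ways the $r$ wheels cross over and under each other while no $r$ of them share an $x$-coordinate; once the strip has width $k$ all $k$ disks fit in a single column, this constraint disappears, the sphere $S^{r-2}$ bounds a disk $D^{r-1}$, and $F$ bounds a $D^{r-1}\times T^{k-r}$.

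I expect the last point to be the main obstacle: upgrading ``the filter sphere bounds once all disks fit in a column'' to an explicit bounding chain that behaves correctly under concatenation. The cleanest way, which I would take, is to run the entire argument inside the cellular (discrete Morse) model of $\config(n,w)$ used to prove Theorem~\ref{thm:basis} --- verifying that the Morse matching and its critical cells respect the width filtration, so that every basic cycle appears at its birth width and the cell filling the sphere of a dying filter is exhibited explicitly --- and then conclude that the persistence module splits cell by cell, delivering the $\mathbb{Z}[t]$-basis and the $[w,2w)$ lifespan bound simultaneously.
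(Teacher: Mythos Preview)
Your proposal is correct and follows essentially the same route as the paper: show that each basic cycle either persists identically or becomes null when its first filter of total size $w+1$ is reached, and read off the $2w$ bound from $m\le k/2$ for the smallest wheel in a filter.  The only organizational difference is that the paper bypasses your geometric ``the $S^{r-2}$ bounds a $D^{r-1}$'' step by invoking the naturality in $w$ of the layer splitting $H_*(\cel(n,w))\cong\bigoplus_\sigma H_{*-\#\sigma}(P(n-\#\sigma,\mathcal W(\sigma),w))$ (Theorem~\ref{thm:split}), reducing at once to the weighted no-$(w+1)$-equal case where the dying leader--follower pair is literally $\partial$ of a single permutohedral cell that enters the complex at $w+1$.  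This is exactly the ``run it inside the cellular model'' cleanup you anticipate in your last paragraph, so your plan and the paper's proof converge.
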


\subsection{Proof ideas}

We analyze $\config(n,w)$ by relating it to a class of simpler spaces.  The
\strong{no-(w+1)-equal space} of $n$ points in $\mathbb{R}$, which we denote
$\no_{w+1}(n,\mathbb{R})$, is the subspace of $\mathbb{R}^n$ in which at most $w$
of the coordinates are the same.  The topology of this space is fairly easy to
understand, although it is related to more complicated questions about hyperplane
arrangements; see \cite{BW95}.  The space $\config(n,w)$ projects onto
$\no_{w+1}(n,\mathbb{R})$ by forgetting the $y$-coordinates of the disks.
Conversely, for every ordering on the numbered disks, there is an injective map
from the no-$(w+1)$-equal space to the configuration space of the strip in which
the disks, when they meet, go around each other ``in order'' from top to bottom.
Each subspace generated in this way is actually a retract of $\config(n,w)$.

However, in many places, two such injections coincide.  For example, if we
transpose two neighboring disks $\circled{a}$ and $\circled{b}$, then the two
injections coincide everywhere except for an open neighborhood of the
codimension-1 subspace of the no-$(w+1)$-equal space where $\circled{a}$ and
$\circled{b}$ coincide.  Abstractly, we can think of this subspace as a
``weighted'' no-$(w+1)$-equal space with $n-1$ symbols, of which one has weight
2.  Write $\no_{w+1}(n-1,\mathcal{W})$ for this space; $\mathcal{W}$ represents
the set of weights of different points.

In the example, the subspace has codimension 1, so its neighborhood looks like
$\no_{w+1}(n-1,\mathcal{W}) \times (0,1)$; we can decompose the union of the
images of the two injections into a copy of
$\no_{w+1}(n-1,\mathcal{W}) \times [0,1]$ glued onto $\no_{w+1}(n)$.  We will see
that we can model $\config(n,w)$ by breaking it up into layers that similarly
look like thickened weighted no-$(w+1)$-equal spaces.

To compute the homology of $\config(n,w)$, we write it as a direct sum of
homology groups of weighted no-$(w+1)$-equal spaces.  We use combinatorics
(specifically, discrete Morse theory) to compute the homology of these spaces.

\subsection{Additional results}

Besides Theorems \ref{thm:FG}, \ref{thm:basis}, and \ref{thm:PH}, the paper
includes a number of other results about $\config(n,w)$ and related spaces.

\subsubsection*{Counting the basis elements}
Theorem~\ref{thm:basis} gives us a way to compute formulas for the Betti numbers of $\config(n, w)$.  We describe a finite computation for each $j$ and $w$ that gives a formula for the rank of $H_j(\config(n, w); \mathbb{Z})$ as a function of $n$, and we show that this function is a sum of products of polynomial and exponential functions.

\subsubsection*{The cohomology of $\config(n,w)$}
We can represent cohomology classes in $H^j(\config(n,w); \mathbb{Z})$ via
Poincar\'e--Lefschetz duality as $(2n-j)$-dimensional compact submanifolds of the
configuration space.  We give a basis for the cohomology, showing that it is a
basis by exploiting the pairing with homology.  The main goal of the section is
to gain some understanding of the cup product in the cohomology ring, which is
fairly complicated and has many indecomposable elements---in contrast with
$H^*(\config(n); \mathbb{Z})$, which is generated as a ring by one-dimensional classes whose
pairing with homology measures the winding of two points around each other, see
\cite{Sinha}.  All higher-dimensional classes are linear combinations of cup
products of these.

On the other hand, in $\config(n,w)$, pairing with cup products often cannot
distinguish between $h \mid h'$ and $h' \mid h$, where $h$ and $h'$ are homology
classes.  In such cases, a cohomology class which pairs nontrivially with the
commutator is perforce indecomposable.  This observation allows us to prove
Theorem \ref{thm:indec}:
\begin{thm*}
  The ring $H^*(\config(n,w); \mathbb{Z})$ has indecomposables:
  \begin{enumerate}[(a)]
  \item Only in degree 1, when $w=2$.
  \item In every degree between $1$ and $\lfloor n/2 \rfloor$ and no others, when
    $w=3$.
  \item In degree 1 and in every degree between $w-1$ and
    $\left\lfloor\frac{n+w-3}{2}\right\rfloor$ and no degree greater than
    $n-\bigl\lceil \frac{n}{w-1} \bigr\rceil$, when $w \geq 4$.
  \end{enumerate}
\end{thm*}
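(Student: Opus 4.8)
The plan is to work with the basis of Theorem~\ref{thm:basis} and the representation of cohomology classes by Poincar\'e--Lefschetz dual submanifolds. Write $\{z_\sigma\}$ for that geometric basis of $H_*(\config(n,w))$ and $\{z_\sigma^*\}$ for the dual basis of $H^*(\config(n,w))$. The engine of the proof is the indecomposability criterion sketched in the introduction, stated as: if $z$ is a basic cycle that can be written as a (possibly iterated) commutator at concatenation splittings across which no decomposable class tells the two orderings apart, then every cohomology class pairing nontrivially with $z$ --- in particular $z^*$ --- is indecomposable. The complementary tool, used for decomposability, is a \emph{peeling lemma}: if a basic cycle factors as $z = z_1\concat z_2$ across a splitting that \emph{is} detected by some cup product, then $z^* = \pm\, z_1^*\cup z_2^* + (\text{cocycles dual to basic cycles with strictly fewer wheel/filter blocks})$, so induction on the number of blocks makes $z^*$ decomposable. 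Thus indecomposability is concentrated on the ``rigid'' basic cycles --- the commutators with no detectable splitting --- and the whole theorem reduces to identifying which basic cycles these are and in what degrees they occur.

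For the existence half: degree $1$ is immediate for all $w\ge2$, since a product of positive-degree classes has degree $\ge2$, so every nonzero element of $H^1(\config(n,w))$ is indecomposable and $H^1\ne0$ once $n\ge2$. For $2\le d\le w-2$ the claim that there are no indecomposables is easy: no filter fits (a filter needs $\ge w+1$ disks and lands in degree $\ge w-1$), so every basic cycle of degree $d$ is a concatenation of wheels of size $\le w-1$, whose dual is a product of one-dimensional classes. For $d$ in the asserted range $[w-1,\lfloor\tfrac{n+w-3}{2}\rfloor]$ I would exhibit one explicit rigid basic cycle of degree $d$: when $w-1\le d\le2w-2$, the $r=2$ filter on two wheels of sizes $k_1,k_2$ with $k_1+k_2=d+2\in(w,2w]$, whose class is $b_2\concat b_1-b_1\concat b_2$; for larger $d$ up to $\lfloor\tfrac{n+w-3}{2}\rfloor$, a minimal filter together with a chain of wheels placed so that Theorem~\ref{thm:basis}(iv) forces a genuine, undetectable interaction all along the chain, the disk budget yielding exactly the bound $\lfloor\tfrac{n+w-3}{2}\rfloor$. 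One then verifies that the chosen cycle is indeed undetected by decomposables; this is where $w\ge3$ is used (for $w=2$ the two size-$2$ wheels of a would-be rigid filter \emph{are} told apart by the cup product of their one-dimensional duals, so the criterion fails). For $w=3$ the interval is $[2,\lfloor n/2\rfloor]$ and no degeneracies occur.

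For the decomposability half: one must show (a) for $w=2$, every class of degree $\ge2$ is decomposable, and (b) for $w\ge3$, there are no indecomposables in degree $>n-\lceil n/(w-1)\rceil$, and none past $\lfloor n/2\rfloor$ when $w=3$. Using the peeling lemma, any basic cycle with a detectable splitting has decomposable dual, reducing to basic cycles with no detectable splitting. A single wheel of size $\le w$ has decomposable dual in degree $\ge2$ because the relevant relations already hold in $\config(k)\simeq\config(k,k)$, whose cohomology is generated in degree $1$. For $w=2$ one checks that every basic cycle of degree $\ge2$, including the $(2,2)$-filters, does after all admit a detectable splitting --- the point being that in width $2$ cup products of one-dimensional classes suffice to orient every commutator. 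For $w\ge3$ the content is a counting bound: a rigid basic cycle must distribute its $n$ disks into an interaction chain whose links are constrained by the filter inequality to behave like wheels of size at most $w-1$, forcing at least $\lceil n/(w-1)\rceil$ links and hence degree at most $n-\lceil n/(w-1)\rceil$; beyond that degree every basic cycle has a detectable splitting and is decomposable. For $w=3$ this bound is $\lfloor n/2\rfloor$, matching the top of the existence range, so the $w=3$ answer is exact.

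The hard part throughout is understanding the cup product well enough to decide when a commutator is ``undetectable'' --- equivalently, which concatenation splittings admit a separating cup product. This single issue controls the verification in the existence step (and its deliberate failure at $w=2$), the duality identity $z^*=\pm z_1^*\cup z_2^* + (\text{lower})$ behind the peeling lemma, and the counting argument producing the threshold $n-\lceil n/(w-1)\rceil$. Everything else --- the triviality of degree $1$, the Arnol'd-relation input, and the bookkeeping that turns disk counts into $\lfloor\tfrac{n+w-3}{2}\rfloor$ and $n-\lceil n/(w-1)\rceil$ --- is routine once that analysis is in place.
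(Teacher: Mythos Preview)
Your overall architecture---find ``rigid'' basic cycles whose commutator structure cannot be detected by any cup product, and peel off the rest---is in the right spirit, but two of your load-bearing claims are genuine gaps rather than routine verifications.

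First, the \emph{peeling lemma} $z^* = \pm\, z_1^*\cup z_2^* + (\text{lower})$ is asserted without argument, and it is not clear it holds. The dual-basis cocycles $z_\sigma^*$ are not the geometric cocycles $\nu(f)$ of \S\ref{S:coho-basis}; the pairing matrix between $\{V(g)\}$ and $\{Z(f)\}$ is only triangular (Lemma~\ref{lem:int-pair}), so computing cup products of dual-basis elements directly is delicate. The paper avoids this entirely: its decomposability argument is geometric. Any critical cell of degree greater than $n-\bigl\lceil n/(w-1)\bigr\rceil$ has a block of size $w$, so the dual submanifold $V(f)$ satisfies $x_{i_1}=\cdots=x_{i_w}$; the paper then shows such a $V$ is the transverse intersection of the connected component of $\{x_{i_1}=\cdots=x_{i_w}\}$ with a second proper submanifold cut out by the remaining constraints. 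This yields decomposability with no induction and no control over the dual basis. Your counting argument about ``rigid basic cycles'' is aimed at the homology side and does not by itself produce a cup-product factorization on the cohomology side.

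Second, your existence step says only ``one then verifies that the chosen cycle is undetected by decomposables,'' but this verification is the substance of the proof. The paper makes it precise by producing, for a specific family of toroidal cycles $Z_S$ (indexed by subsets $S$ of the set $R$ of $2$-blocks, or by permutations when $w=3$), an explicit linear functional $\sum_S(-1)^{|S|}\langle\,\cdot\,,Z_S\rangle$ that annihilates every cup product $\alpha\cup\beta$. The vanishing is proved by pulling back along $g_S\colon T^{r+w-2}\to\config(n,w)$, decomposing via the K\"unneth pairing with subtori $T^P$, and showing via concrete homotopies that $\langle g_S^*\alpha,[T^P]\rangle$ is independent of enough of the bits of $S$ to force the alternating sum to cancel. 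Your sketch names none of this machinery, and in particular does not explain what replaces it when $w=3$ (where the paper uses a different averaging over $S_r$ with weights $\mu(\sigma)$ coming from iterated brackets). Without these identities you have not shown that your candidate classes are indecomposable.
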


\subsubsection*{Other algebraic structures}
One possible operation which takes $j$-cycles in $\config(n,w)$ to $j$-cycles in
$\config(n+1,w)$ is inserting a singleton disk.  If this operation were
well-defined, then $\config({-},w)$ would be an $\FI$-module, one of the central
objects of study in representation stability.  However, since some cycles do not
commute with singletons, there may be several nonequivalent ways of inserting a
singleton, separated by ``barriers''.  In some cases, if these several ways are
in turn well-defined, this can be formalized via an $\FI_d$-module
structure.  We show that $H_j(\no_{k+1}({-},\mathbb{R}); \mathbb{Z})$ and $H_j(\config({-},2); \mathbb{Z})$
have natural $\FI_d$-module structures for appropriate $d$.  On the other hand,
for $w=3$ we give an example which suggests that the notion of a ``barrier'' is
not well-defined and therefore there is no natural $\FI_d$-module structure on
$H_j(\config({-},w); \mathbb{Z})$ for $w \geq 3$.

Another strategy to pin down explicitly the algebraic structure of
$H_*(\config({-},w); \mathbb{Z})$ is to write down a presentation for the twisted algebra
using generators and relations.  Category theory dictates that generators and
relators in this situation are not just elements but $S_n$-representations for
various $n$.  We write down the generators and relations for
$H_*(\config({-},2); \mathbb{Z})$, but already there is some extra difficulty because of the
failure of Maschke's theorem integrally.  A potential avenue for future research
is to write down relations for $H_*(\config({-},w);\mathbb{Q})$ and explore the
implications for the multiplicity of various $S_n$-representations in
$H_j(\config(n,w);\mathbb{Q})$.

\subsubsection*{Unordered disks}
We also discuss the homology of the configuration space of unordered disks,
$\config(n,w)/S_n$.  Taking this quotient produces a large amount of torsion in
the homology, so instead of trying to write down all the torsion we compute
homology with coefficients in $\mathbb F_p$ and $\mathbb Q$.  (In the bulk of the paper, we use coefficients in $\mathbb{Z}$.)  The concatenation
product can still be defined in this case and gives $H_*(\config({-},w)/S_{-})$
the structure of a bigraded algebra over the base field.  We compute a basis for
the homology and give generators and relations for this algebra.

\subsection*{Structure of the paper}
Section~\ref{sec:prelim} contains preliminaries, including descriptions of the cell complexes we study and the algebraic framework we use in Theorem~\ref{thm:FG}.  Sections~\ref{S:nokequal} and~\ref{sec:layers} prove Theorem~\ref{thm:basis}, with Theorem~\ref{thm:FG} as a consequence; Section~\ref{S:nokequal} addresses the homology of weighted no-$(k+1)$-equal spaces, and Section~\ref{sec:layers} proves its relationship to homology of configuration spaces of disks in a strip.  This is the core technical content of the paper.

The rest of the sections are largely independent of each other and may appeal to
different audiences (e.g.\ Section~\ref{sec:cohom} to those interested in motion
planning, and Section~\ref{S:FId} to experts in representation stability).  Section~\ref{sec:formula} concerns finding a formula for the Betti numbers, by counting the basis elements from Theorem~\ref{thm:basis}.  Section~\ref{sec:cohom} describes aspects of the cup product structure of the cohomology of our configuration spaces.  Section~\ref{sec:PH} proves Theorem~\ref{thm:PH} about how the homology changes with $w$, the width of the strip.  Section~\ref{S:FId} concerns additional algebraic properties, in particular the question of whether our homology groups form $\FI_d$-modules.  Section~\ref{sec:unordered} characterizes the homology with field coefficients in the case of unordered disks.  Finally, Section~\ref{sec:open} lists some questions for further study.

\subsection*{Acknowledgements}  H.A.\ and F.M.\ were supported by the National Science Foundation under Awards No.~DMS-1802914 and DMS-2001042, respectively.  We thank Matt Kahle, Jenny Wilson, and Nick Wawrykow for many helpful conversations about this material, as well as the referee for comments that pushed us to make many sections easier to follow.  F.M.\ also thanks Shmuel Weinberger for the suggestion to look at cup products.

\section{Combinatorial and algebraic setup}\label{sec:prelim}

For the purpose of computation, the paper~\cite{AKM} replaces the configuration
space $\config(n, w)$ by a homotopy-equivalent cell complex $\cel(n, w)$.  We use
the same complex; we also use the same method to find a cell complex
$P(n,\mathcal{W}, k)$ that is homotopy equivalent to the weighted
no-$(k+1)$-equal space $\no_{k+1}(n, \mathcal{W})$.  In the case of ordinary,
unweighted no-$(k+1)$-equal spaces this recovers a result of Bj\"orner in~\cite{Bjo}.

In this section, we define the complexes $\cel(n, w)$ and $P(n,\mathcal{W}, k)$
and describe various algebraic structures on their cells which induce a similar
structure on their homology.  In particular, we introduce twisted algebras and
show that for each $w$, $H_*(\config({-},w))$ and $H_*(\no_{w+1}({-},\mathbb{R}))$
are examples.  In the remainder of the paper, unless otherwise specified, homology and cohomology are always computed with coefficients in $\mathbb{Z}$.

\subsection{The complex $\cel(n)$}
The cell complex $\cel(n, w)$ is defined as a subcomplex of a cell complex
$\cel(n)$ described by~\cite{BZ14}.  The complex $\cel(n)$ is defined in terms of the permutohedron $P(n)$, which is the $(n-1)$-dimensional polytope in $\mathbb{R}^n$ equal to the convex hull of the $n!$ points with coordinates $1, 2, \ldots, n$ in some order.  The faces of $P(n)$ can be labeled by partitions of $\{1, 2, \ldots, n\}$ where the sets of the partition are ordered but the elements of each set are unordered.  We refer to each set of the partition as a \strong{block}.  For instance, each vertex is labeled by a sequence of $n$ singleton blocks, and the top-dimensional face is labeled by the one block $\{1, 2, \ldots, n\}$.  Given any permutation $\sigma \in S_n$, thought of as an ordering on $\{1, 2, \ldots, n\}$, we can order the elements of each block according to $\sigma$.  Then, to write out the label of a given face, we can write out the elements of each block in order, with vertical bars between blocks.  We refer to a label of this form as a \strong{symbol}.  For example, the
following is a symbol with 4 blocks that could come from the ordering $4 \prec 5 \prec 6 \prec 7 \prec 8 \prec 1 \prec 2 \prec 3$:
\[(\,7\;2 \mid 6 \mid 4\;5\;1 \mid 8\;3\,).\]

To define $\cel(n)$, we start with $n!$ copies of $P(n)$, one for each $\sigma \in S_n$.  For the copy of $P(n)$ associated with $\sigma$, we use $\sigma$ to label each face of that copy of $P(n)$ by a symbol.  Then, whenever faces from multiple different copies of $P(n)$ have the same symbol, we identify those faces.  For instance, all copies of $P(n)$ have the same vertices, but each of them has its own distinct top-dimensional face.  In this way, $\cel(n)$ has exactly one cell for every possible symbol on $\{1, 2, \ldots, n\}$.

The incidence relation on cells of $\cel(n)$ can be deduced from the geometry, or can be described explicitly on symbols as follows.  A cell $f$ in $\cel(n)$ is a face of the boundary of a cell $g$ if $g$
can be obtained from $f$ by deleting a bar and \strong{shuffling} the entries of
the two neighboring blocks, preserving the ordering of the entries in each block.
For example, one shuffle of $4\;6\;1 \mid 7\;3\;2$ would be $7\;4\;6\;3\;1\;2$.
A $d$-dimensional cell consists of $n-d$ blocks.

Informally, we think of the elements of each block of a given symbol as the labels of disks in a vertical stack in $\config(n, w)$, as in Figure~\ref{fig-symbol}.  Accordingly, as a way to specify that no more than $w$ disks should be in each vertical stack, we define $\cel(n, w)$
to be the subcomplex of $\cel(n)$ consisting of all cells for which every block
has at most $w$ elements.  

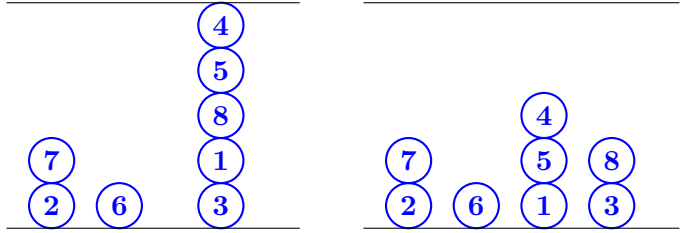
\begin{figure}
\begin{center}
\begin{tikzpicture}[scale=.6]
\node[blue] (a1) at (0, .5) {\bf 2};
\node[blue] (a2) at (0, 1.5) {\bf 7};
\node[blue] (a3) at (1.5, .5) {\bf 6};
\node[blue] (a4) at (3.75, 1.5) {\bf 1};
\node[blue] (a5) at (3.75, 3.5) {\bf 5};
\node[blue] (a6) at (3.75, 4.5) {\bf 4};
\node[blue] (a7) at (3.75, .5) {\bf 3};
\node[blue] (a8) at (3.75, 2.5) {\bf 8};
\draw[blue,thick] (a1) circle (.5);
\draw[blue,thick] (a2) circle (.5);
\draw[blue,thick] (a3) circle (.5);
\draw[blue,thick] (a4) circle (.5);
\draw[blue,thick] (a5) circle (.5);
\draw[blue,thick] (a6) circle (.5);
\draw[blue,thick] (a7) circle (.5);
\draw[blue,thick] (a8) circle (.5);
\draw (-1, 0)--(5.5, 0) (-1, 5)--(5.5, 5);
\end{tikzpicture}\hspace{20pt}
\begin{tikzpicture}[scale=.6, emp/.style={inner sep = 0pt, outer sep = 0pt}, >=stealth]
\node[blue] (a1) at (0, .5) {\bf 2};
\node[blue] (a2) at (0, 1.5) {\bf 7};
\node[blue] (a3) at (1.5, .5) {\bf 6};
\node[blue] (a4) at (3, .5) {\bf 1};
\node[blue] (a5) at (3, 1.5) {\bf 5};
\node[blue] (a6) at (3, 2.5) {\bf 4};
\node[blue] (a7) at (4.5, .5) {\bf 3};
\node[blue] (a8) at (4.5, 1.5) {\bf 8};
\draw[blue,thick] (a1) circle (.5);
\draw[blue,thick] (a2) circle (.5);
\draw[blue,thick] (a3) circle (.5);
\draw[blue,thick] (a4) circle (.5);
\draw[blue,thick] (a5) circle (.5);
\draw[blue,thick] (a6) circle (.5);
\draw[blue,thick] (a7) circle (.5);
\draw[blue,thick] (a8) circle (.5);
\draw (-1, 0)--(6, 0) (-1, 5)--(6, 5);
\end{tikzpicture}
\end{center}
\caption{We can imagine each symbol of $\cel(n, w)$ as a configuration in $\config(n, w)$ where the numbers in each block are the labels in a column of disks.  Pictured are configurations representing the symbol $(\,7\;2 \mid 6 \mid 4\;5\;8\;1\;3\,)$ and its face $(\,7\;2 \mid 6 \mid 4\;5\;1 \mid 8\;3\,)$.}\label{fig-symbol}
\end{figure}

To describe the relationship between $\cel(n, w)$ and $\config(n, w)$, we start by defining $\config(n, w)$ more precisely as the set of configurations of $n$ ordered open disks of diameter $\frac{1}{w}$ in the strip $\mathbb{R} \times (0,1)$.  This is a subspace of the set of configurations of $n$ ordered distinct points in $\mathbb{R} \times (0, 1)$ such that no more than $w$ points are on any vertical line, and Theorem~3.3 of~\cite{AKM} constructs a deformation retraction between the two spaces.  Thus, we abuse notation and use $\config(n, w)$ to mean configurations of points with no more than $w$ vertically aligned.  We use $\config(n)$ to mean configurations of points either in $\mathbb{R} \times (0, 1)$ or in $\mathbb{R}^2$, not distinguishing between these homotopy equivalent spaces.

\begin{thm} \label{thm:emb}
There is an affine embedding of the barycentric subdivision of $\cel(n)$ into $\config(n)$, such that for each $w$, the restriction to $\cel(n, w)$ maps into $\config(n, w)$ and is a homotopy equivalence.
\end{thm}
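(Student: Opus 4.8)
The plan is to build the embedding explicitly on the $0$-skeleton of the barycentric subdivision (the cells of $\cel(n)$), extend affinely, and then check the three assertions — that it is an affine embedding, that the restriction lands in $\config(n,w)$, and that this restriction is a homotopy equivalence — of which only the last will draw on outside input.

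First I would fix, for each symbol $S = (B_1 \mid \cdots \mid B_r)$ of $\cel(n)$ (with $n_i := |B_i|$), a point $b(S) \in \config(n)$ as follows: every disk whose label lies in $B_i$ is given $x$-coordinate $(n_1 + \cdots + n_{i-1}) + \tfrac{n_i + 1}{2}$, the barycenter of the face of the permutohedron $P(n)$ indexed by the ordered set partition underlying $S$; and if $B_i = (a_1, \ldots, a_{n_i})$ is read from top to bottom, the disk $a_j$ is given $y$-coordinate $1 - \tfrac{j}{n_i+1} \in (0,1)$, so that the vertical order of the disks in a column reproduces the order of $S$. The vertices of $\mathrm{sd}(\cel(n))$ are exactly the cells of $\cel(n)$, so $b$ determines a map on $\mathrm{sd}(\cel(n))$ once we extend it affinely over each simplex $S_0 < \cdots < S_k$; affinity on simplices is then automatic, and "affine embedding" reduces to injectivity.

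The combinatorial input for the remaining checks is that along any chain $S_0 < \cdots < S_k$ the block partitions only coarsen; only adjacent blocks are ever merged, so the left-to-right order of blocks is preserved; and a shuffle preserves the order within each merged block, so the top-to-bottom order of any two labels that remain in a common block is preserved. Consequently, for labels $a \ne b$ and a point $p = \sum_i t_i b(S_i)$ with all $t_i > 0$: if $a$ and $b$ share a block of $S_0$ then they share a block of every $S_i$ with the same vertical order, so $x_a(p) = x_b(p)$ but $y_a(p) \ne y_b(p)$; otherwise $x_a(S_i) \le x_b(S_i)$ throughout the chain with strict inequality at $i = 0$, so $x_a(p) < x_b(p)$. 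Hence $p$ consists of $n$ distinct points, and if the chain lies in $\cel(n,w)$ — so every $S_i$ has all blocks of size $\le w$ — then the set of labels sharing a vertical line with any given one is a block of $S_0$, of size $\le w$, whence $p \in \config(n,w)$. Moreover the pattern of shared vertical lines of $p$ together with the vertical orders within them recovers $S_0$; peeling off the vertex $b(S_0)$ and iterating recovers the whole chain and the barycentric coordinates $t_i$, which proves injectivity (and, a fortiori, affine independence of each simplex's vertex images). Carrying out this reconstruction cleanly is the main point to verify, but it is elementary.

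Finally, the restriction $\mathrm{sd}(\cel(n,w)) \hookrightarrow \config(n,w)$ is a homotopy equivalence: $\cel(n,w)$ is exactly the cellular model of $\config(n,w)$ from \cite[Theorem~3.3]{AKM}, and it remains only to observe that the affine embedding above realizes that model — for instance because it is isotopic in $\config(n,w)$ to the embedding used there, or because the deformation retraction of $\config(n,w)$ onto the subcomplex constructed in \cite{AKM} applies verbatim to any such piecewise-linear realization. I expect this last step to be the one that genuinely leans on prior work; the affine-embedding step, while it needs the monotonicity observations and the reconstruction argument above, involves no serious difficulty.
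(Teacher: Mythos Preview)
Your construction is essentially the paper's own: the $x$-coordinates you assign are exactly the permutohedron coordinates (your barycenter-plus-affine-extension recovers them because the projection $\cel(n)\to P(n)$ is already affine on cells), and your $y$-coordinates differ only cosmetically (the paper uses denominator $n+1$ rather than $n_i+1$, which changes nothing). Your injectivity argument via iterated peeling is correct and in fact more explicit than the paper's one-line version.

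The one place the proposal is thin is the final homotopy-equivalence step. Saying the embedding is ``isotopic to the one used in \cite{AKM}'' or that the deformation retraction ``applies verbatim to any such PL realization'' is not yet a proof that \emph{this particular} map is a homotopy equivalence. What the paper does---and what your construction is already set up for---is invoke the nerve theorem directly: \cite[Theorem~3.4]{AKM} exhibits a cover $\{U_\alpha\}$ of $\config(n,w)$ by open convex sets, one for each symbol $\alpha$, whose nerve is exactly $\mathrm{sd}(\cel(n,w))$; one then checks that $b(\alpha)\in U_\alpha$. That check is immediate from your formulas (at $b(\alpha)$ disks in the same block share an $x$-coordinate, so the third defining condition of $U_\alpha$ holds trivially, and the other two are precisely what you built in). Convexity of the $U_\alpha$ then guarantees the affine extension over each simplex lands in the appropriate intersection, and the nerve lemma finishes. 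Replace your last paragraph with this observation and the proof is complete.
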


\begin{proof}
We use the following description of $\config(n)$ in coordinates:
\[\config(n) = \{(x_1, \ldots, x_n, y_1, \ldots, y_n) \in \mathbb{R}^{n} \times (0, 1)^n \ \vert\ (x_i, y_i) \neq (x_j, y_j) \mathrm{\ if\  } i \neq j\}.\]
To define the map on a point $p$ in $\cel(n)$, we need to specify $x$--coordinates and $y$--coordinates, as well as check the condition $(x_i, y_i) \neq (x_j, y_j)$.  

Let $p$ be an arbitrary point in $\cel(n)$.  It is in at least one of the permutohedra $P(n)$ that comprise $\cel(n)$, and $P(n)$ is embedded in $\mathbb{R}^n$, so in this sense $p$ has coordinates in $\mathbb{R}^n$.  We set the $x$--coordinates of the image of $p$ to be these coordinates of $p$ in $\mathbb{R}^n$.  

For the $y$--coordinates, we start with the case where $p$ is the barycenter of a cell in $\cel(n)$.  To set each $y_i$, we find the location of the number $i$ in the symbol of the cell of $p$.  If $i$ appears as the $\ell$th element of a block of size $k$, we set $y_i$ to be $\frac{k-\ell+1}{n+1}$.  In other words, for each block of size $k$ we set the $y$--coordinates of the points in the block, in order, to be $\frac{k}{n+1}, \frac{k-1}{n+1}, \ldots, \frac{1}{n+1}$.  To assign $y$--coordinates when $p$ is not a barycenter, we extend the map so that its restriction to each simplex of the barycentric subdivision of $\cel(n)$ is affine.

Note that in any given cell, away from the barycenter, the $y$--coordinates of the points in each block remain in order.  This is because in the closure of our cell, blocks may merge but may not separate, and when merging, the elements from each smaller block remain in order.  This implies that our map is injective, because different points with the same $x$--coordinates are distinguished by their $y$--coordinates, which in each block appear in the same order as in the corresponding symbol from $\cel(n)$.

To check that the resulting map lands in $\config(n)$, suppose that we have a point $p$ for which $x_i = x_j$.  The permutohedron coordinates imply that in the symbol of the cell of $p$, the numbers $i$ and $j$ are in the same block, and so the note in the previous paragraph implies that $y_i \neq y_j$.  Thus the image of $p$ is in $\config(n)$.

To show that the map is a homotopy equivalence, for each symbol $\alpha$ from $\cel(n)$, as in~\cite{AKM} we define the corresponding open set $U_\alpha \subseteq \config(n)$ to be the set of points $(x_1, \ldots, x_n, y_1, \ldots, y_n)$ in $\mathbb{R}^n \times (0, 1)^n$ such that
\begin{itemize}
\item Whenever $i$ appears before $j$ in the same block, we have $y_i > y_j$.
\item Whenever $i$ appears before $j$ in different blocks, we have $x_i < x_j$.
\item If $k$ and $\ell$ are in the same block, and $k'$ and $\ell'$ are in different blocks (with $k', \ell'$ not necessarily distinct from $k, \ell$), then we have
\[\abs{x_k - x_\ell} < \abs{x_{k'} - x_{\ell'}}.\]
\end{itemize}
We claim that the union of $U_\alpha$ where $\alpha$ ranges over the symbols from $\cel(n, w)$ is $\config(n, w)$.  If $\alpha$ is a symbol of $\cel(n, w)$, then $U_\alpha$ is contained in $\config(n, w)$ because points from different blocks have different $x$--coordinates, so no more than $w$ points can be vertically aligned.  For the reverse inclusion, every element of $\config(n, w)$ is in some $U_\alpha$, because we can construct $\alpha$ by taking the blocks to be the sets of points with the same $y$--coordinate, ordering the blocks from left to right, and ordering the elements of each block from top to bottom.

Each $U_\alpha$ is an open convex set, and Theorem~3.4 of~\cite{AKM} proves that the nerve of this open cover of $\config(n, w)$ is the barycentric subdivision of $\cel(n, w)$. Thus, because our map sends the barycenter of each cell $\alpha$ into a point of the corresponding open set $U_\alpha$, our map is a homotopy equivalence for each $w$.
\end{proof}

\subsection{Signs of the boundary operator} \label{S:boundary}
In order to study the integral cellular chain complex of $\cel(n,w)$, we need to
specify orientations on cells and signs for the boundary operator.  To describe
these, we first generalize slightly.  Observe that the entries of a symbol don't
have to be the numbers $1$ through $n$, but can be any $n$-element set.  So for
any finite set $A$, we have a complex $\cel(A)$.  (Similarly, we will sometimes
write $P(A)$ for the permutohedron whose coordinates are indexed by elements of
$A$.)  Moreover, there is a cellular map
\[{\concat}:\cel(A) \times \cel(B) \to \cel(A \sqcup B)\]
which takes a pair of symbols to the symbol obtained by putting them next to each
other with a bar in between.  We call this the \strong{concatenation product}.

Any cell in $\cel(n)$ is either top-dimensional or a concatenation product.  We
define the boundary operator on a top-dimensional cell $g$ by taking the
coefficient of a cell $f=a \mid b$ in $\partial g$ to be
\[(-1)^{\length(a)}\cdot\sgn(\text{permutation }g \mapsto ab).\]
On a cell $g_1 \concat g_2$, we define $\partial$ via a Leibniz rule:
\begin{equation} \label{eq:Leibniz}
  \partial(g_1 \concat g_2)=\partial g_1 \concat g_2+(-1)^{\dim(g_1)}g_1 \concat \partial g_2.
\end{equation}
This defines an injective chain complex homomorphism on cellular chains,
\[{\concat}:C_*(\cel(A)) \otimes C_*(\cel(B)) \to C_*(\cel(A \sqcup B)),\]
using the standard tensor product on chain complexes, where the differential is
defined by
\[\partial(a \otimes b)=\partial a \otimes b+(-1)^{\deg a}a \otimes \partial b.\]
\begin{prop}
  \begin{enumerate}[(a)]
  \item The boundary operator defined above satisfies $\partial^2=0$.
  \item The $S_n$-action on $\cel(n)$ induced by permutations of $[n]$ preserves
    orientations of cells.
  \end{enumerate}
\end{prop}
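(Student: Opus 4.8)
My plan for (a) is to reduce $\partial^2=0$ to a single finite sign check on a top‑dimensional cell. First I would note that every cell of $\cel(n)$ is uniquely the concatenation $g_1\concat\cdots\concat g_r$ of its blocks, each $g_i$ a top‑dimensional cell of some $\cel(A_i)$; hence the direct formula on top cells together with iterated use of the Leibniz rule determines $\partial$ completely, and iterating that rule gives precisely the usual tensor‑product differential
\[
\partial(g_1\concat\cdots\concat g_r)=\sum_{i=1}^{r}(-1)^{\dim g_1+\cdots+\dim g_{i-1}}\,g_1\concat\cdots\concat\partial g_i\concat\cdots\concat g_r,
\]
which is manifestly independent of how one parenthesizes, so $\partial$ is well defined and $\concat$ is a chain map. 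In $\partial^2$ of such a cell the terms modifying two distinct blocks $g_i,g_j$ occur twice with opposite Koszul signs and cancel exactly as in the standard tensor‑product computation, so it remains only to prove $\partial^2 g=0$ when $g$ is a single block. There $\partial g$ is a signed sum of two‑block symbols and $\partial^2 g$ a signed sum of three‑block symbols, and each three‑block symbol $p\concat q\concat r$ — with underlying ordered set partition $(P,Q,R)$ of the entries — occurs in $\partial^2 g$ via exactly two routes: split $g$ as $\langle P\cup Q\rangle\concat r$ and then split the first block, or split $g$ as $p\concat\langle Q\cup R\rangle$ and then split the second. Introducing the bimultiplicative sign $\epsilon(X,Y)=(-1)^{\#\{(x,y)\in X\times Y:\ y\text{ precedes }x\text{ in }g\}}$ — so that $\sgn(g\mapsto ab)=\epsilon(A,B)$ whenever $A,B$ are the entry sets of the blocks $a,b$ — one computes the two coefficients to be $(-1)^{\abs{Q}}\epsilon(P,Q)\epsilon(P,R)\epsilon(Q,R)$ and its negative. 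Hence $\partial^2=0$.

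For (b) the crucial observation is that the incidence numbers are label‑invariant: $[\sigma f:\sigma g]=[f:g]$ for every $\sigma\in S_n$ and every codimension‑one face $f$ of $g$. This is immediate from the formulas, since $(-1)^{\length(a)}$ depends only on the block sizes, the Leibniz signs $(-1)^{\dim g_i}$ only on dimensions, and $\sgn(g\mapsto ab)$ is the sign of the permutation of \emph{positions} that rearranges the word $g$ into the word $ab$, which does not see the names of the entries. I would then finish as follows: the cellular self‑map of $\cel(n)$ induced by $\sigma$ sends the chosen generator of each cell $g$ to $\pm1$ times the chosen generator of $\sigma g$, say with sign $\eta(\sigma,g)$; since this is a chain map and the (nonzero) incidence numbers between cells of adjacent dimension are $\sigma$‑invariant, the identity $\partial\circ\sigma_*=\sigma_*\circ\partial$ forces $\eta(\sigma,-)$ to agree on $g$ and on each of its facets. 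As $\eta(\sigma,-)=+1$ on $0$‑cells (homeomorphisms preserve the orientation of a point) and $\cel(n)$ is connected through the face relation, $\eta(\sigma,-)\equiv+1$; that is, the $S_n$‑action preserves orientations of cells.

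The only real work is the sign bookkeeping in part (a): there is no conceptual difficulty, but one must take care to pin down the (implicit) permutohedron orientations via the boundary formula itself and to verify that the two routes to a triple split genuinely cancel — i.e.\ that the factorization of $\sgn(g\mapsto ab)$ through $\epsilon$ combines with the powers of $-1$ coming from $\length$ and $\dim$ to give opposite signs. Part (b) is then essentially free once one isolates the single label‑dependent ingredient of the boundary, $\sgn(g\mapsto ab)$, and notes that it depends only on the underlying permutation of positions.
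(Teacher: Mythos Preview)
Your argument is correct and follows essentially the same route as the paper. For (a), the paper also reduces to a single top‑dimensional block via the Leibniz rule and then checks that the two intermediate faces leading to any codimension‑two face $e_1\mid e_2\mid e_3$ contribute opposite signs, tracking separately the permutation sign, the Leibniz sign, and the $(-1)^{\length}$ factor; your bimultiplicative $\epsilon(X,Y)$ is a clean repackaging of the permutation‑sign bookkeeping but the underlying computation is the same. For (b), the paper observes (as you do) that none of the sign ingredients refer to the ordering on $[n]$, and then propagates from $0$‑cells; your explicit $\eta(\sigma,g)=\eta(\sigma,f)$ argument via the chain‑map identity and face‑connectedness is exactly what underlies the paper's one‑line ``preserves signs of $0$‑cells, hence of all cells.''
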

These two features will allow us to define a twisted algebra structure on
$H_*(\cel({-}))$.
\begin{proof}
  Let $g$ be a cell in $\cel(n)$.

  First, suppose that $g$ is top-dimensional, and let $e$ be a codimension-$2$
  face of $g$.  Then $e=e_1 \mid e_2 \mid e_3$, where $e_1$, $e_2$, and
  $e_3$ are blocks.  There are two intermediate faces between $e$ and $g$, which
  we denote by $f = b \mid e_3$ and $f' = e_1 \mid b'$.  We compare
  \[\sgn(e\text{ in }\partial f)\cdot\sgn(f\text{ in }\partial g)\quad\text{and}
  \quad\sgn(e\text{ in }\partial f')\cdot\sgn(f'\text{ in }\partial g),\]
  and we show that these two products are opposite signs.  This will show that
  $\del^2 g=0$.

  If we consider just the contribution from the signs of the permutations, both
  products give the sign of the permutation relating $e$ and $g$, so those
  contributions are equal.  For the contribution from the Leibniz rule, only the
  incidence between $e$ and $f'$ involves splitting a block that is not the
  first, so that incidence has a sign contribution of
  \[(-1)^{\dim(\,e_1\,)}=(-1)^{\length(e_1)-1}\]
  from the Leibniz rule, and all the other incidences have a sign contribution of
  $1$ from the Leibniz rule.  Finally, the contribution from the length of the
  first block gives
  \[(-1)^{\length(b)+\length(e_1)}=(-1)^{\length(e_2)}\]
  for the path through $f$ and $(-1)^{\length(e_1)+\length(e_2)}$ for the path through
  $f'$.  Taking the product of all of these, we see that the two paths give
  opposite signs.

  If $g$ is not top-dimensional, then $g$ is a concatenation product $g_1 \vert g_2$, in which case we use a standard argument.  The Leibniz rule gives
  \[\del^2(g_1 \concat g_2) = \del^2 g_1 \concat g_2 + \bigl[(-1)^{\dim(g_1)} + (-1)^{\dim(\del g_1)}\bigr]\cdot \del g_1 \concat \del g_2 + g_1 \concat \del^2 g_2,\]
  which is zero by induction on the number of blocks of $g$.  This proves (a).

  For (b), notice that the definitions of the signs of the boundary operator do
  not use any particular ordering on the numbers $1$ through $n$.  Therefore they
  are invariant with respect to permutations.  Since the $S_n$-action preserves
  signs of 0-cells, it preserves signs of all cells.
\end{proof}

\subsection{Twisted algebra structure} \label{S:twist}
Let $\FB$ be the category of finite sets and bijective maps.  Then $\cel({-})$ is
a functor from $\FB$ to the category of cell complexes and cellwise maps
(an \strong{$\FB$-complex}, for short).  (This is just a categorical way of
saying that there is a cellular $S_n$-action on $\cel(n)$.)  In particular, the
cellular chains $C_*(\cel({-}))$ form an $\FB$-chain complex.  Moreover, we can
define a tensor product (the \strong{Day convolution}) on $\FB$-objects in a
monoidal category by
\[(F \otimes G)(S)=\bigoplus_{A \sqcup B=S} F(A) \otimes F(B).\]
A unital monoid object with respect to this tensor product is called a
\strong{twisted algebra} (see \cite{SamSno} for a detailed discussion of twisted
\emph{commutative} algebras).  In plain English, a twisted algebra is a family of
objects $A_n, n=0,1,\ldots$ (e.g.~abelian groups) equipped with the
following structure:
\begin{itemize}
\item Each $A_n$ is equipped with an $S_n$-action.
\item For every partition of $\{1,\ldots,n\}$ into subsets of size $i$ and $j$,
  there is a ``multiplication'' $A_i \otimes A_j \to A_n$.  For different
  partitions, these multiplications commute with the $S_n$-action on $A_n$.
\item There is a unit in $A_0$ such that multiplying by it induces the identity
  map on $A_n$.
\end{itemize}

The observations about the differential in \S\ref{S:boundary} show that the
concatenation product
\[{\concat}:C_i(\cel(A)) \otimes C_j(\cel(B)) \to C_{i+j}(\cel(A \sqcup B))\]
makes $C_*(\cel({-}))$ into a (noncommutative) differential graded twisted
algebra or \strong{dgta}, whose unit is the unique 0-cell $(\,)$ in
$C_*(\cel(\emptyset))$.  (By definition, a dgta is simply a twisted algebra in
the category of chain complexes.  The fact that the multiplication maps
$A_i \otimes A_j \to A_n$ are chain maps forces the differential to satisfy a
Leibniz rule such as \eqref{eq:Leibniz}.) The homology of a dgta naturally forms
a graded twisted algebra.  The graded twisted algebra $H_*(\cel({-}))$ is
well-understood since the work of F.~Cohen in the 1970s and is in fact
commutative; see e.g.\ \cite[Theorem 3.4]{MilWil}.

We are most interested in the subcomplex $\cel(n,w)$ consisting of cells whose
every block has size at most $w$.  Since the concatenation product of two such
cells again has the same property, $C_*(\cel({-},w))$ is a sub-dgta of
$C_*(\cel({-}))$.  Our goal in this paper is to understand the graded twisted
algebra $H_*(\cel({-},w))$, and in particular to show that it is finitely
generated.

\subsection{The permutohedron and no-$(k+1)$-equal spaces}
The difference between $\cel(n)$ and $P(n)$ is that in $\cel(n)$ the numbers
within a block are ordered and in $P(n)$ they are not.  This gives a natural
projection $\cel(n) \to P(n)$ (forget the ordering of entries inside each block)
and, for every global ordering of $1,\ldots,n$, an injective map
$P(n) \to \cel(n)$ (arrange the entries in each block in the given order).

$P(n)$ is a polytope, and is in particular contractible.  As with $\cel(n)$, we
can filter $P(n)$ by the largest size of a block, producing a sequence of
complexes $P(n,k)$.  These are homotopy equivalent to the no-$(k+1)$-equal space
of $n$ points in $\mathbb{R}$, as pointed out by Bj\"orner in
\cite[Theorem 2.4]{Bjo}; their homology was computed first by Bj\"orner and
Welker in \cite{BW95}.  As with $\cel(n,w)$, $C_*(P({-},k))$ naturally has a dgta
structure which induces a graded twisted algebra structure on $H_*(P({-},k))$.
From the results of \cite{BW95}, one sees that this is finitely generated, in
fact just has two generators: one in degree $0$ (a point) and one in degree $k-1$
(the boundary of a $P(k+1)$).  We recover this, together with a set of relations,
in \S\ref{S:FId}.

However, we are interested in a somewhat more complicated structure.  Let $\FBW$
be the category of \emph{weighted} finite sets and weight-preserving bijections.
That is, every element is associated with a natural number which is its weight.
Then given a weighted set $(A,\mathcal{W} \in \mathbb{N}^A)$, there is a complex
$P(A,\mathcal{W},k)$ which consists of all the cells for which the sum total
weight of every block is at most $k$.  The following generalization of
\cite[Theorem 2.4]{Bjo} follows by the same argument:
\begin{thm}
  The complex $P(A,\mathcal{W},k)$ is homotopy equivalent to the
  \strong{weighted no-$(k+1)$-equal space} $\no_{k+1}(A,\mathcal{W})$ of $|A|$
  points in $\mathbb{R}$ with weights $\mathcal{W}$, that is, the space of
  configurations of $|A|$ points in $\mathbb{R}$ such that no set of coincident
  points has total weight greater than $k$.
\end{thm}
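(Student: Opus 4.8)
The plan is to follow Björner's original argument for \cite[Theorem 2.4]{Bjo}, keeping track of the weights, and to check that every step of that argument only used the combinatorial structure of the permutohedral complex in a way that carries over verbatim. First I would recall the open cover of $\no_{k+1}(A,\mathcal W)$ that underlies the unweighted statement: for each cell $\alpha$ of $P(A,\mathcal W,k)$, i.e.\ each ordered sequence of blocks whose total weight in each block is at most $k$, one assigns an open set $V_\alpha$ in $\no_{k+1}(A,\mathcal W) \subseteq \mathbb R^A$ consisting of those configurations $(x_a)_{a\in A}$ in which the points cluster, in order along the real line, according to the block structure of $\alpha$ (the points in a given block are much closer to each other than to the points in any other block, and blocks are ordered left to right as in $\alpha$). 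Concretely one can use the same inequalities as in the proof of Theorem~\ref{thm:emb}: $x_i < x_j$ when $i$ precedes $j$ in an earlier block, and $|x_k-x_\ell| < |x_{k'}-x_{\ell'}|$ when $k,\ell$ lie in a common block while $k',\ell'$ do not. The key point is that these sets are convex, hence contractible, and that the condition ``no block exceeds weight $k$'' is exactly the condition that a configuration with this clustering pattern has no coincident subset of weight $>k$ — so the union of the $V_\alpha$ over cells $\alpha$ of $P(A,\mathcal W,k)$ is precisely $\no_{k+1}(A,\mathcal W)$.

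Next I would identify the nerve of this cover with the barycentric subdivision of $P(A,\mathcal W,k)$. A finite collection $\{V_{\alpha_0},\dots,V_{\alpha_m}\}$ has nonempty intersection if and only if the $\alpha_i$ form a chain in the face poset of the permutohedral complex, i.e.\ are totally ordered by the refinement relation on block sequences; and when they do, the intersection is again convex (it is cut out by the union of the defining inequalities), hence contractible. This is the content of Björner's nerve computation, and nothing in it depends on whether the blocks are weighted — the face poset of $P(A,\mathcal W,k)$ is just the induced subposet of the face poset of $P(A)$ consisting of block sequences with all block weights $\le k$, and the subdivision/nerve bookkeeping is identical. The Nerve Lemma then gives a homotopy equivalence between $\no_{k+1}(A,\mathcal W)$ and the geometric realization of this nerve, which is $P(A,\mathcal W,k)$.

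The main obstacle — really the only place requiring care — is verifying that the intersections $V_{\alpha_0}\cap\cdots\cap V_{\alpha_m}$ are nonempty exactly for chains and that they are genuinely contractible (not just nonempty), i.e.\ that the system of strict inequalities defining the intersection is consistent and defines a convex set with nonempty interior. For a chain $\alpha_0 < \alpha_1 < \cdots < \alpha_m$ (each obtained from the previous by splitting blocks), one constructs an explicit configuration realizing all the clustering patterns simultaneously by choosing the gaps between successive ``levels'' of clustering to be separated by widely different scales — a standard ultrametric-type construction — and convexity follows because each defining relation is linear or is of the form $|x_k-x_\ell| < |x_{k'}-x_{\ell'}|$ which, once the signs of all the differences are pinned down by the ordering relations, becomes linear. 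I would also note the weighted constraint interacts with this only through which block sequences are admissible, which is a purely combinatorial restriction and is already built into the indexing set. With these points checked, the proof is ``by the same argument'' as \cite[Theorem 2.4]{Bjo}, and I would phrase the writeup as such, spelling out only the consistency/convexity verification and the translation of the weight condition.
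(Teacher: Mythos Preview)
Your proposal is correct and matches the paper's intent: the paper gives no independent proof of this theorem, merely asserting it ``follows by the same argument'' as \cite[Theorem~2.4]{Bjo}, and your sketch reproduces precisely that nerve-lemma argument (which is also what the paper spells out in the proof of Theorem~\ref{thm:emb} for the closely related complex $\cel(n,w)$). The one minor imprecision is your claim that the signs of all differences $x_k-x_\ell$ are pinned down by the ordering relations --- within a block they are not, since blocks of $P(A)$ are unordered --- but this is harmless: the right-hand side $|x_{k'}-x_{\ell'}|$ has a determined sign, so the condition $|x_k-x_\ell|<|x_{k'}-x_{\ell'}|$ unpacks into two honest linear inequalities, and convexity of $V_\alpha$ follows as you claim.
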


\begin{rmk}
The functor $C_*(P({-},k))$ is an $\FBW$-chain complex, and in fact an
$\FBW$-dga.  That is, we can define a tensor product on $\FBW$-objects in a
monoidal category by
\[(F \otimes G)(S,\mathcal{W})=
\bigoplus_{(A,\mathcal{W}_A) \sqcup (B,\mathcal{W}_B)=(S,\mathcal{W})}
F(A,\mathcal{W}_A) \otimes F(B,\mathcal{W}_B),\]
and the concatenation product on $C_*(P({-},k))$ then makes it into a monoid
object.  This in turn makes $H_*(P({-},k))$ into a graded $\FBW$-algebra.  Once
one makes all this precise, Theorem \ref{thm-weighted-basis} can be interpreted
as showing that this algebra is finitely generated for every $k$, analogously to
our results about $\cel({-},w)$.
\end{rmk}

\subsection{Generators and relations for twisted algebras}

The above discussion deduces the following facts:
\begin{thm}
  The sequences of graded abelian groups $H_*(\cel({-}))$, $H_*(\cel({-},w))$,
  and $H_*(P({-},k))$ admit the structure of graded twisted algebras.
\end{thm}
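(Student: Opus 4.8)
The three assertions package together the structure assembled piecemeal in \S\ref{S:boundary}--\S\ref{S:twist}, so the plan is to isolate the one genuinely formal fact that makes them go through — that homology carries a differential graded twisted algebra to a graded twisted algebra — and then to check that each of the three cellular chain complexes is such an object. All of the substantive work (that ${\concat}$ is a chain map, that $\partial^2 = 0$, and that the $S_n$-action preserves orientations of cells) is the Proposition of \S\ref{S:boundary} together with the construction of the concatenation product; nothing beyond it is needed.

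First I would prove the formal lemma: if $C$ is an $\FB$-object in chain complexes of abelian groups which is a monoid for the Day convolution (a dgta), then the objectwise homology $H_*(C)$ is a graded twisted algebra. Since the $S_n$-action on each $C(n)$ is a chain map, it descends to $H_*(C(n))$, so $H_*(C({-}))$ is an $\FB$-object in graded abelian groups. The multiplication is the composite of the cross product $H_*(C(A)) \otimes H_*(C(B)) \to H_*(C(A) \otimes C(B))$, $[a] \otimes [b] \mapsto [a \otimes b]$ (which exists for arbitrary chain complexes), with $H_*$ applied to the structure map $C(A) \otimes C(B) \to C(A \sqcup B)$, summed over all partitions $A \sqcup B = S$; this is precisely the Day-convolution multiplication $(H_*C \otimes H_*C)(S) \to H_*C(S)$. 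Associativity follows from associativity of the structure map of the dgta $C$ together with the naturality and associativity of the cross product; the unit is the class of the unique $0$-cell in $C(\emptyset)$, and the unit axiom on homology follows from the one on $C$; and $\FB$-equivariance is automatic since every map involved is natural in $A$ and $B$. No commutativity is invoked, so the noncommutativity of ${\concat}$ is harmless.

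Next I would verify the hypothesis for the three complexes. For $C_*(\cel({-}))$ this is exactly what \S\ref{S:boundary}--\S\ref{S:twist} establish. For $C_*(\cel({-},w))$, note that $\partial$ acts on a symbol by splitting one block into two (a face has one more block than its cell) and that ${\concat}$ only inserts a bar between two symbols; neither operation increases the maximum block size, so the cells with all blocks of size $\le w$ are closed under $\partial$ and ${\concat}$, i.e.\ $C_*(\cel({-},w))$ is a sub-dgta of $C_*(\cel({-}))$. For $C_*(P({-},k))$, the permutohedral complex $P({-})$ carries the same concatenation product, and the boundary signs are defined on its cells exactly as for $\cel({-})$ (fixing orientations on the permutohedral factors $P(n_1) \times \cdots \times P(n_r)$); the arguments of \S\ref{S:boundary} for $\partial^2 = 0$ and for $S_n$-invariance of orientations never use the ordering of entries within a block, so they apply verbatim, and the cells with all blocks of size $\le k$ again form a sub-dgta. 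Applying the lemma three times gives the claimed graded twisted algebra structures; the $\FBW$-algebra refinement of $H_*(P({-},k))$ comes out the same way, since $\FBW$-objects and their Day convolution behave formally like the $\FB$ case.

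The only point requiring care, and the one place an error would hide, is the sign bookkeeping: one must confirm that the Koszul signs built into the tensor product of chain complexes and into the cross product are consistent with the orientation and Leibniz-rule conventions fixed in \S\ref{S:boundary}, so that the associativity and unit diagrams for the multiplication on homology commute on the nose rather than up to a sign. This is the same kind of computation already carried out in the Proposition for $\partial^2 = 0$, now pushed from two-fold to three-fold concatenations; it is routine but should be done explicitly.
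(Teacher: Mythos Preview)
Your proposal is correct and follows essentially the same approach as the paper: the theorem is presented there as a summary of the preceding discussion (``The above discussion deduces the following facts''), which establishes exactly the ingredients you isolate---that $C_*(\cel({-}))$ is a dgta, that $C_*(\cel({-},w))$ and $C_*(P({-},k))$ are sub-dgtas, and that the homology of a dgta is a graded twisted algebra. Your write-up is simply a more explicit unpacking of what the paper leaves as an assembled observation.
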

To demonstrate Theorem \ref{thm:FG}, we will show that wheels and filters form a
finite generating set for $H_*(\cel({-},w))$.  Later we will also give
presentations of $H_*(\cel({-},2))$ and $H_*(P({-},k))$ by generators and
relations.  To make this precise, we define a \strong{free twisted algebra}
functor $F_\tau$ from FB-modules to twisted algebras as the left adjoint to the
forgetful functor $U_\tau$ from twisted algebras to FB-modules; such a functor
always exists for monoids in a reasonable monoidal category
\cite[VII.3 Thm.~2]{MacLane}.  Informally, a basis element for a free twisted
algebra on a set $\{V_i\}$ of representations of various $S_{n_i}$ is specified by
a list of basis vectors of the various $V_i$, each labeled by a set of $n_i$
labels; one takes products by concatenating these lists and retaining the labels.

If $A$ is a twisted algebra, we say an FB-submodule $G \subset A$
\strong{generates} $A$ if the induced morphism $F_\tau G \to A$ is surjective;
$A$ is \strong{finitely generated} if it has a finite-dimensional generating
module $G$.  A \strong{presentation} of a twisted algebra $A$ by generators and
relations formally consists of a coequalizer diagram
\[F_\tau R \overset{r}{\underset{0}{\rightrightarrows}} F_\tau G \to A,\]
where $G$ and $R$ are FB-modules of generators and relations, respectively, and
$r:F_\tau R \to F_\tau G$ is an FB-module homomorphism \cite[\S5.4]{Riehl}.  By
the adjunction, it suffices to provide a homomorphism $R \to U_\tau F_\tau G$,
i.e.~to describe the relators of $A$ as linear combinations of words in $G$.

Informally, to prove finite generation, it's enough to provide a finite number of
elements whose closure under multiplication and $S_n$-action is all of $A$.
However, to provide a full description of a presentation, one must also
understand the $S_n$-action on the generators.  To show that $A$ is presented by
a generating set $G$ with relations $R$, it is enough to show that $A$ is
generated by $G$ and that every product of elements of $G$ can be reduced to a
basis element of $A$ via the relators.

\section{Homology of weighted no-$(k+1)$-equal spaces} \label{S:nokequal}

In the previous section, we showed that the space $\no_{k+1}(n,\mathcal{W})$
retracts to a subcomplex $P(n,\mathcal{W},k)$ of the permutohedron $P(n)$.  To
compute the homology of $P(n,\mathcal{W},k)$, we use a discrete gradient vector field on
$P(n)$.

\subsection{Discrete Morse theory}
In any cell complex, the cellular homology comes from a chain complex generated
by the cells; very broadly, discrete Morse theory gives a way to decompose the
chain complex as a direct sum of a chain complex that has no homology (which we
discard) and a chain complex generated by a smaller subset of cells, the critical
cells.  To compute the homology exactly, we need to:
\begin{enumerate}
\item Reduce to the smaller chain complex.
\item Show that the differentials in the smaller chain complex are all zero, so
  that the homology has a $\mathbb{Z}$--basis in bijection with the set of
  critical cells.
\end{enumerate}

The basic definitions in discrete Morse theory are as follows.  In any polyhedral cell complex, we say that cell $f$ is a \strong{face} of cell $g$ if $f$ is in the boundary of $g$ and $\dim f = \dim g - 1$, and we say that $g$ is a \strong{coface} of $f$ if $f$ is a face of $g$.  A \strong{discrete vector field} on a polyhedral cell complex is a set $V$ of pairs of cells $[f, g]$ such that $f$ is a face of $g$ and each cell can be in at most one pair; an example is shown in Figure~\ref{fig-discrete-morse}.  A discrete vector field $V$ is \strong{gradient} if there are no closed $V$--walks.  A $V$--walk is a sequence of pairs $[f_1, g_1], \ldots, [f_r, g_r]$ with $[f_i, g_i] \in V$, such that each $f_{i+1}$ is a face of $g_i$ other than $f_i$.  The $V$--walk is closed if $f_r = f_1$.

\begin{figure}
\begin{center}
\begin{tikzpicture}[scale = 2, emp/.style={inner sep = 0pt, outer sep = 0pt}, vert/.style={circle, draw=black, fill=black, inner sep = 0pt, minimum size = .5mm}]
\draw[draw=gray!50, fill=gray!50] (1.06495000000000, 0.0375000000000000) -- (0.877450000000000, 0.362250000000000) -- (0.942400000000000, 0.399750000000000) -- (0.750000000000000, 0.433000000000000) -- (0.682600000000000, 0.249750000000000) -- (0.747550000000000, 0.287250000000000) -- (0.935050000000000, -0.0375000000000000) --cycle;
\draw[draw=gray!50, fill=gray!50] (0.500000000000000, 0.941000000000000) -- (0.125000000000000, 0.941000000000000) -- (0.125000000000000, 1.01600000000000) -- (0.000000000000000, 0.866000000000000) -- (0.125000000000000, 0.716000000000000) -- (0.125000000000000, 0.791000000000000) -- (0.500000000000000, 0.791000000000000) --cycle;
\draw[draw=gray!50, fill=gray!50] (-0.564950000000000, 0.903500000000000) -- (-0.752450000000000, 0.578750000000000) -- (-0.817400000000000, 0.616250000000000) -- (-0.750000000000000, 0.433000000000000) -- (-0.557600000000000, 0.466250000000000) -- (-0.622550000000000, 0.503750000000000) -- (-0.435050000000000, 0.828500000000000) --cycle;
\draw[draw=gray!50, fill=gray!50] (0.500000000000000, -0.791000000000000) -- (0.125000000000000, -0.791000000000000) -- (0.125000000000000, -0.716000000000000) -- (0.000000000000000, -0.866000000000000) -- (0.125000000000000, -1.01600000000000) -- (0.125000000000000, -0.941000000000000) -- (0.500000000000000, -0.941000000000000) --cycle;
\draw[draw=gray!50, fill=gray!50] (-0.435050000000000, -0.828500000000000) -- (-0.622550000000000, -0.503750000000000) -- (-0.557600000000000, -0.466250000000000) -- (-0.750000000000000, -0.433000000000000) -- (-0.817400000000000, -0.616250000000000) -- (-0.752450000000000, -0.578750000000000) -- (-0.564950000000000, -0.903500000000000) --cycle;
\draw (1, 0)--(.5, .866)--(-.5, .866)--(-1, 0)--(-.5, -.866)--(.5, -.866)--(1, 0);
\node[vert] at (1, 0) [label=right:{$1\vert 2 \vert 3$}] {};
\node[vert] at (.5, .866) [label=above:{$2\vert 1 \vert 3$}] {};
\node[vert] at (-.5, .866) [label=above:{$2\vert 3 \vert 1$}] {};
\node[vert] at (-1, 0) [label=left:{$3\vert 2 \vert 1$}] {};
\node[vert] at (-.5, -.866) [label=below:{$3\vert 1 \vert 2$}] {};
\node[vert] at (.5, -.866) [label=below:{$1\vert 3 \vert 2$}] {};
\node[emp] at (.75, .433) [label=left: {$21\vert 3$}] {};
\node[emp] at (0, .866) [label=below: {$2\vert 31$}] {};
\node[emp] at (-.75, .433) [label=right: {$32\vert 1$}] {};
\node[emp] at (-.75, -.433) [label=right: {$3\vert 21$}] {};
\node[emp] at (0, -.866) [label=above: {$31\vert 2$}] {};
\node[emp] at (.75, -.433) [label=left: {$1\vert 32$}] {};
\node at (2, 0) {$\simeq$};
\node[vert] at (3, 0) [label=left: {$3\vert 2\vert 1$}] {};
\draw (3, 0) to [out=45, in=90] (3.5, 0);
\draw (3, 0) to [out=-45, in=-90] (3.5, 0);
\node[emp] at (3.5, 0) [label=right: {$1\vert 32$}] {};
\end{tikzpicture}
\end{center}
\caption{A \strong{discrete gradient vector field} consists of a set of disjoint pairs of cells, each pair incident and of consecutive dimensions.  The complex is homotopy equivalent to one in which the paired cells are collapsed, and only the \strong{critical} (unpaired) cells remain.}\label{fig-discrete-morse}
\end{figure}
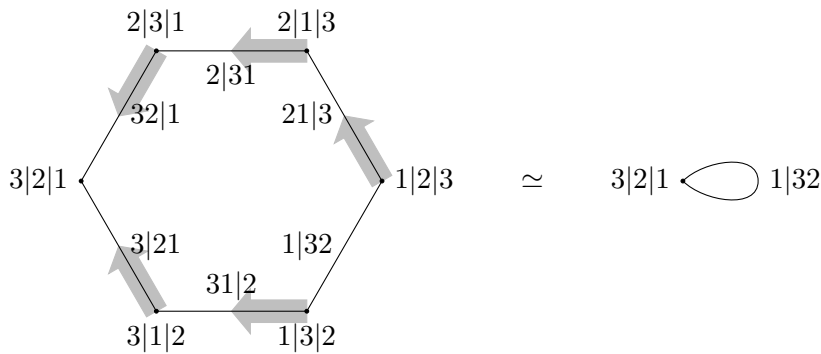

A cell is \strong{critical} with respect to a discrete gradient vector field $V$ if the cell is not in any pair in $V$.  The fundamental theorem of discrete Morse theory~\cite{Forman02} states that there is a cell complex that is a strong deformation retraction of the original cell complex, in which there is one cell per critical cell of $V$.  Thus, we can compute the homology groups $H_j(P(n, \mathcal{W}, k))$ by defining discrete gradient vector fields and computing the homology of the collapsed chain complexes generated by the critical cells.

Our cell complexes $\cel(n, w)$ are not polyhedral cell complexes because, for instance, there are distinct cells with the same boundary.  However, discrete Morse theory still gives an isomorphism on homology between the original chain complex and the collapsed chain complex, as long as the following property holds: for each pair $[f, g]$ of the discrete gradient vector field, the coefficient of $f$ in the boundary of $g$ is a unit in our coefficient ring, in this case $\pm 1$ because we are using coefficients in $\mathbb{Z}$.  In $\cel(n, w)$, every coefficient in every boundary map is $\pm 1$, so this property holds automatically.  Alternatively, the barycentric subdivision of $\cel(n, w)$ is a polyhedral cell complex, so our arguments could be adapted to work with the barycentric subdivision instead.  Thus, we proceed with the discrete Morse theory as if $\cel(n, w)$ were a polyhedral cell complex.

One way to define a discrete gradient vector field on a cell complex is by defining a total ordering on all the cells.  Given a total ordering, the resulting vector field contains a pair $[f, g]$ if and only if both $f$ is the greatest face of $g$ and $g$ is the least coface of $f$ (and for non-polyhedral complexes, we also require the coefficient of $f$ in the boundary of $g$ to be a unit).  One can prove that this vector field is gradient (see Remark~3.7 of~\cite{Bauer19}).

One advantage of doing the construction in this way is that there is a
simple criterion guaranteeing that the discrete gradient vector field forms a perfect Morse
function.  The term ``perfect Morse function'' refers to the case where, on the collapsed chain complex generated by the critical cells, the differential is zero, so the critical cells form a basis for the homology of the
complex.  The following general lemma shows that it suffices to construct, for
each critical cell $e$, a cycle $z(e)$ such that $e$ is its maximum cell and has
coefficient equal to a unit in the coefficient ring.  It turns out that such cycles automatically represent
linearly independent homology classes.  In our main theorems we use the lemma with $\mathbb{Z}$ coefficients, and in Section~\ref{sec:unordered} we use it with field coefficients $\mathbb{Q}$ and $\mathbb{F}_p$.

\begin{lem}\label{lem-max-basis}
  Let $X$ be any finite cell complex with a total ordering on the
  cells, giving a discrete gradient vector field.  Let $R$ be a ring of coefficients.  Suppose there is a collection of cellular cycles with the following properties:
  \begin{itemize}
  \item The cycles in our collection are in bijection with the critical cells of the discrete gradient vector field.  For each critical cell $e$, we denote the corresponding cycle by $z(e)$.
  \item Under the total ordering, the greatest of all the cells appearing with nonzero coefficient in the cellular chain $z(e)$ is the cell $e$.
  \item The coefficient of $e$ in the chain $z(e)$ is a unit in $R$.
  \end{itemize}
Then every homology class in $H_*(X; R)$ can be written uniquely as an
  $R$-linear combination of the homology classes of the cycles $z(e)$.
\end{lem}

\begin{proof}
  For any pair $[f, g]$ in the discrete vector field, we refer to $f$ as a
  ``match-up cell'' and refer to $g$ as a ``match-down cell''.  We also define
  $z'(f)$ to be the boundary of $g$; we know that $f$ is the greatest cell
  appearing in $z'(f)$, and that it has unit coefficient because of how we have defined the discrete vector field from the ordering.

  First, we show that every $j$-cycle $z$ is an $R$-linear combination
  of cycles $z(e)$ and $z'(f)$, where $e$ ranges over the critical $j$-cells and
  $f$ ranges over the match-up $j$-cells.  This follows from the following
  observation: if a match-down cell $g$ is the greatest cell in a $j$-chain,
  then in the boundary of that chain, the corresponding match-up cell $f$ appears
  with nonzero coefficient, because $g$ is the least coface of $f$, so no other
  cell in the chain has $f$ as a face.  Thus, for any $j$-cycle $z$, the
  greatest cell of $z$ cannot be a match-down cell.  It is either a critical cell
  $e$ or a match-up cell $f$, so we subtract the appropriate multiple of $z(e)$
  or $z'(f)$ to get a new cycle with lesser maximum.  Repeating this process
  gives us $z$ as a linear combination of cycles $z(e)$ and $z'(f)$, so because
  each $z'(f)$ is a boundary, this implies that $z$ is homologous to a linear
  combination of the cycles $z(e)$ only.

  To show the uniqueness, we need to show that no nontrivial linear combination
  of cycles $z(e)$ is null-homologous.  Because the cycles $z(e)$ and $z'(f)$
  have distinct maxima, they are linearly independent.  Thus, it suffices to show
  that if a $j$-cycle $z$ is a boundary, it is a linear combination of the
  boundaries $z'(f)$.  To see this, we look at the set of all $j+1$-chains.  The
  chains $z(e)$, $z'(f)$, and $g$ (as $e$ ranges over all critical
  $(j+1)$-cells, $f$ ranges over all match-up $(j+1)$-cells, and $g$ ranges
  over all match-up $(j+1)$-cells) form an $R$-basis for the set of all
  $(j+1)$-chains, because they have distinct maxima equal to the set of all
  $j$-cells.  When we apply the boundary map to this basis, the cycles $z(e)$
  and $z'(f)$ map to zero, and the match-down cells $g$ map to the
  $j$-dimensional boundaries $z'(f)$.  Thus indeed every $j$-dimensional
  boundary is a linear combination of these boundaries $z'(f)$.

  Thus, every homology class in $H_*(X)$ can be written as an $R$-linear
  combination of the homology classes of the cycles $z(e)$, and the combination
  is unique.
\end{proof}

\subsection{Discrete gradient vector fields on permutohedra}
In what follows, we define a total ordering on all of $P(n)$, the polyhedral complex that contains $P(n, \mathcal{W}, k)$ as a subcomplex.  We use the resulting discrete gradient vector fields to compute the homology, by analyzing the critical cells and constructing cycles dual to each one.

Recall that the cells of $P(n)$ are in bijection with ordered partitions of $[n]$
into blocks.  We say the \strong{weight} of a block is the sum of the weights of
its elements.  A block is a \strong{singleton} if it only has one element.
We assign some blocks to \strong{leader--follower pairs} by walking left to right.
A block is a \strong{follower} if it follows a singleton (its \strong{leader}) whose
element is smaller than any of the follower's elements and which is not itself a
follower.  Then a total ordering $\prec$ on cells with symbols $f$ and $g$ is
given by looking at the first block at which they differ.  Let's call the two
blocks $f_i$ and $g_i$.  Then the ordering is given as follows:

\begin{enumerate}[(i)]
\item If $f_i$ is a follower and $g_i$ is not, then $f \prec g$.
\item If $f_i$ and $g_i$ are both followers, then they are ordered by number of
  elements, and then arbitrarily (e.g.~in lexicographic order) if they have the
  same number of elements.
\item If $f_i$ and $g_i$ are not followers, then we order their elements from
  largest to smallest; they are then ordered lexicographically, but according to
  a backwards order on the ``alphabet''.  That is,
  \[3 \prec 3\;2 \prec 3\;2\;1 \prec 3\;1 \prec 2 \prec 2\;1 \prec 1.\]
\end{enumerate}

\begin{lem} \label{lem:crit}
  A cell in $P(n,\mathcal{W},k)$ is critical if and only if every block is either
  \begin{enumerate}[(a)]
  \item A singleton which is not a follower; or,
  \item A follower such that its weight and that of its leader add up to at
    least $k+1$.
  \end{enumerate}
\end{lem}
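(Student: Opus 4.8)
The plan is to describe the discrete gradient vector field coming from the total ordering $\prec$ explicitly, and then determine which cells end up unpaired. Recall that a cell $f$ is paired with a coface $g$ precisely when $f$ is the $\prec$-greatest face of $g$ and $g$ is the $\prec$-least coface of $f$. So to identify the critical cells I would analyze, for a given cell $c$, two things: (1) among all cells obtained from $c$ by \emph{merging} two adjacent blocks (the cofaces of $c$ in $P(n)$, recalling a coface has one fewer block — wait, a coface of $c$ has higher dimension, hence fewer blocks, so it is obtained by merging), the $\prec$-least one; and (2) among all cells obtained from $c$ by \emph{splitting} a block (the faces of $c$), the $\prec$-greatest one. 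A cell is critical iff neither the canonical merge nor the canonical split ``points back'' to $c$.

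First I would handle the easy direction: show that any cell all of whose blocks are of type (a) or (b) is critical. The idea is that such a cell has no available merge or split that both respects the weight constraint $k$ and produces the matching partner. For instance, a type-(a) singleton block that is not a follower cannot be the $\prec$-greatest face of a merge creating it, because merging it leftward or rightward either creates a follower block (which by rule (i) would be $\prec$-smaller, so the merged cell would prefer a different face) or creates a non-follower block whose sorted order by rule (iii) works against $c$; and similarly it cannot be split since it is a singleton. For type-(b) followers, the point is exactly that the weight-$\geq k+1$ condition means the ``natural'' merge of the follower with its leader is \emph{not available in $P(n,\mathcal W,k)$}, so the leader-follower pair cannot be collapsed; one must check no other merge or split involving this block creates the pairing either.

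For the converse I would argue contrapositively: suppose some block $B$ of a cell $c$ is neither of type (a) nor of type (b). There are three cases. If $B$ is a non-singleton non-follower block, I would show $c$ is matched \emph{down} with the split of $B$ that peels off its smallest element (or its largest — whichever the ordering $\prec$ in rule (iii) makes canonical) as a new singleton; one checks this split is the $\prec$-greatest face and that $c$ is in turn the $\prec$-least coface of the split, using that rule (iii) orders blocks ``largest element first, backwards lexicographically.'' If $B$ is a follower whose combined weight with its leader is $\leq k$, then the merge of the leader and $B$ lies in $P(n,\mathcal W,k)$, and I would show this merge pairs $c$ upward. If $B$ is a singleton that \emph{is} a follower, essentially the same merge-with-leader argument applies. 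In each case the verification that the pairing is mutual (greatest face / least coface) is where the delicate bookkeeping lives, since one must rule out competing faces and cofaces using the three-part definition of $\prec$ together with the leader–follower walk being defined left-to-right.

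The main obstacle I expect is precisely this mutuality check in the converse direction: the ordering $\prec$ is defined by a left-to-right scan, and merging or splitting a block can change whether \emph{later} blocks are followers (and which blocks are their leaders), so comparing $c$ to a competing coface is not a purely local operation. I would want a lemma isolating the observation that the leader–follower status of blocks strictly to the left of the modified block is unchanged, so that the first block of disagreement between $c$ and a competitor is controlled, reducing everything to the local rules (i)–(iii). With that lemma in hand the case analysis becomes routine; without it, the argument risks circularity. A secondary subtlety is confirming throughout that we only ever pair cells both of which lie in the subcomplex $P(n,\mathcal W,k)$ — which is exactly what forces type-(b) blocks to survive — so I would be careful to phrase the vector field as the restriction to $P(n,\mathcal W,k)$ of the one on $P(n)$ and note that restricting a gradient vector field to a subcomplex closed under taking faces keeps it gradient.
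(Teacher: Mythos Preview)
Your approach is essentially the paper's. Two small points where the paper is sharper: first, it resolves your uncertainty by splitting off the \emph{least} element of the bad block and placing that singleton \emph{before} the remainder (so it becomes the new leader); second, rather than proving your proposed lemma about left blocks in general, the paper simply always operates at the \emph{first} block that fails to be of type (a) or (b), so that everything to its left is already in critical form and the first-block-of-disagreement comparison with any competing face or coface is automatic. Your final remark that the vector field on $P(n,\mathcal{W},k)$ is the restriction of the one on $P(n)$ is correct and is exactly why the weight condition in (b) appears: those cells would match up in $P(n)$, but the merged cell has left the subcomplex.
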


\begin{figure}
  \centering
  \begin{tikzpicture}[scale=0.6]
    \draw[red, very thick] (-0.75,-0.5)--(3.75,-0.5)--(3.75,3.5)--(-0.75,3.5)--cycle;
    \draw[blue, thick] (0.5,1.5) circle (1);
    \draw[blue, thick] (2.5,2.5) circle (1);
    \draw[blue, thick] (2.5,0.5) circle (1);
    \draw[red, very thick] (3.75,0.5)--(10.5,0.5)--(10.5,4.5)--(3.75,4.5)--cycle;
    \draw[blue, thick] (5,3.5) circle (1);
    \draw[blue, thick] (8.25,2.5) circle (2);
    \draw[blue, thick] (11.75,3.5) circle (1);
    \draw[blue, thick] (13.5,4) circle (0.5);
    \draw[blue, thick] (15,3.5) circle (0.5);
    \draw[blue, thick] (17,3) circle (0.5);
    \draw[blue, thick] (17,4) circle (0.5);
    \draw[blue, thick] (17,1) circle (1.5);
    \draw[red, very thick] (14.25,-0.5)--(18.75,-0.5)--(18.75,4.5)--(14.25,4.5)--cycle;
    \draw[blue, thick] (20,3.5) circle (1);
    \draw[blue, thick] (21.75,4) circle (0.5);

    \node[blue] at (0.5,1.85)  {\bf 6};
    \node at (0.5,1.15)  {(2)};
    \node at (2.5,2.15)   {(2)};
    \node[blue] at (2.5,2.85) {\bf 8};
    \node[blue] at (2.5,0.85) {\bf 11};
    \node at (2.5,0.15) {(2)};
    \node[blue] at (5,3.85) {\bf 9};
    \node at (5,3.15) {(2)};
    \node at (8.25,2.15) {(4)};
    \node[blue] at (8.25,2.85) {\bf 13};
    \node at (11.75,3.15) {(2)};
    \node[blue] at (11.75,3.85) {\bf 7};
    \node[blue] at (13.5,4) {\bf 5};
    \node[blue] at (15,3.5) {\bf 1};
    \node at (17,0.65) {(3)};
    \node[blue] at (17,1.35) {\bf 12};
    \node[blue] at (17,3) {\bf 4};
    \node[blue] at (17,4) {\bf 3};
    \node at (20,3.15) {(2)};
    \node[blue] at (20,3.85) {\bf 10};
    \node[blue] at (21.75,4) {\bf 2};
    \draw (-1.25,-0.5)--(22.75,-0.5);
    \draw (-1.25,4.5)--(22.75,4.5);
  \end{tikzpicture}
  \captionsetup{singlelinecheck=off}
  \caption[]{%
    The 3-cycle in $\no_6(13,\mathcal{W})$, with weights indicated by circle size
    and in parentheses, corresponding to the critical cell
    \[(6 \mid 8\;11 \mid 9 \mid 13 \mid 7 \mid 5
    \mid 1 \mid 3\;4\;12 \mid 10 \mid 2).\]
    The red boxes enclose leader--follower pairs and indicate the boundaries of a
    $P(3)$, a $P(2)$, and a $P(4)$ respectively; thus the cycle as a whole is
    represented by an $S^1 \times S^0 \times S^2$.
  } \label{nokcycle}
\end{figure}
\begin{proof}
 We claim that the matching of the other cells is as follows.  We look at the first block
  where a given cell does not match the characterization of critical given above.  There are two
  possibilities:
  \begin{itemize}
  \item The block is a follower and, together with its leader, has total weight
    at most $k$.  In that case, the cell matches \emph{up} to the cell in which
    the follower and its leader are combined.
  \item The block is not a follower or a singleton; if preceded by a non-follower
    singleton, it has at least one element smaller than that singleton.  Then the
    cell matches \emph{down} to the cell where the least element of the block is
    split off into its own block which comes before the rest.  This is now a
    leader--follower pair, and its leader is not also a follower.
  \end{itemize}
  If $f$ matches up to $g$, then $g$ matches down to $f$, because the new combined block cannot be a follower (otherwise the leader in $f$ would also be a follower, contradicting the definition of leader).  Similarly, if $g$ matches down to $f$, then $f$ matches up to $g$.  Thus, the match-up and match-down cells pair to form a discrete vector field.

  It remains to show that this discrete vector field is induced by our
  ordering.  Let $f$ be a match-down cell and $g$ be the corresponding match-up
  cell.  We need to show $f$ is the greatest face of $g$ and $g$ is the least
  coface of $f$.

  To show that $f$ is the greatest face of $g$, consider the result of splitting
  any earlier block of $g$.  Because $g$ looks like a critical cell at that
  stage, that block is a follower; after splitting, it is shorter and is still a
  follower, so it is smaller by property (ii) of the ordering.  In contrast,
  among ways to split the $k$th block another way, or to split a later block, $f$
  is the greatest because it is the only one for which the $k$th block begins
  with the least element of the $k$th block of $g$ (here we apply properties (i)
  and (iii)).

  To show that $g$ is the least coface of $f$, consider the result of combining
  any earlier blocks of $f$.  Because $f$ looks like a critical cell at that
  stage, the two blocks would be non-follower singletons in decreasing order, so
  the combined block would be larger, not be a follower, and have the same first
  element as the first of the two singletons; therefore the new block is larger
  by property (iii).  In contrast, among ways to combine later blocks of $f$, $g$
  is the least because it is the only one that increases the first element of the
  $k$th block (again applying property (iii)).
\end{proof}

\subsection{Basis for homology}
We now give a basis for the homology of $P(A,\mathcal{W},k)$ for an $n$-element
set $A$, thereby demonstrating that it is free abelian.  By Lemma
\ref{lem-max-basis}, it is sufficient to exhibit a cycle for every critical cell
such that the critical cell is the largest cell in the cycle.  The construction
implicitly relies on an ordering on $A$; for it to work, we must fix an ordering
so that $\mathcal{W}=(w_1,\ldots,w_n)$ satisfies
\begin{equation} \label{sizes}
  w_1 \leq w_2 \leq \cdots \leq w_n.
\end{equation}

Every critical cell $e$ is a concatenation product of non-follower singletons and
leader-follower pairs $(\,b_i \mid b_{i+1}\,)$.  We now build a corresponding
cycle $z(e) \in P(A,\mathcal{W},k)$.  For a cell consisting of one singleton, the
cycle will simply be the corresponding 0-cell.  Now suppose $e$ consists of a
single leader-follower pair $(\,b_1 \mid b_2\,)$.  Then our cycle $z(e)$ will be
the boundary of the top cell $(\,b_1b_2\,)$ of $P(A)$.  Since by \eqref{sizes}
each element of $b_2$ has greater or equal weight to the singleton $b_1$, every
cell in the boundary is a cell of $P(A,\mathcal{W},k)$.  Moreover,
$(\,b_1 \mid b_2\,)$ is the highest cell of $z(e)$ according to our total
ordering, since $b_1$ is the highest-ranked block.

For a general critical cell, we define $z$ by requiring that
\[z(e)=z(e_1 \concat e_2)=z(e_1) \concat z(e_2)\]
for any splitting $e=e_1 \concat e_2$ which does not split a leader--follower
pair.  Here $z(e_i)$ is defined based on the ordering on $A_i \subset A$
inherited from $A$.  To see that $e$ is the highest cell in the resulting cycle,
note that the ordering on cells depends on the first block in which two cells
differ.  For any two cells in $z(e)$, this block will be a leader and the above
argument will apply.

It is clear also that all the nonzero coefficients of $z(e)$ are $\pm 1$.  We
have now proved:
\begin{thm}\label{thm-weighted-basis}
  Suppose that $\mathcal{W}=(w_1,\ldots,w_n)$ is a nondecreasing sequence of
  weights.  Then $H_*(P(n, \mathcal{W}, k))$ is free abelian, with a basis given
  by the classes of the cycles $z(e)$, where $e$ ranges over all cells whose
  blocks are of the following two types:
  \begin{enumerate}[(a)]
  \item A singleton which is not a follower.
  \item A follower such that its weight and that of its leader add up to at
    least $k+1$.
  \end{enumerate}
\end{thm}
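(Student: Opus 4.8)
The plan is to deduce the statement from Lemma~\ref{lem-max-basis}, applied to the finite polyhedral complex $X=P(n,\mathcal{W},k)$ with the discrete gradient vector field induced on it by the total order $\prec$. By Lemma~\ref{lem:crit}, the critical cells of that vector field are exactly the cells $e$ all of whose blocks are non-follower singletons (type (a)) or followers whose weight together with that of their leader is at least $k+1$ (type (b)). So the whole content of the theorem is to produce, for each such $e$, a cycle $z(e)$ in which $e$ occurs with coefficient $\pm1$ and is the $\prec$-greatest cell occurring with nonzero coefficient; Lemma~\ref{lem-max-basis} then delivers both the freeness of $H_*(P(n,\mathcal{W},k))$ and the fact that the classes $[z(e)]$ form a $\mathbb{Z}$-basis.

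First I would build $z(e)$ by the evident recipe: on a lone non-follower singleton, $z(e)$ is that $0$-cell; on a single leader--follower pair $(\,b_1\mid b_2\,)$ one takes $z(e)=\partial(\,b_1b_2\,)$, the boundary of the top cell of the permutohedron $P(b_1\sqcup b_2)$; and in general one extends multiplicatively by $z(e_1\concat e_2)=z(e_1)\concat z(e_2)$ across each bar of $e$ that does not split a leader--follower pair. Two things need checking. First, $z(e)$ is a cycle: the single-pair pieces are cycles since $\partial^2=0$, the singleton pieces are trivially cycles, and a concatenation of cycles is a cycle because $\concat$ obeys the Leibniz rule. Second, and more delicate, $z(e)$ actually lies in the subcomplex $P(n,\mathcal{W},k)$; since concatenation leaves block weights unchanged, it suffices to see that every two-block face of $(\,b_1b_2\,)$ has both blocks of weight at most $k$. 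This is exactly where the normalization \eqref{sizes} is used: $b_1=\{a\}$ with $a$ the least label of $b_1\sqcup b_2$, so every nonempty subset of $b_2$ has weight at least $w(b_1)$. Hence in a two-block partition $(\,C\mid D\,)$ of $b_1\sqcup b_2$ the block not containing $a$ is a subset of $b_2$, of weight at most $w(b_2)\le k$, while the block containing $a$ is obtained by removing that subset from $b_1\sqcup b_2$, so its weight is at most $w(b_1)+w(b_2)-w(b_1)=w(b_2)\le k$.

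It remains to verify that $e$ occurs in $z(e)$ with coefficient $\pm1$ and is its $\prec$-maximum. The coefficient claim is immediate, since a permutohedron has $\pm1$ incidence numbers and this survives the concatenations. For the maximality, note that every cell of $z(e)$ agrees with $e$ on all the lone singleton blocks, and that (reading from the left) $e$ and any other cell $f$ of $z(e)$ first differ at a block which, in $e$, is a leader singleton $b_1^{(j)}=\{\,\min(b_1^{(j)}\sqcup b_2^{(j)})\,\}$. At that block we compare via $\prec$. The competing block $C$ of $f$ satisfies $\min C\ge\min(b_1^{(j)}\sqcup b_2^{(j)})$ and $C\neq b_1^{(j)}$. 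If $C$ is not a follower, then property~(iii) of $\prec$ makes $C$ strictly smaller than the singleton consisting of the least label available at that stage, i.e.\ $C\prec b_1^{(j)}$. If $C$ is a follower, then since the block $b_1^{(j)}$ of $e$ is not, property~(i) of $\prec$ gives $f\prec e$ outright. Either way $f\prec e$, so $e$ is the unique $\prec$-maximum of $z(e)$, and Lemma~\ref{lem-max-basis} completes the proof.

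The step I expect to be the main obstacle is this last maximality argument, where one must keep straight the somewhat intricate order $\prec$. The delicate point is that replacing a leader--follower pair $(\,b_1\mid b_2\,)$ in $e$ by a different two-block partition $(\,C\mid D\,)$ can alter whether the blocks \emph{after} it are followers, but never alters the follower-status of $C$ itself, which is determined only by the blocks to its left --- and those agree with the corresponding blocks of $e$ up to the point of first difference. This is what legitimizes the dichotomy ``$C$ a follower / $C$ not a follower'' at the block in question, and hence what lets properties (i) and (iii) of $\prec$ close out the comparison.
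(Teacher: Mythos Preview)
Your proof is correct and follows essentially the same approach as the paper. You construct $z(e)$ exactly as the paper does (singletons, boundaries of permutohedra on leader--follower pairs, extended via concatenation), verify containment in $P(n,\mathcal{W},k)$ using the weight monotonicity \eqref{sizes}, and deduce the result from Lemma~\ref{lem-max-basis}. Your maximality argument is in fact more carefully spelled out than the paper's: where the paper simply asserts that ``$b_1$ is the highest-ranked block'' and that ``the above argument will apply'' at the first point of difference, you explicitly split into the two cases ($C$ a follower versus $C$ not a follower) and invoke rules (i) and (iii) of $\prec$ separately, correctly noting that follower status at the point of comparison is determined only by the blocks to the left, which agree in $e$ and $f$.
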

Translating these cellular cycles into cycles in $\no_{k+1}(n,\mathcal{W})$, we
get the following picture:
\begin{itemize}
\item Singletons and leader--follower pairs correspond to points and groups of
  points arranged in order along the line.
\item Every leader--follower pair corresponds to a set of $r$ points moving back
  and forth of which any $r-1$ can coincide, but all $r$ cannot.
\end{itemize}

\section{Decomposing cell$(n,w)$ into layers}\label{sec:layers}

In this section we will prove Theorems \ref{thm:FG} and \ref{thm:basis} by
expressing $H_*(\cel(n,w))$ in terms of the homology of weighted no-$(w+1)$-equal
spaces.  To this end, we assign an ordering to the top-dimensional cells of
$\cel(n)$.  For a lower-dimensional cell, its \strong{layer} will be indexed by the
first top-dimensional cell containing it: this is when that cell appears in the
complex when we think of the complex as glued, in order, out of its top-dimensional cells.
Recall that these top cells are identified with permutations in $S_n$.

The intersections of layers of $\cel(n)$ with $\cel(n,w)$ form the layers of
$\cel(n,w)$.  We will show that $H_*(\cel(n,w))$ is a direct sum of pieces which
appear once each subsequent layer is glued on.  Topologically, the layer
associated to a permutation $\sigma \in S_n$ is a copy of
$P(n-r,\mathcal{W},w) \times [0,1]^r$, where $r$ and $\mathcal{W}$ depend on
$\sigma$, which is glued on along $P(n-r,\mathcal{W},w) \times \partial[0,1]^r$;
the combinatorial structure is rather more complicated.  It follows (once
homological triviality of the gluing is established) that the added summand of
$H_j(\cel(n,w))$ is in bijection with elements of $H_{j-r}(P(n-r,\mathcal{W},w))$.  The main
technical result of this section states:
\begin{thm} \label{thm:split}
  The homology of $\cel(n,w)$ decomposes as
  \[H_*(\cel(n,w))=
  \bigoplus_{\sigma \in S_n} H_{*-\#\sigma}(P(n-\#\sigma,\mathcal{W}(\sigma),w)).\]
\end{thm}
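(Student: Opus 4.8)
The plan is to assemble $\cel(n,w)$ one top-dimensional cell at a time and read off its homology from the resulting filtration. Fix the total order $\sigma_1,\sigma_2,\dots,\sigma_{n!}$ on the top cells (the elements of $S_n$) from \S\ref{sec:layers}, with $\sigma_1$ the identity, and let $X_m\subseteq\cel(n,w)$ be the subcomplex of all cells whose layer index -- the position of the first top cell containing them -- is at most $m$. This yields $\emptyset=X_0\subseteq X_1\subseteq\cdots\subseteq X_{n!}=\cel(n,w)$. Writing $r_m=\#\sigma_m$ and $B_m=P(n-r_m,\mathcal W(\sigma_m),w)$, I want to show that for each $m$ the connecting map $\partial\colon H_j(X_m,X_{m-1})\to H_{j-1}(X_{m-1})$ vanishes and that $H_j(X_m,X_{m-1})\cong H_{j-r_m}(B_m)$; then the long exact sequence of the pair gives a short exact sequence $0\to H_*(X_{m-1})\to H_*(X_m)\to H_{*-r_m}(B_m)\to 0$, which splits because $H_*(B_m)$ is free abelian by Theorem~\ref{thm-weighted-basis}. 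An induction on $m$ then produces the claimed direct sum, and at the same time keeps all the groups involved free abelian so the induction closes.

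The first task is the combinatorial description of a single layer. The cells of $X_m$ not lying in $X_{m-1}$ should biject with pairs consisting of a cell of $B_m$ and an open face of the cube $[0,1]^{r_m}$, in such a way that $X_m$ is obtained from $X_{m-1}$ by attaching $B_m\times[0,1]^{r_m}$ along $B_m\times\partial[0,1]^{r_m}$. The geometric content behind the bookkeeping is that the top cell $\sigma_m$ forces certain disks into rigid vertical columns -- whose labels become the weighted points of $\mathcal W(\sigma_m)$ -- while the $r_m$ cube coordinates record exactly the columns that are free to spin or to slide past one another within this layer. Granting this, the relative cellular chain complex of $(X_m,X_{m-1})$ is, up to the degree shift by $r_m$, the cellular chain complex of $B_m$, so $H_j(X_m,X_{m-1})\cong H_{j-r_m}(B_m)$. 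I expect this identification to be the longest part of the argument, but it is essentially symbol-pushing; the genuinely non-formal input is isolated in the next step.

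The main obstacle is the homological triviality of the gluing, i.e.\ the vanishing of $\partial\colon H_j(X_m,X_{m-1})\to H_{j-1}(X_{m-1})$, equivalently the surjectivity of $H_j(X_m)\to H_j(X_m,X_{m-1})$. Here I would use the explicit generators of the target furnished by Section~\ref{S:nokequal}: by Theorem~\ref{thm-weighted-basis} the group $H_{j-r_m}(B_m)$ has a basis of cycles $z(e)$ indexed by the critical cells $e$ of Lemma~\ref{lem:crit}, and under the identification above the exterior product $z(e)\times\theta$, with $\theta$ a relative cycle generating $H_{r_m}([0,1]^{r_m},\partial[0,1]^{r_m})$, represents the corresponding generator of $H_j(X_m,X_{m-1})$. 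The claim is that $z(e)\times\theta$ is the image of an honest cycle $\widetilde z(e)$ in $\cel(n,w)$, namely the concatenation of wheels and filters predicted by Theorem~\ref{thm:basis}: the columns prescribed by $\mathcal W(\sigma_m)$ become wheels, the leader--follower pairs of $e$ become filters, and the cube directions become the rotations of the wheels together with the order-preserving crossings of the columns. Concretely I would (a) write $\widetilde z(e)$ as an explicit signed sum of symbols built from the concatenation product of \S\ref{S:boundary}; (b) check $\partial\widetilde z(e)=0$ in $C_*(\cel(n,w))$ -- a sign computation run through the Leibniz rule, in which the would-be boundary terms cancel in pairs according to whether two adjacent columns may or may not be merged, closely paralleling the proof that $\partial^2=0$ in \S\ref{S:boundary}; and (c) verify that the highest-layer part of $\widetilde z(e)$ is $z(e)\times\theta$ up to sign while everything else lies in $X_{m-1}$. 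Step (b), the sign bookkeeping, is the point I expect to be delicate. Once (a)--(c) hold, the classes $[\widetilde z(e)]$ map onto a basis of $H_*(X_m,X_{m-1})$, so $\partial=0$.

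Finally, with each sequence $0\to H_*(X_{m-1})\to H_*(X_m)\to H_{*-r_m}(B_m)\to 0$ split exact, induction on $m$ gives $H_*(X_m)\cong\bigoplus_{\ell\le m}H_{*-r_\ell}(B_\ell)$, the splitting realized by the cycles $\widetilde z(e)$; taking $m=n!$ is Theorem~\ref{thm:split}. The $\ell=1$ term is the layer of the identity, where $r_1=0$ and $\mathcal W(\sigma_1)=(1,\dots,1)$, contributing $H_*(P(n,(1,\dots,1),w))=H_*(\no_{w+1}(n,\mathbb R))$ as the base of the filtration, consistent with the informal picture in the introduction. As byproducts, the cycles $\widetilde z(e)$ collected over all layers form a $\mathbb Z$-basis of $H_*(\cel(n,w))$, which is Theorem~\ref{thm:basis}, and since finitely many wheels and filters generate all of them under concatenation, Theorem~\ref{thm:FG} follows; for the write-up I would therefore arrange steps (a)--(c) so that they also feed a Lemma~\ref{lem-max-basis}-style argument applied directly to $\cel(n,w)$, yielding Theorems~\ref{thm:basis} and~\ref{thm:split} in one stroke.
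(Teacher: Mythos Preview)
Your strategy coincides with the paper's: filter $\cel(n,w)$ by layers, identify the relative homology of each step with a shifted $H_*(B_m)$, and show the connecting map vanishes so that the long exact sequences collapse into split short exact sequences. The paper packages the splitting via a pair of chain maps $i_\sigma$ and $p_\sigma$ with $p_\sigma\circ i_\sigma=\id$, which is the same data you are assembling piecewise with $\widetilde z(e)$ and the quotient map.

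Two corrections and one warning. First, the cell bijection: the cells of $X_m\setminus X_{m-1}$ are \emph{not} indexed by pairs (cell of $B_m$, open face of $[0,1]^{r_m}$). They are in direct, incidence-preserving bijection with the cells of $B_m$ alone, shifted up in dimension by $r_m$ (the paper's Proposition~\ref{bij}). Only the top face of the cube contributes to layer $m$; the rest of $B_m\times\partial[0,1]^{r_m}$ lands in strictly earlier layers. Relatedly, the $r_m$ ``cube coordinates'' record only the spinning of the wheels (a wheel of size $k$ contributes $k-1$ of them); the order-preserving crossings of columns are already what $B_m$ encodes. This does not break your argument, but your accounting should be rewritten.

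Second, you have the difficulty backwards. The step you call ``essentially symbol-pushing''---realizing $X_m$ as $X_{m-1}$ with $B_m\times[0,1]^{r_m}$ glued along $B_m\times\partial[0,1]^{r_m}$, or equivalently identifying $C_*(X_m,X_{m-1})$ with the degree-shifted $C_*(B_m)$ \emph{as a chain complex}---is where the paper does its real work. It is handled by constructing a genuine cellular homeomorphism $\iota:[0,1]\times P(A\cup\{\mathbf a\})\to P(A\cup\{\mathbf b,\mathbf c\})$ (Lemma~\ref{lem:hyperplane}), proved via zonotope geometry, and iterating it to build a cellular map $\spin_\sigma:T^{\#\sigma}\times P(n-\#\sigma)\to\cel(n)$. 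This is what makes $p_\sigma$ a chain map (boundaries match with the right signs) and what simultaneously makes $i_\sigma(z)=(\spin_{\sigma,w})_*([T^{\#\sigma}]\times z)$ a cycle for free. By contrast, the step you flag as ``delicate''---checking $\partial\widetilde z(e)=0$---is immediate once you know each wheel and each filter factor is a cycle, since the concatenation product is a chain map and satisfies the Leibniz rule. The subtlety is precisely in producing those wheel cycles combinatorially (they are signed sums over $2^{k-1}$ single-block symbols, not all $k!$) and checking that the top-layer part of $\widetilde z(e)$ really equals $z(e)$ with coefficient $\pm1$; both trace back to the $\spin$ construction.

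So your plan works, but either you should prove the analogue of Lemma~\ref{lem:hyperplane} (which is not symbol-pushing), or commit fully to the discrete-Morse route the paper sketches in Remark~\ref{rmk:crit-cells}: put a total order on cells of $\cel(n,w)$ that restricts to the \S\ref{S:nokequal} order on each layer, describe the critical cells directly, and apply Lemma~\ref{lem-max-basis} in one shot. That second route is exactly what your last sentence gestures at, and it avoids the zonotope input entirely, at the cost of the explicit sign and maximality checks.
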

We will define $\#\sigma$ and $\mathcal{W}(\sigma)$ combinatorially.

From the configuration space point of view, the new cycles in layer $\sigma$ are
those basic cycles from Theorem \ref{thm:basis} that have a particular collection
of wheels (irrespective of how those wheels are grouped into filters).  The
bijection above is given by replacing wheels of $k$ disks by points of weight
$k$.  Thus, for example, this bijection in an appropriate layer takes the
$14$-cycle depicted in Figure \ref{14-cycle} to the $3$-cycle depicted in Figure
\ref{nokcycle}.

The decomposition depends in a crucial way on the ordering of labels of disks;
it is not at all equivariant with respect to the $S_n$-action on $\cel(n,w)$.
Therefore, the methods of this section will tell us little about the $S_n$-module
structure on the homology.

\subsection{Combinatorial description of layers}
Given a cell in $\cel(n)$ broken up into blocks, we further (deterministically)
break up each block into \strong{wheels}: each entry of a block is the \strong{axle}
of a wheel if it is the largest entry of the block up to that point, and the
wheel consists of the axle and all the following smaller entries before the next
axle.

\begin{prop}\label{prop-layers}
  Given a symbol $f$, the following represent the same permutation $\sigma(f)$ of
  $[n]$:
  \begin{enumerate}[(i)]
  \item The lexicographically least \strong{shuffle} of $f$.  A shuffle is a
    permutation in which the order of every block is preserved.
  \item The permutation obtained by arranging all the wheels, regardless of
    block, in ascending order by axle.
  \end{enumerate}
\end{prop}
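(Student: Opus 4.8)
The plan is to identify the permutation described in (ii) --- concatenate all the wheels of $f$, across all blocks, in increasing order of axle --- with the greedy construction of the lexicographically least shuffle in (i). Write $\sigma$ for the permutation from (ii). First I would isolate the structural fact that drives everything: inside a single block, an entry is an axle exactly when it is a left-to-right maximum of that block, so the axles of a block are strictly increasing from left to right. Hence the wheels of a given block occur in increasing order of axle, and $\sigma$ lists the wheels of each individual block in the same relative order as in that block (and inside a wheel it lists the axle first, then the followers in their original order). Therefore $\sigma$ is an honest shuffle of $f$.

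The second and main step is to check that $\sigma$ reproduces, step by step, the greedy rule that builds the lex-least shuffle: at each stage, among the current first unremoved entries (the \emph{heads}) of the blocks, output the smallest. Suppose $\sigma$ has so far output $W_1,\dots,W_{j-1}$, the $j-1$ wheels with the smallest axles, followed by a proper (possibly empty) prefix of the next wheel $W_j$. The key bookkeeping point is that, since within each block the wheels are axle-increasing, the entries removed so far form a prefix of every block --- a union of whole wheels in the blocks other than $W_j$'s, and some whole wheels plus the removed prefix of $W_j$ in $W_j$'s block. Consequently the head of $W_j$'s block is precisely the next entry that $\sigma$ outputs, while the head of any other nonempty block is the axle of a wheel $W_{j'}$ with $j'>j$, and so is strictly larger than $\mathrm{axle}(W_j)$. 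Since the next entry of $\sigma$ is either $\mathrm{axle}(W_j)$ itself or one of its (strictly smaller) followers, it is strictly less than every competing head; greedy is forced to output it. Inducting, $\sigma$ equals the greedy output, which is the lex-least shuffle, proving (i) $=$ (ii).

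The same reasoning can be packaged as a direct lexicographic comparison: given a shuffle $\pi\neq\sigma$, let $i$ be the first position where they disagree; the common prefix $\sigma_1\cdots\sigma_{i-1}$ removes a prefix of each block (same lemma), so $\pi_i$ is forced to be either a follower of $W_j$ equal to $\sigma_i$ --- impossible --- or an axle not yet reached, which is $\geq\mathrm{axle}(W_j)\geq\sigma_i$ and distinct from $\sigma_i$, hence $\pi_i>\sigma_i$; thus $\sigma<\pi$ lexicographically. I expect the only real obstacle to be this bookkeeping --- tracking exactly which entries have been removed and why the block heads are what they are --- and in particular the case in which the common prefix ends in the middle of a wheel rather than at a wheel boundary; once one pins down that an axle dominates all of its own followers but is dominated by every not-yet-reached axle, the argument closes.
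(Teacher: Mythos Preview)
Your argument is correct. The paper actually states this proposition without proof, treating the equivalence as immediate from the definitions; your write-up supplies exactly the details one would fill in, via the natural greedy characterization of the lexicographically least shuffle together with the observation that axles are left-to-right maxima and hence increase along each block.
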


\begin{proof}
The first number in the lexicographically least shuffle is an axle, because the first element of every block is an axle.  Thus, it must be the least axle.  The remaining elements of the wheel of that axle are less than that axle and thus are also less than all the other axles.  Thus, the next numbers in the lexicographically least shuffle are the remaining elements of the first wheel.  Repeating the same argument for each wheel, we deduce inductively from left to right that the two permutations are identical.
\end{proof}

For example, the symbol
\[f=(\,7\;2 \mid 6 \mid 4\;5\;8\;1\;3\,)\]
has five wheels: $7\,2$, $6$, $4$, $5$, and $8\,1\,3$; and therefore
$\sigma(f)=4\,5\,6\,7\,2\,8\,1\,3$.
We say that $f$ is in the \strong{layer} $L(\sigma(f))$ of $\cel(n)$, or of
$\cel(n,w)$.  By Proposition~\ref{prop-layers}, being in a given layer is equivalent to having a given set of wheels.  We also write
\[\#\sigma=n-\text{the number of wheels of }\sigma.\]
Notice that $\#\sigma$ is the dimension of the lowest-dimensional cells of
$L(\sigma)$, in which every wheel is its own block.

Let $g$ be a boundary cell of $f$.  Then one of the following holds:
\begin{enumerate}
\item The splitting of a block to make $g$ respects the wheels of $f$: each wheel
  goes completely into one of the new blocks.  Then $g \in L(\sigma(f))$.
\item At least one wheel of $f$ is decomposed into two or more wheels of $g$.
  Then $\sigma(g) \prec \sigma(f)$ lexicographically.
\end{enumerate}
Thus we can build up $\cel(n)$ or $\cel(n,w)$ by gluing each subsequent layer, in
lexicographical order, onto the union of the previous ones.  In other words, the
layers define a filtration by subcomplexes
\[L(\leq \sigma)=\bigcup_{\tau \leq \sigma} L(\tau).\]

Now for each cell of $L(\sigma)$, we can obtain a cell of
$P(\{\text{wheels of }\sigma\})$ with the same block structure, replacing each
wheel by a single label.
\begin{prop} \label{bij}
  The $k$-cells of the layer $L(\sigma)$ of $\cel(n,w)$ are in
  incidence-preserving bijection with the $(k-\#\sigma)$-cells of the complex
  $P(n-\#\sigma,\mathcal{W}(\sigma),w)$, where $\mathcal{W}(\sigma)$ consists of
  the cardinalities of the wheels of $\sigma$.  In particular, the cells of the
  layer $L(\sigma)$ of $\cel(n)$ are in incidence-preserving bijection with those
  of $P(n-\#\sigma)$.
\end{prop}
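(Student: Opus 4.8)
The plan is to write down an explicit inverse to the wheel-collapsing map and then check compatibility with the block structure, sizes, and incidences. Let $\phi$ send a cell $f$ of $L(\sigma)$ to the cell of $P(\{\text{wheels of }\sigma\})$ with the same sequence of blocks, where each block of $f$ is now read as the \emph{set} of wheels it contains; identifying the wheels with $\{1,\dots,n-\#\sigma\}$ in increasing order of axle turns the target into $P(n-\#\sigma)$. This is well defined because every wheel of $f$ is contained in a single block, and because $f\in L(\sigma)$ means precisely that the wheel decomposition of $f$ produces the wheels of $\sigma$, so the blocks of $f$ do induce an ordered partition of that fixed set; note also that the weight of a block on the $P$-side is exactly the number of disk labels in the corresponding block of $f$.

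The key point is a normal-form observation: for $f\in L(\sigma)$, within each block of $f$ the wheels appear in strictly increasing order of axle, since the axle of a wheel is by definition the largest entry of its block seen up to that point and hence larger than the axle of any wheel preceding it in the block. Consequently $f$ is determined by $\phi(f)$: one recovers each block of $f$ by listing its wheels in increasing axle order and concatenating them (each wheel in its own internal order). This both proves $\phi$ injective and produces a candidate inverse $\psi$ on all of $P(\{\text{wheels of }\sigma\})$. To see $\psi$ is a genuine two-sided inverse I would verify that the symbol $\psi$ produces has wheel decomposition equal to the chosen set of wheels: with the wheels arranged in increasing axle order, the axles are exactly the left-to-right maxima of each block, and the other entries of a wheel, being smaller than its axle, are not left-to-right maxima, so re-running the wheel decomposition gives back the original wheels. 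Hence $\psi$ lands in $L(\sigma)$ and is inverse to $\phi$. Since a $k$-cell of $\cel(n)$ has $n-k$ blocks and $\phi(f)$ then has $n-k$ blocks among $n-\#\sigma$ wheels, $\phi$ sends $k$-cells to $(k-\#\sigma)$-cells; and because a block of $f$ has size at most $w$ exactly when the corresponding $P$-block has weight at most $w$, $\phi$ restricts to a bijection between the cells of $L(\sigma)$ lying in $\cel(n,w)$ and the cells of $P(n-\#\sigma,\mathcal{W}(\sigma),w)$.

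Finally I would check that $\phi$ respects the face relation (restricted to $L(\sigma)$ on the $\cel$-side), which suffices for incidence. A codimension-one face of $f$ in $\cel(n)$ deletes one bar and shuffles the two neighboring blocks $B,B'$ keeping internal orders; such a face lies in $L(\sigma)$ precisely when the shuffle respects the wheels, i.e.\ when the merged block's wheel decomposition is still the union of the wheels of $B$ and of $B'$ (otherwise the wheel multiset changes and $\sigma$ drops lexicographically). The crucial sub-observation is that for adjacent blocks $B,B'$ of $f\in L(\sigma)$ there is a \emph{unique} wheel-respecting shuffle: each of $B,B'$ already has its wheels in increasing axle order, and the only interleaving in which every wheel stays a contiguous left-to-right-maximal run is the axle-order merge of the two, which automatically preserves both internal orders. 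Under $\phi$ this face is exactly the result of merging the blocks $\mathbf{B},\mathbf{B}'$ of $\phi(f)$, i.e.\ a codimension-one face in the permutohedral complex $P$; conversely every such merge pulls back under $\psi$ to a wheel-respecting face of $f$. Iterating handles higher-codimension incidences, and taking no bound on block sizes gives the ``in particular'' statement. The only genuinely delicate step is this uniqueness of the wheel-respecting shuffle — one must keep in mind that a shuffle preserving the internal orders of $B$ and $B'$ need not respect the wheels, but then it lands in a strictly smaller layer and so is irrelevant here.
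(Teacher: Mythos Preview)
Your proof is correct and follows the same approach as the paper, which simply defines the wheel-collapsing map and points to the observation that a boundary cell of $f$ stays in $L(\sigma)$ exactly when the splitting respects the wheels; you have filled in the details the paper leaves implicit, including the explicit inverse $\psi$ and the uniqueness of the wheel-respecting shuffle. One small terminology slip worth noting: when you write ``a codimension-one face of $f$ \dots\ deletes one bar and shuffles,'' the cell you describe is actually a \emph{coface} of $f$ (deleting a bar merges two blocks and raises the dimension by one), but since you apply this consistently on both the $\cel$- and $P$-sides the incidence argument goes through unchanged.
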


\begin{proof}
All cells of $L(\sigma)$ have exactly the same wheels, and the elements of each wheel always appear consecutively.  Thus, given a symbol in $L(\sigma)$, we can view it as a sequence of these wheels, separated by vertical bars between blocks.  The resulting new symbol is a symbol of $P(n-\#\sigma,\mathcal{W}(\sigma),w)$, because instead of $n$ numbers in each symbol we have $n-\#\sigma$ wheels.  If the cell of the original symbol has dimension $k$, it has $n-1-k$ bars, so the cell of the new symbol also has $n-1-k$ bars, and thus has dimension $n-\#\sigma-1-(n-1-k) = k-\#\sigma$.

As in option (1) above, cell $g$ in $L(\sigma)$ is a boundary cell of $f$ in $L(\sigma)$ if and only if a block in $f$ is split to form $g$, such that each wheel in the block is assigned entirely to the left or entirely to the right.  Reinterpreting the symbols to be sequences of wheels, this is the same as the criterion for incidence in $P(n-\#\sigma, \mathcal{W}(\sigma), w)$.
\end{proof}

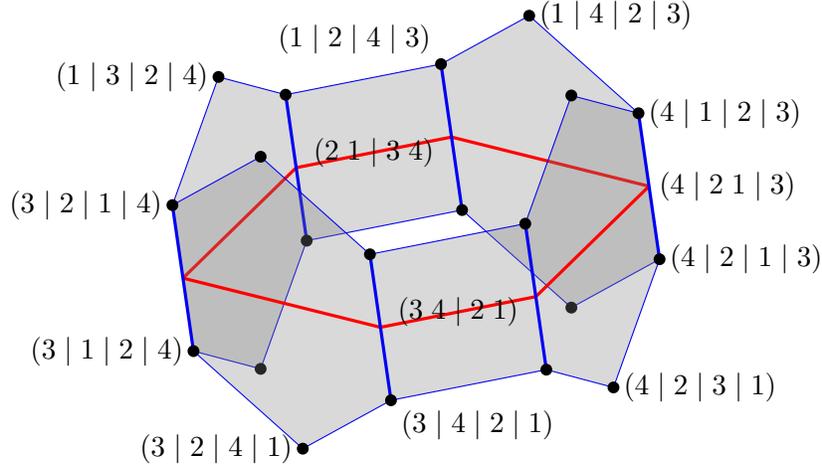
\begin{figure}
  \begin{tikzpicture}%
	[x={(0.684801cm, -0.611902cm)},
	y={(0.728730cm, 0.575018cm)},
	z={(-0.000002cm, 0.543074cm)},
	scale=1.5,
	back/.style={},
	edge/.style={color=blue!95!black},
	facet/.style={fill=gray,fill opacity=0.300000},
	vertex/.style={inner sep=1.3pt,circle,draw=black!25!black,fill=black!75!black,thick}]
%
%

\coordinate (-A) at (-0.70711, -1.22474, -1.73205);
\coordinate (-B) at (-0.70711, -2.04124, -0.57735);
\coordinate (-C) at (-1.41421, 0.00000, -1.73205);
\coordinate (--+D) at (-1.41421, -1.63299, 0.57735);
\coordinate (-+-E) at (-2.12132, 0.40825, -0.57735);
\coordinate (--+E) at (-2.12132, -0.40825, 0.57735);
\coordinate (+--A) at (0.70711, -1.22474, -1.73205);
\coordinate (+--B) at (0.70711, -2.04124, -0.57735);
\coordinate (-+-A) at (-0.70711, 1.22474, -1.73205);
\coordinate (--+A) at (-0.70711, -1.22474, 1.73205);
\coordinate (-+-D) at (-1.41421, 1.63299, -0.57735);
\coordinate (-+C) at (-1.41421, 0.00000, 1.73205);
\coordinate (+-C) at (1.41421, 0.00000, -1.73205);
\coordinate (+-+D) at (1.41421, -1.63299, 0.57735);
\coordinate (++-A) at (0.70711, 1.22474, -1.73205);
\coordinate (+-+A) at (0.70711, -1.22474, 1.73205);
\coordinate (-++B) at (-0.70711, 2.04124, 0.57735);
\coordinate (-++A) at (-0.70711, 1.22474, 1.73205);
\coordinate (++-E) at (2.12132, 0.40825, -0.57735);
\coordinate (+-+E) at (2.12132, -0.40825, 0.57735);
\coordinate (++-D) at (1.41421, 1.63299, -0.57735);
\coordinate (C) at (1.41421, 0.00000, 1.73205);
\coordinate (B) at (0.70711, 2.04124, 0.57735);
\coordinate (A) at (0.70711, 1.22474, 1.73205);
\fill[facet] (-C) -- (-+-A) -- (-+-D) -- (-+-E) -- cycle {};
\fill[facet] (-+-A) -- (++-A) -- (++-D) -- (B) -- (-++B) -- (-+-D) -- cycle {};
\fill[facet] (-A) -- (-C) -- (-+-E) -- (--+E) -- (--+D) -- (-B) -- cycle {};
\draw[red,very thick] (barycentric cs:++-D=0.5,B=0.5) -- (barycentric cs:-+-A=0.5,-+-D=0.5) -- (barycentric cs:-C=0.5,-+-E=0.5) -- (barycentric cs:--+D=0.5,-B=0.5);

\node[left] at (--+D) {$(3 \mid 2 \mid 1 \mid 4)$};
\node[left] at (-B) {$(3 \mid 1 \mid 2 \mid 4)$};
\node[right] at (++-D) {$(4 \mid 2 \mid 1 \mid 3)$};
\node[right] at (barycentric cs:++-D=0.5,B=0.5) {$(4 \mid 2\;1 \mid 3)$};
\node[right] at (B) {$(4 \mid 1 \mid 2 \mid 3)$};
\node[right] at (-++B) {$(1 \mid 4 \mid 2 \mid 3)$};
\node[above left] at (-+-D) {$(1 \mid 2 \mid 4 \mid 3)$};
\node[right] at (++-E) {$(4 \mid 2 \mid 3 \mid 1)$};
\node[left] at (+--B) {$(3 \mid 2 \mid 4 \mid 1)$};
\node[below right] at (+-+D) {$(3 \mid 4 \mid 2 \mid 1)$};
\node[left] at (--+E) {$(1 \mid 3 \mid 2 \mid 4)$};
\draw[edge,back] (-A) -- (-B);
\draw[edge,back] (-A) -- (-C);
\draw[edge,back,very thick] (-C) -- (-+-E);
\draw[edge,back] (-C) -- (-+-A);
\draw[edge,back] (-+-E) -- (--+E);
\draw[edge,back] (-+-E) -- (-+-D);
\draw[edge,back,very thick] (-+-A) -- (-+-D);
\draw[edge,back] (-+-A) -- (++-A);
\draw[edge,back] (-+-D) -- (-++B);
\draw[edge,back] (++-A) -- (++-D);
\node[vertex] at (-A)     {};
\node[vertex] at (-C)     {};
\node[vertex] at (-+-E)     {};
\node[vertex] at (-+-A)     {};
\node[vertex] at (-+-D)     {};
\node[vertex] at (++-A)     {};
\fill[facet] (C) -- (+-+A) -- (+-+D) -- (+-+E) -- cycle {};
\fill[facet] (+-+A) -- (--+A) -- (--+D) -- (-B) -- (+--B) -- (+-+D) -- cycle {};
\fill[facet] (A) -- (C) -- (+-+E) -- (++-E) -- (++-D) -- (B) -- cycle {};

\draw[red,very thick] (barycentric cs:--+D=0.5,-B=0.5) -- (barycentric cs:+-+A=0.5,+-+D=0.5) -- (barycentric cs:C=0.5,+-+E=0.5) -- (barycentric cs:++-D=0.5,B=0.5);

\node at (barycentric cs:+-+D=0.5,C=0.5) {$(3\;4 \mid 2\;1)$};
\node at (barycentric cs:-+-D=0.5,-C=0.5) {$(2\;1 \mid 3\;4)$};
\draw[edge,very thick] (-B) -- (--+D);
\draw[edge] (-B) -- (+--B);
\draw[edge] (--+D) -- (--+E);
\draw[edge] (--+D) -- (--+A);
\draw[edge] (+--B) -- (+-+D);
\draw[edge] (--+A) -- (+-+A);
\draw[edge,very thick] (+-+D) -- (+-+A);
\draw[edge] (+-+D) -- (+-+E);
\draw[edge] (+-+A) -- (C);
\draw[edge] (-++B) -- (B);
\draw[edge] (++-E) -- (+-+E);
\draw[edge] (++-E) -- (++-D);
\draw[edge,very thick] (+-+E) -- (C);
\draw[edge,very thick] (++-D) -- (B);
\draw[edge] (C) -- (A);
\draw[edge] (B) -- (A);
\node[vertex] at (-B)     {};
\node[vertex] at (--+D)     {};
\node[vertex] at (--+E)     {};
\node[vertex] at (+--B)     {};
\node[vertex] at (--+A)     {};
\node[vertex] at (+-+D)     {};
\node[vertex] at (+-+A)     {};
\node[vertex] at (-++B)     {};
\node[vertex] at (++-E)     {};
\node[vertex] at (+-+E)     {};
\node[vertex] at (++-D)     {};
\node[vertex] at (C)     {};
\node[vertex] at (B)     {};
\node[vertex] at (A)     {};
\end{tikzpicture}
  \caption{The layer of $\cel(4,3)$ corresponding to the permutation
    $\sigma=2\,1\,3\,4$.  Selected cells are labeled, and the core (a copy of
    $P(3,\mathcal{W}(\sigma),3)$, which is the boundary of a $P(3)$) is
    highlighted in red.  The image of $\spin_{\sigma,3}$ is combinatorially the
    double of the layer along the top and bottom: in the additional cells, $2\,1$
    is replaced by $1\,2$.} \label{fig:inj}
\end{figure}
In fact, this bijection is not just combinatorial, but can be understood from
several points of view: via cellular maps, cellular chains, or configurations.
In the next subsection, we will construct an injective cellular map
\[[0,1]^{\#\sigma} \times P(n-\#\sigma,\mathcal{W}(\sigma),w) \to \cel(n,w).\]
which matches each product of $[0,1]^{\#\sigma}$ with an $(k-\#\sigma)$-cell to a
$k$-cell in $L(\sigma)$.  In particular, the image of
$\{\frac{1}{2}\}^{\#\sigma} \times P(n-\#\sigma,\mathcal{W}(\sigma),w)$ forms a
``core'' or ``spine'' inside the layer, as illustrated in Figure \ref{fig:inj}.
In fact, this will be the restriction of a (likewise injective and cellular) map
\[\spin_{\sigma,w}:T^{\#\sigma} \times P(n-\#\sigma,\mathcal{W}(\sigma),w) \to
L(\leq \sigma),\]
to one top cell of the torus.  Here $T^{\#\sigma}$ is given a product cell
structure induced by a cell structure on $S^1$ with two vertices and two edges.
In turn, $\spin_{\sigma,w}$ is the restriction of a map
\[\spin_\sigma:T^{\#\sigma} \times P(n-\#\sigma) \to \cel(n).\]

From the point of view of cellular chains, $\spin_{\sigma,w}$ induces a chain map
\[i_\sigma:C_{*-\#\sigma}(P(n-\#\sigma,\mathcal{W}(\sigma),w)) \to
C_*(L(\leq \sigma))\]
via
\[i_\sigma(z)=\spin_{\sigma,w}([T^{\#\sigma}] \times z).\]
In the other direction, we get a surjective chain map
\[p_\sigma:C_*(L(\leq \sigma)) \to C_{*-\#\sigma}(P(n-\#\sigma,\mathcal{W}(\sigma),w))\]
in which cells of $L(<\sigma)$ (defined as
$L(<\sigma)=\bigcup_{\tau \prec \sigma} L(\tau)$) are sent to $0$ and cells of
$L(\sigma)$ are sent to cells of $P(n-\#\sigma,\mathcal{W}(\sigma),w)$, up to
sign.  From the above discussion, one sees that $p_\sigma \circ i_\sigma=\id$.
Together $p_\sigma$ and $i_\sigma$ give a splitting
\begin{equation} \label{split}
  H_*(L(\leq \sigma)) \cong H_*(L(<\sigma)) \oplus
  H_{*-\#\sigma}(P(n-\#\sigma,\mathcal{W}(\sigma),w)).
\end{equation}
This implies Theorem \ref{thm:split}, once we produce 
the map
$\spin_\sigma$.

Finally, from the configuration point of view, the map $\spin_{\sigma,w}$
associates a configuration in the weighted no-$(w+1)$-equal space to a torus of
configurations obtained by replacing points of weight $r$ by wheels of size $r$
and spinning those wheels.  This also describes the action of $i_\sigma$ on
cycles.  For example, when
\[\sigma=3\,7\,5\,9\,8\,10\,13\,4\,15\,17\,14\,18\,20\,21\,12\,2\,22\,16\,23\,6\,11\,1\,24\,19,\]
$i_\sigma$ sends the $3$-cycle in Figure \ref{nokcycle} to the $14$-cycle in
Figure \ref{14-cycle}.  For points in the core, the disks in the wheel are in
the ``standard'' vertically ordered position.

\begin{rmk} \label{rmk:crit-cells}
  Instead of using the splitting, one can prove Theorem \ref{thm:split} directly
  by constructing a discrete gradient vector field and applying Lemma
  \ref{lem-max-basis}.  Put a total ordering $\prec$ on the cells of $\cel(n,w)$
  as follows:
  \begin{enumerate}[(i)]
  \item If $\sigma(g) \prec \sigma(f)$, then $g \prec f$.
  \item If $\sigma(g)=\sigma(f)$, then use the previously defined ordering on
    cells of $P(n-\#\sigma,\mathcal{W}(\sigma),w)$.  This ordering is based on
    an ordering on the wheels of $\sigma$; for this we order first by number of
    elements, then by axle.
  \end{enumerate}
  This ordering induces a discrete gradient vector field on $\cel(n,w)$ which restricts
  to that on $P(n-\#\sigma,\mathcal{W}(\sigma),w)$ on each layer.  The images of
  the bases for $H_*(P(n-\#\sigma,\mathcal{W}(\sigma),w)$ under $i_\sigma$, for
  all $\sigma$, give us a set of cycles to which we can apply Lemma
  \ref{lem-max-basis}.

  For later reference, we describe the set of critical cells of this discrete
  gradient vector field in a self-contained way.  Order wheels in a layer first
  according to their number of elements and then according to their largest
  element (\strong{axle}).  A block is a \strong{unicycle} if it consists of a single
  wheel, that is, if its largest element comes first.  We assign some blocks to
  leader--follower pairs by walking left to right: a block is a \strong{follower}
  if it follows a unicycle (its \strong{leader}) which is not itself a follower and
  whose wheel is smaller than any of the follower's wheels.  A cell of
  $\cel(n,w)$ is critical if every block is either
  \begin{enumerate}[(i)]
  \item A unicycle which is not a follower.
  \item A follower such that it and its leader have at least $w+1$ elements in
    total.
  \end{enumerate}
\end{rmk}

\subsection{Maps between permutohedra}
To complete the proof of Theorem \ref{thm:split}, we must still describe the map
$\spin_\sigma:T^{\#\sigma} \times P(n-\#\sigma) \to \cel(n)$.  Informally, points in
$P(n-\#\sigma)$ encode positions of the wheels of $\sigma$ relative to each
other, whereas points in the torus encode positions of disks inside the wheels,
which can vary as in Figure \ref{fig:wheel}.  In particular, the image
of $\spin_\sigma$ will be the union of the $2^{\#\sigma}$ top-dimensional cells
obtained from $\sigma$ by ``spinning its wheels'', that is, by applying
permutations such as those depicted in Figure \ref{fig:wheel2}.
\begin{figure}[h]
  \begin{tikzpicture}[scale=0.6]
    \draw[blue, thick] (6,0) circle (0.5);
    \draw[blue, thick] (6,1) circle (0.5);
    \draw[blue, thick] (6,2) circle (0.5);
    \draw[blue, thick] (6,3) circle (0.5);
    \draw[blue, thick] (6,4) circle (0.5);
    \draw(6,3.5) circle (1);
    \draw(6,3) circle (1.5);
    \draw(6,2.5) circle (2);
    \draw(6,2) circle (2.5);
    \node[blue] at (6,0) {\bf 7};
    \node[blue] at (6,1) {\bf 1};
    \node[blue] at (6,2) {\bf 11};
    \node[blue] at (6,3) {\bf 6};
    \node[blue] at (6,4) {\bf 23};

    \draw[blue, thick] (12,0) circle (0.5);
    \draw[blue, thick] (12,1) circle (0.5);
    \draw[blue, thick] (12,2) circle (0.5);
    \draw[blue, thick] (12,3) circle (0.5);
    \draw[blue, thick] (12,4) circle (0.5);
    \draw(12,1.5) circle (1);
    \draw(12,2) circle (1.5);
    \draw(12,1.5) circle (2);
    \draw(12,2) circle (2.5);
    \node[blue] at (12,0) {\bf 1};
    \node[blue] at (12,1) {\bf 23};
    \node[blue] at (12,2) {\bf 6};
    \node[blue] at (12,3) {\bf 11};
    \node[blue] at (12,4) {\bf 7};

    \draw[blue, thick] (18,0) circle (0.5);
    \draw[blue, thick] (18,1) circle (0.5);
    \draw[blue, thick] (18,2) circle (0.5);
    \draw[blue, thick] (18,3) circle (0.5);
    \draw[blue, thick] (18,4) circle (0.5);
    \draw(18,1.5) circle (1);
    \draw(18,2) circle (1.5);
    \draw(18,2.5) circle (2);
    \draw(18,2) circle (2.5);
    \node[blue] at (18,0) {\bf 7};
    \node[blue] at (18,1) {\bf 6};
    \node[blue] at (18,2) {\bf 23};
    \node[blue] at (18,3) {\bf 11};
    \node[blue] at (18,4) {\bf 1};
  \end{tikzpicture}
  \caption{Some configurations of a wheel with $5$ disks which give different
    orderings, the first of which is the ``name'' of the wheel.}
  \label{fig:wheel2}
\end{figure}
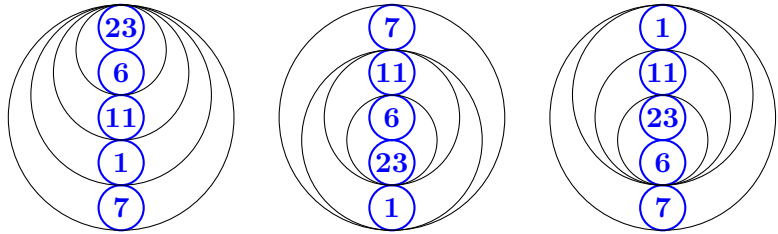

To make this precise, we start with the following lemma:
\begin{lem} \label{lem:hyperplane}
  Let $A$ be a finite set and $\mathbf{a}$, $\mathbf{b}$, $\mathbf{c}$ additional
  symbols not in $A$.  Then there is a map
  \[\iota:[0,1] \times P(A \cup \{\mathbf{a}\}) \to P(A\cup\{\mathbf{b,c}\})\]
  with the following properties:
  \begin{enumerate}[(i)]
  \item It is a homeomorphism and a cellular map.  That is, it sends every
    $k$-face to a disk which is a union of $k$-faces.
  \item It is equivariant with respect to 
    the $\mathbb{Z}/2\mathbb{Z}$-actions given by $t \mapsto -t$ on the domain
    and $\mathbf{b} \leftrightarrow \mathbf{c}$ on the codomain.
  \item For each cell $\sigma$ of $P(A \cup \{\mathbf{a}\})$,
    $\iota([0,1] \times \sigma)$ is the cell of
    $P(A \cup \{\mathbf{b},\mathbf{c}\})$ with the same blocks except that
    $\mathbf{a}$ is replaced in its block by $\mathbf{b}$ and $\mathbf{c}$.
  \item It takes $\{0\} \times P(A \cup \{\mathbf{a}\})$ to the union of
    those cells in which $\mathbf{b}$ and $\mathbf{c}$ are contained in separate
    blocks with $\mathbf{b}$ preceding $\mathbf{c}$.  Similarly, it takes
    $\{1\} \times P(A \cup \{\mathbf{a}\})$ to the union of those cells in which
    $\mathbf{c}$ precedes $\mathbf{b}$.
  \end{enumerate}
\end{lem}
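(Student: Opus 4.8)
The plan is to realize both permutohedra as zonotopes and to build $\iota$ as a fiberwise reparametrization of a linear projection between them.

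First I would fix a realization: for a finite set $S$, let $\Pi(S)=\sum_{\{s,t\}\subseteq S}[0,\,e_s-e_t]\subseteq\mathbb{R}^{S}$, a translate of the permutohedron $P(S)$. Its faces are labelled by the ordered set partitions of $S$ (the cell for $\alpha$ having dimension $|S|$ minus the number of blocks of $\alpha$), a generator $e_s-e_t$ lies in the linear span of a cell exactly when $s,t$ share a block of its symbol, and — since the face structure of a zonotope depends only on the central hyperplane arrangement of its generators — this combinatorial type is unchanged if generators are rescaled or replaced by parallel copies. Identify the symbol $\mathbf{a}$ with the vector $\tfrac12(e_{\mathbf{b}}+e_{\mathbf{c}})$, set $V=\{x\in\mathbb{R}^{A\cup\{\mathbf{b},\mathbf{c}\}}:x_{\mathbf{b}}=x_{\mathbf{c}}\}\cong\mathbb{R}^{A\cup\{\mathbf{a}\}}$, and let $\pi$ be the projection with kernel $\mathbb{R}(e_{\mathbf{b}}-e_{\mathbf{c}})$ and image $V$.

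Next, writing $Z=\Pi(A\cup\{\mathbf{b},\mathbf{c}\})$, I would compute $\pi(Z)=\sum_{\{s,t\}}[0,\pi(e_s-e_t)]$: the generators with $s,t\in A$ are unchanged, while $\pi(e_s-e_{\mathbf{b}})=\pi(e_s-e_{\mathbf{c}})=e_s-e_{\mathbf{a}}$ and $\pi(e_{\mathbf{b}}-e_{\mathbf{c}})=0$, so $\pi(Z)=\sum_{\{s,t\}\subseteq A}[0,e_s-e_t]+\sum_{s\in A}2[0,e_s-e_{\mathbf{a}}]$. By the rescaling remark this polytope $Q:=\pi(Z)$ is combinatorially the permutohedron $P(A\cup\{\mathbf{a}\})$, so I fix a cellular homeomorphism $\psi\colon P(A\cup\{\mathbf{a}\})\to Q$ respecting the labelling of cells by ordered set partitions of $A\cup\{\mathbf{a}\}$. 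Now I would analyze $\pi|_Z\colon Z\to Q$. Since $Z=Z'+[0,e_{\mathbf{b}}-e_{\mathbf{c}}]$ where $Z'$ is the sub-zonotope on the remaining generators, every fiber $\pi^{-1}(q)\cap Z$ is a segment in direction $e_{\mathbf{b}}-e_{\mathbf{c}}$ of length at least $|e_{\mathbf{b}}-e_{\mathbf{c}}|>0$, with endpoints $\lambda_{-}(q)$ on the lower face of $Z$ (in the direction $e_{\mathbf{b}}-e_{\mathbf{c}}$) and $\lambda_{+}(q)$ on the upper face, depending piecewise-linearly on $q$. The facet $(B_1\mid B_2)$ of $Z$ has outward normal a scalar multiple of $\mathbf{1}_{B_2}-\mathbf{1}_{B_1}$, which pairs negatively with $e_{\mathbf{b}}-e_{\mathbf{c}}$ exactly when $\mathbf{b}\in B_1$ and $\mathbf{c}\in B_2$; hence the lower face of $Z$ is the subcomplex $Y_{\mathbf{b}\prec\mathbf{c}}$ of cells whose symbol places $\mathbf{b}$ in a strictly earlier block than $\mathbf{c}$, the upper face is $Y_{\mathbf{c}\prec\mathbf{b}}$, each projects homeomorphically onto $Q$, and the remaining facets — those with $\mathbf{b},\mathbf{c}$ in a common block, equivalently those whose span contains $e_{\mathbf{b}}-e_{\mathbf{c}}$ — project onto $\partial Q$. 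I then define $\iota(t,x)=\lambda_{-}(\psi(x))+t\bigl(\lambda_{+}(\psi(x))-\lambda_{-}(\psi(x))\bigr)$.

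It remains to verify (i)--(iv). That $\iota$ is a homeomorphism onto $Z=P(A\cup\{\mathbf{b},\mathbf{c}\})$ is clear, since it sends each segment $[0,1]\times\{x\}$ affinely onto the fiber over $\psi(x)$ and all the data is continuous; for the cellular half of (i) together with (iii), $\iota([0,1]\times\overline{\sigma})$ is the closure of the union of the fibers over the open cell $\psi(\sigma)$ of $Q$, which is precisely the closed cell of $Z$ obtained from $\sigma$ by replacing $\mathbf{a}$ by $\{\mathbf{b},\mathbf{c}\}$ in its block — the relative interior lands in the corresponding open cell because the fiber direction $e_{\mathbf{b}}-e_{\mathbf{c}}$ is interior to that cell exactly when $\mathbf{b},\mathbf{c}$ share its block. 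Property (ii) holds because swapping the coordinates $\mathbf{b}\leftrightarrow\mathbf{c}$ preserves $Z$, fixes $V$ and hence $Q$ pointwise, commutes with $\pi$, and interchanges the upper and lower faces, so it swaps $\lambda_{+}\leftrightarrow\lambda_{-}$ and thus conjugates $\iota$ with the reflection of the interval; and (iv) is the identity $\iota(\{0\}\times P(A\cup\{\mathbf{a}\}))=\lambda_{-}(Q)=Y_{\mathbf{b}\prec\mathbf{c}}$, with the symmetric statement at $t=1$. I expect the only real work to lie in the previous paragraph: pinning down that the lower and upper faces of $Z$ are exactly $Y_{\mathbf{b}\prec\mathbf{c}}$ and $Y_{\mathbf{c}\prec\mathbf{b}}$, and that each product cell maps onto exactly one closed cell of $Z$ — that is, matching the projection-of-polytopes picture against the symbol combinatorics; everything else is formal. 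A more self-contained alternative is induction on $|A|$: construct $\iota$ on $[0,1]\times\partial P(A\cup\{\mathbf{a}\})$ from the maps already built for the proper faces (whose $\mathbf{a}$-block is a proper subset of $A\cup\{\mathbf{a}\}$), extend over $\{0,1\}\times P(A\cup\{\mathbf{a}\})$ by cellular homeomorphisms onto $Y_{\mathbf{b}\prec\mathbf{c}}$ and $Y_{\mathbf{c}\prec\mathbf{b}}$, and cone off to the interior from a chosen barycenter.
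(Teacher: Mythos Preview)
Your approach is correct and essentially the same as the paper's: both exploit the zonotope structure of the permutohedron, identify a copy of $P(A\cup\{\mathbf a\})$ inside $P(A\cup\{\mathbf b,\mathbf c\})$ via the hyperplane $\{x_{\mathbf b}=x_{\mathbf c}\}$ using the fact that rescaling generators does not change the combinatorial type of a zonotope, and then parametrize the fibers in the direction $e_{\mathbf b}-e_{\mathbf c}$. The only cosmetic difference is that the paper works with the slice $P(n)\cap\{x_{n-1}=x_n\}$ and a radius function $R(x)$, whereas you project along $e_{\mathbf b}-e_{\mathbf c}$ and use the fiber endpoints $\lambda_\pm$; by the $\mathbf b\leftrightarrow\mathbf c$ symmetry the projection equals the slice, so these give the same polytope and the same map.
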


\begin{figure}
  \begin{tikzpicture}%
	[x={(-0.413993cm, -0.273860cm)},
	y={(0.910280cm, -0.124463cm)},
	z={(-0.000084cm, 0.953682cm)},
	scale=1,
	top/.style={very thick,draw=blue!95!black,fill=blue,fill opacity=0.3},
	back/.style={dotted},
	edge/.style={color=blue!95!black},
	facet/.style={fill=gray,fill opacity=0.300000},
	back vertex/.style={inner sep=1.3pt,circle,draw=gray,fill=gray,thick},
	vertex/.style={inner sep=1.3pt,circle,draw=black!25!black,fill=black!75!black,thick}]
%
%

\coordinate (-A) at (-0.70711, -1.22474, -1.73205);
\coordinate (-B) at (-0.70711, -2.04124, -0.57735);
\coordinate (-C) at (-1.41421, 0.00000, -1.73205);
\coordinate (--+D) at (-1.41421, -1.63299, 0.57735);
\coordinate (-+-E) at (-2.12132, 0.40825, -0.57735);
\coordinate (--+E) at (-2.12132, -0.40825, 0.57735);
\coordinate (+--A) at (0.70711, -1.22474, -1.73205);
\coordinate (+--B) at (0.70711, -2.04124, -0.57735);
\coordinate (-+-A) at (-0.70711, 1.22474, -1.73205);
\coordinate (--+A) at (-0.70711, -1.22474, 1.73205);
\coordinate (-+-D) at (-1.41421, 1.63299, -0.57735);
\coordinate (-+C) at (-1.41421, 0.00000, 1.73205);
\coordinate (+-C) at (1.41421, 0.00000, -1.73205);
\coordinate (+-+D) at (1.41421, -1.63299, 0.57735);
\coordinate (++-A) at (0.70711, 1.22474, -1.73205);
\coordinate (+-+A) at (0.70711, -1.22474, 1.73205);
\coordinate (-++B) at (-0.70711, 2.04124, 0.57735);
\coordinate (-++A) at (-0.70711, 1.22474, 1.73205);
\coordinate (++-E) at (2.12132, 0.40825, -0.57735);
\coordinate (+-+E) at (2.12132, -0.40825, 0.57735);
\coordinate (++-D) at (1.41421, 1.63299, -0.57735);
\coordinate (C) at (1.41421, 0.00000, 1.73205);
\coordinate (B) at (0.70711, 2.04124, 0.57735);
\coordinate (A) at (0.70711, 1.22474, 1.73205);

\draw[edge,back] (-A) -- (-B);
\draw[edge,back] (-A) -- (-C);
\draw[edge,back] (-A) -- (+--A);
\draw[edge,back] (-C) -- (-+-E);
\draw[edge,back] (-C) -- (-+-A);
\draw[edge,back] (-+-E) -- (--+E);
\draw[edge,back] (-+-E) -- (-+-D);
\draw[edge,back] (-B) -- (--+D);
\draw[edge,back] (-B) -- (+--B);
\draw[edge,back] (--+D) -- (--+E);
\draw[edge,back] (--+D) -- (--+A);
\draw[edge,back] (--+E) -- (-+C);
\node[back vertex] at (-A)     {};
\node[back vertex] at (-C)     {};
\node[back vertex] at (-+-E)     {};
\node[back vertex] at (--+E)     {};
\node[back vertex] at (--+D)     {};
\node[back vertex] at (-B)     {};
\draw[red,dotted]  (barycentric cs:-++B=0.5,-+-D=0.5) -- (barycentric cs:--+E=0.5,-+C=0.5) -- (barycentric cs:--+D=0.5,--+A=0.5) -- (barycentric cs:+--B=0.5,+-+D=0.5);
\fill[red,opacity=0.1] (barycentric cs:+--B=0.5,+-+D=0.5) -- (barycentric cs:++-E=0.5,+-C=0.5) -- (barycentric cs:++-D=0.5,++-A=0.5) -- (barycentric cs:-++B=0.5,-+-D=0.5) -- (barycentric cs:--+E=0.5,-+C=0.5) -- (barycentric cs:--+D=0.5,--+A=0.5) -- cycle;
\filldraw[top] (C) -- (+-+A) -- (+-+D) -- (+-+E) -- cycle {};
\filldraw[top] (A) -- (-++A) -- (-+C) -- (--+A) -- (+-+A) -- (C) -- cycle {};
\fill[facet] (-+-A) -- (++-A) -- (++-D) -- (B) -- (-++B) -- (-+-D) -- cycle {};
\fill[facet] (+-C) -- (++-E) -- (++-D) -- (++-A) -- cycle {};
\filldraw[top] (A) -- (-++A) -- (-++B) -- (B) -- cycle {};
\fill[facet] (+-+E) -- (+-+D) -- (+--B) -- (+--A) -- (+-C) -- (++-E) -- cycle {};
\filldraw[top] (A) -- (C) -- (+-+E) -- (++-E) -- (++-D) -- (B) -- cycle {};

\draw[red,very thick] (barycentric cs:+--B=0.5,+-+D=0.5) -- (barycentric cs:++-E=0.5,+-C=0.5) -- (barycentric cs:++-D=0.5,++-A=0.5) -- (barycentric cs:-++B=0.5,-+-D=0.5);

\draw[edge] (+--A) -- (+--B);
\draw[edge] (+--A) -- (+-C);
\draw[edge] (+--B) -- (+-+D);
\draw[edge] (--+A) -- (-+C);
\draw[edge] (--+A) -- (+-+A);
\draw[edge] (-+C) -- (-++A);
\draw[edge] (+-C) -- (++-E);
\draw[edge] (+-+D) -- (+-+A);
\draw[edge] (+-+D) -- (+-+E);
\draw[edge] (+-+A) -- (C);
\draw[edge] (-++B) -- (-++A);
\draw[edge] (-++B) -- (B);
\draw[edge] (-++A) -- (A);
\draw[edge] (++-E) -- (+-+E);
\draw[edge] (++-E) -- (1.41421, 1.63299, -0.57735);
\draw[edge] (+-+E) -- (C);
\draw[edge] (++-D) -- (B);
\draw[edge] (C) -- (A);
\draw[edge] (B) -- (A);
\draw[edge] (++-A) -- (++-D);
\draw[edge] (-+-A) -- (-+-D);
\draw[edge] (-+-A) -- (++-A);
\draw[edge] (-+-D) -- (-++B);
\draw[edge] (+-C) -- (++-A);
\node[vertex] at (+--A)     {};
\node[vertex] at (-++A)     {};
\node[vertex] at (+--B)     {};
\node[vertex] at (-++B)     {};
\node[vertex] at (--+A)     {};
\node[vertex] at (++-A)     {};
\node[vertex] at (-+C)     {};
\node[vertex] at (+-C)     {};
\node[vertex] at (+-+D)     {};
\node[vertex] at (-+-D)     {};
\node[vertex] at (+-+A)     {};
\node[vertex] at (-+-A)     {};
\node[vertex] at (++-E)     {};
\node[vertex] at (+-+E)     {};
\node[vertex] at (++-D)     {};
\node[vertex] at (C)     {};
\node[vertex] at (B)     {};
\node[vertex] at (A)     {};
\end{tikzpicture}
  \caption{The permutohedron $P(4)$ and the homeomorphism
    $\iota:[0,1] \times P(3) \to P(4)$. The images of
    $\{\nicefrac{1}{2}\} \times P(3)$ (red) and $\{1\} \times P(3)$ (blue) are
    highlighted.}
\end{figure}
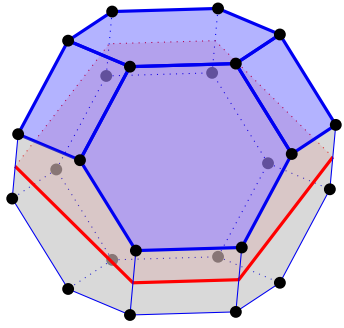

Before proving the lemma, we use it to construct $\spin_\sigma$.  First, notice
that for any ordering of $A \cup \{\mathbf{b},\mathbf{c}\}$, the lemma gives us a
well-defined map to the corresponding top cell of
$\cel(A\cup\{\mathbf{b},\mathbf{c}\})$.  In particular, if we are instead given
an ordering of $A \cup \{\mathbf{a}\}$ we have two choices: we can replace
$\mathbf{a}$ by $\mathbf{b}\,\mathbf{c}$ or by $\mathbf{c}\,\mathbf{b}$.
Moreover, these choices coincide on $\{0,1\} \times P(A \cup \{\mathbf{a}\})$.
Identifying the two subspaces gives a map
\[S^1 \times P(A \cup \{\mathbf{a}\}) \to \cel(A\cup\{\mathbf{b},\mathbf{c}\}).\]
Since top cells of $\cel(A \cup \{\mathbf{a}\})$ correspond to orderings of
$A \cup \{\mathbf{a}\}$, this gives us a map
\[\spin:S^1 \times \cel(A \cup \{\mathbf{a}\}) \to \cel(A\cup\{\mathbf{b},\mathbf{c}\}).\]
The equivariance of $\iota$ means that $\spin$ is well-defined.

We build $\spin_\sigma$ by iterating the map $\spin$.  Start by thinking of
$P(n-\#\sigma)$ as a top cell of $\cel(\{\text{wheels of }\sigma\})$.  At each
step, we replace a wheel with $k$ elements by a singleton and a wheel with $k-1$
elements, applying $\spin$ to this splitting.


From this description, it is evident that
\begin{align*}
  \spin_\sigma(P(n-\#\sigma,\mathcal{W}(\sigma),w)) &= \spin_\sigma(P(n)) \cap \cel(n,w).
\end{align*}

\begin{proof}[Proof of Lemma \ref{lem:hyperplane}]
  In the proof, we will use a slightly different notation than in the statement
  of the lemma: we will replace the finite set $A$ by $\{1,\ldots,n-2\}$ and the
  symbols $\mathbf a$, $\mathbf b$, and $\mathbf c$ by $n-1$, $n-1$, and $n$,
  respectively.  This lets us write coordinates in $\mathbb{R}^n$ more easily.

  The permutohedron $P(n)$ is a \strong{zonotope}, that is, the Minkowski sum of a
  set of line segments.  We refer to \cite[\S7.3]{Ziegler} for known facts about
  zonotopes.

  The standard permutohedron $P(n)$ is the zonotope in $\mathbb{R}^n$ which is
  the Minkowski sum of the ${n \choose 2}$ segments connecting every pair of
  standard unit basis vectors.  That is,
  \[P(n)=\Bigl\{\sum_{1 \leq i<j \leq n} a_{ij}\mathbf{e}_i+(1-a_{ij})\mathbf{e}_j :
  0 \leq a_{ij} \leq 1\Bigr\}.\]
  Evidently, choosing a different set of $n$ linearly independent vectors in any
  Euclidean space gives a linearly equivalent polytope.  Less obviously, the
  combinatorial structure of a zonotope only depends on the oriented matroid
  associated to the set of line segments.  
  In particular, changing the lengths of the line segments without changing their
  direction does not change the combinatorial structure.

  We use this fact to prove the following:
  \begin{lem} \label{comb-equiv}
    The polytope $Z(n)=P(n) \cap \{x_{n-1}=x_n\}$ is combinatorially equivalent to
    $P(n-1)$.
  \end{lem}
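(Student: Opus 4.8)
The plan is to show that $Z(n)$ is \emph{affinely} isomorphic to a zonotope obtained from $P(n-1)$ by rescaling some of its generating segments, and then to invoke the fact recalled just above that rescaling the generators of a zonotope without changing their directions leaves its combinatorial type unchanged.

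First I would introduce the linear surjection $\rho\colon \mathbb{R}^n \to \mathbb{R}^{n-1}$ that ``merges'' the last two coordinates, sending $\mathbf{e}_i \mapsto \mathbf{e}_i$ for $i \le n-2$ and $\mathbf{e}_{n-1}, \mathbf{e}_n \mapsto \mathbf{e}_{n-1}$. Its kernel is spanned by $\mathbf{e}_{n-1} - \mathbf{e}_n$, which is transverse to the hyperplane $H = \{x_{n-1}=x_n\}$, so $\rho|_H$ is an affine isomorphism onto $\mathbb{R}^{n-1}$. Writing $P(n)$ as the Minkowski sum of the segments $[\mathbf{e}_i,\mathbf{e}_j]$, equivalently as a translate $\mathbf{v} + \sum_{i<j}[0,\mathbf{e}_i-\mathbf{e}_j]$, and using $\rho(\mathbf{e}_{n-1}-\mathbf{e}_n)=0$ together with $\rho(\mathbf{e}_i-\mathbf{e}_{n-1}) = \rho(\mathbf{e}_i-\mathbf{e}_n) = \mathbf{e}_i - \mathbf{e}_{n-1}$ for $i \le n-2$, one computes
\[\rho(P(n)) = \rho(\mathbf{v}) + \sum_{i<j\le n-2}[0,\mathbf{e}_i-\mathbf{e}_j] + \sum_{i \le n-2}\bigl[0,\, 2(\mathbf{e}_i - \mathbf{e}_{n-1})\bigr],\]
since $[0,\mathbf{w}]+[0,\mathbf{w}] = [0,2\mathbf{w}]$. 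This is exactly the zonotope presentation of $P(n-1)$ — on the index set $\{1,\dots,n-2\}$ together with one ``merged'' symbol playing the role of coordinate $n-1$ — with each of the $n-2$ segments in a direction $\mathbf{e}_i-\mathbf{e}_{n-1}$ doubled in length. Hence $\rho(P(n))$ is combinatorially equivalent to $P(n-1)$.

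It remains to identify $Z(n) = P(n) \cap H$ with $\rho(P(n))$ through the isomorphism $\rho|_H$, i.e.\ to prove $\rho(Z(n)) = \rho(P(n))$; the inclusion ``$\subseteq$'' is immediate and the reverse inclusion is the real content. Given $y \in \rho(P(n))$, the fiber $F = \rho^{-1}(y) \cap P(n)$ is a nonempty compact convex subset of the affine line $\rho^{-1}(y)$, hence a (possibly degenerate) segment. Here I would use that $P(n)$ carries the coordinate-permuting $S_n$-action, so the transposition $(n-1\ n)$ preserves $P(n)$; it acts on $\mathbb{R}^n$ as the reflection fixing $H$ and moving points along $\ker\rho$, so it preserves each fiber $\rho^{-1}(y)$ and therefore the segment $F$. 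A non-identity affine involution of a line is the reflection about its unique fixed point, and the unique fixed point of $(n-1\ n)$ on $\rho^{-1}(y)$ is the point where that line meets $H$; so if this involution restricted to $F$ is non-identity it must be reflection about the midpoint of $F$, whence that midpoint lies in $H$, while if it is the identity then $F$ is a single point of $H$. Either way $F \cap H \ne \emptyset$, so $y \in \rho(P(n)\cap H) = \rho(Z(n))$.

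Combining the two steps, $Z(n)$ is affinely isomorphic to $\rho(P(n))$, which is combinatorially equivalent to $P(n-1)$, proving the lemma. The delicate point is the fiberwise symmetry argument of the previous paragraph, which is precisely where the special nature of $H$ as a reflection hyperplane of the permutohedron (rather than an arbitrary central hyperplane) is used; a more computational alternative is to substitute the constraint $x_{n-1}=x_n$ directly into the parametrization $P(n)=\{\sum_{i<j}(a_{ij}\mathbf{e}_i+(1-a_{ij})\mathbf{e}_j) : 0\le a_{ij}\le 1\}$, solve for $a_{n-1,n}$, and observe that each pair $(a_{i,n-1},a_{i,n})$ with $i\le n-2$ affects the remaining coordinates only through its sum $a_{i,n-1}+a_{i,n}\in[0,2]$, exhibiting $Z(n)$ directly as the rescaled zonotope above.
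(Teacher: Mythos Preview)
Your proof is correct and rests on the same two ingredients as the paper's: the $(n-1,n)$-symmetry of $P(n)$ and the fact that rescaling zonotope generators preserves combinatorial type. The difference is packaging. The paper works inside $\mathbb{R}^n$: it parametrizes points of $Z(n)$ by the zonotope coordinates $a_{ij}$ of $P(n)$ and uses an averaging trick---take a point of $Z(n)$, swap $\mathbf{e}_{n-1} \leftrightarrow \mathbf{e}_n$ in its zonotope expression, and average the two expressions---to exhibit $Z(n)$ directly as the zonotope with generators $\mathbf{e}_i \leftrightarrow \mathbf{e}_j$ ($i<j\le n-2$) and $\mathbf{e}_{n-1}+\mathbf{e}_n \leftrightarrow 2\mathbf{e}_i$. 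You instead project via $\rho$, compute $\rho(P(n))$ as a Minkowski sum of images (automatic, since linear maps distribute over Minkowski sums), and then use the symmetry to show every $\rho$-fiber through $P(n)$ meets $H$, so $\rho|_{Z(n)}$ is onto $\rho(P(n))$. Your midpoint argument is exactly the geometric content of the paper's averaging step, and the ``computational alternative'' you sketch at the end is essentially the paper's argument. Your projection framing is arguably cleaner because the zonotope structure of $\rho(P(n))$ comes for free; the paper's version keeps everything in the original ambient space, which is convenient because the explicit inclusion $Z(n) \subset P(n)$ is what the surrounding lemma actually uses.
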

  \begin{proof}
    We first show that $Z(n)$ can also be written as
    \begin{equation} \label{eqn:avg}
      \Bigl\{\sum_{1 \leq i<j \leq n} a_{ij}\mathbf{e}_i+(1-a_{ij})\mathbf{e}_j :
      0 \leq a_{ij} \leq 1, a_{i(n-1)}=a_{in}, a_{(n-1)n}=\textstyle{\frac{1}{2}}
      \Bigr\} \subset P(n).
    \end{equation}
    Indeed, suppose that $\mathbf{z} \in Z(n)$, so that
    \[\mathbf{z}=\sum_{1 \leq i<j \leq n} b_{ij}\mathbf{e}_i+(1-b_{ij})\mathbf{e}_j
    \in P(n)\]
    and $z_{n-1}=z_n$.  If we switch $\mathbf{e}_{n-1}$ and $\mathbf{e}_n$ in this
    formula, we get $\mathbf{z}$ back.  We can get an expression as in
    \eqref{eqn:avg} by averaging these two expressions for $\mathbf{z}$, getting
    \[a_{i(n-1)}=a_{in}=\frac{b_{i(n-1)}+b_{in}}{2},\qquad a_{(n-1)n}=\frac{1}{2}.\]

    From \eqref{eqn:avg} we see that $Z(n)$ is the zonotope generated by the
    segments
    \[\mathbf{e}_i \leftrightarrow \mathbf{e}_j, 1 \leq i<j \leq n-2 \quad
    \text{and}\quad\mathbf{e}_{n-1}+\mathbf{e}_n \leftrightarrow 2\mathbf{e}_i,
    1 \leq i \leq n-2.\]
    This zonotope is combinatorially equivalent to one in which each segment
    $\mathbf{e}_{n-1}+\mathbf{e}_n \leftrightarrow 2\mathbf{e}_i$ is replaced by
    $\frac{\mathbf{e}_{n-1}+\mathbf{e}_n}{2} \leftrightarrow \mathbf{e}_i$.  This
    in turn is linearly equivalent to $P(n-1)$.
  \end{proof}

  The lemma gives us an injective map $\iota_{1/2}:P(n-1) \to P(n)$ which is a
  homeomorphism onto its image; it remains to extend it to
  $\iota:[0,1] \times P(n-1) \to P(n)$.  We do this by letting
  \[\iota(t,x)=
  \iota_{1/2}(x)+R(x)(2t-1)(\mathbf{e}_n-\mathbf{e}_{n-1}),\]
  where $R(x)$ is the maximum value for which the image still lies in $P(n)$.
  Property (ii) of the lemma follows immediately from this definition.

  To show that $\iota$ is the desired map, we must show it is a homeomorphism.
  To prove injectivity, it suffices to show that $R(x)$ is always positive.  But
  in fact $R(x) \geq 1/2$, since we can always vary $a_{(n-1)n}$.  Surjectivity
  follows from the fact that $P(n)$ is convex and symmetric about the hyperplane
  $Z(n)$.  Finally, $\iota$ can only be discontinuous if
  $R \circ \iota_{1/2}^{-1}:Z(n) \to \mathbb{R}$ is discontinuous, but convexity
  of $P(n)$ implies that this is a convex function, and therefore continuous.
  Continuity of the inverse likewise follows from the fact that the reciprocal of
  this function is continuous.

  Next we show that $\iota$ is cellular.  Given a face $\sigma$ of $P(n-1)$, we
  know that $\iota_{1/2}(\sigma)$ is a face of $Z(n)$ cut out by a half-space of
  the hyperplane $\{x_{n-1}=x_n\}$.  Extending this half-space orthogonally to
  $\mathbb{R}^n$, we obtain the half-space that cuts out the face
  $\iota([0,1] \times \sigma)$ of $P(n)$.  It follows that
  $\iota(\{0\} \times \sigma)$ and $\iota(\{1\} \times \sigma)$ are also unions
  of faces of $P(n)$ of the appropriate dimension: specifically, each facet of
  $\iota([0,1] \times \sigma)$ which is not the image of $[0,1] \times \tau$ for
  some facet $\tau$ of $\sigma$ is contained in one of those two, depending on
  whether $x_{n-1}-x_n$ is positive or negative for points in that facet.

  To show properties (iii) and (iv), we recall the relationship between symbols
  of cells and the zonotope structure.  Namely, points in a face corresponding to
  a given symbol are those which can be expressed as
  \[\sum_{1 \leq i<j \leq n} a_{ij}\mathbf{e}_i+(1-a_{ij})\mathbf{e}_j\]
  where $a_{ij}$ is $0$ if $j$ is in a later block than $i$ and $1$ if $i$ is in
  a later block than $j$.  When $i$ and $j$ are in the same block, $a_{ij}$ can
  be anything in $[0,1]$.

  From the proof of Lemma \ref{comb-equiv}, it follows that $\iota_{1/2}$ maps a
  face with a given symbol into a face with the same symbol with $n$ added to the
  same block as $n-1$.  Since $\iota$ is cellular, this shows (iii).  Property
  (iv) then also follows from our argument that $\iota$ is cellular.
\end{proof}

\subsection{Proofs of main theorems}
\begin{proof}[Proof of Theorem \ref{thm:basis}]
  Combining the arguments in \S\ref{S:nokequal} and the proof of Theorem
  \ref{thm:split}, we find a basis for $H_*(\cel(n,w))$: for each permutation
  $\sigma \in S_n$ and each basic cycle $z$ of
  $H_j(P(n-\#\sigma,\mathcal{W}(\sigma),w))$, it contains the cycle $i_\sigma(z)$.
  The exact set of cycles we get depends on the correspondence between wheels of
  $\sigma$ and elements of $[n-\#\sigma]$: to make this concrete, we order the
  wheels first by size and then by axle.

  Geometrically, the correspondence $z \mapsto i_\sigma(z)$ matches:
  \begin{itemize}
  \item Points moving in $\mathbb{R}$ to wheels moving in
    $\mathbb{R} \times [0,w]$.
  \item Points moving through each other to wheels moving over and above each
    other, with the wheel with the smaller axle on top.
  \end{itemize}
  We now verify the numbered conditions of Theorem \ref{thm:basis}:
  \begin{itemize}
  \item Condition (i) follows from the way we split $\sigma$ into wheels.
  \item Condition (ii) follows from the definition of the layer $L(\sigma)$.
  \item Conditions (iii) and (iv) follow from Lemma \ref{lem:crit} and the
    ordering of the wheels. \qedhere
  \end{itemize}
\end{proof}
\begin{proof}[Proof of Theorem \ref{thm:FG}]
  Every basic cycle is a concatenation product of wheels and filters.  It is
  enough to show that there are finitely many (unlabeled) types of wheels and
  filters in $\config({-},w)$, and that these have at most $3w/2$ disks and
  generate cycles of dimension at most $3w/2-2$.

  A wheel consists of at most $w$ disks, and a wheel with $k$ disks generates a
  $(k-1)$-cycle.  Filters with at least 3 wheels have at most $3w/2$ disks, and a
  filter with $k$ disks generates a $(k-2)$-cycle.  Filters with 2 wheels $b_1$
  and $b_2$ can be written as $b_2 \concat b_1-b_1 \concat b_2$, and do not need
  to be counted separately.  Therefore $H_*(\config({-},w))$ is spanned by
  concatenation products of cycles in
  $H_{\leq \frac{3}{2}w-2}(\config(\leq 3w/2,w))$.

  Every wheel with the same number of disks has the same shape, and there are
  finitely many shapes of filters of width $w$.  Therefore, $H_*(\config({-},w))$
  is finitely generated as a twisted algebra.
\end{proof}

\section{Betti number growth function}\label{sec:formula}

The paper~\cite{AKM} examines the growth of the dimension of $H_j(\config(n, w))$ for fixed $j$ and $w$ and varying $n$, and shows that it is asymptotically polynomial times exponential.  Having computed a basis for homology, we can now answer the question, is the dimension exactly equal to a polynomial times an exponential?  The answer is that it is a sum of such functions, each with a different base of the exponential.  The polynomials are integer-valued, and the theory of finite difference calculus states that every integer-valued polynomial in $n$ is an integer linear combination of binomial coefficient functions $\binom{n}{a}$ for various $a$.  Thus, in this section we prove the following theorem.

\begin{thm}\label{thm:count-growth}
For fixed $j$ and $w$, as a function of $n$ the dimension of $H_j(\config(n, w))$ is an integer linear combination of terms of the form $\binom{n}{a}b^{n-a}$, where $a$ and $b$ are nonnegative integers.
\end{thm}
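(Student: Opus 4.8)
The plan is to count the basic $j$-cycles supplied by Theorem~\ref{thm:basis} directly, organizing the count as a coefficient extraction $[x^n]$ from an explicit rational generating function. The starting point is the bookkeeping identity: for a basic $j$-cycle with $S$ standalone wheels (wheels not contained in a filter) and $F$ filters one has $j=n-S-2F$, since a standalone wheel on $k$ disks is a $T^{k-1}$ and contributes $k-1$, a filter on $k$ disks is an $S^{r-2}\times T^{k-r}$ and contributes $k-2$, and the disk-counts of all the pieces sum to $n$. In particular every \emph{non-trivial} piece---a standalone wheel with at least two disks, or a filter (which has at least $w+1\ge 3$ disks)---contributes at least $1$ to $j$, so a basic $j$-cycle has at most $j$ non-trivial pieces, involving at most $2wj$ disks in total; every other disk lies in a singleton wheel.

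Next I analyze how the singleton wheels are allowed to sit among the non-trivial pieces. Inspecting conditions (i)--(iv) of Theorem~\ref{thm:basis}, the only constraints involving a free singleton are (iii), which forces the left-to-right order inside each maximal run of wheels-not-in-a-filter to be by descending rank---so inside a run the heavier wheels come first and the singletons follow in decreasing order of label, entirely determined by the \emph{set} of labels placed in the run---and (iv), which imposes, whenever a run abuts a filter on the right, that the labels of all the singletons in that run exceed the largest label of that filter's least wheel. Both constraints are of ``threshold'' type and depend only on bounded core data. I therefore stratify the basic $j$-cycles by their \emph{type} $\tau$: the bounded combinatorial record of the non-trivial pieces (including the internal orderings of the non-axle entries of each wheel and the grouping of wheels into filters), their left-to-right order, and the relative order of the $c_\tau\le 2wj$ ``core'' labels they use, subject to (i)--(iv). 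There are finitely many types for fixed $j$ and $w$.

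For a fixed type $\tau$, choosing the actual values of the $c_\tau$ core labels splits the remaining labels of $[n]$ into $c_\tau+1$ value-intervals (``bands''); by the previous paragraph, whether a given singleton may be inserted into a given ``gap'' between or around the core blocks depends only on which band it lies in, each band $i$ has some positive number $r_{\tau,i}$ of admissible gaps (positive because the gap after the last core block is never forbidden), the singletons in a band may be assigned to admissible gaps independently of one another, and once every singleton is assigned the entire cycle is determined. Hence the number of type-$\tau$ cycles on $[n]$ is $\sum r_{\tau,0}^{m_0}\cdots r_{\tau,c_\tau}^{m_{c_\tau}}$ over compositions $m_0+\cdots+m_{c_\tau}=n-c_\tau$, with generating function the proper rational function $x^{c_\tau}\prod_{i=0}^{c_\tau}(1-r_{\tau,i}x)^{-1}$. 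Summing over the finitely many types shows that $\sum_{n\ge 0}\dim H_j(\config(n,w))\,x^n$ is a proper rational function with integer numerator whose denominator is a product of factors $1-bx$ with $b$ a positive integer; equivalently, for all $n\ge 0$, $\dim H_j(\config(n,w))$ is a polynomial-exponential $\sum_b P_b(n)b^n$ with positive-integer bases and no finitely supported correction term. The conclusion then follows from the standard fact of finite-difference calculus that an integer-valued function of this form is an integer linear combination of the functions $\binom{n}{a}b^{n-a}$---these functions span the relevant rational vector space and form a $\mathbb Z$-basis for the lattice of integer-valued functions in it.

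The crux, and the main obstacle, is the combinatorial analysis of the second and third paragraphs: correctly identifying which placements of singletons relative to wheels and filters are legal under (iii) and (iv), and especially verifying that each per-type generating function is genuinely \emph{proper} with poles only at reciprocals of positive integers. This properness is exactly what eliminates $0^n$-type terms and upgrades the ``eventually polynomial-times-exponential'' behaviour established in \cite{AKM} to an exact formula of the asserted shape valid for every $n$.
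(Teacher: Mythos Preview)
Your combinatorial decomposition is essentially correct and parallels the paper's stratification by ``skyline'' (the arrangement of non-singleton wheels and filters, with labels collapsed). However, the final step contains a genuine error: the claimed ``standard fact'' that an integer-valued function of the form $\sum_b P_b(n)b^n$ with positive-integer bases $b$ must be an integer combination of the functions $\binom{n}{a}b^{n-a}$ is false. A counterexample is $f(n)=(3^n+1)/2$: it is integer-valued for every $n\ge 0$ and lies in the rational span, but any expression $\sum_{a,b} c_{a,b}\binom{n}{a}b^{n-a}$ matching $f$ asymptotically forces the coefficient of $3^n$ to be the constant polynomial $\tfrac12$, hence $c_{0,3}=\tfrac12\notin\mathbb Z$. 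In fact your per-type ordinary generating functions $x^{c_\tau}\prod_i(1-r_{\tau,i}x)^{-1}$ already exhibit this behaviour individually: with all $r_i$'s distinct they yield Stirling numbers $S(n,V)$, and for instance $S(n,2)=2^{n-1}-1$ is \emph{not} an integer combination of the $\binom{n}{a}b^{n-a}$. So integer-valuedness of the total sum is not enough to conclude; some further structure is needed, and your argument supplies none.

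The paper sidesteps this by switching from ordinary to exponential generating functions and using \emph{labeled convolution}. After stratifying by skyline it decomposes each critical cell uniquely as a concatenation of ``tadpoles'' (a run of standalone wheels followed by one filter) and possibly a ``tail'' (a final run with no filter), and observes that the counting function of each factor's cell family is already a single binomial $\binom{n}{k}$ or $\binom{n-1}{k-1}$---hence an integer combination of the $\binom{n}{a}b^{n-a}$. The key lemma is that the labeled convolution of $\binom{n}{a_1}b_1^{n-a_1}$ and $\binom{n}{a_2}b_2^{n-a_2}$ is exactly $\binom{a_1+a_2}{a_1}\binom{n}{a_1+a_2}(b_1+b_2)^{n-(a_1+a_2)}$, so this class of functions is closed under labeled convolution and the conclusion follows. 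Your ordinary-generating-function product enjoys no analogous closure, which is precisely where the argument breaks. The natural repair is to redo your ``distribute the free singletons'' step as a labeled convolution over the tadpole/tail pieces rather than as an ordinary product over bands.
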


Note that the case $b = 0$ is permitted, and represents the function that is equal to $1$ when $n = a$, and $0$ otherwise.

Our proof counts the critical cells from Remark~\ref{rmk:crit-cells}, and decomposes them into concatenation products of factors that are easier to count.  To describe how the counts transform under concatenation product, we introduce the following terminology.  A \emph{\textbf{cell family}} $F$ consists of, for each $n$, a set $F_n$ of cells from $\cel(n)$.  Its \emph{\textbf{counting function}} $f(n)$ is the number of cells in $F_n$.  The concatenation product $F \mid G$ of cell families $F$ and $G$ is the cell family consisting of all concatenation products of a cell in $F$ with a cell in $G$, taken with all possible disk labelings that preserve the order within each factor\footnote{This is closely related to the Day convolution defined in \S\ref{S:twist}, except that $F$ and $G$ need not be FB-subsets of the FB-set of cells of $\cel(-)$, and in our application will not be $S_n$-equivariant.}.  The following lemma shows that to prove our theorem, it suffices to consider separately the various independent factors of a concatenation product.

\begin{lem}\label{lem:egf}
Let $F$ and $G$ be two cell families, such that their counting functions $f(n)$ and $g(n)$ are integer linear combinations of terms of the form $\binom{n}{a}b^{n-a}$, where $a$ and $b$ are nonnegative integers.  Suppose that for every element of their concatenation product $F \ \vert\ G$, there is only one way to write it as the concatenation of an element of $F$ and an element of $G$.  Then the counting function of $F \ \vert\ G$ is also an integer linear combination of terms of the form $\binom{n}{a}b^{n-a}$.
\end{lem}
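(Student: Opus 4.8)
The plan is to recognize the counting function of $F\concat G$ as the \emph{binomial convolution} of $f$ and $g$, and then to check that such convolutions preserve the class of functions described in the statement.

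\textbf{Step 1: the convolution formula.} First I would unwind the definition of $F\concat G$. An element of $(F\concat G)_n$ is built from: an integer $i$ with $0\le i\le n$; a choice of which $i$ of the $n$ labels are used by the left factor, i.e.\ an $i$-element subset $S\subseteq[n]$; a cell of $F_i$, whose labels are then transported to $S$ in an order-preserving way; and a cell of $G_{n-i}$, transported to $[n]\setminus S$ likewise. There are $\binom{n}{i}$ choices for $S$, $f(i)=|F_i|$ choices for the first cell and $g(n-i)=|G_{n-i}|$ for the second. The hypothesis that every element of $F\concat G$ has a \emph{unique} expression as such a concatenation says exactly that distinct such data yield distinct cells of $\cel(n)$; hence
\[
  |(F\concat G)_n| \;=\; \sum_{i=0}^{n}\binom{n}{i}\,f(i)\,g(n-i)\;=:\;h(n).
\]

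\textbf{Step 2: convolution on the generating terms.} Since $h$ is bilinear in the pair $(f,g)$, it suffices to prove the conclusion when $f(n)=\binom{n}{a}b^{n-a}$ and $g(n)=\binom{n}{a'}(b')^{n-a'}$ for fixed nonnegative integers $a,a',b,b'$ (using the convention $0^0=1$, which folds the allowed case $b=0$ into this formula). I would compute this either with exponential generating functions or directly. For the e.g.f.\ route: the e.g.f.\ of $n\mapsto\binom{n}{a}b^{n-a}$ is $\tfrac{x^a}{a!}e^{bx}$, and binomial convolution of sequences corresponds to multiplication of e.g.f.'s, so the e.g.f.\ of $h$ is
\[
  \frac{x^a}{a!}e^{bx}\cdot\frac{x^{a'}}{a'!}e^{b'x}
  \;=\;\frac{x^{a+a'}}{a!\,a'!}\,e^{(b+b')x}
  \;=\;\binom{a+a'}{a}\cdot\frac{x^{a+a'}}{(a+a')!}\,e^{(b+b')x},
\]
which is $\binom{a+a'}{a}$ times the e.g.f.\ of $n\mapsto\binom{n}{a+a'}(b+b')^{n-a-a'}$. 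Reading off the coefficient of $x^n/n!$ gives
\[
  h(n)\;=\;\binom{a+a'}{a}\binom{n}{a+a'}(b+b')^{\,n-a-a'}.
\]
Alternatively the same identity follows by the elementary manipulations $\binom{n}{i}\binom{i}{a}=\binom{n}{a}\binom{n-a}{i-a}$ and $\binom{n-a}{j}\binom{n-a-j}{a'}=\binom{n-a}{a'}\binom{n-a-a'}{j}$ together with the binomial theorem, which also makes the $b=0$ and $b=b'=0$ bookkeeping transparent.

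\textbf{Step 3: conclude.} Since $\binom{a+a'}{a}$ is an integer, the right-hand side above is an integer multiple of a term of the required form $\binom{n}{c}d^{\,n-c}$. Writing $f$ and $g$ as the given finite integer linear combinations of such terms and expanding $h$ bilinearly, $|(F\concat G)_n|=h(n)$ is again a finite integer linear combination of terms $\binom{n}{c}d^{\,n-c}$, as claimed. The only points that really require care are (i) verifying that the uniqueness hypothesis is precisely what turns $|(F\concat G)_n|$ into the clean binomial convolution of Step~1 — without it one would only get an upper bound — and (ii) keeping the degenerate $b=0$ terms consistent, for which the $0^0=1$ convention suffices; there is no substantial obstacle beyond this bookkeeping.
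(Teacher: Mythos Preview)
Your proof is correct and follows essentially the same approach as the paper: both identify the counting function of $F\concat G$ as the labeled (binomial) convolution of $f$ and $g$, compute the exponential generating function of $\binom{n}{a}b^{n-a}$ as $\tfrac{x^a}{a!}e^{bx}$, and multiply to obtain $\binom{a+a'}{a}\binom{n}{a+a'}(b+b')^{n-a-a'}$. Your write-up is slightly more thorough in spelling out why the uniqueness hypothesis is needed and in handling the $b=0$ convention, but the argument is the same.
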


\begin{proof}
Let $f\vert g$ denote the counting function of $F\ \vert\ G$.  Then we have
\[(f\vert g)(n) = \sum_{i+j = n} \binom{i+j}{i} f(i)g(j),\]
which we refer to as the \emph{\textbf{labeled convolution}} of $f$ and $g$.  Exponential generating functions are convenient for dealing with these labeled convolutions; given the exponential generating functions $\sum_{i} f(i)\frac{x^i}{i!}$ for $f$ and $\sum_j g(j) \frac{x^j}{j!}$ for $g$, their product gives the exponential generating function $\sum_n (f\vert g)(n) \frac{x^n}{n!}$ for $f\vert g$.

We can compute the labeled convolution of two terms $\binom{n}{a_1}b_1^{n-a_1}$ and $\binom{n}{a_2}b_2^{n-a_2}$ by converting them to exponential generating functions, taking the product, and then converting the product back.  The exponential generating function of $\binom{n}{a}b^{n-a}$ is
\[\sum_n \binom{n}{a}b^{n-a}\frac{x^n}{n!} = \sum_n \frac{x^a}{a!} \frac{(bx)^{n-a}}{(n-a)!} = \frac{x^a}{a!}e^{bx},\]
so the product of exponential generating functions of $\binom{n}{a_1}b_1^{n-a_1}$ and $\binom{n}{a_2}b_2^{n-a_2}$ is 
\[\frac{x^{a_1 + a_2}}{a_1!a_2!}e^{(b_1 + b_2)x} = \binom{a_1+a_2}{a_1}\frac{x^{a_1+a_2}}{(a_1 + a_2)!}e^{(b_1 + b_2)x},\]
and so the labeled convolution of $\binom{n}{a_1}b_1^{n-a_1}$ and $\binom{n}{a_2}b_2^{n-a_2}$ is $\binom{a_1 + a_2}{a_1}\binom{n}{a_1+a_2}(b_1+b_2)^{n - (a_1 + a_2)}$.  (This identity can also be verified by constructing sets counted by each of the counting functions.)
\end{proof}

Using this lemma, we are ready to prove Theorem~\ref{thm:count-growth}.

\begin{proof}[Proof of Theorem~\ref{thm:count-growth}]
Given $j$ and $w$, the critical cells from Remark~\ref{rmk:crit-cells} form a cell family, and we want to find the counting function of this cell family.  To do this, it helps to count the ways to distribute the singletons separately from counting the ways to arrange the larger wheels.  We define a \emph{\textbf{skyline}} to be a critical cell for which the only singleton blocks are leaders of a filter.  Given any critical cell, we can find its associated skyline by deleting all the non-leader singletons and then shifting the disk labels down so they are consecutive.  For fixed $j$ and $w$, there are only finitely many possible skylines among the critical cells.  Thus it suffices to count the cell family consisting of all critical cells with a given skyline.

For each skyline, its cell family is the concatenation product of other cell families.  We define a \emph{\textbf{tadpole}} to be a critical cell with exactly one filter, which is at the far right, and we define a \emph{\textbf{tail}} to be a critical cell with no filters.  Every critical cell can be written uniquely as a concatenation of some number of tadpoles and possibly one tail on the right.  Thus, it suffices to count the cell family consisting of all tadpoles with a given skyline, and the cell family consisting of all tails with a given skyline.

A tadpole with a singleton as the leader of its filter may have other singletons to the left, but a tadpole with a larger wheel as the leader of its filter may not have any singletons.  Thus, given a tadpole skyline, if the leader of its filter is not a singleton, the counting function of its cell family returns $1$ when the input is the number of disks in the skyline, and $0$ otherwise.  If the leader of its filter is a singleton, then for any tadpole with that skyline, the leader of the filter must be disk $1$.  If $k$ is the total number of disks in the skyline, then the counting function of the tadpole skyline family is $\binom{n-1}{k-1}$, representing the number of ways to choose which disk labels appear in the skyline.  Given a tail skyline, if $k$ is the total number of disks in the skyline (possibly $0$), the counting function of its cell family is $\binom{n}{k}$.

Thus, for each skyline with a given $j$ and $w$, we can compute the counting function of its cell family by taking the labeled convolution of the counting functions of its tadpole skyline and tail skyline factors.  Because each factor has the desired form, so does the labeled convolution, by Lemma~\ref{lem:egf}.  The counting function of the family of all critical cells is the sum of the counting functions of all the skylines, and so it also has the desired form.
\end{proof}

\section{Cohomology ring}\label{sec:cohom}

We now give a basis for $H^j(\config(n,w))$ and describe the cup product
structure in terms of that basis.  We use a similar strategy to that used in
\cite{AKM} to find a lower bound for the dimensions of homology groups.  The main
tool is Poincar\'e--Lefschetz duality: for a compact $2n$-manifold with boundary
$(M,\partial M)$,
\[H^j(M) \cong H_{2n-j}(M,\partial M).\]
Moreover, when homology classes are realized by submanifolds (as they will be in
our case), then:
\begin{enumerate}[(i)]
\item The pairing between classes in $H^j(M)$ and $H_j(M)$ is given by the
  transverse signed intersection number between classes in
  $H_{2n-j}(M,\partial M)$ and $H_j(M)$.
\item The cup product between classes in $H^i(M)$ and $H^j(M)$ is given by the
  transverse intersection map
  \[\pitchfork: H_{2n-i}(M,\partial M) \otimes H_{2n-j}(M,\partial M)
  \to H_{2n-i-j}(M,\partial M).\]
\end{enumerate}
While $\config(n,w)$ as previously described is not a compact $2n$-manifold with
boundary, we can define a homotopy equivalent compact manifold with boundary:
\begin{defn}
  Let $M(n,w) \subset \mathbb{R}^{2n}$ be the configuration space of open disks
  of radius 1 in a strip of any finite length $N>n$ and width $w+\epsi$, for any
  $0<\epsi<1$.
\end{defn}
The boundary consists of those configurations in which a disk
touches either another disk or the boundary of the strip.  It has corners, but every point of the boundary has a neighborhood homeomorphic to a half-space.  (Without the addition of $\epsi$, a boundary configuration with $w$ vertically aligned disks would not have a neighborhood homeomorphic to a half-space.)

Alternatively, in an open manifold without boundary (such as the space of
configurations of $n$ points in $\mathbb{R} \times (0,1)$ of which no more than
$w$ are vertically aligned, as used in Theorem \ref{thm:emb}) Poincar\'e duality
gives an isomorphism $H^j(M) \cong H_{2n-j}^{BM}(M)$.  Here $H_*^{BM}$ indicates
\emph{Borel--Moore homology}, the homology of the complex of locally finite
chains; this is isomorphic to the homology of a compactification relative to the
added points.

\subsection{Basis for cohomology} \label{S:coho-basis}
We will describe a basis for $H_{2n-j}(M(n,w),\partial M(n,w))$ by associating
basis elements to critical $j$-cells of $\cel(n,w)$, as described in Remark
\ref{rmk:crit-cells}.  Given a critical cell with symbol $f$, we define the
submanifold $V(f) \subset M(n,w)$ as the set of configurations such that
\begin{enumerate}[(i)]
\item The disks in each block of $\sigma$ are lined up vertically in order.
\item If a block $b_1$ comes before $b_2$ in $f$, they have at least $w+1$
  elements combined, and one of them is a follower, then the column of disks
  labeled by elements of $b_1$ is to the left of that labeled by elements of
  $b_2$.
\end{enumerate}
To see that this is a submanifold, and that moreover
$\partial V(f)=V(f) \cap \partial M(n,w)$, notice that two columns of disks which
have at least $w+1$ elements combined cannot move past each other while still
satisfying condition (i).

To show that this is a basis, we describe the intersection pairing between these
submanifolds and our generators of $H_j(\config(n,w))$.
\begin{lem}\label{lem:int-pair}
  Let $f$ and $g$ be symbols of critical cells of $\cel(n,w)$.  Write $Z(g)$ for
  the basic cycle corresponding to $g$.  Then:
  \begin{enumerate}[(a)]
  \item $V(g) \pitchfork Z(g)=\pm 1$.
  \item If $V(g) \pitchfork Z(f) \neq 0$, then $g \preceq f$ according to the
    ordering in Remark \ref{rmk:crit-cells}.
  \end{enumerate}
\end{lem}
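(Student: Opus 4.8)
The plan is to perturb $V(g)$ to be transverse to the geometric cycle $Z(f)$ and count the resulting intersection points. Recall from the proof of Theorem~\ref{thm:split} that $Z(f)=i_{\sigma(f)}(z(e_f))$ is supported inside the layer $L(\sigma(f))$, where it is the $\spin_{\sigma(f),w}$-image of $[T^{\#\sigma(f)}]\times z(e_f)$, with $z(e_f)$ the basic cycle of $P(n-\#\sigma(f),\mathcal W(\sigma(f)),w)$ corresponding to $f$; concretely, a configuration in $Z(f)$ consists of the wheels of $\sigma(f)$, each in some rotated state, spaced along the strip as prescribed by $z(e_f)$, and at the core of the torus each wheel sits in the standard vertically ordered position. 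A configuration in $V(g)$, on the other hand, has every block of $g$ occupying a single vertical column in its symbol order. The crux of both parts is to analyze which configurations lie in both.

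For part (a) I would show that $V(g)\cap Z(g)$ is a single configuration, met transversally. Since each wheel of $g$ is a consecutive run inside some block, lying in $V(g)$ forces every wheel of $g$ to be vertically stacked in name order, which pins the torus coordinate of $\spin_{\sigma(g),w}$ to the core and puts the wheels of every filter into a single column. It then remains to locate the configuration inside the spherical factors $S^{r_i-2}$ attached to the leader--follower pairs $(b_1^{(i)}\mid b_2^{(i)})$ of $g$: condition~(i) of $V(g)$ forces the $r_i-1$ wheels comprising the follower block to share an $x$-coordinate, which cuts the $(r_i-2)$-sphere down to two antipodal points, and condition~(ii)---applicable because leader and follower together have $\geq w+1$ disks---selects the one in which the leader wheel lies to the left. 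A local dimension count shows that the tangent space of $V(g)$ at this point (whose complementary directions are exactly the wheel-rotation and filter-spreading directions spanning $T_cZ(g)$) is complementary to $T_cZ(g)$, so the intersection is transverse; since every constraint is a coordinate hyperplane or half-space the local sign is $\pm 1$, and multiplying over the elementary factors gives $V(g)\pitchfork Z(g)=\pm 1$. (After perturbing $V(g)$ slightly off the core, it meets the interior of exactly one of the $2^{\#\sigma(g)}$ top cells making up $[T^{\#\sigma(g)}]$, each of which carries coefficient $\pm1$, so no cancellation occurs.)

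For part (b), let $c\in V(g)\cap Z(f)$. Since $c$ lies on $Z(f)$ it lies in a cell of $L(\sigma(f))$, whose blocks are unions of wheels of $\sigma(f)$ stacked in symbol order; since $c\in V(g)$, each block of $g$ occupies a single vertical column of $c$, stacked in $g$'s order. I would compare these two descriptions of the vertical order in each column of $c$ to conclude $\sigma(g)\preceq\sigma(f)$ lexicographically, with equality arising only when the wheels of $f$ and $g$ coincide; the mechanism is the one already isolated in \S\ref{sec:layers}, that breaking a wheel into two or more wheels strictly decreases $\sigma$ lexicographically. If the inequality is strict we are done by clause~(i) of the ordering in Remark~\ref{rmk:crit-cells}. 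If $\sigma(g)=\sigma(f)=\sigma$, the whole intersection lies in $L(\sigma)$; pulling back by the injective cellular map $\spin_{\sigma,w}$---and using that condition~(i) of $V(g)$ forces the torus coordinate to the core---identifies $V(g)\cap Z(f)$ inside the layer with $\{\ast\}\times\bigl(V'(e_g)\cap z(e_f)\bigr)$, where $V'(e_g)\subset P(n-\#\sigma,\mathcal W(\sigma),w)$ is the subspace cut out by the evident analogues of conditions~(i)--(ii). The problem then reduces to the corresponding statement for weighted no-$(w+1)$-equal spaces: that $V'(e_g)$ meets $z(e_f)$ only for $e_g\preceq e_f$ in the ordering on critical cells of $P({-},{-},w)$ that compares blocks first by the weight of the heaviest wheel and then by axle. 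That statement I would prove directly from the construction of $z(e_f)$ in \S\ref{S:nokequal} as the boundary of a product of permutohedra on the leader--follower groups of $f$: the defining inequalities of $V'(e_g)$ are incompatible with that product unless the grouping of $e_g$ is no finer than that of $e_f$ in precisely that order, and the local count on the product of sphere boundaries again yields $\pm1$, reproving the analogue of part~(a).

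The main obstacle is the combinatorial comparison underlying (b): making precise and verifying that a configuration lying simultaneously in $V(g)$ and in a cell of $L(\sigma(f))$ forces $\sigma(g)\preceq\sigma(f)$ and, in the equality case, $e_g\preceq e_f$, while correctly tracking how the size-then-axle order on wheels interacts with the two block types (non-follower unicycles and followers) in the definitions of both $V({-})$ and the Remark~\ref{rmk:crit-cells} ordering---in particular because the left--right order of the blocks of $g$ need not match the symbol order of $g$ except where condition~(ii) pins it. A secondary technical point is transversality: $V(g)$ is defined by non-strict inequalities and so must be perturbed (or $Z(f)$ perturbed), and one must check the signed count of intersection points is unchanged, especially near the core of the torus where all top cells of $[T^{\#\sigma}]$ meet.
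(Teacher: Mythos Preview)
Your plan is broadly in the right direction, but there is one factual error and one missing simplification that together make the paper's route considerably cleaner.

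First, the error: $Z(f)$ is \emph{not} supported in the single layer $L(\sigma(f))$. The map $\spin_{\sigma,w}$ lands in $L(\leq\sigma)$, and the $2^{\#\sigma}$ top cells of the torus correspond to the $2^{\#\sigma}$ permutations obtained from $\sigma$ by spinning its wheels; all but one of these lie in strictly lower layers, since spinning a wheel typically breaks it into smaller wheels and hence lexicographically decreases the associated permutation. So your step ``since $c\in Z(f)$ it lies in a cell of $L(\sigma(f))$'' must be weakened to $L(\leq\sigma(f))$, and your subsequent comparison of vertical orders does not by itself yield $\sigma(g)\preceq\sigma(f)$.

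Second, and more importantly, the paper does not attempt to compare $\sigma(g)$ with $\sigma(f)$ or to reduce to the weighted no-$(w+1)$-equal case at all. Instead it passes to the \emph{cellular} cocycle $\nu(g)$ obtained by intersecting embedded cells (via Theorem~\ref{thm:emb}) with $V(g)$. The key observation---which your plan is missing---is that condition~(i) of $V(g)$ forces any $j$-cell $h$ in the support of $\nu(g)$ to have \emph{exactly the same set of blocks as $g$}, possibly in a different order: disks in a block of $g$ must share an $x$-coordinate, so each block of $g$ lies inside a block of $h$, and equality of dimensions forces the block sets to coincide. In particular $\sigma(h)=\sigma(g)$ automatically, and the unique intersection point with the embedded $h$ is its barycenter. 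This makes (a) an immediate consequence of (b), and reduces (b) to a purely combinatorial claim: if $h$ has $g$'s blocks in some order compatible with condition~(ii), then $g\preceq h$. The paper handles this with a short three-case analysis on the first block where $g$ and $h$ differ, using only the leader--follower structure of critical cells. Since every cell $h$ in the support of $Z(f)$ satisfies $h\preceq f$ by the maximum-cell property of basic cycles, one gets $g\preceq h\preceq f$.

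Your geometric route can be repaired---once you know $\sigma(h)=\sigma(g)$, your layer comparison becomes trivial and the equality case is exactly the block-ordering comparison above---but the cellular cocycle viewpoint sidesteps the transversality and perturbation issues you flag and turns the ``main obstacle'' you identify into a brief case check, with no need to pull back through $\spin_{\sigma,w}$ or invoke the permutohedral analogue separately.
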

From the lemma, we see that under the ordering of the critical cells by $\prec$,
the intersection pairing is described by a triangular matrix.  Therefore, the
$V(g)$ form a basis for $H_{2n-j}(M(n,w),\partial M(n,w))$.
\begin{proof}
  We use the embedding of Theorem \ref{thm:emb} to associate $V(g)$ to a cellular
  $j$-cocycle $\nu(g)$ in $\cel(n,w)$.  The value of $\nu(g)$ on a $j$-cell is
  given by the signed intersection number of the embedded cell with $V(g)$.

  Suppose that a symbol $h$ is in the support of $\nu(g)$.  By comparing
  condition (i) of the definition of $V(g)$ to the construction of the embedding
  in Theorem \ref{thm:emb}, we see that $f$ must have the same set of blocks as
  $g$, although possibly in a different order; in particular, $h$ and $g$ are in
  the same layer.  Moreover, the barycenter of $h$ is the unique point of
  intersection.  Condition (ii) restricts the possible orderings.

  Now suppose that $h \neq g$.  We look at the first block, say $h_i$ and $g_i$,
  where they differ.  This means that $h_i$ is a block which occurs later in
  $g_i$, say $g_j$.  The structure of critical cells gives us the following
  possibilities:
  \begin{itemize}
  \item The block $g_i$ is a follower in $g$, and $g_j$ is not.  Since $g_i$ is a
    follower, it and $g_{i-1}$ have at least $w+1$ elements in total.  On the
    other hand, condition (ii) implies that $g_i$ and $g_j$ have at most $w$
    elements in total, so $g_{i-1}$ has more elements than $g_j$.  Since $g_j$ is
    not a follower, it must be a unicycle.  Then $g_{i-1} \prec g_j = h_i$ in the
    ordering on wheels, implying that $h_i$ is not a follower in $h$.  Therefore
    $g \prec h$.
  \item There are no followers between $g_i$ and $g_j$, inclusive.  Then $g_j$
    and $g_i$ are both unicycles and $g_j \prec g_i$ in the ordering on wheels.
    Therefore $h_i$ is not a follower and $h_i \prec g_i$.  Because the ordering
    on cells uses the reverse ordering on wheels, $g \prec h$.
  \item There is at least one follower between $g_i$ and $g_j$, and it is not
    $g_i$.  But $g_j$ can't be a follower, because if it were, $V(g)$ could not
    contain configurations with $g_j$ to the left of its leader.  Then $g_j$ and
    the first follower after $g_i$ appear in opposite orders in $g$ and $h$, so
    $g_j$ must have fewer elements than the first leader after $g_i$.  Since
    $g_j$ is not a follower, it is a unicycle, and the ordering implies that it
    must also have fewer elements than $g_i$.  Thus $h_i \prec g_i$, and
    therefore $g \prec h$.
  \end{itemize}
  In other words, we always have that $g \preceq h$.  On the other hand, we know
  that if a cell $h$ is in the support of $Z(f)$, then $h \preceq f$.  Therefore,
  $g \preceq f$.  This proves (b).

  From this we also know that $g$ is the unique cell which is in the support of
  both $\nu(g)$ and $Z(g)$.  Since the coefficient in both cases is $\pm 1$, this
  proves (a).
\end{proof}

\subsection{Cup product structure}
The cup product structure of $H^*(\config(n,w))$ is complicated, with many
indecomposables as well as many nontrivial products.  We start with some simple
observations.  First, the cohomology algebra of $H^*(\config(n))$ is
well-understood: it is generated by 1-dimensional classes.  A good description is
given by Sinha \cite{Sinha}.  Secondly, the pullback of $H^*(\config(n))$ to
$H^*(\config(n,w))$ along the inclusion map is a subalgebra and contains all
classes of degree less than $w-1$.  That means that all classes in
$H^{w-1}(\config(n,w))$ which are not pullbacks from $H^{w-1}(\config(n))$ are
indecomposable.  These include the basis elements corresponding to critical
cells which have one leader--follower pair with $w+1$ total elements and in
which all other blocks are singletons.

In higher degrees, the story is more complicated.  Recall that our basic cocycles
are carved out using three kinds of relations: coincidence between the
$x$-coordinates of two disks, vertical ordering of elements within a block and
horizontal ordering between blocks.  When we take a cup product, the coincidences
from both factors accumulate; see Table \ref{table:products} for some examples.
Likewise, when two blocks are ordered in a product, the ordering must be
``inherited'' from one of the factors.  If many pairs of blocks are ordered, this
may force the cohomology class to be indecomposable.

\begin{table}
  \begin{empheq}[box=\fbox]{align*}
  \nu(2\,1 \mid 6 \mid 5 \mid 4 \mid 3) \cup \nu(3\,2\mid 6 \mid 5 \mid 4 \mid 1)
  &= \nu(3\,2\,1 \mid 6 \mid 5 \mid 4) \\
  \nu(3\,2 \mid 6 \mid 5 \mid 4 \mid 1) \cup \nu(3\,1\mid 6 \mid 5 \mid 4 \mid 2)
  &= \nu(3\,2\,1 \mid 6 \mid 5 \mid 4)+\nu(3\,1\,2 \mid 6 \mid 5 \mid 4) \\
  \nu(3\,2\,1 \mid 6 \mid 5 \mid 4) \cup \nu(5\,4 \mid 6 \mid 3 \mid 2 \mid 1)
  &= \nu(3\,2\,1 \mid 5\,4 \mid 6) \\
  \nu(5\,4 \mid 6\,2\,3 \mid 1) \cup \nu(4\,1 \mid 6 \mid 5 \mid 3 \mid 2)
  &= \nu(5\,4\,1 \mid 6\,2\,3).
  \end{empheq}
\caption{Some examples of nontrivial cup products in $\config(6,4)$.  These are
easy to deduce using intersections of dual cycles.} \label{table:products}
\end{table}

The last example in Table \ref{table:products} illustrates an important class of
decomposable cohomology classes: those associated to critical cells consisting of only two blocks, where the total number of elements is at least $w+2$.  In other
words, a filter with more than $w+1$ elements always pairs with a decomposable
cohomology class.

We also describe an important set of cases in which cup products are zero.
\begin{prop}
  If there is a pair of labels $i$ and $j$ which are contained in the same block
  in both $f$ and $g$, then $V(f) \pitchfork V(g)=\emptyset$.
\end{prop}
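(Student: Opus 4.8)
The plan is to observe that both $V(f)$ and $V(g)$ are confined to a single codimension-one hypersurface of $M(n,w)$, and then to push one of them off that hypersurface to produce disjoint transverse representatives. Concretely: since $i$ and $j$ lie in the same block of $f$, condition~(i) in the definition of $V(f)$ forces the disks labeled $i$ and $j$ to be vertically aligned in every configuration in $V(f)$, so $V(f) \subseteq H_{ij}$, where $H_{ij} := \{x_i = x_j\} \cap M(n,w)$; and since $i,j$ also lie in the same block of $g$, the same reasoning gives $V(g) \subseteq H_{ij}$. In particular $V(f)$ and $V(g)$ fail to be transverse at every point of their intersection, since both their tangent spaces lie in the proper subspace $\ker(dx_i - dx_j)$ of the ambient tangent space; so no purely local transversality argument shrinks the intersection, and it is precisely the confinement to $H_{ij}$ that will let us make it empty.

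Next I would use that $H_{ij}$ is two-sided: it is the zero set of the function $x_i - x_j$ on $M(n,w)$, whose differential $dx_i - dx_j$ vanishes nowhere, so the normal bundle of $H_{ij}$ in $M(n,w)$ is trivialized by the constant field $\mathbf{e}_{x_i} - \mathbf{e}_{x_j}$. Thus $H_{ij}$ has a bicollar neighborhood $H_{ij} \times (-1,1) \hookrightarrow M(n,w)$ with $H_{ij} = H_{ij} \times \{0\}$, and pushing $V(g) = V(g) \times \{0\}$ to $V(g)' := V(g) \times \{\tfrac{1}{2}\}$ inside this bicollar is an ambient isotopy, so $V(g)'$ represents the same relative homology class as $V(g)$ under Poincar\'e--Lefschetz duality. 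But $V(g)' \subseteq H_{ij} \times \{\tfrac12\}$ is disjoint from $H_{ij} \times \{0\} \supseteq V(f)$, so $V(f)$ and $V(g)'$ are (vacuously) transverse with empty intersection. Hence $V(f) \pitchfork V(g)$ is represented by the empty manifold, and in particular $\nu(f) \cup \nu(g) = 0$.

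The point needing the most care — and the main obstacle — is that the isotopy above must respect $\partial M(n,w)$, i.e.\ the bicollar must be a bicollar of the \emph{pair} $(M(n,w), \partial M(n,w))$; this requires $H_{ij}$ to meet $\partial M(n,w)$ transversally. I would check this by noting that a boundary configuration lying in $V(f) \cup V(g)$ can realize a disk tangency only between disks separated by at least a diameter in the vertical direction (in $V(f)$ and in $V(g)$ the disks $i$ and $j$ are vertically aligned and disjoint), so the gradient of the relevant tangency constraint points in a vertical direction and is linearly independent of $dx_i - dx_j$; any boundary constraint not involving both $i$ and $j$ is independent of $dx_i - dx_j$ for the same trivial reason. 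Alternatively, one sidesteps the boundary entirely by running the identical argument in the open configuration manifold with Borel--Moore homology, as in the open-manifold formulation mentioned earlier in this section: there $H_{ij}$ is a closed submanifold with trivial normal bundle, and pushing $V(g)$ to one side of its bicollar immediately gives a homologous Borel--Moore cycle disjoint from $V(f)$.
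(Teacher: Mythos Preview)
Your approach is correct and is essentially the same idea as the paper's: both observe that $V(f),V(g)\subset\{x_i=x_j\}$ and then push one of them off this hypersurface. The paper's execution is more direct and neatly bypasses the boundary issue you spend a paragraph on: after noting that when $i,j$ occur in opposite vertical orders in the two blocks one already has $V(f)\cap V(g)=\emptyset$, it simply displaces each $p\in V(f)$ by $\varepsilon\cdot d(p,\partial M(n,w))$ in the $x_i$-direction, so boundary points stay put (giving a homotopy of relative cycles with no need to check that $H_{ij}$ meets the stratified boundary transversally) while interior points acquire $x_i>x_j$ and hence leave $V(g)$.
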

\begin{proof}
  If the two labels are in opposite orders in the two blocks, then the
  intersection of the two cycles is empty.  If they are in the same order, then
  the intersection may be nonempty, but we can move it off itself by moving every
  point of $V(f)$ slightly in the $x_i$-direction (say, move the point $p$ by a
  distance of $\epsi d(p,\partial M(n,w))$ for some $\epsi>0$ small enough).
  After this operation, $V(f) \cap V(g)$ lies in the boundary of $M(n,w)$.
\end{proof}
In all other cases, we can compute cup products by looking at the intersections
of the associated submanifolds.
\begin{prop}
  If no two blocks in $f$ and $g$ have two labels in common, then $V(f)$ and
  $V(g)$ intersect transversely.
\end{prop}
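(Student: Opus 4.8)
The plan is to reduce the statement to a linear-algebra fact about conormal spaces of $V(f)$ and $V(g)$, and then to a combinatorial fact about the block structure of critical cells.

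First I would recall that, at any interior point $p \in V(f) \cap V(g)$ of $M(n,w)$, condition (i) in the definition of $V(f)$ is a system of linear equations — namely $x_a = x_{a'}$ whenever $a$ and $a'$ lie in a common block of $f$ — while condition (ii) and the disjointness/strip constraints are all open near $p$ (in particular the $y$-coordinates in a block are strictly ordered at $p$, since aligned disks cannot coincide). Hence near $p$ the set $V(f)$ is exactly the affine subspace cut out by those equations, so $N^*_p V(f) = \operatorname{span}\{dx_a - dx_{a'} : a \sim_f a'\} \subseteq \operatorname{span}\{dx_1,\dots,dx_n\}$, and likewise for $g$. Transversality at $p$ is then equivalent to $N^*_p V(f) \cap N^*_p V(g) = 0$. (On the boundary one argues the same way, using that $\partial V(f) = V(f)\cap\partial M(n,w)$ and that the extra equations cutting out $\partial M(n,w)$ each involve only one coordinate, so they do not interfere with the $dx_a - dx_{a'}$.)

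Next I would encode the problem combinatorially. Let $\Gamma$ be the bipartite graph whose vertices are the blocks of $f$ together with the blocks of $g$, with one edge labeled $\ell$ joining the $f$-block and the $g$-block containing $\ell$, for each $\ell \in [n]$. The hypothesis that no block of $f$ and no block of $g$ have two labels in common says precisely that $\Gamma$ has no repeated edge. A one-form $\sum_\ell c_\ell\, dx_\ell$ lies in $N^*_p V(f)$ iff the coefficients $c_\ell$ sum to $0$ over each $f$-block, and in $N^*_p V(g)$ iff they sum to $0$ over each $g$-block; so $N^*_p V(f) \cap N^*_p V(g)$ is canonically the kernel of the (unsigned) vertex–edge incidence map of $\Gamma$. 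That kernel is nonzero exactly when $\Gamma$ contains a cycle. Thus the proposition reduces to: for critical cells $f, g$ with no two blocks sharing two labels, and for every $p \in V(f) \cap V(g)$, the graph $\Gamma$ is a forest.

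Finally I would prove that $\Gamma$ is a forest, using the point $p$ and the structure of critical cells. Along any edge of $\Gamma$ the two incident blocks share a label, so at $p$ — where by condition (i) each block is a vertical column — they share an $x$-coordinate; hence all blocks in a single connected component of $\Gamma$ sit over one vertical line at $p$, and all the disks they contain form a single column. A cycle in that component would require at least as many disks as there are blocks in the component, and I would contradict this by combining (a) the fact that a vertical column in a strip of width $w + \varepsilon$ holds at most $w$ disks, so a component has at most $w$ disks and hence each of its blocks has few elements, with (b) the constraints on critical cells — followers are forced away from heavy neighbours by condition (ii), and the non-follower unicycle blocks of $f$ (and of $g$) that land in a common column appear in a strictly decreasing order — to see that the induced partitions of the column by $f$-blocks and by $g$-blocks cannot be "interleaved" enough to close a cycle. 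This last step, turning the $p$-forced alignment together with the critical-cell bookkeeping into the inequality that forbids a balanced cycle in $\Gamma$, is the part I expect to be the real work; everything preceding it is formal.
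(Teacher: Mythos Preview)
Your reduction is correct and in fact more careful than the paper's argument. The paper just runs a dimension count: each $g$-block of size $k$ has its $k$ elements in $k$ distinct $f$-blocks (by the hypothesis), so adding its constraints ``merges $k$ different blocks in $f$ into one'' and drops the count by $k-1$, yielding $\#\mathrm{blocks}(f)+\#\mathrm{blocks}(g)-n$ blocks in the intersection and hence $\codim(V(f)\cap V(g))=\codim V(f)+\codim V(g)$. But that merging step tacitly assumes the $k$ relevant $f$-blocks have not already been merged by an earlier $g$-block --- which is precisely your forest condition on $\Gamma$. So you have correctly located where the real content lies.

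Unfortunately the last step cannot be completed: $\Gamma$ need not be a forest, and the proposition is false as stated. Take $n=4$, $w\geq 4$, $f=(4\,1\mid 3\,2)$, $g=(4\,2\mid 3\,1)$. Both are critical (two non-follower unicycles, axles decreasing); each $f$-block meets each $g$-block in exactly one label, so the hypothesis holds; yet $\Gamma$ is a $4$-cycle. Concretely $V(f)\cap V(g)=\{x_1=x_2=x_3=x_4\}$ has codimension $3$ rather than $2+2=4$, and for $w\geq 4$ it contains interior points of $M(4,w)$ (four unit disks stack vertically in width $4+\varepsilon$). Condition (ii) in the definition of $V(\cdot)$ is vacuous here since neither cell has followers, and the critical-cell constraints you invoke (decreasing axles among non-follower unicycles in a common column) are satisfied by both $f$ and $g$ without obstructing the cycle. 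The statement would need an additional hypothesis --- for instance a bound on the combined degree of $\nu(f)$ and $\nu(g)$ --- to hold.
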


\begin{proof}
Locally, $V(f)$ and $V(g)$ are linear subspaces of $\config(n)$, cut out by the linear equations constraining the $x$--coordinates in each block to coincide.  Thus, the dimension of $V(f)$ is $n$ (for the $y$--coordinates) plus the number of blocks in $f$ (for the $x$--coordinates), and similarly for $V(g)$.  In the intersection, we imagine starting with the constraints for $V(f)$ and including the constraints for $V(g)$ one block at a time.  Each block of size $k$ in $g$ merges $k$ different blocks in $f$ into one, so the net change in the number of blocks is $1 - k$.  In total, the number of blocks in $V(f) \cap V(g)$ is $\#\mathrm{blocks}(f) + \#\mathrm{blocks}(g) - n$, so the local codimension of $V(f) \cap V(g)$ is $2n - \#\mathrm{blocks}(f) - \#\mathrm{blocks}(g)$, which equals $\codim V(f) + \codim V(g)$.
\end{proof}

Finally, we show that there are many indecomposable elements in
$H^*(\config(n,w))$, but that they do not occur in the very highest degrees.
\begin{thm} \label{thm:indec}
  The ring $H^*(\config(n,w))$ has indecomposables:
  \begin{enumerate}[(a)]
  \item Only in degree 1, when $w=2$.
  \item In every degree between $1$ and $\lfloor n/2 \rfloor$ and no others, when
    $w=3$.
  \item In degree 1 and in every degree between $w-1$ and
    $\left\lfloor\frac{n+w-3}{2}\right\rfloor$ and no degree greater than
    $n-\bigl\lceil \frac{n}{w-1} \bigr\rceil$, when $w \geq 4$.
  \end{enumerate}
\end{thm}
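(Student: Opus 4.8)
The plan is to analyze the decomposable subspace of $H^*(\config(n,w))$ using the basis $\{\nu(g)\}$ dual to critical cells (Remark~\ref{rmk:crit-cells}, Lemma~\ref{lem:int-pair}), the identification of cup product with transverse intersection of the dual submanifolds $V(g)$, and the fact that the pullback of $H^*(\config(n))$ is a subalgebra generated in degree $1$ containing everything of degree $<w-1$. The degree-$1$ clause in all three cases is immediate: a nonzero element of $H^1$ cannot be a sum of products of positive-degree classes, and $H^1(\config(n,w))\neq 0$ for $n\geq 2$. The degree-$(w-1)$ clause is recorded just before the theorem: for $g$ a minimal filter together with singletons, $\nu(g)$ is not in the pulled-back subalgebra, while in degree $w-1$ every decomposable class lies in that subalgebra (both factors would have degree $<w-1$).

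For the remaining indecomposables (degrees from $w-1$ up to $\lfloor(n+w-3)/2\rfloor$ when $w\geq 3$) I would use the commutator observation. For each target degree $d$ I construct an explicit critical cell $g$ of dimension $d$: a minimal filter whose follower block $F$ has size $w$, placed at the far right, preceded by a packet of $2$-disk wheels (with one singleton inserted to adjust parity). The structural point is that condition~(ii) in the definition of $V(g)$ forces \emph{every} column to the left of $F$ to remain left of $F$, so $\nu(g)$ pairs nontrivially with the cycle that slides $F$ leftward past all those wheels. I then argue that no degree-$d$ decomposable class pairs nontrivially with this commutator: in a product $\nu(f_1)\cup\nu(f_2)$ whose dual submanifold constrains the relevant columns, one factor, say $f_1$, must contain $F$ as a follower block together with the relevant leftward columns inside a single layer, which pins $\dim f_1$ from below; matching the disk count and recalling that the singletons contribute nothing leaves no dimension budget for $f_2$ to be positive-degree. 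Since each $2$-wheel contributes $1$ to the dimension and the filter contributes $w-1$, the maximal such cell has dimension $\lfloor(n+w-3)/2\rfloor$, and reducing the number of wheels fills in every intermediate degree. When $w=2$ this construction collapses (a $2$-wheel is already maximal and its class is pulled back), so only degree $1$ survives; when $w=3$ the two bounds $\lfloor(n+w-3)/2\rfloor$ and $n-\lceil n/(w-1)\rceil$ both equal $\lfloor n/2\rfloor$, giving the exact range.

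For the absence of indecomposables above $n-\lceil n/(w-1)\rceil$ (and above degree $1$ for $w=2$) I would show every $\nu(g)$ of large dimension is decomposable by a peeling procedure: (1) a standalone wheel $W$ of size $\geq 2$ can be split off as a factor $\nu(W)$, with $W$ replaced by $|W|$ singletons in the complementary factor, since the lining-up of $W$ together with all horizontal constraints involving it survives this refinement and the two submanifolds still meet transversely; (2) a filter with more than $w+1$ disks is decomposable, as in the last row of Table~\ref{table:products}; (3) a critical cell with two or more filters splits between consecutive filters. Iterating (1)--(3), an indecomposable $g$ reduces to a single minimal filter (using $w+1$ disks, contributing $w-1$) plus blocks of size at most $w-1$, whence $\#\mathrm{blocks}(g)\geq \lceil n/(w-1)\rceil$ and $\dim g\leq n-\lceil n/(w-1)\rceil$.

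The hardest part, and where I expect most of the work to go, is making the commutator-detection argument genuinely quantitative: identifying precisely which critical cells $g$ have $\nu(g)$ paired nontrivially with a commutator invisible to all cup products of the same degree, and confirming that the dimension accounting forecloses a decomposition as a \emph{sum} of cup products, not merely a single product. This uses the triangularity of the intersection pairing in Lemma~\ref{lem:int-pair} together with a careful reading of the basic-cycle conditions (iii)--(iv) in Theorem~\ref{thm:basis} and of the ``follower'' clause in condition~(ii) of the definition of $V(g)$, which is exactly what determines when peeling is available; getting this bookkeeping right simultaneously in the regimes $w=2$, $w=3$, and $w\geq 4$ is where the casework concentrates.
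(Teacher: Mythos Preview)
Your lower-bound construction has a genuine error. You take the follower block $F$ to have size $w$; the paper takes it to have size $w-1$, and this distinction is exactly what makes the class indecomposable. With $\lvert F\rvert=w$, your own peeling move applies: replace a $2$-wheel $b_i$ by two singletons in the complementary factor $g'$. Each singleton is still constrained left of $F$ by condition (ii), since $1+w\geq w+1$ and $F$ remains a follower. Hence $V(b_i)\cap V(g')=V(g)$ transversely, and $\nu(g)=\pm\nu(b_i)\cup[V(g')]$ with both factors of positive degree. So your candidate indecomposables are decomposable. With $\lvert F\rvert=w-1$, the same replacement \emph{loses} the constraint (because $1+(w-1)<w+1$), the intersection picks up a second component with $b_i$ on the far side of $F$, and the peeling no longer yields a single product.

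This is the same issue that breaks step (1) of your upper-bound argument. ``Any standalone wheel of size $\geq 2$ can be split off'' is false precisely when that wheel is ordered by condition (ii) relative to a follower of size $<w$: the horizontal constraint does not survive refinement to singletons. Your stated conclusion ``blocks of size at most $w-1$'' is the right target, but it only follows if step (1) is restricted to size-$w$ wheels---and that is essentially the paper's argument, which notes that a size-$w$ block is a barrier splitting all other disks into a left set and a right set, so that $V$ factors as $W_1\cap W_2$ with $W_1$ the relevant component of $\{x_{i_1}=\cdots=x_{i_w}\}$ and $W_2$ cut out by the relations internal to each side.

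For the lower bound the paper's method is also different from your dimension-pinning sketch, and it resolves the ``sums of products'' difficulty you flag. It builds a linear functional on homology---for $w\geq 4$, the alternating sum $\sum_{S}(-1)^{|S|}\langle\,\cdot\,,Z_S\rangle$ over all placements of the $2$-wheels left or right of $F$; for $w=3$, a signed sum over $S_r$ coming from an iterated bracket---and shows it vanishes on every cup product $\alpha\cup\beta$ by pulling back to the torus $T^{r+w-2}$ and checking that for each splitting $T^P\times T^Q$ at least one factor's pairing is insensitive to moving some $2$-wheel across $F$. This kills sums of products automatically, while $\nu(f)$ pairs with exactly one $Z_S$.
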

When $w \geq 4$, indecomposables also seem to occur in most degrees below
$n-\bigl\lceil \frac{n}{w-1} \bigr\rceil$, but perhaps not all.
\begin{proof}
  We first show that every class of degree greater than
  $n-\bigl\lceil \frac{n}{w-1} \bigr\rceil$ is decomposable.  The proof does not
  depend on $w$.  Every cell with fewer than
  $\bigl\lceil \frac{n}{w-1} \bigr\rceil$ blocks has a block with $w$ elements,
  and therefore $V(f) \subset M(n,w)$ satisfies an equation of the form
  $x_{i_1}=\cdots=x_{i_w}$.  Therefore it is enough to show:
  \begin{lem}
    Suppose that $V \subset M(n,w)$ is a connected compact submanifold of
    dimension at most $2n-w$, satisfying $\partial V \subset \partial M(n,w)$,
    which is cut out by relations of the form $x_i=x_j$, $y_i<y_j$, and $x_i<x_j$.
    Suppose furthermore that $V$ satisfies $x_{i_1}=\cdots=x_{i_w}$ for some set of
    indices $i_1,\ldots,i_w$.  Then $V$ is the transverse intersection of two
    proper compact submanifolds satisfying $\partial V \subset \partial M(n,w)$.
  \end{lem}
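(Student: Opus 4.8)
The plan is to split $V$ as a transverse intersection $V_1\pitchfork V_2$ by partitioning the set of equalities ``$x_i=x_j$'' that cut it out; the two families of inequalities ``$y_i<y_j$'' and ``$x_i<x_j$'' will only ever select connected components, never cut dimension, so the real issue will be keeping each factor a manifold-with-boundary lying entirely over $\partial M(n,w)$. Setup: let $B$ be the block (the $x$-equivalence class of the equalities defining $V$) containing $i_1,\dots,i_w$; since these $w$ disks are $x$-aligned on the connected set $V$, $B$ has a well-defined vertical order $e_1,\dots,e_m$ with $m=|B|\ge w$. Because $\dim V\le 2n-w$ we have $\codim V\ge w$, while $B$ contributes exactly $m-1$ to $\codim V=\sum_{\text{blocks}}(|b|-1)$; so either (A) some block other than $B$ has $\ge 2$ elements, whence $\codim V>m-1$, or (B) $B$ is the only non-singleton block, whence $m-1=\codim V\ge w$ and so $m\ge w+1$.

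For the construction, in case (A) let $V_1$ be the component through $V$ of the locus cut out by exactly those relations of $V$ involving an element of $B$ (the $m-1$ internal equalities of $B$, the $y$-orderings among the $e_i$, and every ``$x_a<x_b$'' of $V$ with $a\in B$ or $b\in B$), and let $V_2$ be the component through $V$ of the locus cut out by the remaining relations (which involve no element of $B$). In case (B) set $V_1=\{x_{e_1}=x_{e_2},\ y_{e_1}>y_{e_2}\}$ and let $V_2$ be cut out by requiring $e_2,\dots,e_m$ to form a column in this order, together with all ``$x_i<x_j$'' relations of $V$, each recorded using a representative disk in $\{e_2,\dots,e_m\}$ (legitimate since on $V$ every disk of $B$ shares an $x$-coordinate). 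In either case one may first enlarge the defining relation set of $V$ by the finitely many ``$x_i<x_j$'' relations that hold on $V$ and are needed to single out its component; this does not change $V$.

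Four things then need checking. (i) $V_1$ and $V_2$ are properly embedded compact submanifolds with $\partial(\cdot)\subset\partial M(n,w)$: their equalities constrain only $x$-coordinates internal to a block, so locally each is a linear subspace; the $y$-orderings within a block merely pick the vertical order already forced by $x$-alignment, and the $x$-orderings are between a rigid column of $\ge w$ disks and further disks it cannot pass (a $\ge w$-column plus any disk exceeds the strip width — here we use that $V$, being properly embedded, can only impose $y$-relations within blocks and $x$-orderings between such rigid columns), so these too only pick components; and each $V_i$ meets $\partial M(n,w)$ transversally since its defining equations involve only block-internal $x$-coordinates while a local boundary equation either involves a $y$-coordinate or constrains a difference of $x$-coordinates of disks in different blocks. (ii) Both are proper: in (A), $\codim V_1=m-1\ge w-1\ge 1$ and $\codim V_2=\codim V-(m-1)\ge 1$; in (B), $\codim V_1=1$ and $\codim V_2=m-2\ge w-1\ge 1$. (iii) $V_1\cap V_2=V$: the union of the defining relations of $V_1$ and $V_2$ is equivalent to that of $V$ (in (B) one must observe that the single recorded $x$-ordering forces all of $B$ to the correct side once $V_1$ has glued $e_1$ on top of $e_2$), and both factors were taken to be the component through $V$. (iv) Transversality: in (A) the two sets of equalities involve disjoint $x$-variables, hence are jointly independent and $\codim V_1+\codim V_2=\codim V$; in (B) the equality $x_{e_1}=x_{e_2}$ involves $x_{e_1}$, absent from $V_2$'s equalities $x_{e_2}=\dots=x_{e_m}$, so again the ranks add.

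The main obstacle is step (i) together with (iii): making each $V_i$ a manifold-with-boundary over $\partial M(n,w)$ while still forcing $V_1\cap V_2$ to equal $V$ exactly. The naive attempt — split $B$ into two shorter columns, one per factor — fails twice: two shorter columns can slide past each other or past singletons, creating boundary in the interior, and splitting the chain $x_{e_1}=\dots=x_{e_m}$ across the two factors forces the sub-columns to lie at distinct $x$-coordinates, so the intersection is empty rather than $V$. This is precisely why case (A) keeps all of $B$'s equalities on one side, and why case (B) — where that is impossible, since $B$ then carries all of $\codim V$ — has to be isolated and handled using $m\ge w+1$ to keep $\{e_2,\dots,e_m\}$ tall enough to stay rigid.
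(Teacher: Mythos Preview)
Your approach in case (A) is correct and is essentially the paper's proof: the paper takes $W_1$ to be the connected component of $\{x_{i_1}=\cdots=x_{i_w}\}$ containing $V$, and $W_2$ to be cut out by all defining relations of $V$ between pairs of points on the same side of this column. Since the column of $w$ disks is impassable, every other disk is strictly to one side, so ``relations not involving $B$'' and ``relations between same-side pairs'' coincide; your $V_1,V_2$ are exactly the paper's $W_1,W_2$.

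However, your case analysis is unnecessary, and case (B) is in fact vacuous. You correctly observe $m=|B|\ge w$, but you missed the reverse inequality: since $V$ is nonempty and forces $m$ disks to share an $x$-coordinate inside a strip of width $w+\epsi$, we must have $m\le w$. Hence $m=w$ always, your derivation $m\ge w+1$ in case (B) is a contradiction showing that case never arises, and the splitting-off-$e_1$ construction is never needed. (Indeed, your $V_2$ in case (B) would itself be empty once $m-1>w$.) The paper silently uses $m=w$, which is why its argument is so short.

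Your step (i) is more carefully argued than the paper's: you explicitly note that $\partial V\subset\partial M(n,w)$ forces every non-redundant defining inequality of $V$ to be an $x$-ordering between columns of combined size $>w$, so that each $V_i$ is a component of a linear subspace intersected with $M(n,w)$ and inherits its boundary from $\partial M(n,w)$. That observation is the real content behind both proofs, and making it explicit is worthwhile.
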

  \begin{proof}
    Define $W_1$ to be the connected component of $\{x_{i_1}=\cdots=x_{i_w}\}$
    which contains $V$; this exists since $V$ is connected.  The defining
    relations of $W_1$ are:
    \begin{itemize}
    \item $x_{i_k}=x_{i_\ell}$, for each $k \neq \ell$;
    \item $y_{i_k}<y_{i_\ell}$ or $y_{i_k}>y_{i_\ell}$, for each $k \neq \ell$;
    \item $x_j<x_{i_k}$ or $x_j>x_{i_k}$, whenever $j \neq i_k$ for any $k$.
    \end{itemize}
    Let $W_2$ be the submanifold cut out by all defining relations of $V$ which
    involve pairs of points constrained to be to the same side of
    $x_{i_1},\ldots,x_{i_w}$.  Then $V=W_1 \cap W_2$, since every necessary
    relation defining $V$ is a defining relation of either $W_1$ or $W_2$.
    Moreover, by counting the number of defining equalities we immediately see
    that the intersection is transverse.
  \end{proof}
  Applying this to each connected component of $V(f)$, we get a decomposition of
  the corresponding cohomology class as a sum of cup products.

  We now build indecomposable cocycles when $w \geq 4$; the proof for $w=3$ will
  be similar, but not identical.  We will show that basic cocycles corresponding
  to critical cells of certain shapes are indecomposable.  Specifically, we
  consider a critical cell $f$ which starts with some number $r$ of blocks with
  $2$ elements, followed by one block (a follower) with $w-1$ elements, and where
  the remaining blocks are singletons.  The degree of such an $f$ is $r+w-2$, and
  $r$ can be any number between $1$ and $\frac{n-w+1}{2}$, giving us all degrees
  between $w-1$ and $\bigl\lfloor\frac{n+w-3}{2}\bigr\rfloor$.  It is clear that
  there are critical cells of any such shape; in particular, we select $f$ to be the cell with all entries in order from greatest to least.

  We will show that the corresponding cohomology class $\nu(f)$ is indecomposable
  by induction on $r$.  We will do this by constructing a cycle with which
  $\nu(f)$ pairs nontrivially, but any decomposable class pairs trivially.

  To construct this cycle, first let $R$ be the set of blocks of $f$ which are
  left of the follower.  For any subset $S \subseteq R$, there is a critical cell
  $f_S$ in which the blocks in $S$ are moved to the right of the follower, and
  otherwise the ordering on blocks is the same.  Let $Z_S$ be the concatenation product of the toroidal cycles obtained by interpreting each block as a wheel and spinning it.  This cycle is represented by a map $g_S:T^{r+w-2} \to \config(n,w)$.  The $Z_S$ represent linearly independent homology classes, since they generate a $2^r$-dimensional subspace of $H_{r+w-2}(\config(n,w))$: for every $S \subseteq R$, the basic cycle $Z(f_S)$ is a linear combination of them, given by $Z_S$ itself if $S = R$, or otherwise the difference between $Z_S$ and $Z_{S'}$, where $S'$ is the union of $S$ with the greatest block not in $S$.  We will show that for every decomposable class $\nu$,
  \begin{equation} \label{eqn:sum}
    \sum_{S \subseteq R} (-1)^{|S|}\langle \nu, Z_S \rangle=0.
  \end{equation}
  In particular, $\nu$ cannot pair nontrivially with exactly one of the cycles
  $Z_S$.  On the other hand, $\nu(f)$ pairs nontrivially with $Z_S$ if and only
  if $S=\emptyset$.

  It is enough to show this equation holds for every pairwise cup product,
  $\nu=\alpha \cup \beta$.  We study the pullbacks of such a pairwise cup product
  along each $g_S$.  Suppose that $\alpha$ is of degree $p$, and write
  \[T^{r+w-2}=\prod_{i \in R \sqcup R'} S^1_i,\]
  where $R'$ is the set of degrees of freedom of the $(w-1)$-element wheel.  To
  understand $g_S^*\alpha$, it is enough to understand how it pairs with
  $T^P=\prod_{i \in P} S^1_i$, for each $p$-element set $P \subseteq R \sqcup R'$.
  Then
  \begin{equation} \label{eqn:decomp}
    \langle \nu, Z_S \rangle=\sum_{P \sqcup Q=R \sqcup R'}
    \langle g_S^*\alpha, [T^P] \rangle \cdot \langle g_S^*\beta, [T^Q] \rangle.
  \end{equation}

  We now show that these decompositions are not independent for different $S$.
  \begin{lem}
    If $P \cap R' \neq R'$, then $\langle g_S^*\alpha, [T^P]\rangle$ is the same
    for every $S$.
  \end{lem}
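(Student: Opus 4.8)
\textit{Proof plan.} I would prove the stronger statement that the homology class $(g_S|_{T^P})_*[T^P]\in H_p(\config(n,w))$ is itself independent of $S$; since $H_*(\config(n,w))$ is free abelian of finite rank (Theorem~\ref{thm:basis}), the pairing $H^p\times H_p\to\mathbb Z$ is perfect, so this detects the class and in particular gives the lemma. Writing $R$ for the set of $2$-element blocks of $f$, any two subsets of $R$ are joined by a sequence of moves each transferring one block $b\in R$ from one side of the follower to the other (passing only through the critical cells $f_{S'}$), so it suffices to show $(g_S|_{T^P})_*[T^P]$ is unchanged by a single such move. Under $g_S|_{T^P}$ the configuration is, at every time, a concatenation (left to right, in the order prescribed by $S$) of the $2$-disk wheels — spinning when the corresponding coordinate lies in $P$, frozen at the basepoint otherwise — together with the follower wheel, which spins only in its $P\cap R'$ coordinates; the induced homology class is the corresponding concatenation product in the twisted algebra $H_*(\cel(n,w))$. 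The block $b$ being moved is flanked by blocks of size $1$ or $2$ other than the follower, and since $2+2\le w$ and $2+1\le w$ (here $w\ge 4$) the class carried by $b$ commutes freely past each of those. So the move changes the class only through commuting the class carried by $b$ past the class $\phi$ of the partially spun follower wheel, and it is enough to prove $\phi\concat[b]=[b]\concat\phi$.

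This is exactly where the hypothesis $P\cap R'\neq R'$ enters: some rotational degree of freedom of the follower is not being spun, i.e.\ one of its $w-1$ disks is held at a fixed angle relative to the disks it encircles and is therefore not entangled with the spinning. I would use this to replace $\phi$ by a homotopic cycle in which that frozen disk is ``parked'' far away in the $x$-direction from the remaining $w-2$ disks of the follower, which continue to carry all of the spin. In the resulting cycle no vertical line ever meets more than $w-2$ disks of the follower; since $(w-2)+2\le w$, at every instant there is vertical room for the two disks of $b$, and one can transport $b$ across the follower's $x$-range — passing the parked disk and the spinning $(w-2)$-disk cluster at distinct $x$-values, hence at distinct times — all while the other wheels of $P$ keep spinning. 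This homotopy, read over all of $T^P$, realizes $g_S|_{T^P}\simeq g_{S'}|_{T^P}$.

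The main obstacle is making the parking step precise, in particular when the unused degree of freedom corresponds to an \emph{interior} ring of the follower: then the disk one wants to slide aside is itself nested inside spinning disks, so instead one must extract the maximal sub-wheel lying inside the innermost frozen ring — a cluster of at most $w-2$ disks, since that ring is frozen — and slide that cluster away, checking that the deformation crosses no orbit of a spinning outer disk in a way that produces a collision and that it does not change the homology class of $\phi$. Once $\phi$ has been put in a form where at most $w-2$ of the follower's disks ever share a vertical line, the transport of $b$ and its compatibility with the simultaneous spinning of the wheels in $P$ are routine, using that $\config(n,w)$ is an open manifold whose only constraint is disjointness of the disks. (Alternatively one could carry this out cell-by-cell, using that each $g_S$ is a restriction of a $\spin_\sigma$ built by iterating the homeomorphisms of Lemma~\ref{lem:hyperplane}, but the geometric argument is more transparent.)
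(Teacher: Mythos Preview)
Your proposal is correct and follows essentially the same approach as the paper: both reduce to showing that the maps $T^P \to \config(n,w)$ are homotopic for different $S$, and both use the hypothesis $P\cap R'\neq R'$ to compress the partially-spinning follower wheel into width at most $w-2$, after which the $2$-wheels (and singletons, since $w\geq 4$) can be slid past it. The paper states the compression step in one sentence (``compress the allowed movements of the $(w-1)$-element wheel into a subset of the strip of width $w-2$'') without elaboration; you unpack this more carefully and correctly flag the interior-ring case as the only nontrivial point. Your concern there is legitimate but resolvable: rather than extracting and sliding an inner cluster, it is cleaner to first homotope the frozen coordinate $\theta_j$ from its basepoint to a horizontally-offset position, so that disk $d_{j+1}$ sits to the side of the inner $j$-wheel; a short count then shows the remaining orbiting disks $d_{j+2},\dots,d_{w-1}$ bring the total vertical extent to exactly $w-2$.
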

  \begin{proof}
    It suffices to show that the pushforward cycles $(g_S)_*[T^P]$ are homologous
    regardless of $S$.  In fact, the corresponding maps $T^P \to \config(n,w)$
    are homotopic.  We can compress the allowed movements of the $(w-1)$-element
    wheel into a subset of the strip of width $w-2$, letting wheels of width $2$
    pass by.  Clearly wheels of width 2 can also pass by each other since we are
    assuming $w \geq 4$.  Using this set of motions, we can construct a homotopy
    between any two such maps.
  \end{proof}
  \begin{lem}
    If $P \cap R'=R'$, then
    $\langle g_S^*\alpha, [T^P]\rangle=\langle g_{S'}^*\alpha, [T^P]\rangle$
    whenever $(S \bigtriangleup S') \cap P=\emptyset$ (where $\bigtriangleup$
    indicates symmetric difference).
  \end{lem}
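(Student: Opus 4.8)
The plan is to reduce the statement to a homotopy statement about tori in $\config(n,w)$ and then build an explicit homotopy by sliding ``parked'' wheels past the fully spun $(w-1)$-wheel. First I would use naturality of the evaluation pairing, $\langle g_S^*\alpha,[T^P]\rangle=\langle\alpha,(g_S)_*[T^P]\rangle$, so that the homology class $(g_S)_*[T^P]\in H_p(\config(n,w))$, and hence all these pairings, depends only on the homotopy class of the restricted map $g_S|_{T^P}\colon T^P\to\config(n,w)$. Thus it suffices to show $g_S|_{T^P}$ and $g_{S'}|_{T^P}$ are homotopic whenever $(S\bigtriangleup S')\cap P=\emptyset$.

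Next I would unwind what the two maps are. Since $P\cap R'=R'$ we have $P=R'\sqcup(P\cap R)$, so along $T^P$ the $(w-1)$-element follower wheel is spun through all of its degrees of freedom and the size-$2$ wheels indexed by $P\cap R$ are spun, while every other block of $f$ is held at a fixed configuration; in particular this is true of the size-$2$ wheels indexed by $(S\bigtriangleup S')\subseteq R\setminus P$. Because $S$ and $S'$ agree on $P\cap R$, the maps $g_S|_{T^P}$ and $g_{S'}|_{T^P}$ coincide except in the constant location of each size-$2$ wheel $\{c\}$ with $c\in S\bigtriangleup S'$: in one map it is parked to the left of the follower, in the other to its right.

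The heart of the proof is then to slide these parked wheels across, one at a time. Working in the point model of $\config(n,w)$ (configurations in $\mathbb{R}\times(0,1)$ with at most $w$ points on a vertical line), for a fixed $c\in S\bigtriangleup S'$ I would first homotope within the single rotational degree of freedom of the wheel $\{c\}$---legitimate precisely because $S^1_c$ is \emph{not} one of the circle factors of $T^P$---to move its two points out of vertical alignment into a ``thin'' position contributing at most one point to any vertical line. A thin pair can then be slid horizontally past the follower (giving $1+(w-1)=w$ points on a line, which is allowed), past each spun size-$2$ wheel (giving $1+2=3\leq w$, using $w\geq4$), and trivially past singletons and other parked blocks; after parking it on the correct side, I would rotate it back to standard position. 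Carrying this out for every $c\in S\bigtriangleup S'$ produces the required homotopy from $g_S|_{T^P}$ to $g_{S'}|_{T^P}$, and the lemma follows.

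The point that must be stated carefully---and the reason this argument differs from the previous lemma---is why the hypothesis $(S\bigtriangleup S')\cap P=\emptyset$ is essential. A size-$2$ wheel indexed by $P$ is genuinely spun along $T^P$ and so necessarily passes through its vertically stacked configuration; it cannot be flattened into a thin position without changing the homotopy class, and when such a wheel and the fully spun $(w-1)$-wheel are simultaneously stacked they occupy $w+1$ points on a line, obstructing the move. Likewise, since all $w-2$ rotational freedoms of the $(w-1)$-wheel are in use, we cannot compress its motion into a strip of width $w-2$ as before. So the construction genuinely relies on the wheels being moved being parked rather than spun, which is exactly what $(S\bigtriangleup S')\cap P=\emptyset$ guarantees.
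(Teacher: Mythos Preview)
Your proof is correct and takes essentially the same approach as the paper: reduce to a homotopy of the maps $T^P\to\config(n,w)$, then use the fact that blocks in $S\bigtriangleup S'$ are not in $P$ to break them into individual disks (your ``thin position'') which can be slid past the $(w-1)$-wheel one at a time. The paper states this in one line (``moving the individual disks comprising blocks not in $P$ around the $(w-1)$-element set''); your version supplies the details and the explanation of why the hypothesis $(S\bigtriangleup S')\cap P=\emptyset$ is needed, which is welcome.
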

  \begin{proof}
    Again, it suffices to show that $(g_S)_*[T^P]$ and $(g_{S'})_*[T^P]$ are
    homologous, and again the corresponding maps $T^P \to \config(n,w)$ are
    homotopic.  We can build the homotopy by moving the individual disks
    comprising blocks not in $P$ around the $(w-1)$-element set.
  \end{proof}
  Together, the lemmas imply that for any pair of nonempty complementary sets
  $P,Q \subset R \sqcup R'$, the quantity
  \[\langle g_S^*\alpha, [T^P] \rangle \cdot \langle g_S^*\beta, [T^Q] \rangle\]
  is independent of whether $i \in S$ for at least one $i \in R$.  In particular,
  \[\sum_{S \subseteq R} (-1)^{|S|}\langle g_S^*\alpha, [T^P] \rangle \cdot
  \langle g_S^*\beta, [T^Q] \rangle=0.\]
  From here we get \eqref{eqn:sum} by summing over all pairs $P,Q$ and using
  \eqref{eqn:decomp}.

  Finally we deal with the case $w=3$.  In this case, we consider critical cells
  composed of $r$ two-element blocks (with the bigger element first) followed by
  $n-2r$ singletons, for some $1 \leq r \leq \lfloor n/2 \rfloor$.  Such a cell
  is critical if and only if the singletons are in reverse order.  In particular,
  every permutation $\sigma$ of the set of two-element blocks gives a critical
  cell $f_\sigma$.  For each $\sigma \in S_r$, let $Z_\sigma$ be an $r$-cycle,
  represented by a map $g_\sigma:T^r \to \config(n,w)$, obtained by arranging the
  blocks in the order dictated by $\sigma$ and spinning each of them.  These
  cycles are linearly independent in homology since the basic cycles $Z(f_\sigma)$
  are all linear combinations of them.

  We define a function $\mu:S_r \to \mathbb{Z}$ as follows.  Consider the
  expression $[\cdots[[1,2],3], \cdots r]$.  If $\sigma$ cannot be obtained from
  this by commuting some of the brackets (equivalently, by spinning the wheel
  $1\,2\,\cdots\,r$, as in Figure \ref{fig:wheel2}), then $\mu(\sigma)=0$.  If it
  can, then $\mu(\sigma)=(-1)^c$ where $c$ is the number of commutations
  required.  We will show that for every decomposable class $\nu$,
  \begin{equation} \label{eqn:sum2}
    \sum_{\sigma \in S_r} \mu(\sigma)\langle \nu, Z_\sigma \rangle=0.
  \end{equation}
  In particular, $\nu$ cannot pair nontrivially with exactly one of the
  $Z_\sigma$, and therefore there is an indecomposable class of degree $r$.

  It is enough to show the equation for every pairwise cup product,
  $\nu=\alpha \cup \beta$.  Write $T^r=\prod_{i=1}^r S^1_i$; for every
  $P \subset \{1,\ldots,r\}$, write $T^P=\prod_{i \in P} S^1_i$.  Then
  \begin{equation} \label{eqn:decomp2}
    \langle \nu, Z_\sigma \rangle=\sum_{P \sqcup Q=\{1,\ldots,r\}}
    \langle g_\sigma^*\alpha, [T^P] \rangle
    \cdot \langle g_\sigma^*\beta, [T^Q] \rangle.
  \end{equation}
  Once again, these decompositions are not independent for different $\sigma$:
  \begin{lem}
    Whenever $\sigma$ and $\tau$ impose the same ordering on elements of $P$,
    \[\langle g_\sigma^*\alpha,[T^P]\rangle=\langle g_\tau^*\alpha,[T^P]\rangle.\]
  \end{lem}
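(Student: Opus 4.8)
The plan is to mirror the analogous lemmas from the $w\ge 4$ case: rather than argue about the pairings directly, I would prove the stronger statement that the pushforward homology classes $(g_\sigma)_*[T^P]$ and $(g_\tau)_*[T^P]$ coincide in $H_{|P|}(\config(n,w))$, from which the lemma follows at once by applying $\langle\alpha,-\rangle$.  In fact I would show that the restrictions of $g_\sigma$ and $g_\tau$ to the coordinate sub-torus $T^P\subset T^r$ are homotopic as maps $T^P\to\config(n,w)$.

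Unwinding the definitions, $g_\sigma|_{T^P}$ is the map that spins the two-element blocks indexed by $P$ while holding every other disk fixed: the two-element blocks indexed by $\{1,\dots,r\}\setminus P$ sit as rigid vertical columns, and the $n-2r$ singletons sit to the right in reverse order, all in the positions prescribed by $\sigma$.  Since $\sigma$ and $\tau$ induce the same ordering on the blocks of $P$, the spinning blocks occur in the same relative order for both maps, so the two maps differ only in where the frozen two-element columns are placed among the spinning blocks (the singletons, which lie to the right in reverse order in either case, need not move).  It is therefore enough to slide each frozen two-element column from its $\sigma$-position to its $\tau$-position, past the spinning $P$-wheels and past the other frozen columns, continuously in the spin coordinates.

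The geometric input, and the point where $w=3$ enters crucially, is that a rigid column of at most two disks can always be slid past a spinning two-disk wheel in a strip of width $3$: the column dissolves and passes one disk at a time through whichever of the three unit-height slots is momentarily left free by the wheel, which is possible precisely because three---but not four---disks can be vertically stacked.  (For the same reason a column can pass another column, and a singleton can pass anything.)  Performing these slides in sequence carries the $\sigma$-configuration of frozen material to the $\tau$-configuration, continuously in the spin parameters, which is the required homotopy.

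The main obstacle is checking that the ``thread through the free slot'' move can be made continuous in the spin phase of a $P$-wheel: when such a wheel is nearly vertical it pins the passing disk to essentially the single middle slot, so the passing path must be chosen coherently as the phase runs around $S^1$.  This is exactly the step that fails for two genuinely rotating width-$2$ wheels---which is why the $w\ge 4$ argument, in which two full width-$2$ wheels may pass each other outright, does not carry over verbatim---but it succeeds here because in each relevant pair only one wheel spins while the other is a rigid column.
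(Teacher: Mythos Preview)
Your approach is essentially the paper's: prove the stronger statement that the restrictions $g_\sigma|_{T^P}$ and $g_\tau|_{T^P}$ are homotopic, by moving the disks of the non-$P$ blocks individually past the spinning $P$-wheels. The paper's own proof says only this much.

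Your ``thread through the free slot'' picture is more complicated than necessary and is what generates the coherence worry you flag at the end. The clean observation is that a spinning $2$-wheel, regardless of phase, fits entirely within two of the three unit-height lanes of the strip: if its center is placed at height $1$ (resp.\ $2$), both disks have $y$-coordinate in $[\tfrac12,\tfrac32]$ (resp.\ $[\tfrac32,\tfrac52]$) for every angle. So you first slide the wheel's center to height $1$ (continuously in the phase), then pass the single disk along the top lane at height $\tfrac52$, then restore the wheel. No phase-dependent choice of slot is needed, and the coherence issue evaporates. With this in hand, dissolving each frozen $2$-column into two singletons and passing them one at a time, exactly as you describe, completes the homotopy.
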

  \begin{proof}
    To show that $(g_\sigma)_*[T^P]$ is homologous to $(g_\tau)_*[T^P]$, we
    construct a homotopy between the corresponding maps $T^P \to \config(n,w)$.
    This involves moving around the individual disks comprising the blocks not in
    $P$ to make sure they are in the right order.
  \end{proof}
  The lemma implies that for any pair of nonempty complementary sets
  $P,Q \subset \{1,\ldots,r\}$,
  \[\langle g_\sigma^*\alpha, [T^P] \rangle \cdot \langle g_\sigma^*\beta, [T^Q] \rangle=\langle g_\tau^*\alpha, [T^P] \rangle \cdot \langle g_\tau^*\beta, [T^Q] \rangle\]
  if $\tau$ and $\sigma$ differ by commuting one of the brackets of
  $[\cdots[[1,2],3], \cdots r]$: specifically, the innermost bracket in which the
  right side is in $Q$ if $1 \in P$, or vice versa.  In particular,
  \[\sum_{\sigma \in S_r} \mu(\sigma)\langle g_\sigma^*\alpha, [T^P] \rangle \cdot
  \langle g_\sigma^*\beta, [T^Q] \rangle=0.\]
  From here we get \eqref{eqn:sum2} by summing over all pairs $P,Q$ and using
  \eqref{eqn:decomp2}.
\end{proof}

\section{Persistent homology}\label{sec:PH}

The majority of this paper has investigated the properties of $\config(n,w)$ as
we increase $n$ and keep $w$ fixed.  But we can also look at what happens when
$w$ grows.  Since $\config(n,w)$ naturally injects into $\config(n,w+1)$, forming
a filtration, the right framework for understanding this is
\strong{persistent homology}, which considers homology for all $w$ at once, together with the maps induced by the inclusions.  We give a short introduction to the
machinery; for more details, see \cite{EdH,ZC}.

Specifically, we can regard $\bigoplus_{w} H_*(\config(n, w))$ as a
$\mathbb{Z}[t]$-module in which multiplication by $t$ corresponds to applying
the maps $H_*(\config(n, w)) \rightarrow H_*(\config(n, w+1))$ induced by the
inclusions $\config(n, w) \hookrightarrow \config(n, w+1)$.  We denote this $\mathbb{Z}[t]$-module by $PH_*(\config(n, *))$.  Similarly, $\bigoplus_w H^*(\config(n, w))$ is a $\mathbb{Z}[t]$-module in which multiplication by $t$ corresponds to applying the pullback maps $H^*(\config(n, w)) \rightarrow H^*(\config(n, w-1))$, and we denote this $\mathbb{Z}[t]$-module by $PH^*(\config(n, *))$.

A \strong{cyclic} summand of $PH_*(\config(n, *))$ or $PH^*(\config(n, *))$ is generated by a single element that is nonzero for some interval of values of $w$.  It is standard to refer to a cyclic summand as a \strong{bar}, and to the endpoints of the corresponding interval as the values $w$ of its \strong{birth} and \strong{death}.  A decomposition of $PH_*(\config(n, *))$ or $PH^*(\config(n, *))$ into cyclic summands means selecting $\mathbb{Z}$--bases for the various values of $w$ in a way that agrees with the maps between the values of $w$.

The fundamental theorem for modules over a PID guarantees that a persistence
module with coefficients in a field will always decompose into cyclic summands.
No such guarantee exists for integral persistence modules: for example, one could
have a single class born at one time and later become divisible by $2$, yielding
a module isomorphic to the ideal $(2,t) \subset \mathbb{Z}[t]$.

Theorems~\ref{thm:PH:basis} and~\ref{thm:PH:barlength}, which we state and prove below, together prove Theorem \ref{thm:PH}.

\begin{thm}\label{thm:PH:basis}
  The homology basis elements given in Theorem~\ref{thm:basis} induce a
  decomposition of $PH_*(\config(n, *))$ as the direct sum of cyclic
  $\mathbb{Z}[t]$--modules.  The cohomology basis elements from
  \S\ref{S:coho-basis} give a decomposition of $PH^*(\config(n, *))$ as the
  direct sum of cyclic $\mathbb{Z}[t]$--modules.
\end{thm}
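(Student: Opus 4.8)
The plan is to upgrade the $\mathbb Z$-bases of Theorem~\ref{thm:basis} to a basis compatible with the whole filtration by $w$. Concretely, I will show that the inclusion-induced map $\iota_*\colon H_*(\config(n,w))\to H_*(\config(n,w+1))$ sends each basic cycle $Z(f)$ --- indexed, as in Remark~\ref{rmk:crit-cells}, by a critical cell $f$ of $\cel(n,w)$ --- either to $\pm Z(f)$ (when $f$ is still a critical cell of $\cel(n,w+1)$) or to $0$ (otherwise), and that the surviving assignment $Z(f)\mapsto \iota_*Z(f)$ is injective. Granting this, form the graph on $\bigsqcup_w\{\text{basic }w\text{-cycles}\}$ whose edges are the nonzero instances of $Z(f)\mapsto\iota_*Z(f)$; each vertex has at most one edge in and one out, so the graph is a disjoint union of finite or one-sidedly infinite paths. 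Each path spans a $t$-stable subgroup of $PH_*(\config(n,*))$ generated by its initial vertex, hence a cyclic $\mathbb Z[t]$-module (isomorphic to $\mathbb Z[t]/(t^m)$ for a path of length $m$, or to a graded shift of $\mathbb Z[t]$ for an infinite path); since the basic cycles form a $\mathbb Z$-basis for each $w$ and the paths partition them, $PH_*(\config(n,*))$ is the direct sum of these cyclic modules.

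The heart of the matter is the matching claim, which I would prove by descending to the weighted no-$(w+1)$-equal level. The layer maps $\spin_\sigma$ and $i_\sigma,p_\sigma$ behind the splitting \eqref{split} are defined on all of $\cel(n)$ with no reference to $w$, so the inclusion $\cel(n,w)\hookrightarrow\cel(n,w+1)$ respects the decomposition of Theorem~\ref{thm:split} and restricts on the $\sigma$-summand to the map induced by $P(A,\mathcal W,k)\hookrightarrow P(A,\mathcal W,k+1)$. On that level the matching is visible: by the construction behind Theorem~\ref{thm-weighted-basis}, $z(e)$ is a fixed concatenation of $0$-cells (for non-follower singletons) and of boundaries $\partial(b_1b_2)$ of top cells of sub-permutohedra (for leader--follower pairs $(b_1\mid b_2)$), a chain lying in $P(A,\mathcal W,k')$ for every $k'\ge k$; hence $\iota_*z(e)=z(e)$ as a chain. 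If every leader--follower pair of $e$ has total weight at least $k+2$, then $e$ is still critical for $k+1$ by Lemma~\ref{lem:crit} and $z(e)$ is again a basis cycle; if instead some pair $(b_1\mid b_2)$ has total weight exactly $k+1$, then $(b_1b_2)$ is a cell of $P(A,\mathcal W,k+1)$, so $\partial(b_1b_2)$ bounds there, and since concatenation is a chain map the whole chain $z(e)$ bounds as well. Injectivity of the surviving part is immediate because $Z(f)$ has $f$ as its unique maximal cell (Lemma~\ref{lem-max-basis}) under the $w$-independent ordering of Remark~\ref{rmk:crit-cells}. Translated back to disks, $\iota_*Z(f)=0$ precisely when some filter of $f$ has exactly $w+1$ disks.

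For the cohomological statement I would argue by $\mathbb Z$-duality. Each $H_j(\config(n,w))$ is free abelian of finite rank, so $H^j(\config(n,w))=\Hom(H_j(\config(n,w)),\mathbb Z)$ and the pullback along $\config(n,w-1)\hookrightarrow\config(n,w)$ is the transpose of $\iota_*$; transposing the matching claim gives the same ``each generator to $\pm$ one generator or to $0$'' behaviour for the dual bases $\{Z(f)^{\vee}\}$, now with $w$ decreasing, and hence the analogous thread decomposition of $PH^*(\config(n,*))$. To see that the geometric basis $\{\nu_w(f)\}$ of \S\ref{S:coho-basis} realizes this decomposition, I would invoke Lemma~\ref{lem:int-pair}: it shows $\{\nu_w(f)\}$ is related to $\{Z(f)^\vee\}$ by a matrix that is unitriangular for the ordering of Remark~\ref{rmk:crit-cells}, and since that ordering and the set of critical cells present at each stage are intrinsic to the combinatorics (not to the choice of $w$), this change of basis commutes with the transition maps. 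Equivalently, via Poincar\'e--Lefschetz duality one checks directly that $\nu_w(f)$ restricts to $0$ exactly when $f$ has a block of $w$ disks --- because $w$ vertically stacked disks cannot fit in a strip of width $w-1$ --- and to $\pm\nu_{w-1}(f)$ modulo strictly higher-ranked terms otherwise.

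The step I expect to be the main obstacle is the matching claim of the second paragraph: one must verify that the cycle constructions of \S\ref{S:nokequal} and the layer maps of \S\ref{sec:layers} are genuinely filtration-independent, so that each basic cycle either persists verbatim as a basic cycle or visibly becomes a boundary, with no intermediate behaviour such as mapping to a nontrivial combination of lower-ranked basic cycles. In the cohomological case there is the extra wrinkle, flagged above, that the natural cohomology basis is only triangularly --- not literally --- dual to the homology basis, so one must also check that this triangular comparison is compatible with the maps of the filtration.
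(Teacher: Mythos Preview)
Your homology argument is exactly the paper's: the paper isolates the weighted no-$(k+1)$-equal version as a separate statement (Theorem~\ref{thm:PH:weighted}), proves it just as you do---a basic cycle is a concatenation of $0$-cells and permutohedral boundaries $\partial(b_1b_2)$, and once some leader--follower pair has total weight at most the new $k$, that factor literally bounds---and then invokes naturality of the splitting of Theorem~\ref{thm:split} in $w$ to transfer this to $\cel(n,*)$. The ``main obstacle'' you flag in your last paragraph is precisely what that naturality handles, and your argument for it is the paper's.

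For cohomology the paper takes only your final, geometric route and skips the duality detour entirely. It does not pass through $\Hom$, dual bases, or the triangular comparison of Lemma~\ref{lem:int-pair}; it simply observes that under Poincar\'e--Lefschetz duality the restriction $H^*(\config(n,w))\to H^*(\config(n,w-1))$ sends the class of $V(g)$ to the class of $V(g)\cap M(n,w-1)$, and that this intersection is empty whenever $g$ has a block of size $w$. The paper then declares that ``when a basis element stops being in the basis it becomes zero'' is all that is needed, without addressing the surviving case. So your primary approach is more roundabout than the paper's, and the ``extra wrinkle'' about the triangular change of basis that you worry over is a complication the paper never introduces; on the other hand, your caution about the surviving case (``$\pm\nu_{w-1}(f)$ modulo strictly higher-ranked terms'') is a point the paper passes over in silence.
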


To prove the theorem, it suffices to show that when a given basis element stops being in the basis, it also becomes zero in homology or cohomology.  To verify this for homology, we show the corresponding statement for weighted no--$(w+1)$--equal spaces.

\begin{thm} \label{thm:PH:weighted}
For the weighted no--$(w+1)$--equal spaces, the homology basis elements from Theorem~\ref{thm:basis} give a $\mathbb{Z}[t]$--basis for $PH_*(\no_*(n, \mathcal{W}))$.
\end{thm}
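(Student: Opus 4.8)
The plan is to show that the discrete Morse bases of Theorem~\ref{thm-weighted-basis} at the various thresholds are related by a \emph{partial bijection}, which forces the persistence module to split into bars. Fix $n$ and a weight vector $\mathcal{W}$, ordered so that \eqref{sizes} holds, and write $H^{(k)}=H_*(P(n,\mathcal{W},k))\cong H_*(\no_{k+1}(n,\mathcal{W}))$. By Theorem~\ref{thm-weighted-basis} — whose basis is exactly the one underlying Theorem~\ref{thm:basis} within a single layer — $H^{(k)}$ has a $\mathbb{Z}$-basis given by the classes $[z(e)]$ as $e$ ranges over the cells critical for $P(n,\mathcal{W},k)$, namely the concatenation products of non-follower singletons and leader--follower pairs satisfying (a), (b) of Lemma~\ref{lem:crit}. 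To such an $e$ attach $m(e)$, the largest weight of a block of $e$, and the totals $W_1(e),\dots,W_r(e)$ of its leader--follower pairs (leader weight plus follower weight). Then Lemma~\ref{lem:crit} says precisely that $e$ is critical for $P(n,\mathcal{W},k)$ if and only if $m(e)\le k\le\min_i W_i(e)-1$, with the upper bound read as $+\infty$ when $r=0$.

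First I would pin down the structure maps. The inclusion $P(n,\mathcal{W},k)\hookrightarrow P(n,\mathcal{W},k+1)$ is a cellular inclusion of subcomplexes, hence the literal inclusion on cellular chains; so for any chain $z$ supported on $P(n,\mathcal{W},k)$, the image of $[z]$ in $H^{(k+1)}$ is the class of the same chain, and in particular $t\colon H^{(k)}\to H^{(k+1)}$ carries $[z(e)]_k$ to $[z(e)]_{k+1}$. The key step is to show that this image is again one of the Theorem~\ref{thm-weighted-basis} basis classes of $H^{(k+1)}$, or is zero. If $\min_i W_i(e)\ge k+2$, then $e$ is still critical at level $k+1$ and $[z(e)]_{k+1}$ is a basis class. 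Otherwise $\min_i W_i(e)=k+1$, so some leader--follower pair has total weight $W_{i_0}(e)=k+1$; since $z(e)$ is the concatenation product of the $z(\cdot)$'s of its atomic factors, and the factor for that pair is $\partial(b\,b')$, the boundary of the top cell of $P(\{b,b'\})$ — a cell of weight $k+1$, hence lying in $P(n,\mathcal{W},k+1)$ — the chain obtained by replacing that one factor by the top cell itself bounds $z(e)$ up to sign (by the Leibniz rule, the remaining factors being cycles), so $[z(e)]_{k+1}=0$. One also checks that $z(e)$ is not a chain of $P(n,\mathcal{W},k)$ when $k<m(e)$ and is a boundary when $k\ge\min_i W_i(e)$; thus $z(e)$ represents a nonzero class of $H^{(k)}$ on exactly the interval of thresholds on which $e$ is critical.

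Finally I would assemble the bars: for each cell $e$ critical for at least one threshold, let $B_e\subseteq PH_*(\no_*(n,\mathcal{W}))$ be the $\mathbb{Z}[t]$-submodule generated by $z(e)$ viewed in $H^{(m(e))}$. By the previous step $B_e$ is spanned over $\mathbb{Z}$ by $\{[z(e)]_k : m(e)\le k\le\min_i W_i(e)-1\}$, so $B_e$ is cyclic over $\mathbb{Z}[t]$ — a bar born at $k=m(e)$, infinite when $r=0$ and otherwise killed at $k=\min_i W_i(e)$. Since at each fixed $k$ the classes $[z(e)]_k$ with $e$ critical at $k$ form a $\mathbb{Z}$-basis of $H^{(k)}$, this gives $PH_*(\no_*(n,\mathcal{W}))=\bigoplus_e B_e$ as $\mathbb{Z}[t]$-modules, which is the asserted $\mathbb{Z}[t]$-basis; because $t$ is a partial bijection of bases, no integral obstruction of the $(2,t)\subset\mathbb{Z}[t]$ type arises.

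The main obstacle — though a mild one — is the key step above: confirming that a basis class drops to exactly $0$, not merely out of the basis, the instant its cell stops being critical. In degree $0$ this is a little delicate, since no $0$-cell can bound, but a \emph{difference} of $0$-cells can, and indeed the basis classes coming from leader--follower pairs of two singletons are such differences; the mechanism is uniform in all degrees: criticality is lost at level $k+1$ only when some pair's total weight equals $k+1$, which is exactly the threshold at which the permutohedron cell bounding the corresponding factor of $z(e)$ enters the complex.
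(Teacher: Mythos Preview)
Your proof is correct and follows essentially the same approach as the paper's: both argue that a critical cell $e$ remains critical precisely on an interval of thresholds, and that the instant the criticality condition fails (some leader--follower pair has total weight equal to the new threshold), the boundary-of-permutohedron factor in $z(e)$ becomes filled in, making $z(e)$ a boundary. Your version is considerably more detailed---explicitly tracking the birth time $m(e)$, invoking the Leibniz rule to propagate the boundary through the concatenation product, and addressing the degree-$0$ subtlety---but the mechanism is identical to the paper's one-paragraph argument.
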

\begin{proof}
  Recall that a basic cycle
  consists of a product of single vertices corresponding to singletons and
  boundaries of permutohedral cells corresponding to leader--follower pairs.

  The key fact is that as we increase $w$, a particular cell stays critical as
  long as for every leader--follower pair, the weights add up to at least $w+1$.
  But once they add up to only $w$ for some leader--follower pair, that means
  that the corresponding permutohedron boundary is filled in, and so the cycle
  becomes a boundary.  Therefore every cell of $P(n)$ which is critical in
  $P(n,\mathcal{W},w)$ for some $w$ corresponds to a direct summand of the
  persistence module.
\end{proof}

\begin{proof}[Proof of Theorem~\ref{thm:PH:basis}]
  Theorem \ref{thm:PH:basis} follows easily from Theorem \ref{thm:PH:weighted}.
  This is because the splitting
  \[H_*(\cel(n,w))=
  \bigoplus_{\sigma \in S_n} H_{*-\#\sigma}(P(n-\#\sigma,\mathcal{W}(\sigma),w))\]
  of Theorem \ref{thm:split} is natural with respect to increasing $w$ (even on
  the chain level, as one readily sees).  Therefore
  \[PH_*(\cel(n,*))=
  \bigoplus_{\sigma \in S_n} PH_{*-\#\sigma}(P(n-\#\sigma,\mathcal{W}(\sigma),*)).\]
  
  In particular, an element leaves the basis exactly when one of its filters has the wheel sizes adding up to at most $w$, and the correspondence with the no--$(w+1)$--equal homology proves that the element is null-homologous in this case.
  
For cohomology, the restriction map $H^*(\config(n, w)) \rightarrow H^*(\config(n, w-1))$ corresponds to intersecting each basis element $V(g)$ of $H_{2n-*}(M(n, w), \del M(n, w))$ with the smaller space $M(n, w-1)$.  When $w$ gets too small for $V(g)$ to be in the basis, it is because some block of $g$ has more than $w$ elements; thus, the intersection of $V(g)$ with this $M(n, w)$ is empty, and the restricted cohomology class is zero.  Thus, it is also true for cohomology that when a given element stops being in the basis, it becomes zero.
\end{proof}
\begin{rmk}
  We can also explore how the persistence module structure on homology and
  cohomology interacts with other structures we have discussed.
  \begin{enumerate}
  \item Cohomological persistence does not play nicely with the cup product
    structure: frequently a class is born indecomposable at time $w$ and becomes
    decomposable at time $w-1$.  For example, in the notation of the previous
    section, we have the relation
    \[\nu(5\,4 \mid 6\,2\,3 \mid 1) \cup \nu(4\,1 \mid 6 \mid 5 \mid 3 \mid 2)
    = \nu(5\,4\,1 \mid 6\,2\,3),\]
    in $\config(6,3)$ and $\config(6,4)$, but $\nu(5\,4\,1 \mid 6\,2\,3)$ is
    indecomposable in $\config(6,5)$.
  \item The concatenation product is perfectly well-defined on persistence
    modules.  So we can think of $PH_*(\config({-},*))$ as a graded twisted
    algebra in $\mathbb{Z}[t]$-modules!  The $\mathbb{Z}[t]$-module structure of
    this algebra is relatively easy to understand as we have seen above, but it
    does not interact in a nice way with the symmetric group action.  Moreover,
    unlike the algebras $H_*(\config({-},w))$ for fixed $w$, it is not finitely
    generated as an algebra.  For these reasons, we do not study this structure
    further.
  \end{enumerate}
\end{rmk}

\subsection{Asymptotics of the persistence module}
A main goal of \cite{AKM} was to understand the growth of Betti numbers of
$\config(n,w)$ as $n$ increases.  Now that we have described the persistence
module of $\config(n,*)$, we can refine this: as the number of disks increases,
we keep track of the number of bars of different lengths.  It turns out that the
number of short bars grows faster and eventually dominates the number of longer
bars.  We make this precise in the following theorem.

\begin{thm}\label{thm:PH:barlength}
  Each $\mathbb{Z}[t]$--basis element of $PH_*(\config(n, *))$ born at $w=w_0$
  either persists for all $w > w_0$ or dies by $w=2w_0$.  For each $j$ and $w_0$,
  either the maps
  \[H_j(\config(n, w_0)) \rightarrow H_j(\config(n, w))\]
  are isomorphisms for all $w > w_0$ and all $n$, or the fraction of basis
  elements of $H_j(\config(n, w_0))$ that persist to $H_j(\config(n, w_0+1))$
  approaches $0$ as $n$ approaches $\infty$.
\end{thm}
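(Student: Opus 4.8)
The plan is to prove the two assertions separately, in both cases reading everything off the combinatorial description of basic cycles together with the death criterion from the proof of Theorems~\ref{thm:PH:basis} and~\ref{thm:PH:weighted}.

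For the first assertion, recall that a basic cycle $z$ generates a cyclic summand of $PH_*(\config(n,*))$, which is a basis element of $H_*(\config(n,w))$ for exactly those $w$ at which every wheel of $z$ fits, every filter of $z$ satisfies its defining inequalities, and every filter is still nontrivial; once $w \geq \min_F k_F$, where the minimum runs over the filters $F$ of $z$ and $k_F$ is the number of disks in $F$, the cycle becomes null-homologous. For a filter $F$ with $r \geq 2$ wheels of sizes $m_1 \leq \cdots \leq m_r$, the requirement that each wheel have at least $k_F - w$ disks says $w \geq k_F - m_1$, and $k_F - m_1 \geq m_r$, so $F$ first appears at width $k_F - m_1$; since $k_F = \sum m_i \geq r m_1 \geq 2 m_1$, this is at least $k_F/2$. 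Hence if $F_0$ is the filter of $z$ with the fewest disks, the birth $w_0$ of $z$ is at least $k_{F_0}/2$ while its death is $k_{F_0} \leq 2 w_0$; and if $z$ has no filters it is an infinite bar. This proves the first statement.

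For the second assertion, I first treat the case $j \leq w_0 - 2$. A filter has more than $w_0$ disks, hence contributes at least $w_0 - 1 > j$ to the degree, and a wheel of more than $w_0$ disks contributes at least $w_0 > j$; so every basic $j$-cycle of $\config(n,w_0)$ is a concatenation of wheels of size $\leq w_0$, it never dies, and no new $j$-cycle appears as $w$ grows. Thus the maps are isomorphisms for all $w > w_0$ and all $n$. Now suppose $j \geq w_0 - 1$. Then for $n$ large there is a basic $j$-cycle containing a filter of exactly $w_0 + 1$ disks --- e.g.\ a singleton together with a wheel of $w_0$ disks, concatenated with $j - w_0 + 1$ wheels of two disks --- and such a cycle dies at $w_0 + 1$ by Theorem~\ref{thm:PH:weighted}; so the maps are not all isomorphisms, and it remains to show the surviving fraction tends to $0$. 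A basic $j$-cycle survives from $w_0$ to $w_0 + 1$ precisely when none of its filters has exactly $w_0 + 1$ disks. By the bookkeeping of Section~\ref{sec:formula}, counting the critical $j$-cells of Remark~\ref{rmk:crit-cells} reduces to a finite sum over skyline shapes, and the total count is asymptotic to $C n^A B^n$, where for each skyline the exponential base is $1$ plus the number of singleton-leader filters and the polynomial degree $A$ is governed by the total number of disks in the filters and in the wheel-tail. I would carry out the resulting finite optimization to show that the lexicographically largest pair $(B,A)$ is attained only by skylines whose filters all have exactly $w_0 + 1$ disks: maximizing $B$ uses $\lfloor j/(w_0-1)\rfloor$ singleton-leader filters and leaves only $j \bmod (w_0-1) < w_0 - 1$ units of degree, too little for any further filter, and then maximizing $A$ forces those filters to the minimal size $w_0 + 1$ and the leftover degree into two-disk wheels. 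Imposing that every filter have at least $w_0 + 2$ disks lowers the leading asymptotics: the base drops to $1 + \lfloor j/w_0\rfloor$, and in the finitely many residual cases where that still equals $B$, the same computation shows $A$ decreases by $\lfloor j/(w_0-1)\rfloor \geq 1$. Hence the count of surviving $j$-cycles has strictly smaller $(B,A)$ than the count of all $j$-cycles, so the surviving fraction is $O(n^{-c})$ for some $c > 0$.

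The main obstacle is the extremal combinatorial statement in the last paragraph: one must balance two competing effects --- extra filters raise the exponential base $B$, while larger filters and a longer wheel-tail raise the polynomial degree $A$ --- and check that at the lexicographic optimum the balance always tips toward the maximum possible number of minimal ($w_0 + 1$ disk) filters, with larger-wheel-leader filters and longer tail wheels never doing better. Once that is pinned down, the strict gap between the two leading asymptotics, and hence the theorem, follows by rerunning the count under the extra constraint.
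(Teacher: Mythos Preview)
Your argument for the first assertion is correct and matches the paper's: the leader wheel of any filter is no larger than the rest of the filter, so a filter on $k$ disks is born no earlier than $k/2$ and dies at $k$.

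For the second assertion your treatment of the case $j\le w_0-2$ is fine, and your overall strategy for $j\ge w_0-1$ (compare asymptotics of all basic $j$-cycles to those that persist) is the right one, but the execution has a concrete error that both hides the real reason and leaves the proof incomplete. You correctly observe that a cycle persists to $w_0+1$ iff every filter has at least $w_0+2$ disks, and you then assert that under this constraint the exponential base drops to $1+\lfloor j/w_0\rfloor$. That is wrong. A filter at width $w_0$ whose leader is a singleton wheel satisfies $1\ge k_F-w_0$, hence $k_F=w_0+1$; so \emph{no} persisting cycle has a singleton-leader filter. In the language of \S\ref{sec:formula}, every tadpole in a persisting skyline has a non-singleton leader, so singletons can only be inserted in the tail, and the exponential base for persisting cycles is $1$, not $1+\lfloor j/w_0\rfloor$. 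Once you see this, the comparison is immediate: Theorem~\ref{thm:AKM:asymptotic} gives exponential growth for the total, while the persisting count is polynomial (at most $\binom{n}{2j}(2j)!\,2^{2j-1}$, since the non-singleton part uses at most $2j$ disks). No ``residual case'' optimization over $(B,A)$ is needed.

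This is exactly the paper's argument: persisting $\Rightarrow$ no singleton wheel in any filter $\Rightarrow$ by rules (iii)--(iv) of Theorem~\ref{thm:basis} all singleton wheels sit to the right of everything else $\Rightarrow$ polynomial count. Your proposed finite optimization would, if carried out correctly, rediscover this (you would find the maximum of $B$ over persisting skylines is $1$), but as written it is based on a false intermediate claim and leaves the hardest step unjustified.
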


We note that because each homology basis element is matched to a cohomology basis element in the same degree with the same birth and death, the theorem for homology immediately implies a corresponding statement for cohomology, which we do not include here.

For the second statement of Theorem~\ref{thm:PH:barlength}, it helps to have an asymptotic estimate of the dimension of $H_j(\config(n, w))$ as $n$ approaches $\infty$.  Examining the basis for homology and estimating the number of elements recovers the following theorem.

\begin{thm}[\cite{AKM}]\label{thm:AKM:asymptotic}
If $w \geq 2$ and $0 \leq j \leq w-2$, then the inclusion of $\config(n, w)$ into the configuration space of points in the plane induces an isomorphism on $H_j$.  If $w \geq 2$ and $j \geq w-1$, then there are positive constants $c_1$ and $c_2$, depending on $w$ and $j$, such that the following is true.  Write $j = q(w-1) + r$ with $q \geq 1$ and $0 \leq r < w-1$.  Then 
\[c_1\cdot n^{qw+2r}(q+1)^n \leq \dim H_j(\config(n, w)) \leq c_2\cdot n^{qw+2r}(q+1)^n.\]
\end{thm}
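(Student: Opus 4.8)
The plan is to prove both assertions by counting basic cycles: since $\dim H_j(\config(n,w))$ equals the number of basic cycles of Theorem~\ref{thm:basis}, equivalently the number of critical $j$-cells of the discrete vector field of Remark~\ref{rmk:crit-cells}, everything reduces to combinatorics. Recall that a critical $j$-cell is a concatenation of \emph{lone-wheel blocks} (single wheels that are not followers) and \emph{leader--follower pairs} (a single-wheel leader followed by one block all of whose wheels outrank the leader, the two having at least $w+1$ disks together); a lone wheel of $k$ disks contributes $k-1$ to $j$, and a pair of $m$ disks contributes $m-2$. A lone wheel of size $\ge 2$ contributes $\ge 1$ and a pair contributes $\ge w-1$, so a critical $j$-cell has at most $q:=\lfloor j/(w-1)\rfloor$ pairs. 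When $q=0$, i.e.\ $0\le j\le w-2$, there are no pairs, and for every $w'\ge j+2$ the critical $j$-cells of $\cel(n,w')$ are exactly the same single-wheel concatenations; the corresponding basic cycles are literally the same submanifolds, compatibly with the inclusions $\config(n,w)\hookrightarrow\config(n,w')\hookrightarrow\config(n)$ (using $\config(n)\simeq\config(n,n)$). Hence these inclusions are isomorphisms on $H_j$, which is the first assertion.

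Now assume $j\ge w-1$ and write $j=q(w-1)+r$ with $q\ge1$ and $0\le r<w-1$. I would sort the critical $j$-cells by \emph{type}: the list of wheel sizes inside each leader--follower pair together with the sizes of the lone wheels of size $\ge 2$, recorded up to relabeling. For fixed $j,w$ there are finitely many types, since they involve a bounded number of disks. Fix a type with $q'\le q$ pairs and with $N_1=j+2q'+s$ labels in its ``skeleton,'' where $s$ is its number of lone wheels of size $\ge 2$; the other $n-N_1$ disks are singletons. A critical cell of this type is specified by choosing the skeleton labels together with one of $O(1)$ internal arrangements, and then distributing the $n-N_1$ singletons among the $q'+1$ gaps around the pairs (within each gap the order is forced to be decreasing by rank). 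The only constraint is condition~(iv): a singleton $d$ may go in the gap immediately left of the $i$th pair only if $d$ outranks that pair's leader wheel; for a singleton leader labelled $L_i$ this says $d>L_i$, and it is impossible when the leader has size $\ge 2$.

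Consequently, letting $p\le q'$ be the number of pairs with a singleton leader (pairs with a larger leader, of which there are $O(1)$, merely forbid singletons in the gap to their left), the number of critical $j$-cells of this type is, up to a constant depending only on $j,w$, at most
\[
  O\!\left(n^{\,N_1-p}\right)\cdot\!\!\sum_{\{L_{i_1}<\cdots<L_{i_p}\}\subseteq[n]}\ \prod_{d\notin\{L_{i_1},\dots,L_{i_p}\}}\bigl(1+\#\{t:L_{i_t}<d\}\bigr).
\]
Splitting $[n]$ into the $p+1$ intervals cut out by the $L_{i_t}$ turns the inner product into $\prod_{t=0}^{p}(t+1)^{g_t}$ with $\sum_t g_t=n-p$, so the double sum is $[x^{n-p}]\prod_{t=0}^{p}(1-(t+1)x)^{-1}=\Theta\bigl((p+1)^{\,n-p}\bigr)$, and the type contributes $O\bigl(n^{\,N_1-p}(p+1)^n\bigr)$. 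Since $p\le q'\le q$ and $q'(w-1)+s\le j$, one gets $N_1-p\le N_1-q'=j+q'+s\le j+q'+\bigl(j-q'(w-1)\bigr)$, which equals $qw+2r$ precisely when $q'=p=q$ and $s=r$, and is otherwise strictly dominated (a smaller polynomial degree with the same exponential base $q+1$, or a strictly smaller base). Summing over the finitely many types gives $\dim H_j(\config(n,w))\le c_2\, n^{qw+2r}(q+1)^n$.

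For the matching lower bound I would realize the extremal type explicitly: $q$ leader--follower pairs, each a filter of $w+1$ singleton wheels (contributing $w-1$ apiece), plus $r$ lone wheels of size $2$ (contributing $r$), totalling $j$. Force the $q$ leader labels to be $1,2,\dots,q$, choose the remaining $qw+2r$ skeleton labels freely from $\{q+1,\dots,n\}$ — that is $\binom{n-q}{qw+2r}=\Theta(n^{qw+2r})$ ways — and fix any single internal arrangement. Every remaining singleton has label $>q$, hence outranks each leader $1,\dots,q$, so it may be placed in any of the $q+1$ gaps; these $(q+1)^{\,n-q-qw-2r}$ placements give distinct critical cells, whence $\dim H_j(\config(n,w))\ge c_1\, n^{qw+2r}(q+1)^n$. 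The main obstacle is the upper bound: one must couple the choice of skeleton labels to the singleton-placement count — it is exactly the forced smallness of the $q$ leaders, which costs $q$ in the polynomial degree, that is required to retain the full exponential rate $(q+1)^n$ — and then verify that the resulting generating-function sum really has order $(p+1)^{n-p}$; the bookkeeping for pairs with non-singleton leaders and the finiteness of the set of types is routine by comparison.
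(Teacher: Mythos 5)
Your proof is correct in substance. Note first that the paper does not actually prove this statement: it is quoted from \cite{AKM} with only the remark that ``examining the basis for homology and estimating the number of elements recovers'' it, so you are supplying a derivation the paper omits. What you do is essentially a hand-rolled, asymptotic version of the machinery of Section~\ref{sec:formula}: your ``types'' are the paper's skylines, your observation that a singleton may precede the $i$th pair only if its label exceeds the singleton leader $L_i$ is exactly why a tadpole's counting function is $\binom{n-1}{k-1}$ there, and your generating-function identity $[x^{n-p}]\prod_{t=0}^{p}(1-(t+1)x)^{-1}=\Theta((p+1)^{n-p})$ is the labeled convolution of Lemma~\ref{lem:egf}, under which the exponential bases add ($q$ tadpoles of base $1$ plus a tail of base $1$ give base $q+1$, with top binomial index $qw+2r$ for the extremal skyline of $q$ minimal filters and $r$ lone $2$-wheels). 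Your first paragraph (the range $j\le w-2$) is also fine and matches the persistence statement that filter-free bars are infinite. One slip to repair in the upper bound: since $p\le q'$, the inequality ``$N_1-p\le N_1-q'$'' is backwards; when $p<q'$ the polynomial degree $N_1-p$ can exceed $N_1-q'$. The argument survives because that case forces base $p+1\le q<q+1$, so the term is exponentially dominated regardless of its polynomial degree; the degree bound $N_1-p\le qw+2r$ is only needed (and only holds, via $q'=q$ and $s\le r$) in the case $p=q$, which is how your parenthetical summary reads anyway. With that two-case repair the estimate is complete.
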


\begin{proof}[Proof of Theorem~\ref{thm:PH:barlength}]
The basis elements that persist indefinitely are those with no filters.  Each filter can be written as a leader--follower pair, and any leader--follower pair that appears at time $w_0$ has total weight at most $2w_0$, because the leader block has at most the weight of the follower block.  Thus it only remains a filter until at most $w = 2w_0$.  This proves the first statement of the theorem.
  
For the second statement, by Theorem~\ref{thm:AKM:asymptotic} it suffices to show that for $j \geq w-1$, the number of basis elements of $H_j(\config(n, w))$ that persist to $H_j(\config(n, w+1))$ grows polynomially in $n$.  We observe that if a filter for $w$ contains any wheel of size $1$, then its total size is $w+1$, so it does not remain a filter for $w+1$.  Thus, in any basis element of $H_j(\config(n, w))$ that persists, every filter must contain only wheels of size at least $2$.  One rule for being a basis element is, ``Every wheel immediately to the left of a filter is greater than the least wheel in the filter,'' so this then implies that in any basis element of $H_j(\config(n, w))$ that persists, all of the wheels of size $1$ appear to the right of all other wheels and filters.

To estimate the number of such basis elements, we simply count the cell symbols that end with (at least) $n-2j$ singleton blocks in descending order.  There are $\binom{n}{2j}\cdot (2j)!\cdot 2^{2j-1}$ such symbols, and that function grows polynomially in $n$.  Because the total number of basis elements of $H_j(\config(n, w))$ grows exponentially in $n$, asymptotically almost all of the basis elements do not persist to $w+1$.
\end{proof}

\section{Relations in the twisted algebra and $\FI_d$-modules} \label{S:FId}

The twisted algebra structure of $H_*(\cel(-, w))$ is unusual because many pairs of elements do not commute.  In particular, there are some elements that do not commute with the $0$--cycles coming from a single disk: we think of these as \strong{barriers} that prevent singleton disks from passing back and forth.  This non-commuting with singleton $0$--cycles is the main reason for Theorem~\ref{thm:AKM:asymptotic}: a given homology element in degree $j$ can be written as the concatenation product of up to $\bigl\lfloor\frac{j}{w-1}\bigr\rfloor$ barriers, giving up to $1+\bigl\lfloor\frac{j}{w-1}\bigr\rfloor$ non-equivalent ways to insert a new disk as a singleton.

One algebraic object that exhibits this kind of exponential growth is an $\FI_d$--module.  The best example for understanding the idea of an $\FI_d$--module is the $j$th homology of the configuration space of $n$ disks on the disjoint union of $d$ planes.  Each additional disk can be added to any of the $d$ planes.  Sam and Snowden define $\FI_d$--modules for the first time in~\cite{Sam17}.  In~\cite{Ramos17}, Ramos shows that finitely generated $\FI_d$--modules satisfy a notion of generalized representation stability, and in~\cite{Ramos19} he shows that the homology groups of a certain kind of graph configuration space are finitely generated $\FI_d$--modules.

We show in this section that the homology of unweighted no--$(w+1)$--equal spaces and of $\config(n, 2)$ are both $\FI_d$--modules.  On the other hand, we give an example to show that the homology of $\config(n, w)$ for $w > 2$ is probably not well-described via $\FI_d$--modules, since there seems to be no consistent way of decomposing homology classes as products of barriers.

Formally, the category $\FI_d$ has one object $[n] = \{1, \ldots, n\}$ for each natural number $n$.  The morphisms are pairs $(\varphi, c)$, where $\varphi$ is an injection, say, from $[n]$ to $[m]$, and $c$ is a $d$--coloring on the complement of the image of $\varphi$; that is, $c$ is a map from $[m] \setminus \varphi([n])$ to a set of size $d$, which in this paper we choose to be $\{0, 1, \ldots, d-1\}$.  The morphisms compose as illustrated in Figure~\ref{fig-compose}: for each element colored by the first morphism, in the composition, the image of that element under the second morphism is the one that gets that color.  (In the picture, the color of a given element is shown in a diamond just above the element.)  More formally, if $(\varphi, c) \co [n_1] \rightarrow [n_2]$ and $(\varphi', c') \co [n_2] \rightarrow [n_3]$ are two morphisms, then we have
\[(\varphi', c') \circ (\varphi, c) = (\varphi' \circ \varphi, c''),\]
where $c''(i)$ is equal to $c'(i)$ if $i \not\in \varphi'([n_2])$, and is equal to $c(\varphi'^{-1}(i))$ if $i \in \varphi'([n_2])$. 

\begin{figure}
\begin{center}
\begin{tikzpicture} [scale=.5, emp/.style={inner sep = 0pt, outer sep = 0pt}, >=stealth]
\node[emp] at (1, 7) {$1$};
\node[emp] at (2, 7) {$2$};
\node[emp] at (3, 7) {$3$};
\draw (.5, 6.5)--(3.5, 6.5)--(3.5, 7.5)--(.5, 7.5)--cycle;
\draw (1.5, 6.5)--(1.5, 7.5) (2.5, 6.5)--(2.5, 7.5);

\draw[->] (1, 6.5)--(3, 4.5);
\draw[->] (2, 6.5)--(2, 4.5);
\draw[->] (3, 6.5)--(4, 4.5);
\draw (1, 4.5)--(1.5, 5)--(1, 5.5)--(.5, 5)--cycle;
\node[emp] at (1, 5) {$2$};

\node[emp] at (1, 4) {$1$};
\node[emp] at (2, 4) {$2$};
\node[emp] at (3, 4) {$3$};
\node[emp] at (4, 4) {$4$};
\draw (.5, 3.5)--(4.5, 3.5)--(4.5, 4.5)--(.5, 4.5)--cycle;
\draw (1.5, 3.5)--(1.5, 4.5) (2.5, 3.5)--(2.5, 4.5) (3.5, 3.5)--(3.5, 4.5);

\draw[->] (1, 3.5)--(2, 1.5);
\draw[->] (2, 3.5)--(1, 1.5);
\draw[->] (3, 3.5)--(4, 1.5);
\draw[->] (4, 3.5)--(5, 1.5);
\draw (3, 1.5)--(3.5, 2)--(3, 2.5)--(2.5, 2)--cycle;
\node[emp] at (3, 2) {$1$};

\node[emp] at (1, 1) {$1$};
\node[emp] at (2, 1) {$2$};
\node[emp] at (3, 1) {$3$};
\node[emp] at (4, 1) {$4$};
\node[emp] at (5, 1) {$5$};
\draw (.5, .5)--(5.5, .5)--(5.5, 1.5)--(.5, 1.5)--cycle;
\draw (1.5, .5)--(1.5, 1.5) (2.5, .5)--(2.5, 1.5) (3.5, .5)--(3.5, 1.5) (4.5, .5)--(4.5, 1.5);


\node[emp] at (6, 4) {$=$};

\node[emp] at (7+1, 5.5) {$1$};
\node[emp] at (7+2, 5.5) {$2$};
\node[emp] at (7+3, 5.5) {$3$};
\draw (7+.5, 5)--(7+3.5, 5)--(7+3.5, 6)--(7+.5, 6)--cycle;
\draw (7+1.5, 5)--(7+1.5, 6) (7+2.5, 5)--(7+2.5, 6);

\draw[->] (7+1, 5)--(7+4, 3);
\draw[->] (7+2, 5)--(7+1, 3);
\draw[->] (7+3, 5)--(7+5, 3);
\draw (7+2, 3)--(7+2.5, 3.5)--(7+2, 4)--(7+1.5, 3.5)--cycle;
\node[emp] at (7+2, 3.5) {$2$};
\draw[fill=white] (7+3, 3)--(7+3.5, 3.5)--(7+3, 4)--(7+2.5, 3.5)--cycle;
\node[emp] at (7+3, 3.5) {$1$};

\node[emp] at (7+1, 2.5) {$1$};
\node[emp] at (7+2, 2.5) {$2$};
\node[emp] at (7+3, 2.5) {$3$};
\node[emp] at (7+4, 2.5) {$4$};
\node[emp] at (7+5, 2.5) {$5$};
\draw (7+.5, 2)--(7+5.5, 2)--(7+5.5, 3)--(7+.5, 3)--cycle;
\draw (7+1.5, 2)--(7+1.5, 3) (7+2.5, 2)--(7+2.5, 3) (7+3.5, 2)--(7+3.5, 3) (7+4.5, 2)--(7+4.5, 3);
\end{tikzpicture}
\end{center}
\caption{To compose two morphisms in $\FI_d$, we have $(\varphi', c') \circ (\varphi, c) = (\varphi' \circ \varphi, c'')$, where $c''(i)$ is equal to $c'(i)$ if $i$ is not in the image of $\varphi'$ (for instance, $i = 3$ has color $1$ in the example shown) and is equal to $c(\varphi'^{-1}(i))$ if $i$ is in the image of $\varphi'$ (for instance, $i = 2$ has color $2$ in the composition because $c(1) = 2$ and $\varphi'(1) = 2$).}\label{fig-compose}
\end{figure}
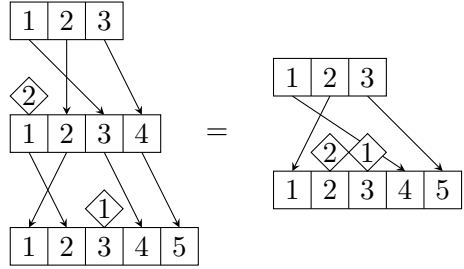

An \strong{$\FI_d$--module} $M$ over a commutative ring $k$ is defined to be a functor from $\FI_d$ to $k$--modules; that is, we have a $k$--module $M_n$ for each $n$, and for each $(\varphi, c) \co [n] \rightarrow [m]$, we have a corresponding $k$--module map $(\varphi, c)_* \co M_n \rightarrow M_m$.  In the present paper we use $k = \mathbb{Z}$.  An $\FI_d$--module is \strong{finitely generated} if there exists a finite set of elements $x_1, \ldots, x_r \in \bigsqcup_{n = 1}^\infty M_n$ such that the only $\FI_d$--submodule of $M$ containing $x_1, \ldots, x_r$ is $M$ itself.  Figure~\ref{fig-strip-morph} sketches the $\FI_{j+1}$--module structure for $H_j(\config(n, 2))$: the colors of the disks, shown in the picture as the numbers in the diamonds, indicate where to insert the disks between barriers.

\begin{figure}
\begin{center}
\begin{tikzpicture}[scale=.5, emp/.style={inner sep = 0pt, outer sep = 0pt}, >=stealth]
\node[emp] at (1, 1.5) {$1$};
\node[emp] at (2, 1.5) {$2$};
\node[emp] at (3, 1.5) {$3$};
\node[emp] at (4, 1.5) {$4$};
\node[emp] at (5, 1.5) {$5$};
\draw (.5, 1)--(5.5, 1)--(5.5, 2)--(.5, 2)--cycle (1.5, 1)--(1.5, 2) (2.5, 1)--(2.5, 2) (3.5, 1)--(3.5, 2) (4.5, 1)--(4.5, 2);

\node[emp] at (1, -1.5) {$1$};
\node[emp] at (2, -1.5) {$2$};
\node[emp] at (3, -1.5) {$3$};
\node[emp] at (4, -1.5) {$4$};
\node[emp] at (5, -1.5) {$5$};
\node[emp] at (6, -1.5) {$6$};
\node[emp] at (7, -1.5) {$7$};
\node[emp] at (8, -1.5) {$8$};
\draw (.5, -2)--(8.5, -2)--(8.5, -1)--(.5, -1)--cycle (1.5, -2)--(1.5, -1) (2.5, -2)--(2.5, -1) (3.5, -2)--(3.5, -1) (4.5, -2)--(4.5, -1) (5.5, -2)--(5.5, -1) (6.5, -2)--(6.5, -1) (7.5, -2)--(7.5, -1);

\draw[->] (1, 1)--(3, -1);
\draw[->] (2, 1)--(2, -1);
\draw[->] (3, 1)--(5, -1);
\draw[->] (4, 1)--(4, -1);
\draw[->] (5, 1)--(8, -1);

\node[emp] at (1, -.5) {$1$};
\draw (1, -1)--(1.5, -.5)--(1, 0)--(.5, -.5)--cycle;
\draw[fill=white] (6, -1)--(6.5, -.5)--(6, 0)--(5.5, -.5)--cycle;
\node[emp] at (6, -.5) {$0$};
\draw[fill=white] (7, -1)--(7.5, -.5)--(7, 0)--(6.5, -.5)--cycle;
\node[emp] at (7, -.5) {$1$};

\draw[->] (0, 2.5)--(0, -2.5);


\draw (-5, -3)--(8, -3);
\draw (-5, -5)--(8, -5);
\node[emp] (d6) at (-3.5, -4.5) {$6$};
\node[emp] (d5) at (-1.5, -4.5) {$5$};
\node[emp] (d2) at (.5, -4.5) {$2$};
\node[emp] (d3) at (.5, -3.5) {$3$};
\node[emp] (d1) at (2.5, -4.5) {$1$};
\node[emp] (d7) at (4.5, -4.5) {$7$};
\node[emp] (d8) at (6.5, -4.5) {$8$};
\node[emp] (d4) at (6.5, -3.5) {$4$};

\draw (d1) circle (.5);
\draw (d2) circle (.5);
\draw (d3) circle (.5);
\draw (d4) circle (.5);
\draw (d5) circle (.5);
\draw (d6) circle (.5);
\draw (d7) circle (.5);
\draw (d8) circle (.5);

\draw[->] (.5+.52, -5+.7) arc (-30:30:.6);
\draw[->] (.5-.52, -5+1.3) arc (150:210:.6);
\draw[->] (6.5+.52, -5+.7) arc (-30:30:.6);
\draw[->] (6.5-.52, -5+1.3) arc (150:210:.6);


\draw (-5, 3)--(8, 3);
\draw (-5, 5)--(8, 5);
\node[emp] (d5) at (-1.5, 3.5) {$3$};
\node[emp] (d2) at (1.5, 3.5) {$2$};
\node[emp] (d3) at (1.5, 4.5) {$1$};
\node[emp] (d8) at (4.5, 3.5) {$5$};
\node[emp] (d4) at (4.5, 4.5) {$4$};

\draw (d2) circle (.5);
\draw (d3) circle (.5);
\draw (d4) circle (.5);
\draw (d5) circle (.5);
\draw (d8) circle (.5);

\draw[->] (1.5+.52, 3+.7) arc (-30:30:.6);
\draw[->] (1.5-.52, 3+1.3) arc (150:210:.6);
\draw[->] (4.5+.52, 3+.7) arc (-30:30:.6);
\draw[->] (4.5-.52, 3+1.3) arc (150:210:.6);

\end{tikzpicture}
\end{center}
\caption{When applying this $\FI_3$--morphism to a class in $H_2(\config(5, 2))$, we insert the disks with color--$k$ labels immediately after the $k$th circling pair.}\label{fig-strip-morph}
\end{figure}
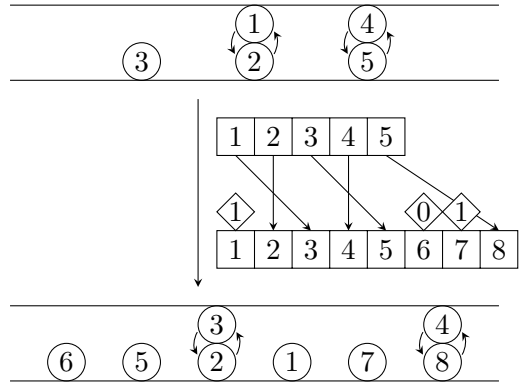

\subsection{No--$(w+1)$--equal spaces}

\begin{thm}\label{thm-unwt-gen-rel}
  The graded twisted algebra $H_*(\no_{w+1}(-))$ has a presentation with two
  generators and two relations.  The generators are the representations spanned
  by a singleton $0$-cycle and the $(w-1)$-cycle $\partial(1\,2\,\cdots\,w+1)$:
  these are trivial representations of $S_1$ and $S_{w+1}$, respectively.  The
  relations are that two singletons commute, and for each $(w+2)$--block, taking
  the signed sum of the boundaries of those facets that have a $(w+1)$--block in
  them gives zero.
\end{thm}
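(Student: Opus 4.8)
The plan is to verify the two conditions isolated in \S\ref{S:twist}: that $G=\langle e,f\rangle$ generates the graded twisted algebra $H_*(\no_{w+1}(-))\cong H_*(P(-,w))$, and that the two stated relations reduce every concatenation product of generators to a $\mathbb Z$-linear combination of the basis cycles of Theorem~\ref{thm-weighted-basis}. (I will assume $w\ge 2$, since for $w=1$ two singletons do not commute, $\no_2(2)$ being disconnected.) \emph{Generation} is immediate from Theorem~\ref{thm-weighted-basis} with all weights $1$ and $k=w$: a basis cycle is a concatenation product of singleton $0$-cycles together with cycles $z(\ell\mid B)=\partial P(\{\ell\}\cup B)$, $|B|=w$; the first is the generator $e$ (the unique $0$-cell of $P(1)$, a trivial $S_1$-representation) and the second is $f=\partial(1\,2\cdots w{+}1)$ carried by the $(w{+}1)$-element label set $\{\ell\}\cup B$. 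Since $P(w{+}1)$ has a single top cell $T_0$, fixed by the $S_{w+1}$-action, and this action preserves cell orientations (just as for $\cel$ in \S\ref{S:boundary}), $\sigma\cdot f=\partial(\sigma\cdot T_0)=\partial T_0=f$, so $f$ spans a trivial $S_{w+1}$-representation, as claimed.

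Next, \emph{the relations hold in homology}. For the first: when $w\ge 2$ the edge $P(2)$ is a cell of $P(2,w)$, so $\partial P(2)=\pm(e_{\{2\}}\concat e_{\{1\}}-e_{\{1\}}\concat e_{\{2\}})$ is a boundary and the two singletons commute; a rank count ($\dim H_0(P(2,w))=1$) shows this is the only relator at $2$ labels. For the second: let $T$ be the unique top cell of $P(w{+}2)$, so $\partial^2T=0$. Write $\partial T=\Phi+\gamma$, where $\Phi$ is the signed sum of the facets $(\{i\}\mid B_i)$ and $(B_i\mid\{i\})$, with $B_i:=[w{+}2]\setminus\{i\}$, that contain a $(w{+}1)$-block, and $\gamma$ is the signed sum of the remaining facets, all of whose blocks have size $\le w$, so that $\gamma\in C_w(P(w{+}2,w))$. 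By the Leibniz rule for the concatenation product, $\partial(\{i\}\mid B_i)=e_{\{i\}}\concat\partial P(B_i)=e_{\{i\}}\concat f_{B_i}$ and $\partial(B_i\mid\{i\})=f_{B_i}\concat e_{\{i\}}$, so the signed sum of the boundaries of the facets comprising $\Phi$ is $\partial\Phi=\partial(\partial T-\gamma)=-\partial\gamma$, a boundary in $P(w{+}2,w)$; hence it vanishes in $H_{w-1}(P(w{+}2,w))$, which is exactly the stated relation. Both $\Phi$ and $\gamma$ are $S_{w+2}$-invariant (the action permutes facets within each block type and preserves orientations), so $\partial\Phi$ spans a trivial $S_{w+2}$-representation; and since here $\dim(F_\tau G)=2(w{+}2)$ while $\dim H_{w-1}(P(w{+}2,w))=2w{+}3$ by Theorem~\ref{thm-weighted-basis} and $G$ is surjective, the kernel in this bidegree is $1$-dimensional, spanned by $\partial\Phi$, so no further relator is needed at $w{+}2$ labels.

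It remains to show \emph{the relations suffice} to rewrite an arbitrary word $x_1\concat\cdots\concat x_m$ (each $x_i$ a labelled $e$ or $f$) as a $\mathbb Z$-linear combination of basic words. I argue by induction on $m$: expanding $x_2\concat\cdots\concat x_m$ in basic words, it suffices to reduce $x_1\concat z$ for $z$ a basic word. Prepending $f$ to a basic word again yields a basic word, since a filter imposes no ordering constraint on what follows it and the prepended $f$ forms a fresh leader--follower pair (its leader being the minimum of its own $w{+}1$ labels), so the critical-cell criterion of Remark~\ref{rmk:crit-cells} is preserved. Prepending $e_s$ is the substantive case: using the first relation, slide $e_s$ rightward past each singleton of larger label; it either settles into a position meeting the basic-word conditions, or reaches a filter $f_L$ with $s<\min L$, whence $s=\min(\{s\}\cup L)$ and the second relation, applied to $e_s\concat f_L$, rewrites it as a combination of terms $e_\ell\concat f_{L'}$ and $f_{L'}\concat e_\ell$ with $\ell\in L$, so that every newly created singleton label $\ell$ exceeds $s$. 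The main obstacle is \emph{termination}: each application of the second relation absorbs the offending small singleton into a filter, but can create new out-of-order adjacencies further to the right. To handle this I would introduce a well-founded complexity measure built from the total ordering $\prec$ on cells of $P(n,w)$ in Remark~\ref{rmk:crit-cells} --- under which a word is basic exactly when its $\prec$-maximal cell is critical --- and show, mirroring the reduction in the proof of Lemma~\ref{lem-max-basis}, that each relation step strictly lowers the lexicographically ordered list of cells occurring with nonzero coefficient. Since $\prec$ is a well-order the process terminates, expressing the word as a $\mathbb Z$-linear combination of basic words and completing the proof.
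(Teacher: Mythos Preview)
Your argument for generation and for the validity of the two relations is correct and matches the paper's proof closely (including the use of $\partial^2=0$ on the $(w{+}2)$-block). Your rank counts at $2$ and $w{+}2$ labels are a nice bonus that the paper does not include.

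The gap is in the sufficiency step, specifically termination. You correctly identify this as ``the main obstacle,'' but then you only \emph{sketch} a fix via an unspecified complexity measure built from $\prec$, appealing to the mechanism of Lemma~\ref{lem-max-basis}. This is not a proof: you would need to check that each application of relation~(2) can be arranged to strictly decrease your measure, and this is not obvious (for instance, the maximal cell of $e_s\concat f_L$ with $s<\min L$ has the $w$-element follower $L\setminus\{\ell\}$ as its second block, and comparing it to the maximal cells of the replacement terms under $\prec$ requires care that you have not supplied).

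The paper closes this gap with a single concrete observation that you are very close to: when $a<\min S$ and you apply the $(w{+}2)$-relation to the bad substring $e_a\concat f_S$, \emph{every} replacement two-factor substring is already a valid basis substring. Indeed, the other facets have boundaries $e_c\concat f_D$ or $f_D\concat e_c$ with $c\in S$ (so $c>a$) or $c=a$ on the right; in every case $\min D=a$, so either $c>\min D$ or the singleton sits to the right of the filter, where no constraint applies. Hence processing from left to right, after resorting singletons with relation~(1), the number of filters in the already-valid prefix increases by one at each use of relation~(2), and the procedure terminates after at most as many steps as there are filters. In your inductive framing this becomes a secondary induction on the number of $f$'s in the basic word $z$; once you insert this observation, your argument goes through without any abstract complexity measure.
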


\begin{proof}
The fact that our proposed generators do generate comes automatically from the description of the basis.  To show that the relations are true, we know that $\del^2 = 0$, in particular when applied to a $(w+2)$--block, so the signed sum of the boundaries of \emph{all} facets of the $(w+2)$--block gives zero.  The facets that have no $(w+1)$--block in them are cells in $P(n, w)$, so their boundaries are null-homologous; thus, the sum of the boundaries of the remaining facets gives zero in homology.

To show that the specified relations are sufficient, we take every product of generators that is not in the basis and use the specified relations to write it in terms of the basis.  First we may assume that consecutive singletons are always in descending order, using the relation that commutes singletons.  Then, if a generator is not in the basis, some boundary of a $(w+1)$--block must be immediately preceded by a singleton that is less than every element of the $(w+1)$--block.  If we combine those elements to form a $(w+2)$--block, this substring of our generator is the boundary of one facet of the $(w+2)$--block, and the boundaries of all the other facets are in the basis.  Thus, applying the relation replaces a non-basis substring of our generator by a sum of basis substrings.  Applying the relations repeatedly from left to right rewrites our original non-basis generator in terms of the basis.
\end{proof}

\begin{thm}\label{thm-unwt-fid}
For each $j$ a multiple of $w-1$, the homology groups $H_j(\no_{w+1}(-))$ form a finitely generated $\FI_d$--module for $d = 1 + \frac{j}{w-1}$.
\end{thm}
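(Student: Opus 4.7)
The plan is to make precise the intuition that when $j = q(w-1)$, each basis element of $H_j(\no_{w+1}(n))$ contains exactly $q$ leader--follower pairs (``barriers'') dividing the sequence into $q+1$ ``regions'', and that the $\FI_d$-action inserts a new singleton labeled with color $i \in \{0, 1, \ldots, q\}$ into region $i$.

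First I would extract from Theorem~\ref{thm-weighted-basis} the shape of a basis element in degree $j$. In the unweighted setting every label has weight $1$, so a leader--follower pair in a critical cell must use exactly $w+1$ labels---its follower needs weight at least $w$ while blocks have size at most $w$---and contributes a $(w-1)$-cycle, while free singletons contribute $0$. Hence any basis cycle in $H_{q(w-1)}(\no_{w+1}(n))$ has exactly $q$ such pairs, with the remaining $n - q(w+1)$ free singletons distributed among the $q+1$ regions between and around those pairs.

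Next, for an $\FI_d$-morphism $(\varphi, c) \colon [n_1] \to [n_2]$ with $d = q+1$, I would define $(\varphi, c)_*$ on a basis cycle $z(e)$ by relabeling it via $\varphi$ and then, for each $x \in [n_2] \setminus \varphi([n_1])$, inserting a $0$-chain at the new point $x$ as a free singleton into region $c(x)$. Since this is a concatenation of the relabeled $z(e)$ with $0$-chains at fixed positions, it commutes with the boundary operator and so descends to a well-defined map on homology. The resulting cell may no longer be critical, but by the singleton-commutation relation from Theorem~\ref{thm-unwt-gen-rel} it rewrites as an integer combination of basis elements. Functoriality is immediate, since composition of $\FI_d$-morphisms corresponds to sequential relabel-and-insert operations that agree on chains.

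For finite generation, I would use a single generator $g \in H_j(\no_{w+1}(q(w+1)))$: any critical-cell cycle consisting of $q$ leader--follower pairs with no free singletons. Given an arbitrary basis element $e'$ of $H_j(\no_{w+1}(n))$, choose $\varphi$ to send the barrier labels of $g$ bijectively onto those of $e'$ in the same left-to-right order and let $c$ assign each remaining label the color of the region in which it appears in $e'$; then $(\varphi, c)_* g = e'$ (possibly after using singleton-commutation to rewrite a non-critical intermediate cell).

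The main obstacle is showing that the map is well-defined with respect to both relations from Theorem~\ref{thm-unwt-gen-rel}, particularly the $(w+2)$-block relation. Because each such relation involves a fixed $(w+2)$-subset of labels disjoint from any inserted singletons, every term in the relation receives identical additional insertions and the signed sum remains zero; the singleton-commutation relation is preserved because inserted singletons may be reordered freely within their assigned region. Once this verification is carried out, the $\FI_d$-structure and its finite generation follow from the construction above.
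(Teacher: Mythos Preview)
Your proposal is correct and follows essentially the same route as the paper: define the $\FI_d$-action on products of the generators from Theorem~\ref{thm-unwt-gen-rel} by inserting each colored singleton between the appropriate boundary-of-$(w+1)$-block factors, verify that both relations are preserved under insertion (your final paragraph), and deduce finite generation from the cycles with no free singletons. The only imprecision is the remark that singleton commutation alone rewrites the image in basis form---when the inserted label is smaller than the next leader one actually needs the $(w+2)$-block relation as well---but this is harmless, since what the argument requires is compatibility with the relations rather than landing in basis form, and you handle that correctly; your observation that a single generator suffices (via the $S_n$-action inside $\FI_d$) is in fact slightly sharper than the paper's use of the full basis of $H_j(\no_{w+1}(q(w+1)))$.
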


\begin{proof}
The $\FI_d$--module structure is as follows.  Suppose we have an $\FI_d$--morphism from $[m]$ to $[n]$ with $n \geq m$.  For any element of $H_j(\no_{w+1}(m))$, we write it in terms of the generators from Theorem~\ref{thm-unwt-gen-rel}.  We apply the relabeling injection, and for each additional number with color $i \in \{0, 1, \ldots, \frac{j}{w-1}\}$, we insert an element of that color into each summand between the $i$th and the $(i+1)$st factors of the generator that look like the boundary of a $(w+1)$--block (rather than a singleton).  The result is an element of $H_j(\no_{w+1}(n))$.

To show that this map on homology is well-defined, we need to show that if we write an element of $H_j(\no_{w+1}(m))$ in terms of the generators in two different ways, the resulting elements of $H_j(\no_{w+1}(n))$ are the same.  To see this, suppose that we apply a relation to an original generator.  The same relation can just as easily be applied \emph{after} the relabeling and insertion, just by moving the new singletons past old singletons so that they are out of the way.  Thus, applying the relation does not change which homology class we get.

The fact that the maps respect composition of $\FI_d$--morphisms is automatic once we use the fact that consecutive singletons commute.  Thus we have an $\FI_d$--module.  It is finitely generated by the basis of $H_j(\no_{w+1}(j(w+1)))$.
\end{proof}

A proof along similar lines shows that the homology of weighted no--$(w+1)$--equal spaces can be written as a direct sum, with each summand equal to the span of the generators with a particular number of filters.  This allows us to insert weight-$1$ points between the filters in a well-defined way.  However, the result is not formally an $\FI_d$-module (or a direct sum of them) in a reasonable way, because relabeling cannot permute points of different weights without leaving the no--$(w+1)$--equal space.  Because the statement of the theorem would be cumbersome, we do not include the details.

\subsection{Disks in a strip of width $w=2$} Essentially the same proofs as for the no--$(w+1)$--equal spaces give generators and relations for the configuration spaces for $w=2$, which then give its homology an $\FI_d$--module structure.

\begin{thm}\label{thm-w2-gen-rel}
  The graded twisted algebra $H_*(\config(-,2))$ has a presentation with three
  generators and three relations.  The generators are:
  \begin{enumerate}
  \item $H_0(\config(1,2)) \cong \mathbb{Z}$, i.e.~a singleton $0$-cycle on which
    $S_1$ acts trivially.
  \item $H_1(\config(2,2)) \cong \mathbb{Z}$, i.e.~a 2-wheel on which $S_2$ acts
    trivially.  Write this as $w(1,2)=2\,1+1\,2$.
  \item The two-dimensional representation of $S_3$ spanned by the cycles
    $z(1,2,3)=\begin{tikzpicture}[baseline=-15pt, scale=.4, emp/.style={inner sep = 0pt, outer sep = 0pt}]
      \draw (-.25, 0)--(2.25, 0);
      \draw (-.25, -2)--(2.25, -2);

      \node (d3) at (1, -.55) {$\scriptstyle 2$};
      \node (d5) at (1+.5, -.55-.866) {$\scriptstyle 3$};
      \node (d2) at (1-.5, -.55-.866) {$\scriptstyle 1$};

      \draw (d3) circle (.5);
      \draw (d5) circle (.5);
      \draw (d2) circle (.5);

      \draw[->] (1.53033008588991, -0.596669914110089) arc (45:15:.75);
      \draw[->] (1.19411428382689, -1.85144436971680) arc (-75:-105:.75);
      \draw[->] (0.275555630283199, -0.932885716173110) arc (-195:-225:.75);
    \end{tikzpicture}$
    and
    $z(1,3,2)=\begin{tikzpicture}[baseline=7.75pt, scale=.4, emp/.style={inner sep = 0pt, outer sep = 0pt}]
      \draw (-.25, 0)--(2.25, 0);
      \draw (-.25, 2)--(2.25, 2);

      \node (d3) at (1, .55) {$\scriptstyle 2$};
      \node (d5) at (1+.5, .55+.866) {$\scriptstyle 3$};
      \node (d2) at (1-.5, .55+.866) {$\scriptstyle 1$};

      \draw (d3) circle (.5);
      \draw (d5) circle (.5);
      \draw (d2) circle (.5);

      \draw[<-] (1.53033008588991, 0.596669914110089) arc (-45:-15:.75);
      \draw[<-] (1.19411428382689, 1.85144436971680) arc (75:105:.75);
      \draw[<-] (0.275555630283199, 0.932885716173110) arc (195:225:.75);
  \end{tikzpicture}$
    in $H_1(\config(3,2))$, where transpositions in $S_3$ act by switching the
    two basis vectors.  This representation is irreducible over $\mathbb{Z}$, but
    over $\mathbb{Q}$ it splits into the direct sum of a trivial representation
    and a sign representation.
  \end{enumerate}
  The relations are:
  \begin{enumerate}
  \item The singletons commute: $a \mid b=b \mid a$.
  \item The relation induced by boundaries of 4-cells in $\cel(4)$.
  \item The relation in $H_1(\cel(3,2))$:
    \begin{align*}
      z(1,2,3) + z(1, 3, 2) &=1 \mid w(2,3)+w(2,3) \mid 1 \\
      &\qquad{}+2 \mid w(3,1)+w(3,1) \mid 2+3 \mid w(1,2)+w(1,2) \mid 3.
    \end{align*}
  \end{enumerate}
\end{thm}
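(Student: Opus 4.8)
The plan is to follow the proof of Theorem~\ref{thm-unwt-gen-rel} essentially line by line, adjusting for the two features that distinguish $w=2$ from the unweighted no--$3$--equal case: the $2$--wheel $w(1,2)$ is now a genuine cycle of $\cel(2,2)$ rather than a contractible $2$--block, so it supplies an extra generator, and the third generator is an indecomposable but $\mathbb{Q}$--reducible integral $S_3$--representation, so Maschke's theorem is unavailable. By the criterion recalled at the end of \S\ref{S:twist}, it suffices to establish three things: (i) the listed classes generate the twisted algebra $H_*(\cel(-,2))$; (ii) the listed relations actually hold in $H_*(\cel(-,2))$; and (iii) the listed relations suffice to rewrite every product of generators in terms of the basis of Theorem~\ref{thm:basis}.

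For (i), Theorem~\ref{thm:basis} says every class is a $\mathbb{Z}$--combination of concatenations of wheels and filters with $r\geq 2$. For $w=2$ the wheels are exactly singletons and $2$--wheels (generators 1 and 2); a filter with $r\geq 4$ is impossible, since it would require $\geq 4$ wheels of at least $k-2$ disks each with $k>2$; a filter with $r=3$ can only be three singletons, and is therefore a relabelling of $z(1,2,3)$ or $z(1,3,2)$ (generator 3); and a filter with $r=2$ is a commutator $b_2\concat b_1 - b_1\concat b_2$ of two wheels, hence a combination of generators 1 and 2. For (ii): relation (1) holds because $\config(n,2)$ is connected; relation (2) is the $w=2$ instance of the $(w+2)$--block relation of Theorem~\ref{thm-unwt-gen-rel} and follows from $\partial^2=0$ applied to a top cell $(1\,2\,3\,4)$ of $\cel(4)$, together with the observation that the codimension--$1$ faces all of whose blocks have size $\leq 2$ are cells of $\cel(4,2)$ and so have null--homologous boundaries, which forces the signed sum of the boundaries of the faces that contain a $3$--block to vanish; relation (3) is verified at the chain level, either by exhibiting an explicit $2$--chain of $\cel(3,2)$ bounding the difference of the two sides, or by tracing both sides through the layer decomposition $H_1(\cel(3,2))=\bigoplus_\sigma H_{1-\#\sigma}(P(3-\#\sigma,\mathcal{W}(\sigma),2))$ of Theorem~\ref{thm:split}.

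Part (iii) is the bulk of the argument and, as in Theorem~\ref{thm-unwt-gen-rel}, amounts to a terminating normal--form rewriting. Starting from an arbitrary product of generators, one first uses relation (1) to make every maximal run of consecutive singletons decreasing, and then repeatedly removes violations of the conditions of Theorem~\ref{thm:basis}: (a) a $2$--wheel $w$ sitting to the right of an adjacent singleton $a$ is moved to the left, writing $a\concat w = w\concat a - (w\concat a - a\concat w)$, where the second summand is an $r=2$ filter that is already a combination of generators 1 and 2 and, after normalisation, a basic cycle; and (b) a $z$--cycle whose wheels are not in ascending--axle order, such as $z(1,3,2)$, is eliminated using relation (3) and its $S_3$--translates, which trade it for $-z(1,2,3)$ plus a sum of terms $a\concat w(b,c)+w(b,c)\concat a$. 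The summands $a\concat w(b,c)$ with $a$ smaller than both $b$ and $c$ are themselves non--basic and re--enter step (a), so the crux of the proof --- and the step I expect to be the main obstacle --- is showing that this loop terminates, that the signs close up, and, above all, that the three stated relations genuinely suffice over $\mathbb{Z}$. Since the third generator does not split integrally, one cannot reduce to a rational computation and discard a sign summand; the rewriting must be carried out integrally on products of three, four, and more disks, and checking that nothing further is needed is where the care lies.
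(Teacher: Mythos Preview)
Your plan for (i) and (ii) is sound and matches the paper. The gap is in (iii): your rewriting scheme never uses relation~(2), and without it the normal-form procedure cannot succeed. The case you have not covered is a product $a \mid z(b,c,d)$ with $a<b<c<d$; this violates condition~(iv) of Theorem~\ref{thm:basis} (the singleton $a$ to the left of the filter does not rank above the least wheel $b$), yet neither of your moves (a),(b) applies to it. Your commutator trick from step~(a) does not help, because $z(b,c,d)\mid a - a\mid z(b,c,d)$ is not a basic cycle (a four-singleton filter is impossible for $w=2$, as you yourself observe). In the paper this is exactly where relation~(2) enters: one first uses the identity
\[
\partial(a\,b\,c)+z(a,b,c)=w(b,c)\mid a+b\mid w(c,a)+w(a,b)\mid c
\]
to rewrite relation~(2) as a rule that eliminates subwords $a\mid z(b,c,d)$, in precise analogy with the $(w+2)$-block relation of Theorem~\ref{thm-unwt-gen-rel}. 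So the obstacle is not a delicate termination or sign issue as you suggest, but a missing rewriting move.

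There is also a structural difference worth noting. The paper does \emph{not} rewrite toward the Theorem~\ref{thm:basis} basis; it first replaces that basis by a new one in which each leader--follower pair $a\mid b\,c$ is interpreted as a $z(a,b,c)$ factor and all remaining blocks as singletons or $2$-wheels. In this new basis the $2$-wheels act as walls: the conditions for a concatenation product to be basic decouple into independent conditions on the intervals between $2$-wheels. This eliminates your step~(a) entirely (no singleton--$2$-wheel swapping is needed) and reduces the rewriting to the exact shape of Theorem~\ref{thm-unwt-gen-rel}: reorder singletons via relation~(1), kill wrong-order $z$-factors via relation~(3), and kill $a\mid z(b,c,d)$ via relation~(2). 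Once you add the missing move, your approach could be pushed through, but you would still have to control the interaction between the $r=2$ filters generated in step~(a) and everything else; the paper's change of basis sidesteps this.
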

Rationally, one can use the sign representation in $H_1(\config(3,2))$ as a
generator, removing the need for the third relation.

\begin{proof}
  We first establish a basis for $H_*(\config(n,2))$ consisting of concatenation
  products of (1), (2), and (3).  We use the same critical cells as in Remark
  \ref{rmk:crit-cells}, but interpret them differently: leader--follower pairs of
  the form $a \mid b\,c$ where $a<b<c$ become $z(a,b,c)$ factors, while the rest
  of the blocks individually yield singletons and $2$-wheels.

  To show that the generators and relations are valid, it suffices to show:
  \begin{enumerate}[(i)]
  \item Any concatenation product of generators can be written as a sum of basis
    elements by rewriting via the relations.
  \item Every element of the basis from Theorem \ref{thm:basis} can be written as
    a sum of concatenation products of the new generators.
  \end{enumerate}
  Together, the two steps imply that the new basis spans $H_*(\config(n,w))$;
  since it has the same cardinality as the old basis, this shows that it is
  indeed a basis.  The first step then implies that the provided relations are
  sufficient.

  To see (ii), we note that
  \begin{equation} \label{abc}
    \partial(a\,b\,c) + z(a, b, c) = w(b,c) \mid a + b \mid w(c,a) + w(a,b) \mid c.
  \end{equation}
  In this fashion we obtain filters of singletons.  We already know that every
  filter of a singleton and a $2$-wheel is given by an expression of the form
  \[a \mid w(b,c)-w(b,c) \mid a.\]
  Every element of the old basis is a concatenation product of these two types of
  cycles as well as singletons and $2$-wheels.

  To see (i), note first that in the description of the basis, the $2$-wheels
  separate the strip into intervals that do not interact with each other or with
  the $2$-wheels; that is, the requirements for being a basis element are the
  same as the requirements for each of these intervals individually to give a
  basis element.  Then to rewrite a product of generators in terms of basis
  elements, we only need to rewrite products of singletons and $3$-disk
  generators.

  Relation (3) allows us to eliminate $3$-disk generators that are in the
  ``wrong'' order.  Equation \eqref{abc} lets us turn relation (2) into a way to
  eliminate subwords of the form $a \mid z(b,c,d)$ where $a<b<c<d$.  Thus we can
  rewrite every product of singletons and $3$-disk generators in normal form
  using the same method as in Theorem~\ref{thm-unwt-gen-rel}.
\end{proof}

\begin{thm}\label{thm-w2-fid}
For each $j$, the homology groups $H_j(\config(-, 2))$ form a finitely generated $\FI_{j+1}$--module.
\end{thm}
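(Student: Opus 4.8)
The plan is to follow the proof of Theorem~\ref{thm-unwt-fid} almost verbatim, with the presentation of $H_*(\config(-,2))$ from Theorem~\ref{thm-w2-gen-rel} playing the role of the presentation of $H_*(\no_{w+1}(-))$. The first thing to record is the degree bookkeeping: in the basis of $H_*(\config(-,2))$ built from the generators (1)--(3), each factor other than a singleton contributes exactly $1$ to the degree, so a degree-$j$ basis element is a concatenation product of generators in which exactly $j$ of the factors are \emph{barriers} (a $2$-wheel $w(a,b)$ or a $z$-cycle $z(a,b,c)$) and the rest are singletons; the $j$ barriers cut the word into $j+1$ gaps. Given an $\FI_{j+1}$-morphism $(\varphi,c)\co[m]\to[n]$ and a class $x\in H_j(\config(m,2))$, we write $x$ as a $\mathbb{Z}$-linear combination of words in the generators, apply the relabeling $\varphi$ to each word, and for every new label $\ell\in[n]\setminus\varphi([m])$ with color $c(\ell)=i\in\{0,\dots,j\}$ insert a singleton labeled $\ell$ into the $i$th gap of each word. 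The result is a class in $H_j(\config(n,2))$; the point is to check that it is well defined and compatible with composition.

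Next I would verify well-definedness, namely that the output does not depend on the chosen expression for $x$ in the generators. Any two such expressions differ by the three relations of Theorem~\ref{thm-w2-gen-rel}, so it suffices to see that ``relabel and insert'' commutes with each relation. Relation (1) is exactly that singletons commute, hence preserved. Relations (2) and (3) involve only the original labels and act on a bounded substring of a word, and neither one reorders two barriers; therefore, exactly as in Theorem~\ref{thm-unwt-fid}, the newly inserted singletons can be slid past old singletons (using relation (1), which holds in homology) out of the substring the relation touches, the relation can be applied, and the inserted singletons slid back. This shows the two outputs agree. Compatibility with composition of $\FI_{j+1}$-morphisms is then automatic, again using that consecutive singletons commute, so we obtain a genuine $\FI_{j+1}$-module.

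For finite generation: a degree-$j$ basis element of $H_j(\config(n,2))$ has $j$ barriers, which together involve at most $3j$ disks (a $2$-wheel uses $2$, a $z$-cycle uses $3$), and every remaining disk appears as a singleton in one of the $j+1$ gaps. Hence this basis element is the image, under a suitable $\FI_{j+1}$-morphism, of the ``barrier-only'' element of $H_j(\config(m,2))$ carrying the same sequence of barriers, where $m\le 3j$: the relabeling injection spreads the barrier-disk labels into their positions in $[n]$, and the colors of the complementary labels record which gap each singleton belongs to (the within-gap order is immaterial since singletons commute). Consequently $H_j(\config(-,2))$ is generated as an $\FI_{j+1}$-module by the finitely many basis elements lying in $\bigsqcup_{1\le m\le 3j} H_j(\config(m,2))$ (and by $H_0(\config(1,2))$ when $j=0$), so it is finitely generated.

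The step I expect to be the main obstacle is the well-definedness check, and inside it the interaction of inserted singletons with relations (2) and (3). Unlike the no-$(w+1)$-equal case, here the $z$-cycles span a rank-two $S_3$-representation and relation (3) mixes $z$-cycles with filters consisting of a singleton and a $2$-wheel, while relation (2) --- the $\cel(4)$ boundary relation --- becomes, via the identity $\partial(a\,b\,c)+z(a,b,c)=w(b,c)\mid a+b\mid w(c,a)+w(a,b)\mid c$ from the proof of Theorem~\ref{thm-w2-gen-rel}, a rule eliminating substrings $a\mid z(b,c,d)$; one must make sure that when an inserted singleton lands in the same gap as a $z$-cycle or $2$-wheel that such a relation touches, it can still be moved clear of the rewrite. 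I expect this to go through by the same ``move the new singletons out of the way'' argument, since the relations never reorder two barriers and singletons commute with one another, but it is the part of the argument that requires the most care.
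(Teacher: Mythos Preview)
Your proposal is correct and follows essentially the same approach as the paper: define the $\FI_{j+1}$-action by relabeling and inserting color-$i$ singletons into the $i$th gap between barriers, check well-definedness by sliding inserted singletons out of the way before applying relations (2) and (3), and observe finite generation because a degree-$j$ word has at most $3j$ non-singleton disks. Your write-up is in fact more careful than the paper's, which simply says ``as in Theorem~\ref{thm-unwt-fid}'' for well-definedness and cites the basis of $H_j(\config(3j,2))$ as a generating set.
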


\begin{proof}
Every generator of $H_j(\config(n, 2))$ has exactly $j$ factors of types (2) and (3).  We define the $\FI_{j+1}$--module structure as follows: to insert a new disk of color $i \in \{0, 1, \ldots, j\}$ into a generating cycle, we insert it as a singleton $0$--cycle between the $i$th and $(i+1)$st factors of type (2) or (3) of the generator.

As in Theorem~\ref{thm-unwt-fid}, the relations can be applied either before or after the insertion, so the $\FI_{j+1}$--module structure is well-defined.  The basis of $H_j(\config(3j, 1))$ gives a finite generating set for the $\FI_{j+1}$--module.
\end{proof}

\subsection{Disks in a strip of width $w > 2$}

We say that an element of $H_*(\config(n, w))$ is a \textbf{\textit{barrier}} if the two ways to concatenate it with a one-disk $0$--cycle represent different homology classes.  The wheels of size $w$ are barriers, as are any filters that contain a wheel of size $1$.  Because the homology $H_*(\config(n, w))$ is generated by concatenations of wheels and filters, we can count the number of barriers in each generator.

The following proposition implies that counting barriers is not well-defined on arbitrary homology classes in $H_*(\config(n, w))$.  As mentioned above, this suggests that the structure of $H_*(\config(n, w))$ is not well-described by $\FI_d$--modules.

\begin{prop}\label{prop-not-fid}
There is a nontrivial element of $H_4(\config(8, 3))$ that is simultaneously a sum of non-barrier generators, a sum of one-barrier generators, and a sum of two-barrier generators.
\end{prop}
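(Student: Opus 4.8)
The plan is to exhibit explicit $4$-cycles $\alpha_0,\alpha_1,\alpha_2$ in $\cel(8,3)$, each a $\mathbb Z$-linear combination of concatenation products of wheels and filters in which every summand has respectively $0$, $1$, and $2$ barriers, together with $5$-chains $\gamma_1,\gamma_2$ realizing $\partial\gamma_1=\alpha_1-\alpha_0$ and $\partial\gamma_2=\alpha_2-\alpha_1$, and then to check that the common homology class $\alpha$ is nonzero. That $8$ disks and degree $4$ is the right place to look is essentially forced: a single barrier in width $3$ contributes exactly $2$ to the degree, so a $2$-barrier class needs degree $\ge 4$, while a $0$-barrier class of that degree must be a product of four $2$-wheels and hence uses all $8$ disks.

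The rewriting rests on two families of relations, both consequences of $\partial^2=0$ via the signed boundary formula of \S\ref{S:boundary}. First, the commutator relation: if $b_1,b_2$ are wheels or filters whose disks together number at most $w=3$, then $b_1\concat b_2-b_2\concat b_1$ is already null-homologous, and otherwise it equals $\pm$ the $r=2$ filter $[b_1,b_2]$. In particular, in width $3$ singletons and $2$-wheels slide past each other and past all wheels of size $\le 2$, so the only barriers among generators are $3$-wheels and $4$-disk filters. Second, the relation of Theorem~\ref{thm-unwt-gen-rel} and its weighted analogue, proved the same way: inside any block of wheels of total size $w+2=5$, the signed sum of the boundaries of the facets containing a size-$(w+1)$ sub-block is null-homologous, because the remaining facets lie in $\cel(n,w)$ and sum to an honest boundary. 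Applied to a block with size profile $(2,2,1)$ or $(2,1,1,1)$, this trades a filter-containing term for a sum of filter-free terms; the point is that the five disks of the block get re-grouped into wheels differently in the different facets, which is precisely what lets one replace a $4$-disk filter (or a $3$-wheel) by configurations of $2$-wheels.

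Concretely, I would take $\alpha_2$ to be a concatenation of two barriers on disks $1,\dots,8$ — for instance two filters of four $1$-wheels, arranged in the order dictated by Theorem~\ref{thm:basis} — and apply the weighted $5$-block relation to each filter to reach an expression all of whose summands carry exactly one barrier, then reorder the remaining wheels with the commutator relation and apply the relation once more to remove the last barrier, arriving at a combination of products of $2$-wheels and $r=2$ filters of $2$-wheels; these are $\alpha_1$ and $\alpha_0$. The chains $\gamma_1,\gamma_2$ are the sums of the $5$-cells whose boundaries effect these steps, and $\partial\gamma_i=\alpha_i-\alpha_{i-1}$ is checked term by term from the boundary formula. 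For $\alpha\ne 0$ I would expand $\alpha_0$ in the basis of Theorem~\ref{thm:basis} and read off a nonzero coefficient — or equivalently pick a critical cell $g$ with $V(g)\pitchfork\alpha_0=\pm1$ and $V(g)$ trivial against every other basis element in the support, using Lemma~\ref{lem:int-pair}.

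The hard part is the middle step. Producing relations is easy; the difficulty is choosing the initial $2$-barrier cycle and the sequence of $5$-cells so that after the first batch of rewrites the combination splits cleanly into one-barrier generators (no one-barrier term appearing before it should, none left uncancelled) and likewise after the second batch. This amounts to tracking, for every $5$-cell invoked, exactly which wheels each of its facets carries, and it is presumably why the authors record an explicit small example rather than a general statement; once the example is written down the verification is mechanical.
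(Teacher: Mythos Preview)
Your proposal has a genuine gap at the very first step. You suggest taking $\alpha_2$ to be a \emph{single} concatenation of two barrier filters, for instance $\partial(1\,2\,3\,4)\mid\partial(5\,6\,7\,8)$. But this is itself a basis element in the sense of Theorem~\ref{thm:basis} (its critical cell is $1\mid 2\,3\,4\mid 5\mid 6\,7\,8$), so it cannot be rewritten as a $\mathbb{Z}$-linear combination of other basis elements, and in particular not as a sum of one-barrier or zero-barrier generators. Your proposed ``$5$-block relation'' cannot apply either: each filter involves only four disks, and there is no fifth disk available to form a weight-$5$ block without dismantling the other filter.

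The paper's construction avoids this by starting from the zero-barrier side with a carefully chosen \emph{combination}: the signed sum $z=\sum_{\tau\in S_4}\sgn(\tau)\,i_\sigma\bigl(\tau([2\,1]\mid[4\,3]\mid[6\,5]\mid[8\,7])\bigr)$ over all $24$ orderings of four $2$-wheels. The key identity is the warm-up computation
\[
i_\sigma\bigl(\partial([a][b])\bigr)=\partial\bigl(i_\sigma([a][b])\bigr),
\]
which equates the commutator of two $2$-wheels (a non-barrier) with a signed sum of four relabelings of the barrier filter $\partial(c\,d\,e\,f)$. Pairing the $24$ terms by swapping the middle two wheels turns each pair into (wheel)$\mid$(commutator)$\mid$(wheel), hence a sum of one-barrier generators; grouping into sixes by swapping the first pair and the last pair turns each group into (commutator)$\mid$(commutator), hence a sum of two-barrier generators. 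Nontriviality is immediate since the $24$ zero-barrier summands are already basis elements. No $5$-block relation is needed; the whole argument rests on the single $4$-disk identity above applied to different groupings of the same antisymmetrized sum.
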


As a warm-up before proving the proposition, we consider the case of $n=4$, $w = 3$, and $\sigma = 2\ 1\ 4\ 3$.  Then $P(n-\#\sigma, \mathcal{W}(\sigma), w)$ is a no--$(w+1)$--equal space with two points of weight $2$, which we denote $[2\ 1]$ and $[4\ 3]$.  Then because $i_{\sigma}$ is a chain map we have
\[i_\sigma(\del([2\ 1][4\ 3])) = \del(i_{\sigma}([2\ 1][4\ 3])).\]
The left-hand side is the sum (with some signs) of the two ways to concatenate the wheels $2\ 1$ and $4\ 3$, whereas the right-hand side is the sum (with some signs) of the four cycles $\del(2\ 1\ 4\ 3)$, $\del(2\ 1\ 3\ 4)$, $\del(1\ 2\ 4\ 3)$, and $\del(1\ 2\ 3\ 4)$, each of which can be thought of as an image under the $S_4$--relabeling of $\del(1\ 2\ 3\ 4)$, which for width $w = 3$ is a filter with four wheels each containing one disk, and is a barrier.

\begin{proof}[Proof of Proposition~\ref{prop-not-fid}]
Let $\sigma = 2\ 1\ 4\ 3\ 6\ 5\ 8\ 7$.  Then $P(n-\#\sigma, \mathcal{W}(\sigma), w)$ is a no--$(w+1)$--equal space with four points of weight $2$, which we denote by $[2\ 1]$, $[4\ 3]$, $[6\ 5]$, and $[8\ 7]$.  We can apply any element $\tau \in S_4$ to any chain in $C_*(P(n-\#\sigma, \mathcal{W}(\sigma), w))$ by permuting these four labels of points.

To construct our cycle, we take
\[z = \sum_{\tau \in S_4} \sgn(\tau) \cdot i_{\sigma}(\tau([2\ 1]\vert [4\ 3]\vert [6\ 5]\vert [8\ 7])).\]
That is, we take the signed sum of all of the ways to concatenate the four wheels $2\ 1$, $4\ 3$, $6\ 5$, and $8\ 7$.  Each of these summands is already a basis element, although they differ as to which pairs are considered filters of two wheels.  Thus, their sum is nonzero in homology, and none of these wheels is a barrier so it is a sum of non-barrier generators.

To  write our element $z$ as a sum of one-barrier cycles, we pair up all elements $\tau$ in $S_4$ that differ by swapping the middle two wheels.  One such pair gives
\[i_{\sigma}([2\ 1] \vert \del([4\ 3][6\ 5])\vert [8\ 7]),\]
which concatenates the wheels $2\ 1$ (on the left) and $8\ 7$ (on the right) to the cycle
\[i_\sigma(\del([4\ 3][6\ 5])) = \del(i_{\sigma}([4\ 3][6\ 5])),\]
which is a sum (with some signs) of relabelings of the barrier filter $\del(3\ 4\ 5\ 6)$.  In this way we can write our element $z$ as a sum of generators, each one a concatenation of a $2$--wheel, a barrier filter, and another $2$--wheel.

To write our element $z$ as a sum of two-barrier cycles, we group the elements $\tau$ in $S_4$ into quadruples: permutations get grouped together if they differ by swapping the first two and/or the last two wheels.  One such quadruple gives
\[i_{\sigma}(\del([2\ 1][4\ 3])\vert \del([6\ 5][8\ 7])) = \pm \del(i_{\sigma}([2\ 1][4\ 3])) \vert \del(i_{\sigma}([6\ 5][8\ 7])),\]
which, similar to the computation above, is a sum of generators, each one a concatenation of two barrier filters.
\end{proof}

\section{Configuration spaces of unordered disks}\label{sec:unordered}

The configuration space of $n$ unordered disks of diameter $1$ in a strip of width $w$ is the quotient of $\config(n, w)$ by the action of $S_n$ that permutes the disk labels, and it is homotopy equivalent to the quotient of $\cel(n, w)$ by the action of $S_n$.  Because the action is cellular and free, this quotient is a cell complex which we call $\ucel(n, w)$.  In this section of the paper, we compute the homology of $\ucel(n, w)$ (and thus of the configuration space of unordered disks in a strip) with field coefficients, using the discrete Morse theory methods from Section~\ref{S:nokequal}.  In the version of discrete Morse theory that applies here, we do not need to assume that the coefficients of the boundary map are $\pm 1$, as is true for polyhedral cell complexes, but only that these coefficients are units.  We use field coefficients throughout this section, so all nonzero coefficients are automatically units.

The concatenation product
\[H_j(\ucel(n,w)) \otimes H_{j'}(\ucel(n',w)) \to H_{j+j'}(\ucel(n+n',w))\]
is well-defined since there is no need to choose labels for the disks.
Therefore, for any ring $R$, $H_*(\ucel(*,w);R)$ forms a noncommutative bigraded
$R$-algebra.

The cells of $\ucel(n, w)$ are labeled by symbols as in $\cel(n, w)$, but the
numbers in the symbols become indistinguishable; the only remaining information
is the sizes of blocks, which we also refer to as weights.  We notate them all by ${\circ}$, 
and use exponents to denote the weights of the blocks.  For instance, in $\cel(3)$
we have
\[\del(1\ 2\ 3) = -1 \mid 2\ 3\ + 2 \mid 1\ 3\ - 3 \mid 1\ 2 + 2\ 3 \mid 1 - 1\ 3 \mid 2 + 1\ 2 \mid 3,\]
and the corresponding relation in $\ucel(3)$ is
\[\del({\circ}^3) = -{\circ} \mid {\circ\circ} + {\circ} \mid {\circ\circ} - {\circ} \mid {\circ\circ} + {\circ\circ} \mid {\circ} - {\circ\circ} \mid {\circ} + {\circ\circ} \mid {\circ} = -{\circ} \mid {\circ\circ} + {\circ\circ} \mid {\circ}.\]
The following lemma describes all the coefficients of these boundary maps.

\begin{lem}
In $\ucel(n)$, the coefficient of the face ${\circ}^k  \mid {\circ}^{n-k}$ in the boundary of the cell ${\circ}^n$ can be described as follows:
\begin{itemize}
\item If $k$ and $n-k$ are both odd, the coefficient is $0$.
\item If $n = 2n'$ is even and $k = 2k'$ is even, the coefficient is $\binom{n'}{k'}$.
\item If $n = 2n' + 1$ is odd and $k = 2k'$ is even, the coefficient is $\binom{n'}{k'}$.
\item If $n = 2n' + 1$ is odd and $k = 2k' + 1$ is odd, the coefficient is $-\binom{n'}{k'}$.
\end{itemize}
\end{lem}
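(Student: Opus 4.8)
The plan is to reduce to a computation in $\cel(n)$ and then push the boundary formula down to the free quotient $\ucel(n) = \cel(n)/S_n$. As a lift of the cell ${\circ}^n$ I would take the top-dimensional cell $e = (\,1\;2\;\cdots\;n\,)$ of $\cel(n)$, i.e.\ the one corresponding to the identity permutation. Its codimension-$1$ faces are precisely the symbols $(\,S_\uparrow \concat S^c_\uparrow\,)$, where $S$ ranges over the nonempty proper subsets of $[n]$ and $S_\uparrow$ denotes the elements of $S$ listed in increasing order — these are the only $2$-block symbols some shuffle of which equals $1\;2\;\cdots\;n$. By the sign rule of \S\ref{S:boundary}, the coefficient of this face in $\del e$ is $c_S = (-1)^{|S|}\sgn(\tau_S)$, where $\tau_S$ is the permutation with one-line notation $S_\uparrow\,S^c_\uparrow$.

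Next I would compute $\sgn(\tau_S)$ by counting inversions. Writing $S = \{s_1 < \cdots < s_k\}$ with $k = |S|$, the entry $s_a$ (in position $a$) is inverted exactly with the elements of $S^c$ smaller than it, of which there are $s_a - a$; hence the number of inversions is $\sum_{s \in S} s - \binom{k+1}{2}$ and, using $k - \binom{k+1}{2} = -\binom{k}{2}$,
\[c_S = (-1)^{\,k + \sum_{s \in S} s - \binom{k+1}{2}} = \epsilon(k)\,(-1)^{\sum_{s \in S} s}, \qquad \epsilon(k) := (-1)^{\binom{k}{2}} = (-1)^{\lfloor k/2\rfloor}.\]
Because the $S_n$-action on $\cel(n)$ preserves cell orientations (Proposition in \S\ref{S:boundary}), the cellular chain complex of $\ucel(n)$ is computed from orbit representatives, and the coefficient of ${\circ}^k \concat {\circ}^{n-k}$ in $\del({\circ}^n)$ is the sum of $c_S$ over all faces of $e$ in that orbit, namely all $S \subseteq [n]$ with $|S| = k$:
\[\text{coeff of } {\circ}^k \concat {\circ}^{n-k} = \epsilon(k)\sum_{|S| = k}(-1)^{\sum_{s \in S} s} = \epsilon(k)\,[x^k]\!\!\prod_{i=1}^n\bigl(1 + (-1)^i x\bigr).\]

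Finally I would evaluate the generating function: pairing consecutive factors gives $\prod_{i=1}^n(1+(-1)^i x) = (1-x^2)^{\lfloor n/2\rfloor}$ when $n$ is even and $(1-x)(1-x^2)^{\lfloor n/2\rfloor}$ when $n$ is odd. Reading off the coefficient of $x^k$ shows the inner sum vanishes when $k$ and $n-k$ are both odd, equals $(-1)^{\lfloor k/2\rfloor}\binom{\lfloor n/2\rfloor}{\lfloor k/2\rfloor}$ when $k$ is even, and equals $-(-1)^{\lfloor k/2\rfloor}\binom{\lfloor n/2\rfloor}{\lfloor k/2\rfloor}$ when $k$ (hence $n-k$, and so $n$) is odd. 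Multiplying by $\epsilon(k) = (-1)^{\lfloor k/2\rfloor}$ cancels these signs and produces exactly the four stated cases ($0$; $\binom{n'}{k'}$; $\binom{n'}{k'}$; $-\binom{n'}{k'}$). I do not expect a genuine obstacle here; the one point needing care is the sign bookkeeping — verifying that the $k$-dependent prefactor $\epsilon(k)$ precisely cancels the sign coming from the generating function — together with the routine observation that, for a free orientation-preserving action, the differential on $\ucel(n)$ is obtained by summing the $\cel(n)$-coefficients over each orbit of faces.
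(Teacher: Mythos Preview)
Your proof is correct. Both you and the paper lift ${\circ}^n$ to the top cell $(1\,2\,\cdots\,n)$ of $\cel(n)$ and recognise the coefficient as the signed sum $\sum_{|S|=k}(-1)^k\sgn(\tau_S)$; the difference lies only in how this sum is evaluated. The paper uses a sign-reversing involution: it pairs $1$ with $2$, $3$ with $4$, and so on, and cancels any face in which some pair is split between the two blocks (swapping the first split pair reverses the sign), leaving exactly the $\binom{n'}{k'}$ faces in which every pair lies entirely in one block, each contributing sign $(-1)^k$. You instead compute $\sgn(\tau_S)$ explicitly via inversions, package the sum as $[x^k]\prod_{i=1}^n(1+(-1)^i x)$, and then pair consecutive factors into $(1-x^2)$. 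The underlying pairing idea is the same, expressed bijectively in the paper and algebraically in your version; your route handles the prefactor $\epsilon(k)$ and the four cases uniformly in one stroke, while the paper's involution avoids computing $\sgn(\tau_S)$ at all and makes the cancellation visibly combinatorial.
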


\begin{proof}
Consider the cell $1\ 2\ \cdots\ n$ in $\cel(n)$.  In its boundary, there are $\binom{n}{k}$ cells that project to ${\circ}^k \mid {\circ}^{n-k}$ in $\ucel(n)$, and our task is to add up all of their signs.  The sign of each such face is $(-1)^k$ times the sign of the corresponding permutation.

We pair up the numbers $1$ and $2$, $3$ and $4$, and so on, pairing up $n-1$ and $n$ if $n$ is even, or leaving only $n$ unpaired if $n$ is odd.  We can match and cancel faces of $1\ 2\ \cdots\ n$ with opposite signs in the following way.  Given a face, if $1$ and $2$ are in different blocks, then swapping them gives another face with opposite sign.  Similarly, if $1$ and $2$ are in the same block, but $3$ and $4$ are in different blocks, then swapping $3$ and $4$ gives another face with opposite sign.  In this way, for each face for which a pair of numbers is split up, we match and cancel it with another such face by finding the first pair of numbers that is split up, and swapping those numbers.

The remaining faces have $1$ and $2$ in the same block, $3$ and $4$ in the same block, and so on, and each one corresponds to an even permutation, so the total sign is $(-1)^k$.  If $k$ and $n-k$ are both odd, then there are no such faces.  If $k = 2k'$ is even and there are $n'$ pairs, then the faces all have positive sign, and there are $\binom{n'}{k'}$ of them.  And, if $k = 2k' + 1$ is odd and there are $n'$ pairs, then the faces all have negative sign, and there are $\binom{n'}{k'}$ of them.
\end{proof}

\subsection{Discrete Morse theory on $\ucel(n, w)$ with $\mathbb{Q}$--coefficients}

When ordering the cells to produce a discrete gradient vector field, part of the ordering will be chosen later to be lexicographical.  Thus, we need to compute the lexicographically least way to split each block.

\begin{lem}
The lexicographically least face of the cell ${\circ}^n$ in $\ucel(n)$ with nonzero coefficient is given as follows.  If $n$ is odd, the least face is ${\circ}^1 \mid {\circ}^{n-1}$.  If $n$ is even and greater than $2$, the least face is ${\circ}^2 \mid {\circ}^{n-2}$.  In the case $n = 2$, the cell ${\circ}^2$ is a cycle.
\end{lem}

\begin{proof}
If $n$ is odd, then the coefficient of the face ${\circ}^1 \mid {\circ}^{n-1}$ is $-\binom{(n-1)/2}{0} = -1$.  If $n > 2$ is even, then the coefficient of the face ${\circ}^1 \mid {\circ}^{n-1}$ is zero, but the coefficient of the face ${\circ}^2 \mid {\circ}^{n-2}$ is $\binom{(n-2)/2}{1} = (n-2)/2 \neq 0$.
\end{proof}

\begin{defn}
Two consecutive blocks in a symbol in $\ucel(n, w)$ form a \textbf{\textit{leader--follower pair in characteristic $0$}} if they have the form ${\circ}^1 \mid {\circ}^{2k'}$ or ${\circ}^{2} \mid {\circ}^{2k'}$ for some $k'$. 
\end{defn}

\begin{thm}\label{thm:ubasis-q}
There is a discrete gradient vector field on $\ucel(n, w)$ such that the critical cells are in bijection with a basis for $H_*(\ucel(n, w); \mathbb{Q})$.  The critical cells are described as follows:
\begin{itemize}
\item For $w = 2$, every cell.
\item For $w = 3$, the concatenation of zero or more blocks ${\circ}^{2}$, followed by zero or more singletons ${\circ}^1$.
\item For $w > 2$ even, the concatenation of zero or more copies of ${\circ}^2 \mid {\circ}^{w}$ or strings ending in ${\circ}^1 \mid {\circ}^w$, followed by the concatenation of zero or more singletons ${\circ}^1$.  For each string ending in ${\circ}^1 \mid {\circ}^w$, it consists of an optional ${\circ}^2$, followed by zero or more singletons ${\circ}^1$, followed by the pair ${\circ}^1 \mid {\circ}^w$.
\item For $w > 3$ odd, the concatenation of zero or more pairs ${\circ}^2 \mid {\circ}^{w-1}$, followed by an optional ${\circ}^2$, followed by zero or more singletons ${\circ}^1$.
\end{itemize}
\end{thm}

\begin{proof}
The proof is similar to the proof of Theorem~\ref{thm-weighted-basis}, which finds a basis for homology of weighted no-$(w+1)$-equal spaces.  Informally, we construct a discrete vector field that pairs cells as follows: given a symbol, we read it from left to right, and find the first place where either there is a block of weight greater than $2$, in which case we match down by breaking it into a leader--follower pair; or, there is a leader-follower pair of total weight at most $w$, in which case we match up by merging it into one block.  

The cells that remain unpaired are those for which every leader--follower pair has total weight greater than $w$, and all other blocks are either ${\circ}^1$ or ${\circ}^2$.  We observe that if $w$ is even, the only possibilities for leader--follower pairs in critical cells are ${\circ}^1 \mid {\circ}^w$ and ${\circ}^2 \mid {\circ}^w$, because the total weight must exceed $w$ while the weight of the follower block must be even and at most $w$. For the same reason, if $w$ is odd the only possible leader--follower pair in a critical cell is ${\circ}^2 \mid {\circ}^{w-1}$.  We also observe that there are no instances of either ${\circ}^1 \mid {\circ}^2$ for $w \geq 3$, or ${\circ}^2 \mid {\circ}^2$ for $w \geq 4$, because these would be leader--follower pairs.  Combining these observations, we deduce that the critical cells must have the form given in the theorem statement.

More formally, to check that this discrete vector field is gradient, we exhibit an ordering that produces it.  We order the cells of $\ucel(n, w)$ such that if $f$ and $g$ are two cells, we find the first block where they differ, and order according to this first differing block in $f$ and $g$:
\begin{itemize}
\item a follower block is less than a non-follower;
\item two follower blocks are ordered in increasing order of weight; and
\item two non-follower blocks are ordered in decreasing order of weight.
\end{itemize}
Then, we pair up two cells $f$ and $g$, with $f$ a face of $g$, if $f$ is the greatest face of $g$ and $g$ is the least coface of $f$.  One can check that this pairing agrees with the pairing described informally above, and thus that the critical cells match the desired description.

To check that the resulting critical cells correspond to a basis, by Lemma~\ref{lem-max-basis} it suffices to find a cycle $z(e)$ for each critical cell $e$, such that $e$ is the greatest cell in $z(e)$.  Because we are using coefficients in $\mathbb{Q}$, we do not have to worry about whether the coefficient is a unit, as long as it is nonzero.  To construct $z(e)$, we take the concatenation product of the following cycles: for each block that is not in a leader--follower pair, it is already a cycle, and for each leader--follower pair, as our cycle we take the boundary of the block resulting from merging the pair.  Note that every cell of this boundary that has a nonzero coefficient has block weight at most $w$, because the original leader-follower pair is one of the two faces with the most unbalanced block weights in this boundary.  Because our leader--follower pair is the lexicographically least face of the merged block, and our ordering is the reverse of lexicographical for non-followers, we see that $e$ is the greatest cell in $z(e)$.  This implies that the cells $z(e)$ form a basis for $H_*(\ucel(n, w); \mathbb{Q})$.
\end{proof}

\begin{cor}
The homology $H_*(\ucel(*, w); \mathbb{Q})$ forms a bigraded algebra over $\mathbb{Q}$ under concatenation product.  It has the following generators:
\begin{enumerate}
\item The singleton block ${\circ}^1$;
\item The block ${\circ}^{2}$;
\item For $w > 3$, the cycle $\partial({\circ}^{w+1})$; and
\item For $w > 2$ even, the cycle $\partial({\circ}^{w+2})$.
\end{enumerate}
It has the following relations:
\begin{enumerate}
\item The singleton block ${\circ}^1$ commutes with ${\circ}^{2}$ for all $w \geq 3$;
\item The singleton block ${\circ}^1$ commutes with $\partial({\circ}^{w+1})$ for all odd $w > 3$; and
\item The symbol ${\circ}^{2}  \mid {\circ}^{2}$ is null-homologous for all $w \geq 4$;
\end{enumerate}
\end{cor}

\begin{proof}
The description of the basis shows that the specified generators do generate.

Relation (1) is true because the boundary of the cell ${\circ}^{3}$ is $-{\circ}^1 \mid {\circ}^{2} + {\circ}^{2} \mid {\circ}^1$.  Relation (2) comes from the relation $\del^2({\circ}^{w+2}) = 0$ on $\ucel(n)$; when $w$ is odd, expanding $\del({\circ}^{w+2})$ gives $-{\circ}^1 \mid {\circ}^{w+1} + {\circ}^{w+1} \mid {\circ}^1$ plus a sum of cells in $\ucel(n, w)$, so applying $\del$ again gives our desired homology relation.  Relation (3) is true because ${\circ}^{2} \mid {\circ}^{2}$ is the only face of ${\circ}^{4}$ with nonzero coefficient.

The relations are enough to transform an arbitrary product of generators into one of our basis cycles.
\end{proof}

\begin{cor}\label{cor:q-growth}
For fixed $j$ and $w$, the Betti numbers $\beta_j(\ucel(n, w); \mathbb{Q})$ as a function of $n$ grow with an upper bound of $O(n^q)$, where $q = \left\lfloor\frac{j}{w-1}\right\rfloor$.  If $w$ is odd, the Betti numbers either are $0$ for all $n$ or are $1$ for all sufficiently large $n$.  If $w$ is even, the Betti numbers either are $0$ for all $n$ or grow as $\Theta(n^q)$, and the latter case holds for all $j \geq (w-1)(w-3)$.
\end{cor}

\begin{proof}
Deleting non-leader singleton blocks ${\circ}^1$ from a critical cell gives a critical cell with smaller $n$, and for each $j$ and $w$, there are finitely many ways to form one of these ``skyline'' critical cells that have no non-leader singletons.  For each skyline critical cell, the only places to insert singletons are at the end (that is, on the right side) and immediately before the leader--follower pair ${\circ}^1 \mid {\circ}^w$, which exists only if $w$ is even.  Thus, if $w$ is odd, then for all sufficiently large $n$ the Betti number $\beta_j(\ucel(n, w); \mathbb{Q})$ is constant, equal to the number of skyline critical cells for $j$ and $w$, which is $1$ if $j$ is congruent to $0$ or $1$ mod $w-1$, or $0$ otherwise.

If $w$ is even, for each $j$ we claim that either there is a skyline with $\left\lfloor\frac{j}{w-1}\right\rfloor$ instances of ${\circ}^1 \mid {\circ}^w$, or there is no skyline at all.  We write $j$ as $q(w-1) + r$, with $0 \leq r \leq w-2$.  To construct the critical cell, if $r \leq q$ we concatenate $r$ instances of ${\circ}^2 \mid {\circ}^1 \mid {\circ}^w$ and then $q-r$ instances of ${\circ}^1 \mid {\circ}^w$.  If $r = q+1$ we concatenate $q$ instances of ${\circ}^2 \mid {\circ}^1 \mid {\circ}^w$, followed by one instance of ${\circ}^2$.  If $r > q+1$ there is no way to build a critical cell of dimension $j$, because all blocks that contribute to the dimension are either ${\circ}^w$, contributing $w-1$, or ${\circ}^2$, contributing $1$, and there can be at most $q+1$ instances of ${\circ}^2$.

Given a skyline critical cell with $n'$ disks and $k$ instances of ${\circ}^1 \mid {\circ}^w$, the number of critical cells with $n$ disks arising from this skyline is $\binom{n-n'+k}{k}$, corresponding to the number of ways to arrange $n-n'$ additional singletons and $k$ dividers.  This is a polynomial in $n$ of degree $k$.  If $w$ is even, for each $j$ either there is no skyline, or there is a skyline with $q = \left\lfloor \frac{j}{w-1}\right\rfloor$ instances of ${\circ}^1 \mid {\circ}^w$, which is the largest possible $k$.  Thus, the Betti numbers grow like either $0$ or $\Theta(n^q)$.

In the case where $w$ is even and $j \geq (w-1)(w-3)$, the quotient $q$ is at least $w-3$ and the remainder $r$ is at most $w-2$, so the case $r > q+1$ is impossible, and the Betti numbers grow like $\Theta(n^q)$.
\end{proof}

\subsection{Discrete Morse theory on $\ucel(n, w)$ with $\mathbb{F}_p$--coefficients}

Using coefficients mod $p$, our strategy for computing the homology is the same as with $\mathbb{Q}$ coefficients, but the answer becomes more complicated because we need to account for divisibility of binomial coefficients.

\begin{lem}
For any prime $p$, the lexicographically least face of the cell ${\circ}^n$ in $\ucel(n)$ with coefficient not divisible by $p$ is given as follows.  If $n$ is odd, the least face is ${\circ}^1 \mid {\circ}^{n-1}$.  If $n$ is even, then we write $n$ as $2p^k \cdot a$, where $a$ is not divisible by $p$, and the least face is ${\circ}^{2p^k} \mid {\circ}^{2p^k(a-1)}$.
\end{lem}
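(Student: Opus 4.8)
The plan is to read the result off the coefficient formula of the preceding lemma, which turns the statement into a classical base-$p$ divisibility fact about binomial coefficients.

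First I would note that the codimension-one faces of the cell $\circ^n$ in $\ucel(n)$ are precisely the $n-1$ symbols $\circ^i \mid \circ^{n-i}$ with $1 \le i \le n-1$, and that among faces having a prescribed property the lexicographically least is the one whose first block is smallest, i.e.\ the one with the smallest such $i$. By the preceding lemma the coefficient of $\circ^i \mid \circ^{n-i}$ in $\partial(\circ^n)$ is $0$ when $i$ and $n-i$ are both odd, and is $\pm\binom{\lfloor n/2\rfloor}{\lfloor i/2\rfloor}$ otherwise. So the task is to find the least $i \ge 1$ for which this coefficient is a unit modulo $p$.

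If $n$ is odd, take $i=1$: then $n-i$ is even, so the coefficient is $-\binom{(n-1)/2}{0} = -1$, which is a unit mod every prime $p$; hence the least face is $\circ^1 \mid \circ^{n-1}$. If $n$ is even, then every odd $i$ gives coefficient $0$, so I need the least even $i = 2j$ with $j \ge 1$ and $p \nmid \binom{n/2}{j}$. Writing $n = 2p^k a$ with $p \nmid a$, so that $n/2 = p^k a$, Lucas's theorem says $p \nmid \binom{n/2}{j}$ exactly when each base-$p$ digit of $j$ is at most the corresponding digit of $n/2$. The lowest $k$ base-$p$ digits of $p^k a$ are zero while its digit in place $k$ is the (nonzero) lowest digit of $a$; hence any admissible $j$ is divisible by $p^k$, and $j = p^k$ is itself admissible. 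So $j = p^k$ is the least admissible value, the smallest valid first block has size $2p^k$, and the least face is $\circ^{2p^k}\mid\circ^{\,n-2p^k}$, with $n - 2p^k = 2p^k(a-1)$, as claimed.

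I do not expect a genuine obstacle: once the coefficient formula is available the proof is essentially a one-line application of Lucas's theorem. The only points requiring a little care are identifying lexicographic order with the size of the first block, observing that for even $n$ all odd-sized first blocks contribute nothing, and the degenerate case $a = 1$ (that is, $n = 2p^k$), where the displayed ``face'' $\circ^{2p^k}\mid\circ^0$ is not an honest face; this simply reflects that in that case $\partial(\circ^n)$ is identically divisible by $p$.
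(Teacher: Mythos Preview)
Your proof is correct and follows essentially the same approach as the paper: both quote the preceding coefficient lemma, dispatch the odd case with $i=1$, and for even $n$ reduce to finding the least $j\ge 1$ with $p\nmid\binom{p^k a}{j}$. The only cosmetic difference is that you invoke Lucas's theorem while the paper uses the Frobenius identity $(x+y)^{p^k a}\equiv(x^{p^k}+y^{p^k})^a$ to reach the same conclusion; your explicit remarks on lexicographic order and the degenerate case $a=1$ are welcome clarifications.
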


\begin{proof}
If $n$ is odd, then the coefficient of the face ${\circ}^1 \mid {\circ}^{n-1}$ is $-\binom{(n-1)/2}{0} = -1$, which is a unit in any $\mathbb{F}_p$.

If $n = 2p^k\cdot a$ is even, then the faces of ${\circ}^n$ have coefficients $\binom{p^ka}{k'}$ for various $k'$.  These are the coefficients of $(x+y)^{p^ka} \equiv (x^{p^k} + y^{p^k})^a$ mod $p$, using the Frobenius homomorphism.  These coefficients are $0$ unless $p^k$ divides $k'$, and the coefficient $\binom{p^ka}{p^k}$ is congruent mod $p$ to $\binom{a}{1} = a$, which is a unit in $\mathbb{F}_p$.
\end{proof}

\begin{defn}
Two consecutive blocks in a symbol in $\ucel(n, w)$ form a \textbf{\textit{leader--follower pair in characteristic $p$}} if they have the form ${\circ}^1  \mid {\circ}^{2k'}$ for some $k'$, or the form ${\circ}^{2p^k} \mid {\circ}^{2p^k(a-1)}$ for some $k \geq 0$ and $a$ not divisible by $p$.  We assign the pairs disjointly from left to right, so that once a block is a follower in a pair with the previous block, it cannot also be a leader in a pair with the next block.
\end{defn}

\begin{thm}
For each prime $p$, there is a discrete gradient vector field on $\ucel(n, w)$ with $\mathbb{F}_p$ coefficients, such that the critical cells are in bijection with a basis for $H_*(\ucel(n, w); \mathbb{F}_p)$.  The critical cells are those with the properties that every leader--follower pair has total weight greater than $w$, and every block that is not a follower is either ${\circ}^1$ or has the form ${\circ}^{2p^k}$ for some $k\geq 0$.  These properties imply that consecutive blocks that are not followers appear in decreasing order of weight, weakly decreasing if $p = 2$ and strictly decreasing if $p \neq 2$, except for singleton blocks ${\circ}^1$ which may occur consecutively for any $p$.
\end{thm}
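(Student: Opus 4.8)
\emph{Plan of proof.} The strategy is to transplant the discrete Morse theory of Section~\ref{S:nokequal} from the permutohedron to $\ucel(n)$, using the two preceding lemmas on the boundary coefficients of ${\circ}^n$ in place of the elementary incidence computation for permutohedra. Let $\ucel(n)$ denote the (non-regular!) cell complex containing $\ucel(n,w)$ as the subcomplex of cells all of whose blocks have size $\le w$; its face poset is the refinement poset of compositions of $n$. First I would define a discrete vector field by the leftmost-bad-block rule, exactly as in the proof of Lemma~\ref{lem:crit}: given a non-critical symbol, locate the leftmost block not of one of the three allowed forms; if it is a follower whose leader--follower pair in characteristic $p$ has total weight $\le w$, match it \emph{up} by fusing it with its leader; otherwise it is a non-follower block ${\circ}^m$ with $m$ neither $1$ nor of the form $2p^k$, and we match \emph{down} by splitting it into the lexicographically least face of ${\circ}^m$ with coefficient a unit mod $p$ — that is, into ${\circ}^1\mid{\circ}^{m-1}$ when $m$ is odd and into ${\circ}^{2p^k}\mid{\circ}^{2p^k(a-1)}$ when $m=2p^ka$ with $p\nmid a$ — which turns that slot into a leader--follower pair. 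The two cases are mutually exclusive (the down-case requires a non-follower block), so this is a well-defined matching whose unmatched cells are precisely the symbols listed in the theorem.

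Next I would show the matching is gradient and that its Morse complex has vanishing differential, so that the critical cells biject with a basis of $H_*(\ucel(n,w);\mathbb F_p)$. Since $\ucel(n)$ is not polyhedral, I cannot simply cite the criterion recalled in Section~\ref{S:nokequal}; instead I would exhibit a total order $\prec$ on cells inducing this matching — reading blocks left to right and, at the first block where two symbols differ, preferring the one that "continues to look critical" longer, with ties among genuine non-follower blocks broken so that ${\circ}^1$ and the forms ${\circ}^{2p^k}$ outrank all others and smaller size wins among these — after which being gradient is automatic, since along any $V$-walk each match-down cell's partner is a $\prec$-greatest face, so the walk is strictly $\prec$-decreasing. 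To kill the Morse differential I would apply Lemma~\ref{lem-max-basis}, whose proof works verbatim over $\mathbb F_p$ once "$\pm1$" is read as "unit of $\mathbb F_p$" and polyhedrality of the complex is replaced by the boundary-coefficient lemma, which gives exactly the coefficient $-1$, resp.\ $\binom{p^ka}{p^k}\equiv a$, for the incidence of a leader--follower pair inside the fused block. It then suffices to produce, for each critical cell $e$, a cycle $z(e)$ over $\mathbb F_p$ in $\ucel(n,w)$ having $e$ as its $\prec$-greatest cell with invertible coefficient.

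These cycles I would build multiplicatively along the concatenation product, splitting $e$ into non-follower singletons, non-follower blocks ${\circ}^{2p^k}$, and leader--follower pairs, and setting $z(e_1\mid e_2)=z(e_1)\mid z(e_2)$ on splittings that do not break a pair. For a non-follower ${\circ}^1$, take the $0$-cell; for a non-follower ${\circ}^{2p^k}$, take the cell ${\circ}^{2p^k}$ itself, which is a mod-$p$ cycle because $\del{\circ}^{2p^k}=\sum_{0<i<p^k}\binom{p^k}{i}\,{\circ}^{2i}\mid{\circ}^{2(p^k-i)}\equiv 0\pmod p$; for a leader--follower pair ${\circ}^c\mid{\circ}^d$ (so $c\in\{1,2p^k\}$, $c+d>w$, $c,d\le w$), take $z(e)=\del{\circ}^{c+d}$. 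The one point that really uses the hypotheses is that, although ${\circ}^{c+d}$ is not a cell of $\ucel(n,w)$, every term of $\del{\circ}^{c+d}$ surviving mod $p$ does lie in $\ucel(n,w)$: when $c=1$ the critical inequality forces $w=2k'=d$ and every split ${\circ}^j\mid{\circ}^{1+2k'-j}$ has both parts $\le 2k'$, while when $c=2p^k$ the Frobenius congruence $\binom{p^ka}{i}\equiv 0\pmod p$ unless $p^k\mid i$ leaves only the splits ${\circ}^{2p^kj}\mid{\circ}^{2p^k(a-j)}$, $1\le j\le a-1$, whose blocks are all of size $\le 2p^k(a-1)=d\le w$; in both cases the distinguished term ${\circ}^c\mid{\circ}^d$ carries coefficient $-1$, resp.\ $\binom a1=a$, a unit. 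That ${\circ}^c\mid{\circ}^d$ (and $e$ as a whole) is the $\prec$-greatest cell of its $z$ follows as in Section~\ref{S:nokequal}, since in every surviving term the first differing block is a leader or non-follower to which the ordering rules apply. Finally, the stated consequence — consecutive non-follower blocks between followers strictly decrease in size unless $p=2$ — I would record by observing that ${\circ}^{2p^k}\mid{\circ}^{2p^k}$ is itself a leader--follower pair precisely when $a=2$ is a unit mod $p$, i.e.\ when $p\neq 2$, while ${\circ}^1\mid{\circ}^1$ is never a leader--follower pair.

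I expect the main obstacle to be the construction and verification of the total order $\prec$. Unlike in Section~\ref{S:nokequal}, a single block ${\circ}^m$ has genuine codimension-one faces ${\circ}^j\mid{\circ}^{m-j}$ whose boundary coefficient vanishes mod $p$ — already over $\mathbb Z$, by the parity case of the boundary-coefficient lemma — so "greatest face" can no longer be read off from incidence numbers, and the order must be arranged so that the lemma-prescribed split outranks these phantom faces as well as the nonzero ones and all fusions of earlier blocks, and simultaneously so that the fused block ${\circ}^m$ is the $\prec$-least coface of that split. Once $\prec$ is pinned down, the remaining bookkeeping is the same kind of argument as in the proof of Lemma~\ref{lem:crit}.
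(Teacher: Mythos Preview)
Your proposal is correct and follows essentially the same route as the paper: define a total order on cells of $\ucel(n,w)$ (follower $\prec$ non-follower at the first differing block; followers by increasing size; non-followers by decreasing size), read off the matching, and then apply the argument of Lemma~\ref{lem-max-basis} with the cycles $z(e)$ built as concatenation products of single blocks ${\circ}^{2p^k}$ (which are mod-$p$ cycles) and boundaries $\partial{\circ}^{c+d}$ for leader--follower pairs. Your caution about non-polyhedrality is well placed---the paper handles it exactly as you suggest, by noting that over a field every nonzero incidence coefficient is a unit, and that the leader--follower split is by construction the lexicographically least face with nonzero coefficient mod $p$, so the ``phantom faces'' with vanishing coefficient do not interfere with either the gradient property or the greatest-cell argument.
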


\begin{proof}
The proof is exactly analogous to that of Theorem~\ref{thm:ubasis-q}, which addresses the case of $\mathbb{Q}$ coefficients.  As in that proof, our discrete vector field is informally described by reading each symbol from left to right, breaking down any block of weight other than $1$ or $2p^k$ into a leader--follower pair, and combining any leader--follower pair of total weight at most $w$.

We can describe the resulting critical cells more concretely as follows.  To find all possibilities for leader--follower pairs in critical cells, for each $k \geq 0$ such that $2p^k \leq w$, we find the least multiple of $2p^k$ greater than $w$.  If this multiple is not divisible by $2p^{k+1}$, it has the form $2p^ka$ for $a$ not divisible by $p$, and the leader--follower pair ${\circ}^{2p^k} \mid {\circ}^{2p^k(a-1)}$ may appear in a critical cell.  We know that $2p^k(a-1)$ is at most $w$, otherwise it would contradict the selection of $2p^ka$ as the least multiple of $2p^k$ greater than $w$.  If the least multiple of $2p^k$ greater than $w$ is divisible by $2p^{k+1}$, there is no leader--follower pair beginning with $2p^k$ that may appear in a critical cell.  In addition, if $w$ is even, the leader--follower pair ${\circ}^1 \mid {\circ}^w$ may appear in a critical cell.

For each critical cell, we can imagine dividing the symbol into strings, where the followers are the dividers.  Each string consists of blocks ${\circ}^1$ and/or ${\circ}^{2p^k}$ for various $k \geq 0$, with constraints on the multiplicities and order because they may not form leader--follower pairs.  The pair of blocks ${\circ}^{2p^k} \mid {\circ}^{2p^k}$ forms a leader--follower pair if $p \neq 2$, so it may not appear in one of these strings; if $p = 2$, it does not form a leader--follower pair, so it may appear.  The pairs ${\circ}^1 \mid {\circ}^{2p^k}$ for $k \geq 0$ and ${\circ}^{2p^k} \mid {\circ}^{2p^{\ell}}$ for $k < \ell$ do form leader--follower pairs regardless of $p$, so they may not appear in one of these strings.  Thus, if $p = 2$, each string is an arbitrary sequence of blocks ${\circ}^1$ and ${\circ}^{2p^k}$, any number of each, in weakly decreasing order.  If $p \neq 2$, each string is an arbitrary sequence of any number of blocks ${\circ}^1$ and at most one of each block ${\circ}^{2p^k}$, in decreasing order.  Note that given a leader--follower pair, in the string preceding it, the blocks cannot have smaller weight than the leader, because of the condition that the entire string including the leader should be in decreasing order.

To complete the proof formally, we use the same ordering on cells of $\ucel(n, w)$ as in the proof of Theorem~\ref{thm:ubasis-q}, except with the characteristic $p$ definition of leader--follower pairs.  The resulting discrete gradient vector field agrees with the informal description above.  We define cycles $z(e)$ as in the proof of Theorem~\ref{thm:ubasis-q}: for each leader--follower pair, we take the boundary of the block in $\ucel(n)$ resulting from merging the pair, for each block not in a leader--follower pair it is already a cycle, and we take the concatenation product of all these cycles to get $z(e)$.  Applying Lemma~\ref{lem-max-basis}, we conclude that the cells $z(e)$ form a basis for $H_*(\ucel(n, w); \mathbb{F}_p)$.
\end{proof}

\begin{cor}
For each prime $p$, the homology $H_*(\ucel(*, w); \mathbb{F}_p)$ forms a bigraded algebra over $\mathbb{F}_p$ under concatenation product.  It has the following generators:
\begin{enumerate}
\item The singleton block ${\circ}^1$;
\item The block ${\circ}^{2p^k}$, for each $k \geq 0$ with $2p^k \leq w$;
\item If $w$ is even but not equal to $2p^k$ for any $k \geq 0$, the cycle $\partial({\circ}^{w+1})$; and
\item The cycle $\partial({\circ}^{n'})$, where $n'$ is the least multiple of $2p^k$ greater than $w$, for each $k \geq 0$ such that $2p^k \leq w/2$.  In this case we let $k(n')$ denote the power of $p$ in the prime factorization of $n'$.
\end{enumerate}
It has the following relations:
\begin{enumerate}
\item The singleton block ${\circ}^1$ commutes with ${\circ}^{2p^k}$ whenever $1 + 2p^k \leq w$;
\item The singleton block ${\circ}^1$ commutes with $\partial({\circ}^{n'})$ whenever $1 + n' - 2p^{k(n')} \leq w$;
\item If $p \neq 2$, then ${\circ}^{2p^k}  \mid {\circ}^{2p^k}$ is null-homologous whenever $4p^k \leq w$;
\item We have ${\circ}^{2p^\ell}  \mid {\circ}^{2p^k} = -({\circ}^{2p^k}  \mid {\circ}^{2p^\ell})$ whenever $2p^l + 2p^k \leq w$ and $k \neq \ell$; and
\item The block ${\circ}^{2p^\ell}$ commutes with $\partial ({\circ}^{n'})$ whenever $2p^\ell + n' - 2p^{k(n')} \leq w$.
\end{enumerate}
\end{cor}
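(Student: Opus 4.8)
The plan is to argue exactly as in the proofs of Theorems~\ref{thm-unwt-gen-rel} and~\ref{thm-w2-gen-rel}: with the critical–cell basis for $H_*(\ucel(n,w);\mathbb{F}_p)$ already in hand from the preceding theorem, it suffices to show three things — that the listed elements generate the algebra under concatenation, that the listed relations hold in homology, and that these relations are enough to rewrite an arbitrary concatenation product of generators as an $\mathbb{F}_p$–linear combination of critical cells. Generation and the verification of the relations are the routine parts; the rewriting (completeness) is where the real work lies.

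For generation: by the preceding theorem every basis cycle $z(e)$ is a concatenation product over the blocks of the critical cell $e$, where a non–follower block contributes the cell $\circ^1$ or $\circ^{2p^k}$ — generators (1) and (2) — and a leader–follower pair of total weight $m$ contributes $\partial(\circ^{m})$. So I would check that the possible values of $m$ are precisely those appearing in generators (3)–(4): from the characteristic–$p$ leader–follower definition, a pair with singleton leader forces $m=w+1$ with $w$ even, while a pair with leader $\circ^{2p^k}$ forces $m$ to be the unique multiple of $2p^k$ in $(w,w+2p^k]$, which is the least multiple of $2p^k$ exceeding $w$ and is automatically $\le 2w$ because $\circ^{2p^k}$ is a block of $\ucel(n,w)$. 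This is finite combinatorial bookkeeping, using the boundary–coefficient lemma above.

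For the relations: each one is an instance of "a small cluster can be slid past a larger one,'' verified either by a chain–level computation based on $\partial^2(\circ^N)=0$ for an appropriate block $\circ^N$, together with the fact that any face of $\partial(\circ^N)$ all of whose blocks have size at most $w$ is null–homologous in $\ucel(n,w)$ (it lies in a copy of the permutohedral complex with block sizes $\le w$) — this is the device used for relation (2) of Theorem~\ref{thm-unwt-gen-rel} — or geometrically, by producing the commuting homotopy in the configuration space. Relations (1), (2), (5) say that a singleton, or a wheel of size $2p^\ell$, can slide past a wheel $\circ^{2p^k}$ or a filter $\partial(\circ^{n'})$ exactly when the stated inequality holds, i.e.\ exactly when the smaller cluster together with the follower block still fits within $w$ vertically aligned disks, so that the larger cluster is not a barrier; relations (3)–(4) are the characteristic–$p$ refinement of "two adjacent wheels of the same type,'' with the sign and the vanishing coming from the residues of $\binom{2p^k}{p^k}$ and $\binom{2p^\ell+2p^k}{2p^\ell}$ modulo $p$, which give $\circ^{2p^k}\mid\circ^{2p^k}\simeq 0$ when $p\neq 2$ and anticommutation when $k\neq\ell$.

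Finally, for completeness I would run the standard left–to–right rewriting: given a product of generators, first use relation (1) to move all singletons into their canonical positions, then use (3)–(4) to sort the remaining non–follower wheels into decreasing order (weakly if $p=2$, strictly if $p\neq 2$) with the correct signs, and then use (2) and (5) to carry singletons and small wheels across the filters $\partial(\circ^{n'})$; each step strictly decreases a suitable complexity measure, so the process terminates at a sum of cells matching the critical–cell description of the preceding theorem, and counting the normal forms against that basis confirms no relation is missing. I expect the main obstacle to be precisely this last step: one must check that the listed commutation relations really do suffice — that every non–critical product of generators contains a sub–word to which one of (1)–(5) applies, and that the weight inequalities line up so that no "stuck'' configuration survives — and that the characteristic–$p$ bookkeeping (the interplay of $2p^k$ versus $2p^{k+1}$, the $p=2$ versus $p\neq 2$ dichotomy in the ordering, and the anticommutation signs) remains consistent throughout the rewriting.
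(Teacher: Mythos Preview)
Your proposal is correct and follows essentially the same route as the paper: generation from the critical-cell basis, verification of each relation via $\partial^2(\circ^N)=0$ and the mod-$p$ boundary-coefficient lemma, and a left-to-right rewriting argument for completeness. One small slip: for relation~(4) the relevant coefficient is $\binom{p^\ell+p^k}{p^\ell}$ (from the lemma with $n'=p^\ell+p^k$), not $\binom{2p^\ell+2p^k}{2p^\ell}$; otherwise your outline matches the paper's proof.
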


\begin{proof}
The description of the basis shows that the specified generators do generate.

Relation (1) is true because the boundary of the cell ${\circ}^{1+2p^k}$ mod $p$ is $-{\circ}^1 \mid {\circ}^{2p^k} + {\circ}^{2p^k} \mid {\circ}^1$.  All other faces have coefficients of $0$, because $\binom{p^k}{k'} \equiv 0$ mod $p$ unless $k'$ is $0$ or $p^k$.  Relation (2) comes from the relation $\del^2({\circ}^{1+n'}) = 0$ on $\ucel(n)$; expanding $\del({\circ}^{1+n'})$ gives $-{\circ}^1 \mid {\circ}^{n'} + {\circ}^{n'} \mid {\circ}^1$ plus a sum of cells in $\ucel(n, w)$, so applying $\del$ again gives our desired homology relation.  Relation (3) is true because ${\circ}^{2p^k} \mid {\circ}^{2p^k}$ is the only face of ${\circ}^{4p^k}$ with nonzero coefficient mod $p$ when $p \neq 2$. (If $p =2$, then ${\circ}^{4p^k}$ is a cycle.)  To see that relation (4) is true, if $\ell < k$ the faces of ${\circ}^{2p^\ell + 2p^k}$ have coefficients $\binom{p^\ell(1 + p^{k - \ell})}{p^\ell k'} \equiv \binom{1+p^{k - \ell}}{k'}$ for various $k'$.  The coefficients for $k' = 1$ and $k' = p^{k - \ell}$ are both $1$ mod $p$ and are the coefficients of ${\circ}^{2p^\ell} \mid {\circ}^{2p^k}$ and ${\circ}^{2p^k} \mid {\circ}^{2p^\ell}$.  The other coefficients are all $0$ mod $p$, because the Pascal's triangle identity gives $\binom{1+p^{k - \ell}}{k'} = \binom{p^{k-\ell}}{k' - 1} + \binom{p^{k-\ell}}{k'}$, which is $0$ mod $p$ unless $k'$ is $0$, $1$, $p^{k-\ell} - 1$, or $p^{k-\ell}$.  Relation (5) comes from the relation $\del^2({\circ}^{2p^\ell + n'}) = 0$; using similar reasoning to relation (4), we find that the only faces of $\del({\circ}^{2p^\ell + n'})$ with nonzero coefficient have the form ${\circ}^{k'} \mid {\circ}^{2p^\ell + n'-k'}$ where $k'$ is congruent to either $0$ or $2p^\ell$ mod $2p^k$.  Thus, $\del({\circ}^{2p^\ell + n'})$ is ${\circ}^{2p^\ell} \mid {\circ}^{n'} + {\circ}^{n'} \mid {\circ}^{2p^\ell}$ plus a sum of cells in $\ucel(n, w)$, and then we may apply $\del$ again, keeping in mind that the sign convention for the Leibniz rule gives a negative sign to the term ${\circ}^{2p^\ell} \mid \del({\circ}^{n'})$.

The relations are enough to transform an arbitrary product of generators so that consecutive non-follower blocks are in decreasing order, strictly decreasing if $p \neq 2$.  The resulting cycle is in our basis.
\end{proof}

\begin{cor}
For fixed $j$, $w$, and prime $p$, the Betti numbers $\beta_j(\ucel(n, w); \mathbb{F}_p)$ as a function of $n$ grow with an upper bound of $O(n^q)$, where $q = \left\lfloor\frac{j}{w-1}\right\rfloor$.  If $w$ is odd, the Betti numbers become eventually constant in $n$; if $w$ is even and $j \geq (w-1)(w-3)$, the Betti numbers grow as $\Theta(n^q)$.
\end{cor}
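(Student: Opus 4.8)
The plan is to count critical $j$-cells directly. By the theorem just proved, these are in bijection with a basis of $H_*(\ucel(n,w);\mathbb{F}_p)$, so $\beta_j(\ucel(n,w);\mathbb{F}_p)$ equals the number of critical $j$-cells on $n$ disks, and both the upper and lower bounds become combinatorics of cell symbols. Recall that in a critical cell every block is $\circ^1$, or $\circ^{2p^k}$, or a follower; since a block has size at most $w$ and a leader--follower pair has total weight more than $w$, a pair whose leader is $\circ^1$ is forced to be $\circ^1\mid\circ^w$, which can occur only when $w$ is even, and every leader--follower pair (so in particular every $\circ^1\mid\circ^w$) has dimension at least $w-1$. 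I will call $\circ^1\mid\circ^w$ a \emph{small filter}. The first observation is that a standalone singleton (a block $\circ^1$ that is not a leader) can only occur in a maximal run of singletons sitting immediately to the left of the leader $\circ^1$ of a small filter, or at the far right of the symbol: in a run of consecutive non-follower blocks the $\circ^1$'s come last, and a standalone singleton immediately followed by a block of size $\ge 2$ would violate the decreasing-size rule for consecutive non-followers. Hence the runs of standalone singletons occupy at most (the number of small filters) $+1\le q+1$ ``slots''.

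For the upper bound I would observe that the ``structural part'' of a critical $j$-cell---everything except the standalone singletons---has total dimension $j$, hence involves at most $O(j)$ disks (a block of size $s\ge 2$ contributes at least $s/2$ to the dimension, and the singleton leaders number at most $q$); so for fixed $j,w,p$ there are only finitely many structural shapes, and each is completed to a critical $j$-cell on $n$ disks by distributing the remaining $n-O(1)$ singletons among its $\le q+1$ slots, which can be done in $O(n^q)$ ways. When $w$ is odd, the analysis above shows that there are no small filters, so there is exactly one slot (at the far right); thus once $n$ is large each structural shape has a unique completion, and $\beta_j(\ucel(n,w);\mathbb{F}_p)$ equals the (finite) number of structural shapes for all large $n$, i.e.\ is eventually constant.

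For the lower bound, suppose $w$ is even and $j\ge (w-1)(w-3)$, and write $j=q(w-1)+r$ with $q=\lfloor j/(w-1)\rfloor$ and $0\le r\le w-2$, so that the hypothesis gives $q\ge w-3$ and hence $r\le q+1$. I would take $q$ small filters $\circ^1\mid\circ^w$ concatenated in a row; these contribute dimension $q(w-1)$ and leave $q+1$ positions at which a run of standalone singletons may sit (immediately before each of the $q$ leaders, and at the far right). To supply the remaining dimension $r$, prepend a single block $\circ^2$ to $r$ of these $q+1$ runs: this is legal because $\circ^2$ may immediately follow a follower $\circ^w$ (no constraint), may precede the singletons and the leader $\circ^1$ of its run since $2>1$, and it leaves that run's slot intact while adding exactly $1$ to the dimension. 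The result is a critical $j$-cell with $q+1$ slots and $O(1)$ structural disks, so distributing the remaining $n-O(1)$ singletons among the slots yields $\binom{n-O(1)+q}{q}=\Theta(n^q)$ distinct critical $j$-cells, which together with the upper bound gives $\Theta(n^q)$.

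The delicate point---and where the exact threshold $(w-1)(w-3)$ enters---is the residue arithmetic in the last step: one must confirm that $r\le q+1$ (so that there are enough runs to absorb the excess dimension one unit at a time using only the block $\circ^2$, which is the unique non-follower block available in every characteristic) and that prepending $\circ^2$'s never decreases the slot count, which requires checking the interaction of a prepended $\circ^2$ with its run's leader $\circ^1$ and with the preceding follower $\circ^w$ against the ordering rules for critical cells. The remaining bookkeeping---that the displayed configurations are indeed critical and that distinct distributions of singletons give distinct cells of $\ucel(n,w)$---is routine given the preceding theorem.
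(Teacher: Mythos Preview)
Your argument is correct and follows essentially the same route as the paper. The paper uses the term ``skyline'' for what you call the structural part, observes exactly as you do that standalone singletons can only be inserted immediately before a pair $\circ^1\mid\circ^w$ or at the far right, and for the lower bound builds one explicit skyline with $q$ small filters padded by $\circ^2$'s; your phrasing (``prepend $\circ^2$ to $r$ of the $q+1$ runs'') neatly unifies the paper's two cases $r\le q$ and $r=q+1$ into the single inequality $r\le q+1$.
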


\begin{proof}
As in Corollary~\ref{cor:q-growth}, we can delete non-leader singleton blocks ${\circ}^1$ from each critical cell to form one of finitely many ``skylines''.  To recover all critical cells with a given skyline, we insert singletons ${\circ}^1$ either on the far right, or immediately preceding a leader--follower pair ${\circ}^1 \mid {\circ}^w$, which can only exist if $w$ is even.  In this case, the maximum possible number of such pairs is $q = \left\lfloor \frac{j}{w-1}\right\rfloor$, so summing over all skylines, we find that the total Betti number is eventually equal to the number of skylines if $w$ is odd, and is bounded above by $O(n^q)$ if $w$ is even.  

In the case where $j \geq (w-1)(w-3)$, we can construct a skyline critical cell with $q$ instances of ${\circ}^1 \mid {\circ}^w$ in exactly the same way as in Corollary~\ref{cor:q-growth}. The existence of such a skyline implies that $\beta_j(\ucel(n, w); \mathbb{F}_p)$ grows with a lower bound of $\Omega(n^q)$, matching the upper bound.
\end{proof}

\section{Open questions and further directions}\label{sec:open}

\begin{enumerate}
\item One generalization of the disks in a strip configuration spaces is the following.  Let $E \rightarrow B$ be a locally trivial bundle, and consider the configuration space of $n$ distinct points in $E$, such that each fiber of the bundle may contain at most $w$ of them.  What do our methods say about the homology of this configuration space?  We predict that if some neighborhoods in $B$ are $1$-dimensional, then the homology exhibits the same non-commutativity as that of disks in a strip, but that if $B$ is everywhere at least $2$-dimensional, then the homology of the configuration space is a finitely generated $\FI$-module.
\item The representation stability properties of the configuration space of $n$ points on a given manifold come in some sense from the special case where the manifold is Euclidean space; the special case can be considered a local model.  In particular, when the manifold has an end, the homology of the manifold configuration space, considered for all $n$ at once, is a module over the twisted commutative algebra given by the homology of the Euclidean configuration space.  The algebra acts by inserting cycles near infinity in the end of the manifold.  Does our disks in a strip configuration space act as a local model for other configuration spaces, for which the homology exhibits similar finite generation properties due to its being a module?  For instance, what can we say about the homology of the configuration space of disks in the product of an interval with a non-compact $1$-complex, such as the union of three rays with a common starting point?
\item Our proof that $H_*(\config(n, w))$ is finitely generated as a twisted non-commutative algebra relies on fully computing $H_*(\config(n, w))$ and exhibiting the generators.  Is there a more abstract algebraic framework that would prove finite generation without computing the homology?
\item Having described $H_*(\config(n, w))$, but not at all equivariantly, we can ask about its $S_n$-action by permuting the disk labels.  In its decomposition into irreducible representations of $S_n$, how do the multiplicities grow in $n$?  For finitely generated twisted noncommutative algebras in general, by what patterns can the multiplicities grow?  In particular, can one use presentations by
  generators and relations as in \S\ref{S:FId} to recover this information?
\end{enumerate}

\appendix
\section{Computer calculations for small $n$}

In \cite{AKM}, the Betti numbers of $\config(n,w)$ were computed for $n \leq 8$
using off-the-shelf software for computing persistent homology.  This involved
running the software on the complex $\cel(8)$, which has over 5 million cells.
In general, $\cel(n)$ has $2^{n-1}n!$ cells.

We wrote a Python script that harnesses Theorems \ref{thm:basis} and \ref{thm:PH}
to compute the persistence diagram of $H_*(\cel(n,*))$.  Although the runtime
still grows as $n!$, this is faster than the above method by an exponential
factor; for $n=12$, the script ran on a laptop in less than 90 minutes.
Figures \ref{pdiag1}, \ref{pdiag2}, and \ref{pdiag3} show a graphical
representation of the resulting persistence diagram.  We would like to thank
Matthew Kahle for making the first version of this series of figures.


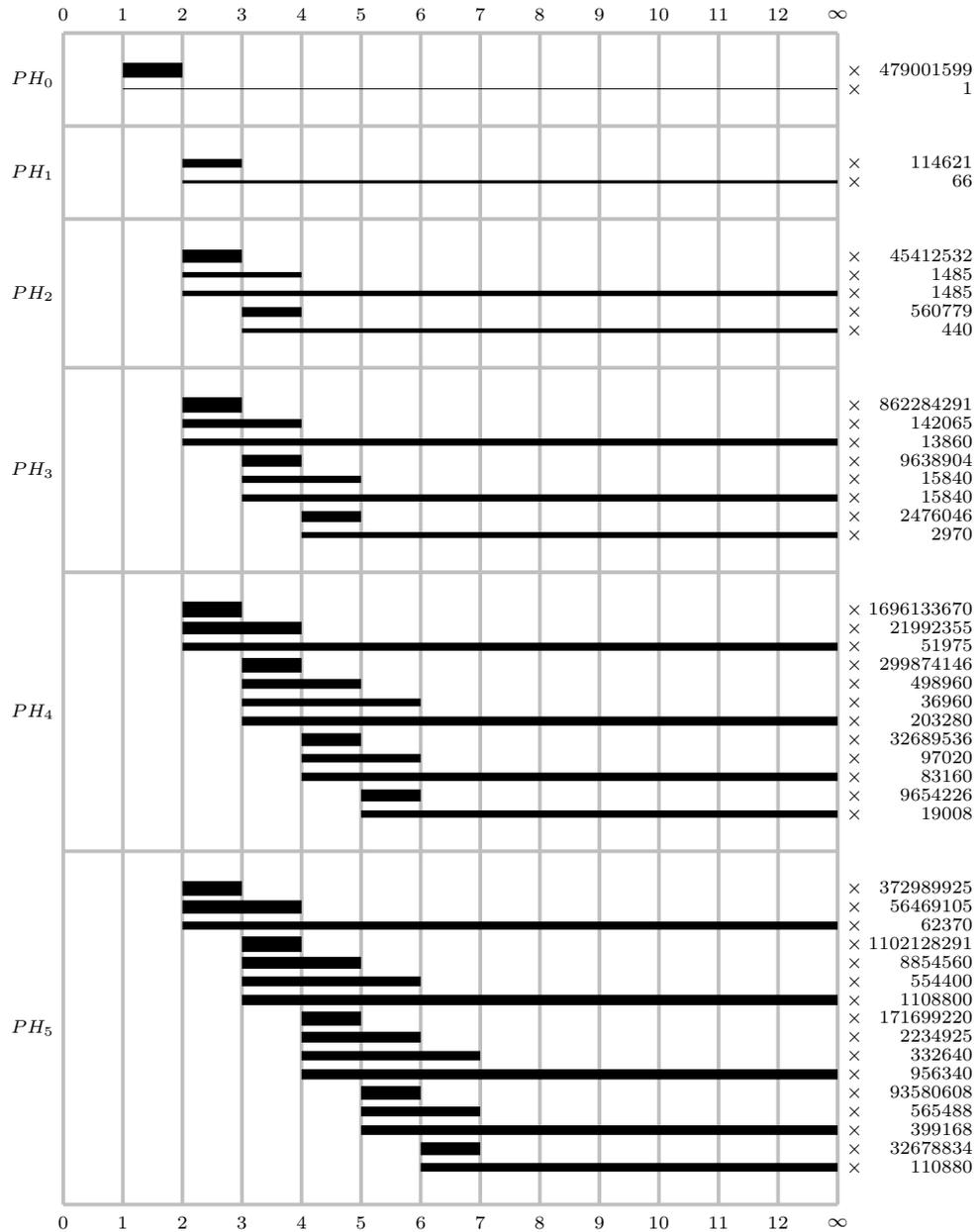
\begin{figure}
\begin{tikzpicture}[line width=0.5mm,xscale=0.8,yscale=1]
\tiny

\foreach \x in {0,...,13}
{
\draw[lightgray] (\x,0.5)--(\x,-15.25);
}

\foreach \x in {0,...,12}
{
\node at (\x,0.75){\x};
}

\node at (13,0.75){$\infty$};

\draw[lightgray] (0, 0.5000 )-- (13, 0.5000 );
\draw[line width= 1.9987214493574 mm] ( 1 , 0.0000 )-- ( 2 , 0.0000 );
\node[anchor=west,text width=\tendigitwidth] at (13, 0.0000 ) {$\times$\hspace{\fill}479001599};
\draw[line width= 0.1000 mm] ( 1 , -0.2500 )-- ( 13 , -0.2500 );
\node[anchor=west,text width=\tendigitwidth] at (13, -0.2500 ) {$\times$\hspace{\fill}1 };
\node[anchor=east] at (0, -0.125 ) {$PH_0$};

\draw[lightgray] (0, -0.7500 )-- (13, -0.7500 );
\draw[line width= 1.1649386312549 mm] ( 2 , -1.250 )-- ( 3 , -1.250 );
\node[anchor=west,text width=\tendigitwidth] at (13, -1.250 ) {$\times$\hspace{\fill}114621 };
\draw[line width= 0.41896547420264 mm] ( 2 , -1.500 )-- ( 13 , -1.500 );
\node[anchor=west,text width=\tendigitwidth] at (13, -1.500 ) {$\times$\hspace{\fill}66 };
\node[anchor=east] at (0, -1.375 ) {$PH_1$};

\draw[lightgray] (0, -2.000 )-- (13, -2.000 );
\draw[line width= 1.7631298660163 mm] ( 2 , -2.500 )-- ( 3 , -2.500 );
\node[anchor=west,text width=\tendigitwidth] at (13, -2.500 ) {$\times$\hspace{\fill}45412532 };
\draw[line width= 0.73031700512368 mm] ( 2 , -2.750 )-- ( 4 , -2.750 );
\node[anchor=west,text width=\tendigitwidth] at (13, -2.750 ) {$\times$\hspace{\fill}1485 };
\draw[line width= 0.73031700512368 mm] ( 2 , -3.000 )-- ( 13 , -3.000 );
\node[anchor=west,text width=\tendigitwidth] at (13, -3.000 ) {$\times$\hspace{\fill}1485 };
\draw[line width= 1.3237082167496 mm] ( 3 , -3.250 )-- ( 4 , -3.250 );
\node[anchor=west,text width=\tendigitwidth] at (13, -3.250 ) {$\times$\hspace{\fill}560779 };
\draw[line width= 0.60867747269123 mm] ( 3 , -3.500 )-- ( 13 , -3.500 );
\node[anchor=west,text width=\tendigitwidth] at (13, -3.500 ) {$\times$\hspace{\fill}440 };
\node[anchor=east] at (0, -3) {$PH_2$};

\draw[lightgray] (0, -4.000 )-- (13, -4.000 );
\draw[line width= 2.0575095578199 mm] ( 2 , -4.500 )-- ( 3 , -4.500 );
\node[anchor=west,text width=\tendigitwidth] at (13, -4.500 ) {$\times$\hspace{\fill}862284291 };
\draw[line width= 1.1864039978328 mm] ( 2 , -4.750 )-- ( 4 , -4.750 );
\node[anchor=west,text width=\tendigitwidth] at (13, -4.750 ) {$\times$\hspace{\fill}142065 };
\draw[line width= 0.95367622727439 mm] ( 2 , -5.000 )-- ( 13 , -5.000 );
\node[anchor=west,text width=\tendigitwidth] at (13, -5.000 ) {$\times$\hspace{\fill}13860 };
\draw[line width= 1.6081317967177 mm] ( 3 , -5.250 )-- ( 4 , -5.250 );
\node[anchor=west,text width=\tendigitwidth] at (13, -5.250 ) {$\times$\hspace{\fill}9638904 };
\draw[line width= 0.96702936653684 mm] ( 3 , -5.500 )-- ( 5 , -5.500 );
\node[anchor=west,text width=\tendigitwidth] at (13, -5.500 ) {$\times$\hspace{\fill}15840 };
\draw[line width= 0.96702936653684 mm] ( 3 , -5.750 )-- ( 13 , -5.750 );
\node[anchor=west,text width=\tendigitwidth] at (13, -5.750 ) {$\times$\hspace{\fill}15840 };
\draw[line width= 1.4722173490966 mm] ( 4 , -6.000 )-- ( 5 , -6.000 );
\node[anchor=west,text width=\tendigitwidth] at (13, -6.000 ) {$\times$\hspace{\fill}2476046 };
\draw[line width= 0.79963172317967 mm] ( 4 , -6.250 )-- ( 13 , -6.250 );
\node[anchor=west,text width=\tendigitwidth] at (13, -6.250 ) {$\times$\hspace{\fill}2970 };
\node[anchor=east] at (0, -5.375 ) {$PH_3$};

\draw[lightgray] (0, -6.750 )-- (13, -6.750 );
\draw[line width= 2.1251617186069 mm] ( 2 , -7.250 )-- ( 3 , -7.250 );
\node[anchor=west,text width=\tendigitwidth] at (13, -7.250 ) {$\times$\hspace{\fill}1696133670 };
\draw[line width= 1.6906205450930 mm] ( 2 , -7.500 )-- ( 4 , -7.500 );
\node[anchor=west,text width=\tendigitwidth] at (13, -7.500 ) {$\times$\hspace{\fill}21992355 };
\draw[line width= 1.0858518112726 mm] ( 2 , -7.750 )-- ( 13 , -7.750 );
\node[anchor=west,text width=\tendigitwidth] at (13, -7.750 ) {$\times$\hspace{\fill}51975 };
\draw[line width= 1.9518873431267 mm] ( 3 , -8.000 )-- ( 4 , -8.000 );
\node[anchor=west,text width=\tendigitwidth] at (13, -8.000 ) {$\times$\hspace{\fill}299874146 };
\draw[line width= 1.3120281211200 mm] ( 3 , -8.250 )-- ( 5 , -8.250 );
\node[anchor=west,text width=\tendigitwidth] at (13, -8.250 ) {$\times$\hspace{\fill}498960 };
\draw[line width= 1.0517591525756 mm] ( 3 , -8.500 )-- ( 6 , -8.500 );
\node[anchor=west,text width=\tendigitwidth] at (13, -8.500 ) {$\times$\hspace{\fill}36960 };
\draw[line width= 1.2222339617994 mm] ( 3 , -8.750 )-- ( 13 , -8.750 );
\node[anchor=west,text width=\tendigitwidth] at (13, -8.750 ) {$\times$\hspace{\fill}203280 };
\draw[line width= 1.7302565584657 mm] ( 4 , -9.000 )-- ( 5 , -9.000 );
\node[anchor=west,text width=\tendigitwidth] at (13, -9.000 ) {$\times$\hspace{\fill}32689536 };
\draw[line width= 1.1482672421799 mm] ( 4 , -9.250 )-- ( 6 , -9.250 );
\node[anchor=west,text width=\tendigitwidth] at (13, -9.250 ) {$\times$\hspace{\fill}97020 };
\draw[line width= 1.1328521741972 mm] ( 4 , -9.500 )-- ( 13 , -9.500 );
\node[anchor=west,text width=\tendigitwidth] at (13, -9.500 ) {$\times$\hspace{\fill}83160 };
\draw[line width= 1.6082906304914 mm] ( 5 , -9.750 )-- ( 6 , -9.750 );
\node[anchor=west,text width=\tendigitwidth] at (13, -9.750 ) {$\times$\hspace{\fill}9654226 };
\draw[line width= 0.98526152221624 mm] ( 5 , -10.00 )-- ( 13 , -10.00 );
\node[anchor=west,text width=\tendigitwidth] at (13, -10.00 ) {$\times$\hspace{\fill}19008 };
\node[anchor=east] at (0, -8.625 ) {$PH_4$};

\draw[lightgray] (0, -10.50 )-- (13, -10.50 );
\draw[line width= 1.9737061966519 mm] ( 2 , -11.00 )-- ( 3 , -11.00 );
\node[anchor=west,text width=\tendigitwidth] at (13, -11.00 ) {$\times$\hspace{\fill}372989925 };
\draw[line width= 1.7849204232400 mm] ( 2 , -11.25 )-- ( 4 , -11.25 );
\node[anchor=west,text width=\tendigitwidth] at (13, -11.25 ) {$\times$\hspace{\fill}56469105 };
\draw[line width= 1.1040839669520 mm] ( 2 , -11.50 )-- ( 13 , -11.50 );
\node[anchor=west,text width=\tendigitwidth] at (13, -11.50 ) {$\times$\hspace{\fill}62370 };
\draw[line width= 2.0820508957417 mm] ( 3 , -11.75 )-- ( 4 , -11.75 );
\node[anchor=west,text width=\tendigitwidth] at (13, -11.75 ) {$\times$\hspace{\fill}1102128291 };
\draw[line width= 1.5996443138524 mm] ( 3 , -12.00 )-- ( 5 , -12.00 );
\node[anchor=west,text width=\tendigitwidth] at (13, -12.00 ) {$\times$\hspace{\fill}8854560 };
\draw[line width= 1.3225641726858 mm] ( 3 , -12.25 )-- ( 6 , -12.25 );
\node[anchor=west,text width=\tendigitwidth] at (13, -12.25 ) {$\times$\hspace{\fill}554400 };
\draw[line width= 1.3918788907418 mm] ( 3 , -12.50 )-- ( 13 , -12.50 );
\node[anchor=west,text width=\tendigitwidth] at (13, -12.50 ) {$\times$\hspace{\fill}1108800 };
\draw[line width= 1.8961254783050 mm] ( 4 , -12.75 )-- ( 5 , -12.75 );
\node[anchor=west,text width=\tendigitwidth] at (13, -12.75 ) {$\times$\hspace{\fill}171699220 };
\draw[line width= 1.4619718228420 mm] ( 4 , -13.00 )-- ( 6 , -13.00 );
\node[anchor=west,text width=\tendigitwidth] at (13, -13.00 ) {$\times$\hspace{\fill}2234925 };
\draw[line width= 1.2714816103092 mm] ( 4 , -13.25 )-- ( 7 , -13.25 );
\node[anchor=west,text width=\tendigitwidth] at (13, -13.25 ) {$\times$\hspace{\fill}332640 };
\draw[line width= 1.3770868777341 mm] ( 4 , -13.50 )-- ( 13 , -13.50 );
\node[anchor=west,text width=\tendigitwidth] at (13, -13.50 ) {$\times$\hspace{\fill}956340 };
\draw[line width= 1.8354333740496 mm] ( 5 , -13.75 )-- ( 6 , -13.75 );
\node[anchor=west,text width=\tendigitwidth] at (13, -13.75 ) {$\times$\hspace{\fill}93580608 };
\draw[line width= 1.3245444354154 mm] ( 5 , -14.00 )-- ( 7 , -14.00 );
\node[anchor=west,text width=\tendigitwidth] at (13, -14.00 ) {$\times$\hspace{\fill}565488 };
\draw[line width= 1.2897137659886 mm] ( 5 , -14.25 )-- ( 13 , -14.25 );
\node[anchor=west,text width=\tendigitwidth] at (13, -14.25 ) {$\times$\hspace{\fill}399168 };
\draw[line width= 1.7302238148005 mm] ( 6 , -14.50 )-- ( 7 , -14.50 );
\node[anchor=west,text width=\tendigitwidth] at (13, -14.50 ) {$\times$\hspace{\fill}32678834 };
\draw[line width= 1.1616203814424 mm] ( 6 , -14.75 )-- ( 13 , -14.75 );
\node[anchor=west,text width=\tendigitwidth] at (13, -14.75 ) {$\times$\hspace{\fill}110880 };
\node[anchor=east] at (0, -12.875 ) {$PH_5$};

\draw[lightgray] (0, -15.25 )-- (13, -15.25 );
\foreach \x in {0,...,12}
{
\node at (\x,-15.5){\x};
}
\node at (13,-15.5){$\infty$};

\end{tikzpicture}
\caption{The persistent homology for the configuration space of 12 disks of unit diameter in a strip of width $w$. The thickness of a bar in the barcode is proportional to the logarithm of the multiplicity, and the exact multiplicity is in the rightmost column.} \label{pdiag1}
\end{figure}

\begin{figure}
\begin{tikzpicture}[line width=0.5mm,xscale=0.8,yscale=0.93]
\tiny

\foreach \x in {0,...,13}
{
\draw[lightgray] (\x,0.5)--(\x,-19.25);
}

\foreach \x in {0,...,12}
{
\node at (\x,0.75){\x};
}

\node at (13,0.75){$\infty$};

\draw[lightgray] (0, 0.5000 )-- (13, 0.5000 );
\draw[line width= 1.5826941558013 mm] ( 2 , 0.0000 )-- ( 4 , 0.0000 );
\node[anchor=west,text width=\tendigitwidth] at (13, 0.0000 ) {$\times$\hspace{\fill}7474005 };
\draw[line width= 0.92490802002921 mm] ( 2 , -0.2500 )-- ( 13 , -0.2500 );
\node[anchor=west,text width=\tendigitwidth] at (13, -0.2500 ) {$\times$\hspace{\fill}10395 };
\draw[line width= 2.1147868037206 mm] ( 3 , -0.5000 )-- ( 4 , -0.5000 );
\node[anchor=west,text width=\tendigitwidth] at (13, -0.5000 ) {$\times$\hspace{\fill}1528982070 };
\draw[line width= 1.7819245276474 mm] ( 3 , -0.7500 )-- ( 5 , -0.7500 );
\node[anchor=west,text width=\tendigitwidth] at (13, -0.7500 ) {$\times$\hspace{\fill}54802440 };
\draw[line width= 1.5848698715127 mm] ( 3 , -1.000 )-- ( 6 , -1.000 );
\node[anchor=west,text width=\tendigitwidth] at (13, -1.000 ) {$\times$\hspace{\fill}7638400 };
\draw[line width= 1.4760116600818 mm] ( 3 , -1.250 )-- ( 13 , -1.250 );
\node[anchor=west,text width=\tendigitwidth] at (13, -1.250 ) {$\times$\hspace{\fill}2571800 };
\draw[line width= 2.0773576307721 mm] ( 4 , -1.500 )-- ( 5 , -1.500 );
\node[anchor=west,text width=\tendigitwidth] at (13, -1.500 ) {$\times$\hspace{\fill}1051597536 };
\draw[line width= 1.6729320483427 mm] ( 4 , -1.750 )-- ( 6 , -1.750 );
\node[anchor=west,text width=\tendigitwidth] at (13, -1.750 ) {$\times$\hspace{\fill}18426870 };
\draw[line width= 1.5017401196086 mm] ( 4 , -2.000 )-- ( 7 , -2.000 );
\node[anchor=west,text width=\tendigitwidth] at (13, -2.000 ) {$\times$\hspace{\fill}3326400 };
\draw[line width= 1.3343424762514 mm] ( 4 , -2.250 )-- ( 8 , -2.250 );
\node[anchor=west,text width=\tendigitwidth] at (13, -2.250 ) {$\times$\hspace{\fill}623700 };
\draw[line width= 1.5463688298714 mm] ( 4 , -2.500 )-- ( 13 , -2.500 );
\node[anchor=west,text width=\tendigitwidth] at (13, -2.500 ) {$\times$\hspace{\fill}5197500 };
\draw[line width= 1.9647422067084 mm] ( 5 , -2.750 )-- ( 6 , -2.750 );
\node[anchor=west,text width=\tendigitwidth] at (13, -2.750 ) {$\times$\hspace{\fill}341009900 };
\draw[line width= 1.5985651666891 mm] ( 5 , -3.000 )-- ( 7 , -3.000 );
\node[anchor=west,text width=\tendigitwidth] at (13, -3.000 ) {$\times$\hspace{\fill}8759520 };
\draw[line width= 1.4101110464212 mm] ( 5 , -3.250 )-- ( 8 , -3.250 );
\node[anchor=west,text width=\tendigitwidth] at (13, -3.250 ) {$\times$\hspace{\fill}1330560 };
\draw[line width= 1.5017401196086 mm] ( 5 , -3.500 )-- ( 13 , -3.500 );
\node[anchor=west,text width=\tendigitwidth] at (13, -3.500 ) {$\times$\hspace{\fill}3326400 };
\draw[line width= 1.9201875394572 mm] ( 6 , -3.750 )-- ( 7 , -3.750 );
\node[anchor=west,text width=\tendigitwidth] at (13, -3.750 ) {$\times$\hspace{\fill}218407992 };
\draw[line width= 1.4877899636475 mm] ( 6 , -4.000 )-- ( 8 , -4.000 );
\node[anchor=west,text width=\tendigitwidth] at (13, -4.000 ) {$\times$\hspace{\fill}2893275 };
\draw[line width= 1.4324254015526 mm] ( 6 , -4.250 )-- ( 13 , -4.250 );
\node[anchor=west,text width=\tendigitwidth] at (13, -4.250 ) {$\times$\hspace{\fill}1663200 };
\draw[line width= 1.8354397320024 mm] ( 7 , -4.500 )-- ( 8 , -4.500 );
\node[anchor=west,text width=\tendigitwidth] at (13, -4.500 ) {$\times$\hspace{\fill}93586558 };
\draw[line width= 1.3253812603825 mm] ( 7 , -4.750 )-- ( 13 , -4.750 );
\node[anchor=west,text width=\tendigitwidth] at (13, -4.750 ) {$\times$\hspace{\fill}570240 };
\draw[lightgray] (0, -5.250 )-- (13, -5.250 );
\node[anchor=east] at (0, -2.375 ) {$PH_6$};

\draw[line width= 1.9045724657210 mm] ( 3 , -5.750 )-- ( 4 , -5.750 );
\node[anchor=west,text width=\tendigitwidth] at (13, -5.750 ) {$\times$\hspace{\fill}186832800 };
\draw[line width= 1.8373136203671 mm] ( 3 , -6.000 )-- ( 5 , -6.000 );
\node[anchor=west,text width=\tendigitwidth] at (13, -6.000 ) {$\times$\hspace{\fill}95356800 };
\draw[line width= 1.6670324220460 mm] ( 3 , -6.250 )-- ( 6 , -6.250 );
\node[anchor=west,text width=\tendigitwidth] at (13, -6.250 ) {$\times$\hspace{\fill}17371200 };
\draw[line width= 1.4524924710988 mm] ( 3 , -6.500 )-- ( 13 , -6.500 );
\node[anchor=west,text width=\tendigitwidth] at (13, -6.500 ) {$\times$\hspace{\fill}2032800 };
\draw[line width= 2.0793787668578 mm] ( 4 , -6.750 )-- ( 5 , -6.750 );
\node[anchor=west,text width=\tendigitwidth] at (13, -6.750 ) {$\times$\hspace{\fill}1073067996 };
\draw[line width= 1.8432722895324 mm] ( 4 , -7.000 )-- ( 6 , -7.000 );
\node[anchor=west,text width=\tendigitwidth] at (13, -7.000 ) {$\times$\hspace{\fill}101211495 };
\draw[line width= 1.6521478592862 mm] ( 4 , -7.250 )-- ( 7 , -7.250 );
\node[anchor=west,text width=\tendigitwidth] at (13, -7.250 ) {$\times$\hspace{\fill}14968800 };
\draw[line width= 1.5135184231742 mm] ( 4 , -7.500 )-- ( 8 , -7.500 );
\node[anchor=west,text width=\tendigitwidth] at (13, -7.500 ) {$\times$\hspace{\fill}3742200 };
\draw[line width= 1.6330788786398 mm] ( 4 , -7.750 )-- ( 13 , -7.750 );
\node[anchor=west,text width=\tendigitwidth] at (13, -7.750 ) {$\times$\hspace{\fill}12370050 };
\draw[line width= 2.0152586700233 mm] ( 5 , -8.000 )-- ( 6 , -8.000 );
\node[anchor=west,text width=\tendigitwidth] at (13, -8.000 ) {$\times$\hspace{\fill}565141500 };
\draw[line width= 1.7591920004564 mm] ( 5 , -8.250 )-- ( 7 , -8.250 );
\node[anchor=west,text width=\tendigitwidth] at (13, -8.250 ) {$\times$\hspace{\fill}43659000 };
\draw[line width= 1.5892869933440 mm] ( 5 , -8.500 )-- ( 8 , -8.500 );
\node[anchor=west,text width=\tendigitwidth] at (13, -8.500 ) {$\times$\hspace{\fill}7983360 };
\draw[line width= 1.5199722752880 mm] ( 5 , -8.750 )-- ( 9 , -8.750 );
\node[anchor=west,text width=\tendigitwidth] at (13, -8.750 ) {$\times$\hspace{\fill}3991680 };
\draw[line width= 1.6452485721375 mm] ( 5 , -9.000 )-- ( 13 , -9.000 );
\node[anchor=west,text width=\tendigitwidth] at (13, -9.000 ) {$\times$\hspace{\fill}13970880 };
\draw[line width= 2.0030333233363 mm] ( 6 , -9.250 )-- ( 7 , -9.250 );
\node[anchor=west,text width=\tendigitwidth] at (13, -9.250 ) {$\times$\hspace{\fill}500107300 };
\draw[line width= 1.7102296588448 mm] ( 6 , -9.500 )-- ( 8 , -9.500 );
\node[anchor=west,text width=\tendigitwidth] at (13, -9.500 ) {$\times$\hspace{\fill}26756730 };
\draw[line width= 1.5410443784195 mm] ( 6 , -9.750 )-- ( 9 , -9.750 );
\node[anchor=west,text width=\tendigitwidth] at (13, -9.750 ) {$\times$\hspace{\fill}4928000 };
\draw[line width= 1.6058855070914 mm] ( 6 , -10.00 )-- ( 13 , -10.00 );
\node[anchor=west,text width=\tendigitwidth] at (13, -10.00 ) {$\times$\hspace{\fill}9424800 };
\draw[line width= 1.9783945569303 mm] ( 7 , -10.25 )-- ( 8 , -10.25 );
\node[anchor=west,text width=\tendigitwidth] at (13, -10.25 ) {$\times$\hspace{\fill}390893448 };
\draw[line width= 1.6264707125118 mm] ( 7 , -10.50 )-- ( 9 , -10.50 );
\node[anchor=west,text width=\tendigitwidth] at (13, -10.50 ) {$\times$\hspace{\fill}11579040 };
\draw[line width= 1.5556397696819 mm] ( 7 , -10.75 )-- ( 13 , -10.75 );
\node[anchor=west,text width=\tendigitwidth] at (13, -10.75 ) {$\times$\hspace{\fill}5702400 };
\draw[line width= 1.9201864016719 mm] ( 8 , -11.00 )-- ( 9 , -11.00 );
\node[anchor=west,text width=\tendigitwidth] at (13, -11.00 ) {$\times$\hspace{\fill}218405507 };
\draw[line width= 1.4729719123634 mm] ( 8 , -11.25 )-- ( 13 , -11.25 );
\node[anchor=west,text width=\tendigitwidth] at (13, -11.25 ) {$\times$\hspace{\fill}2494800 };
\draw[lightgray] (0, -11.75 )-- (13, -11.75 );
\node[anchor=east] at (0, -8.5 ) {$PH_7$};

\draw[line width= 1.5550205726571 mm] ( 3 , -12.25 )-- ( 6 , -12.25 );
\node[anchor=west,text width=\tendigitwidth] at (13, -12.25 ) {$\times$\hspace{\fill}5667200 };
\draw[line width= 1.2414711510642 mm] ( 3 , -12.50 )-- ( 13 , -12.50 );
\node[anchor=west,text width=\tendigitwidth] at (13, -12.50 ) {$\times$\hspace{\fill}246400 };
\draw[line width= 1.8888602206994 mm] ( 4 , -12.75 )-- ( 5 , -12.75 );
\node[anchor=west,text width=\tendigitwidth] at (13, -12.75 ) {$\times$\hspace{\fill}159667200 };
\draw[line width= 1.8002662169773 mm] ( 4 , -13.00 )-- ( 6 , -13.00 );
\node[anchor=west,text width=\tendigitwidth] at (13, -13.00 ) {$\times$\hspace{\fill}65835000 };
\draw[line width= 1.7940562776805 mm] ( 4 , -13.25 )-- ( 7 , -13.25 );
\node[anchor=west,text width=\tendigitwidth] at (13, -13.25 ) {$\times$\hspace{\fill}61871040 };
\draw[line width= 1.5741320035313 mm] ( 4 , -13.50 )-- ( 8 , -13.50 );
\node[anchor=west,text width=\tendigitwidth] at (13, -13.50 ) {$\times$\hspace{\fill}6860700 };
\draw[line width= 1.6136632771957 mm] ( 4 , -13.75 )-- ( 13 , -13.75 );
\node[anchor=west,text width=\tendigitwidth] at (13, -13.75 ) {$\times$\hspace{\fill}10187100 };
\draw[line width= 2.0452885180093 mm] ( 5 , -14.00 )-- ( 6 , -14.00 );
\node[anchor=west,text width=\tendigitwidth] at (13, -14.00 ) {$\times$\hspace{\fill}763088964 };
\draw[line width= 1.8032936096936 mm] ( 5 , -14.25 )-- ( 7 , -14.25 );
\node[anchor=west,text width=\tendigitwidth] at (13, -14.25 ) {$\times$\hspace{\fill}67858560 };
\draw[line width= 1.6809160665314 mm] ( 5 , -14.50 )-- ( 8 , -14.50 );
\node[anchor=west,text width=\tendigitwidth] at (13, -14.50 ) {$\times$\hspace{\fill}19958400 };
\draw[line width= 1.6298335041548 mm] ( 5 , -14.75 )-- ( 9 , -14.75 );
\node[anchor=west,text width=\tendigitwidth] at (13, -14.75 ) {$\times$\hspace{\fill}11975040 };
\draw[line width= 1.5382044309674 mm] ( 5 , -15.00 )-- ( 10 , -15.00 );
\node[anchor=west,text width=\tendigitwidth] at (13, -15.00 ) {$\times$\hspace{\fill}4790016 };
\draw[line width= 1.7076640030448 mm] ( 5 , -15.25 )-- ( 13 , -15.25 );
\node[anchor=west,text width=\tendigitwidth] at (13, -15.25 ) {$\times$\hspace{\fill}26078976 };
\draw[line width= 1.9907906953645 mm] ( 6 , -15.50 )-- ( 7 , -15.50 );
\node[anchor=west,text width=\tendigitwidth] at (13, -15.50 ) {$\times$\hspace{\fill}442480500 };
\draw[line width= 1.8029509071255 mm] ( 6 , -15.75 )-- ( 8 , -15.75 );
\node[anchor=west,text width=\tendigitwidth] at (13, -15.75 ) {$\times$\hspace{\fill}67626405 };
\draw[line width= 1.6509056072864 mm] ( 6 , -16.00 )-- ( 9 , -16.00 );
\node[anchor=west,text width=\tendigitwidth] at (13, -16.00 ) {$\times$\hspace{\fill}14784000 };
\draw[line width= 1.6116013484754 mm] ( 6 , -16.25 )-- ( 10 , -16.25 );
\node[anchor=west,text width=\tendigitwidth] at (13, -16.25 ) {$\times$\hspace{\fill}9979200 };
\draw[line width= 1.7032304216628 mm] ( 6 , -16.50 )-- ( 13 , -16.50 );
\node[anchor=west,text width=\tendigitwidth] at (13, -16.50 ) {$\times$\hspace{\fill}24948000 };
\draw[line width= 1.9969787379040 mm] ( 7 , -16.75 )-- ( 8 , -16.75 );
\node[anchor=west,text width=\tendigitwidth] at (13, -16.75 ) {$\times$\hspace{\fill}470726300 };
\draw[line width= 1.7795554762959 mm] ( 7 , -17.00 )-- ( 9 , -17.00 );
\node[anchor=west,text width=\tendigitwidth] at (13, -17.00 ) {$\times$\hspace{\fill}53519400 };
\draw[line width= 1.6505478251516 mm] ( 7 , -17.25 )-- ( 10 , -17.25 );
\node[anchor=west,text width=\tendigitwidth] at (13, -17.25 ) {$\times$\hspace{\fill}14731200 };
\draw[line width= 1.6809160665314 mm] ( 7 , -17.50 )-- ( 13 , -17.50 );
\node[anchor=west,text width=\tendigitwidth] at (13, -17.50 ) {$\times$\hspace{\fill}19958400 };
\draw[line width= 1.9987214771235 mm] ( 8 , -17.75 )-- ( 9 , -17.75 );
\node[anchor=west,text width=\tendigitwidth] at (13, -17.75 ) {$\times$\hspace{\fill}479001732 };
\draw[line width= 1.7361008263603 mm] ( 8 , -18.00 )-- ( 10 , -18.00 );
\node[anchor=west,text width=\tendigitwidth] at (13, -18.00 ) {$\times$\hspace{\fill}34656930 };
\draw[line width= 1.6521478592862 mm] ( 8 , -18.25 )-- ( 13 , -18.25 );
\node[anchor=west,text width=\tendigitwidth] at (13, -18.25 ) {$\times$\hspace{\fill}14968800 };
\draw[line width= 1.9783947424026 mm] ( 9 , -18.50 )-- ( 10 , -18.50 );
\node[anchor=west,text width=\tendigitwidth] at (13, -18.50 ) {$\times$\hspace{\fill}390894173 };
\draw[line width= 1.5998230449098 mm] ( 9 , -18.75 )-- ( 13 , -18.75 );
\node[anchor=west,text width=\tendigitwidth] at (13, -18.75 ) {$\times$\hspace{\fill}8870400 };
\draw[lightgray] (0, -19.25 )-- (13, -19.25 );
\node[anchor=east] at (0, -15.5 ) {$PH_8$};

\foreach \x in {0,...,12}
{
\node at (\x,-19.5){\x};
}
\node at (13,-19.5){$\infty$};

\end{tikzpicture}
\caption{The persistent homology for the configuration space of 12 disks of unit diameter in a strip of width $w$. The thickness of a bar in the barcode is proportional to the logarithm of the multiplicity, and the exact multiplicity is in the rightmost column.} \label{pdiag2}
\end{figure}

\begin{figure}
\begin{tikzpicture}[line width=0.5mm,xscale=0.8,yscale=1]
\tiny

\foreach \x in {0,...,13}
{
\draw[lightgray] (\x,0.5)--(\x,-11.75);
}

\foreach \x in {0,...,12}
{
\node at (\x,0.75){\x};
}

\node at (13,0.75){$\infty$};

\draw[lightgray] (0, 0.5000 )-- (13, 0.5000 );

\draw[line width= 1.5646009855508 mm] ( 4 , 0.0000 )-- ( 8 , 0.0000 );
\node[anchor=west,text width=\tendigitwidth] at (13, 0.0000 ) {$\times$\hspace{\fill}6237000 };
\draw[line width= 1.4036571943074 mm] ( 4 , -0.2500 )-- ( 13 , -0.2500 );
\node[anchor=west,text width=\tendigitwidth] at (13, -0.2500 ) {$\times$\hspace{\fill}1247400 };
\draw[line width= 1.7579268887010 mm] ( 5 , -0.5000 )-- ( 7 , -0.5000 );
\node[anchor=west,text width=\tendigitwidth] at (13, -0.5000 ) {$\times$\hspace{\fill}43110144 };
\draw[line width= 1.6586017114000 mm] ( 5 , -0.7500 )-- ( 8 , -0.7500 );
\node[anchor=west,text width=\tendigitwidth] at (13, -0.7500 ) {$\times$\hspace{\fill}15966720 };
\draw[line width= 1.5892869933440 mm] ( 5 , -1.000 )-- ( 9 , -1.000 );
\node[anchor=west,text width=\tendigitwidth] at (13, -1.000 ) {$\times$\hspace{\fill}7983360 };
\draw[line width= 1.5382044309674 mm] ( 5 , -1.250 )-- ( 10 , -1.250 );
\node[anchor=west,text width=\tendigitwidth] at (13, -1.250 ) {$\times$\hspace{\fill}4790016 };
\draw[line width= 1.6362873562686 mm] ( 5 , -1.500 )-- ( 13 , -1.500 );
\node[anchor=west,text width=\tendigitwidth] at (13, -1.500 ) {$\times$\hspace{\fill}12773376 };
\draw[line width= 1.8612283893192 mm] ( 6 , -1.750 )-- ( 7 , -1.750 );
\node[anchor=west,text width=\tendigitwidth] at (13, -1.750 ) {$\times$\hspace{\fill}121118976 };
\draw[line width= 1.7262092310661 mm] ( 6 , -2.000 )-- ( 8 , -2.000 );
\node[anchor=west,text width=\tendigitwidth] at (13, -2.000 ) {$\times$\hspace{\fill}31392900 };
\draw[line width= 1.6525585374815 mm] ( 6 , -2.250 )-- ( 9 , -2.250 );
\node[anchor=west,text width=\tendigitwidth] at (13, -2.250 ) {$\times$\hspace{\fill}15030400 };
\draw[line width= 1.6116013484754 mm] ( 6 , -2.500 )-- ( 10 , -2.500 );
\node[anchor=west,text width=\tendigitwidth] at (13, -2.500 ) {$\times$\hspace{\fill}9979200 };
\draw[line width= 1.6586017114000 mm] ( 6 , -2.750 )-- ( 11 , -2.750 );
\node[anchor=west,text width=\tendigitwidth] at (13, -2.750 ) {$\times$\hspace{\fill}15966720 };
\draw[line width= 1.7229331920812 mm] ( 6 , -3.000 )-- ( 13 , -3.000 );
\node[anchor=west,text width=\tendigitwidth] at (13, -3.000 ) {$\times$\hspace{\fill}30381120 };
\draw[line width= 1.8857122289759 mm] ( 7 , -3.250 )-- ( 8 , -3.250 );
\node[anchor=west,text width=\tendigitwidth] at (13, -3.250 ) {$\times$\hspace{\fill}154719180 };
\draw[line width= 1.7651001486238 mm] ( 7 , -3.500 )-- ( 9 , -3.500 );
\node[anchor=west,text width=\tendigitwidth] at (13, -3.500 ) {$\times$\hspace{\fill}46316160 };
\draw[line width= 1.6505478251516 mm] ( 7 , -3.750 )-- ( 10 , -3.750 );
\node[anchor=west,text width=\tendigitwidth] at (13, -3.750 ) {$\times$\hspace{\fill}14731200 };
\draw[line width= 1.6655009985487 mm] ( 7 , -4.000 )-- ( 11 , -4.000 );
\node[anchor=west,text width=\tendigitwidth] at (13, -4.000 ) {$\times$\hspace{\fill}17107200 };
\draw[line width= 1.7165835609253 mm] ( 7 , -4.250 )-- ( 13 , -4.250 );
\node[anchor=west,text width=\tendigitwidth] at (13, -4.250 ) {$\times$\hspace{\fill}28512000 };
\draw[line width= 1.9123074641551 mm] ( 8 , -4.500 )-- ( 9 , -4.500 );
\node[anchor=west,text width=\tendigitwidth] at (13, -4.500 ) {$\times$\hspace{\fill}201857920 };
\draw[line width= 1.7793823705795 mm] ( 8 , -4.750 )-- ( 10 , -4.750 );
\node[anchor=west,text width=\tendigitwidth] at (13, -4.750 ) {$\times$\hspace{\fill}53426835 };
\draw[line width= 1.7225675609609 mm] ( 8 , -5.000 )-- ( 11 , -5.000 );
\node[anchor=west,text width=\tendigitwidth] at (13, -5.000 ) {$\times$\hspace{\fill}30270240 };
\draw[line width= 1.7127614396432 mm] ( 8 , -5.250 )-- ( 13 , -5.250 );
\node[anchor=west,text width=\tendigitwidth] at (13, -5.250 ) {$\times$\hspace{\fill}27442800 };
\draw[line width= 1.9528539313709 mm] ( 9 , -5.500 )-- ( 10 , -5.500 );
\node[anchor=west,text width=\tendigitwidth] at (13, -5.500 ) {$\times$\hspace{\fill}302786748 };
\draw[line width= 1.8054317303450 mm] ( 9 , -5.750 )-- ( 11 , -5.750 );
\node[anchor=west,text width=\tendigitwidth] at (13, -5.750 ) {$\times$\hspace{\fill}69325080 };
\draw[line width= 1.7096842737766 mm] ( 9 , -6.000 )-- ( 13 , -6.000 );
\node[anchor=west,text width=\tendigitwidth] at (13, -6.000 ) {$\times$\hspace{\fill}26611200 };
\draw[line width= 1.9987214497750 mm] ( 10 , -6.250 )-- ( 11 , -6.250 );
\node[anchor=west,text width=\tendigitwidth] at (13, -6.250 ) {$\times$\hspace{\fill}479001601 };
\draw[line width= 1.6991482222108 mm] ( 10 , -6.500 )-- ( 13 , -6.500 );
\node[anchor=west,text width=\tendigitwidth] at (13, -6.500 ) {$\times$\hspace{\fill}23950080 };
\draw[lightgray] (0, -7.000 )-- (13, -7.000 );
\node[anchor=east] at (0, -3.25 ) {$PH_9$};

\draw[line width= 1.5710548376646 mm] ( 6 , -7.500 )-- ( 12 , -7.500 );
\node[anchor=west,text width=\tendigitwidth] at (13, -7.500 ) {$\times$\hspace{\fill}6652800 };
\draw[line width= 1.5710548376646 mm] ( 6 , -7.750 )-- ( 13 , -7.750 );
\node[anchor=west,text width=\tendigitwidth] at (13, -7.750 ) {$\times$\hspace{\fill}6652800 };
\draw[line width= 1.6431866434172 mm] ( 7 , -8.000 )-- ( 12 , -8.000 );
\node[anchor=west,text width=\tendigitwidth] at (13, -8.000 ) {$\times$\hspace{\fill}13685760 };
\draw[line width= 1.6431866434172 mm] ( 7 , -8.250 )-- ( 13 , -8.250 );
\node[anchor=west,text width=\tendigitwidth] at (13, -8.250 ) {$\times$\hspace{\fill}13685760 };
\draw[line width= 1.6601521300536 mm] ( 8 , -8.500 )-- ( 12 , -8.500 );
\node[anchor=west,text width=\tendigitwidth] at (13, -8.500 ) {$\times$\hspace{\fill}16216200 };
\draw[line width= 1.6521478592862 mm] ( 8 , -8.750 )-- ( 13 , -8.750 );
\node[anchor=west,text width=\tendigitwidth] at (13, -8.750 ) {$\times$\hspace{\fill}14968800 };
\draw[line width= 1.7230142622113 mm] ( 9 , -9.000 )-- ( 12 , -9.000 );
\node[anchor=west,text width=\tendigitwidth] at (13, -9.000 ) {$\times$\hspace{\fill}30405760 };
\draw[line width= 1.6691377629658 mm] ( 9 , -9.250 )-- ( 13 , -9.250 );
\node[anchor=west,text width=\tendigitwidth] at (13, -9.250 ) {$\times$\hspace{\fill}17740800 };
\draw[line width= 1.8054496746212 mm] ( 10 , -9.500 )-- ( 12 , -9.500 );
\node[anchor=west,text width=\tendigitwidth] at (13, -9.500 ) {$\times$\hspace{\fill}69337521 };
\draw[line width= 1.6991482222108 mm] ( 10 , -9.750 )-- ( 13 , -9.750 );
\node[anchor=west,text width=\tendigitwidth] at (13, -9.750 ) {$\times$\hspace{\fill}23950080 };
\draw[line width= 1.9528539350038 mm] ( 11 , -10.00 )-- ( 12 , -10.00 );
\node[anchor=west,text width=\tendigitwidth] at (13, -10.00 ) {$\times$\hspace{\fill}302786759 };
\draw[line width= 1.7589319222864 mm] ( 11 , -10.25 )-- ( 13 , -10.25 );
\node[anchor=west,text width=\tendigitwidth] at (13, -10.25 ) {$\times$\hspace{\fill}43545600 };
\draw[lightgray] (0, -10.75 )-- (13, -10.75 );
\node[anchor=east] at (0, -8.875 ) {$PH_{10}$};

\draw[line width= 1.7502307845874 mm] ( 12 , -11.25 )-- ( 13 , -11.25 );
\node[anchor=west,text width=\tendigitwidth] at (13, -11.25 ) {$\times$\hspace{\fill}39916800 };
\node[anchor=east] at (0, -11.25 ) {$PH_{11}$};
\draw[lightgray] (0, -11.75 )-- (13, -11.75 );

\foreach \x in {0,...,12}
{
\node at (\x,-12){\x};
}
\node at (13,-12){$\infty$};

\end{tikzpicture}

\caption{The persistent homology for the configuration space of 12 disks of unit diameter in a strip of width $w$. The thickness of a bar in the barcode is proportional to the logarithm of the multiplicity, and the exact multiplicity is in the rightmost column.} \label{pdiag3}
\end{figure}

\bibliographystyle{amsalpha}
\bibliography{harddisks}

\end{document}